\theoremstyle{plain}
\newtheorem*{main}{Main result}
\newtheorem*{prop*}{Proposition}
\newtheorem{theo}{Theorem}[section]
\newtheorem{prop}[theo]{Proposition}
\newtheorem{lemm}[theo]{Lemma}
\newtheorem{coro}[theo]{Corollary}
\newtheorem{defi}[theo]{Definition}
\theoremstyle{definition}
\newtheorem{rema}[theo]{Remark}
\newtheorem*{rema*}{Remark}
\newtheorem*{remas}{Remarks}
\newtheorem*{nota*}{Notation}
\newtheorem{exam}[theo]{Example}
\DeclareMathOperator{\diff}{d}
\DeclareMathOperator{\Op}{Op}
\numberwithin{equation}{section}
\title{\huge{Global solutions and asymptotic behavior \\ 
for two dimensional gravity water waves}}
\author{Thomas Alazard\\D\'epartement de Math\'ematiques et Applications\\
 \'Ecole normale sup\'erieure et CNRS UMR 8553\\ 
 45, rue d'Ulm\\
 F-75230 Paris, France \and Jean-Marc Delort\\ Universit\'e Paris 13\\
Sorbonne Paris Cit\'e, LAGA, CNRS (UMR 7539)\\
 99, avenue J.-B. Cl\'ement\\
F-93430 Villetaneuse}
\date{}
\begin{document}


\maketitle


\parskip=10pt 
\lineskip=4pt

\def\indexname{Index of notations}


\begin{abstract}
 This paper is devoted to the proof of a global existence result for the water waves equation with smooth, small, and
 decaying at infinity Cauchy data. We obtain moreover an asymptotic description in physical coordinates of the solution,
 which shows that modified scattering holds. 

The proof is based on a bootstrap argument involving $L^2$ and $L^\infty$ estimates. The $L^2$ bounds are proved in the
companion paper~\cite{AlDel} of this article. They rely on a normal forms paradifferential method allowing one to obtain energy estimates on the Eulerian
formulation of the water waves equation. We give here the proof of the  uniform bounds, interpreting the equation in a
semi-classical way, and combining Klainerman vector fields with the description of the solution in terms of semi-classical
lagrangian distributions. This, together with the $L^2$ estimates of~\cite{AlDel}, allows us to deduce our main global existence result.
\end{abstract}


\section*{Introduction}
\renewcommand{\theequation}{\arabic{equation}}
\renewcommand{\thesubsection}{\arabic{subsection}}

\subsection{Main result}

Consider an homogeneous and incompressible fluid in a gravity field, 
occupying a time-dependent domain with a free surface. 
We assume that the motion is the same in every vertical section and consider the two-dimensional 
motion in one such section. At time~$t$, the fluid domain, denoted by 
$\Omega(t)$, is therefore a two-dimensional domain. We assume that its boundary 
is a free surface 
described by the equation~$y=\eta(t,x)$, so that
$$
\Omega(t)=\left\{\, (x,y)\in \xR\times \xR\,;\, y<\eta(t,x)\,\right\}.
$$
The velocity field is assumed to satisfy the incompressible Euler equations. 
Moreover, the fluid motion is assumed to have been generated from rest 
by conservative forces and is 
therefore irrotational in character. It follows that the 
velocity field~$v\colon \Omega \rightarrow \xR^{2}$ 
is given by~$v=\nabla_{x,y} \phi$ 
for some velocity potential~$\phi\colon \Omega\rightarrow \xR$ satisfying 
\begin{equation}\label{intro:1}
\Delta_{x,y}\phi=0,\quad 
\partial_{t} \phi +\mez \la \nabla_{x,y}\phi\ra^2 + P +g y = 0,
\end{equation}
where~$g$ is the modulus of the acceleration of gravity ($g>0$) and where~$P$ is the pressure term. 
Hereafter, the units of length and time are chosen so that~$g=1$. 

The problem is then given by two boundary conditions on the free surface:
\begin{equation}\label{intro:2}
\left\{
\begin{aligned}
&\partial_{t} \eta = \sqrt{1+(\px \eta)^2}\, \partial_n \phi 
&&\text{on }\partial\Omega, \\
& P=0
&&\text{on }\partial\Omega,
\end{aligned}
\right.
\end{equation}
where~$\partial_n$ is the outward normal derivative of~$\Omega$, so that 
$\sqrt{1+(\px \eta)^2}\, \partial_n \phi =\partial_y\phi-(\px\eta)\px\phi$. The former condition 
expresses that the velocity of the free surface coincides with the one of the fluid particles. 
The latter condition is a balance of forces across the free surface.  

Following Zakharov~\cite{Zakharov1968} and Craig and Sulem~\cite{CrSu}, we 
work with the trace of~$\phi$ at the free boundary
$$
\psi(t,x)\defn\phi(t,x,\eta(t,x)).
$$
To form a system of two evolution equations for~$\eta$ and 
$\psi$, one introduces the Dirichlet-Neumann operator~$G(\eta)$ 
that relates~$\psi$ to the normal derivative 
$\partial_n\phi$ of the potential by 
$$
(G(\eta) \psi)  (t,x)=\sqrt{1+(\partial_x\eta)^2}\,
\partial _n \phi\arrowvert_{y=\eta(t,x)}.
$$
(This definition is made precise in the first section of the companion paper~\cite{AlDel}. See proposition~\ref{ref:118} below).
Then~$(\eta,\psi)$ solves (see~\cite{CrSu}) 
the so-called Craig--Sulem--Zakharov system
\begin{equation}\label{intro:3}
\left\{
\begin{aligned}
&\partial_t \eta=G(\eta)\psi,\\
&\partial_t \psi + \eta+ \frac{1}{2}(\partial_x \psi)^2
-\frac{1}{2(1+(\partial_x\eta)^2)}\bigl(G(\eta)\psi+(\partial_x  \eta )(\partial_x \psi)\bigr)^2= 0.
\end{aligned}
\right.
\end{equation}
In \cite{Bertinoro}, it is proved that 
if $(\eta,\psi)$ is a classical solution of \e{intro:3}, such that $(\eta,\psi)$ belongs 
to $C^0([0,T];H^s(\xR))$ for some $T>0$ and $s>3/2$, then one can define a velocity potential $\phi$ and 
a pressure $P$ satisfying \e{intro:1} and \e{intro:2}. Thus it is sufficient to solve the 
Craig--Sulem--Zakharov formulation of the water waves equations. 

Our main result is stated in full generality in the first section of this paper. A weaker statement is the following:

\begin{main}
For small enough initial data of size $\eps\ll1$,  sufficiently decaying at infinity, the Cauchy problem 
for \e{intro:3} is globally in time well-posed. Moreover, $u=\Dxmez \psi+i\eta$ admits the following 
asymptotic expansion as $t$ goes to $+\infty$: 
There is a continuous function $\underline{\alpha}\colon \xR
\rightarrow \xC$, depending of $\eps$ but 
bounded uniformly in $\eps$, such that
$$
u(t,x)=\frac{\eps}{\sqrt{t}}\underline{\alpha}\Bigl(\frac{x}{t}\Bigr)
\exp\Bigl(\frac{it}{4|x/t|}+\frac{i\eps^2}{64}\frac{\la \underline{\alpha}(x/t)\ra^2}{\la x/t\ra^5}\log (t)\Bigr)
+\eps t^{-\mez-\kappa}\rho(t,x)
$$
where $\kappa$ is some positive number and $\rho$ 
is a function uniformly bounded for $t\ge 1$, $\eps\in ]0,\eps_0]$.
\end{main}

As an example of small enough initial data sufficiently decaying at infinity, consider
\be\label{intro:4}
\eta\arrowvert_{t=1}=\eps \eta_0,\quad 
\psi\arrowvert_{t=1}=\eps \psi_{0},
\ee
with $\eta_0, \psi_0$ in $C^\infty_0(\R)$. 
Then there exists a unique solution 
$(\eta,\psi)$ in $C^\infty([1,+\infty[;H^\infty(\xR))$ of \e{intro:3}. In fact, in Theorem~\ref{ref:122} we allow 
$\psi$ to be merely in some homogeneous Sobolev space.

The strategy of the proof will be 
explained in the following sections of this introduction. 
We discuss at the end of this paragraph some related previous works. 

For the equations obtained by neglecting the nonlinear terms, the computation of the asymptotic behavior of the solutions 
was performed by Cauchy \cite{Cauchy} who computed the phase of oscillations. The reader is 
referred to \cite{Darrigol} and \cite{Craik} 
for many historical comments on Cauchy's memoir.

Many results have been obtained in the study of 
the Cauchy problem for the water waves equations, 
starting from the pioneering work  
of Nalimov~\cite{Nalimov} who proved that the Cauchy problem is well-posed 
locally in time, in the framework of Sobolev spaces, 
under an additional smallness assumption 
on the data. 
We also refer the reader to Shinbrot~\cite{Shinbrot}, Yoshihara~\cite{Yosihara} and 
Craig~\cite{Craig1985}. 
Without smallness assumptions on the data, 
the well-posedness of the Cauchy problem     
was first proved by Wu for the case without surface 
tension (see~\cite{WuInvent,WuJAMS}) and by Beyer-G\"unther in~\cite{BG} 
in the case with surface tension. 
Several extensions of their results have been obtained and 
we refer the reader to C{\'o}rdoba, C{\'o}rdoba and Gancedo~\cite{CCG}, Coutand-Shkoller~\cite{CS}, 
 Lannes~\cite{LannesJAMS,LannesKelvin,LannesLivre}, Linblad~\cite{LindbladAnnals}, Masmoudi-Rousset~\cite{MR} and Shatah-Zeng \cite{SZ,SZ2} for recent results on the Cauchy problem for the gravity water waves equation. 
 
Our proof of global existence is based on the analysis of the 
Eulerian formulation of the water waves equations by means of microlocal 
analysis. In particular, the energy estimates discussed in~\cite{AlDel} are influenced by the papers by Lannes~\cite{LannesJAMS} 
and Iooss-Plotnikov~\cite{IP} and  follow the paradifferential analysis 
introduced in \cite{AM} and further developed in \cite{ABZ1,ABZ3}. 

\smallbreak
 
It is worth recalling that the only known 
coercive quantity for \e{intro:3} is the 
hamiltonian, which reads (see~\cite{Zakharov1968,CrSu})
\be\label{i10}
\mathcal{H}=\mez \int \eta^2 \,dx+\mez \int \psi G(\eta)\psi \, dx.
\ee
We refer to the paper by Benjamin and Olver \cite{BO} for considerations on the conservation laws of the water waves equations. 
One can compare the hamiltonian with the critical threshold given by the scaling invariance of the equations. 
Recall (see~\cite{BO,CMSW}) that 
if $(\eta,\psi)$ solves \e{intro:3}, then the functions $(\eta_\lambda,\psi_\lambda)$ defined by
\be\label{i11}
\eta_\lambda(t,x)=\lambda^{-2}\eta\left(\lambda t, \lambda^2 x\right),\quad \psi_\lambda(t,x)=\lambda^{-3}\psi\left(\lambda t,\lambda^2 x\right)
\qquad (\lambda>0)
\ee
are also solutions of \e{intro:3}. In particular, one notices that the critical space for the scaling corresponds to $\eta_0$ in $\dot{H}^{3/2}(\xR)$. 
Since the hamiltonian~\e{i10} only controls the $L^2(\xR)$-norm of $\eta$, one 
sees that the hamiltonian is highly supercritical for the water waves equation and hence one cannot  use it 
directly to prove global well-posedness of the Cauchy problem.

\smallbreak

Given $\eps \ge 0$, consider the 
solutions to the water waves system \e{intro:3} with initial data satisfying \e{intro:4}. 
In her breakthrough 
result \cite{Wu09}, Wu 
proved that the maximal time of existence  $T_\eps$ is larger or equal to $e^{c/\eps}$ for $d=1$. 
Then Germain--Masmoudi--Shatah~\cite{GMS} and Wu~\cite{Wu10} have shown 
that the Cauchy problem for three-dimensional waves 
is globally in time well-posed for $\eps$ small enough 
(with linear scattering in Germain-Masmoudi-Shatah and no assumption about the 
decay to $0$ at spatial infinity of $\Dxmez \psi$ in Wu). Germain--Masmoudi--Shatah recently proved global existence 
for pure capillary waves in dimension $d=2$ in \cite{GMS2}. 

There is at least one other case where the global existence of solutions is now understood, namely 
for the equations with viscosity (see~\cite{Beale}, \cite{GT} and the references therein). 
Then global well-posedness is obtained by using the dissipation of energy. 
Without viscosity, the analysis of global well-posedness is based on dispersive estimates. Our 
approach follows a variant of the vector fields method introduced by Klainerman in \cite{Kl,Kl2} 
to study the wave and Klein-Gordon equations  
(see the book by H\"ormander in \cite{Hormander} or the Bourbaki seminar by Lannes~\cite{LannesBourbaki} 
for an introduction to this method). More precisely, as it is discussed later in this introduction, we shall follow the approach introduced in \cite{Delort} 
for the analysis of the Klein-Gordon equation in space dimension one, to cope with the fact that solutions of the equation do
not scatter. Results for one dimensional Schr\"odinger equations, that display the same non scattering behavior, have been
proved by Hayashi and Naumkin~\cite{HayashiNaumkin}, and global existence for a simplified model of the water waves equation
studied by Ionescu and Pusateri in~\cite{IonescuPusateri0}.

Let us discuss two other questions related to our analysis : the possible 
emergence of singularities in finite time and the existence of solitary waves. 

An important question is to determine 
whether the lifespan could be finite. 
Castro, C\'ordoba, Fefferman, Gancedo 
and G\'omez-Serrano conjecture (see~\cite{CCFGGS-PNAS}) that 
blow-up in finite time is possible for some initial data. 
It is conjectured in \cite{CCFGGS-PNAS} that 
there exists at least one water-wave solution such that, 
at time $0$, the fluid interface is a graph, 
at a later time $t_1>0$ the fluid interface is not a graph, and, 
at a later time $t_2>t_1$, the fluid self-intersects. Notice that, according to this 
conjecture, one does not expect global well-posedness for arbitrarily large initial data. 
One can quote several results supporting this conjecture 
(see \cite{CCFGGS-arxiv,CCFGLF,CS2}). 
In \cite{CCFGGS-arxiv} (see also~\cite{CS2}), the authors prove the following result: 
there exists an initial data such the free surface is a self-intersecting curve, and such that solving 
backward in time the Cauchy problem, one obtains for small enough negative times a non self-intersecting 
curve of $\xR^2$. 
On the other hand, it was conjectured 
that there is no blow-up in finite time 
for small enough, sufficiently decaying initial data (see the survey paper by Craig and Wayne \cite{CW}). 

Our main result precludes the existence of solitary waves sufficiently small and sufficiently decaying at infinity. 
In this direction, notice that Sun \cite{Sun} 
has shown that in infinitely deep water, no two-dimensional solitary water waves exist. 
For further comments and references on solitary waves, we refer the 
reader to~\cite{Craig02}
as well as to \cite{BGSW,Hur1,MRT} for recent results.

We refer the reader to \cite{ABZ1,ABZ2,CHS} for the study of other 
dispersive properties of the water waves equations 
(Strichartz estimates and smoothing effect of surface tension). 

Finally, let us mention that Ionescu and Pusateri~\cite{IonescuPusateri} independently obtained  a  global existence result very similar to the one
we get here. The main difference is that they assume less decay on the initial data, and get asymptotics not for the solution
in physical space, with control of the remainders in $L^\infty$, but for its space Fourier transform, with remainders in
$L^2$. These asymptotics, as well as ours, show that solutions do not scatter. 
To get asymptotics with remainders estimated in $L^\infty$, we shall 
commute iterated vector field $Z=t\partial_t+2x\partial_x$ to the water waves equations. 
This introduces several new difficulties and requires that the initial data be sufficiently decaying at 
infinity.   

\subsection{General strategy of proof}\label{Sintro:GSP}

Let us describe our general strategy, the difficulties one has to cope with, and the ideas used to overcome them. The general
framework we use is the one of Klainerman vector fields. Consider as a (much) simplified model an equation of the form
\begin{equation}
  \label{eq:1}\begin{split}
  (D_t-P(D_x))u &= N(u)\\
u\vert_{t=1} &= \eps u_0,\end{split}
\end{equation}
where $D_t = \frac{1}{i}\frac{\partial}{\partial t}$, $P(\xi)$ is a real valued symbol (for the linearized water waves
equation, $P(\xi)$ would be $\lvert\xi\rvert^{1/2}$), and $N(u)$ is a nonlinearity vanishing  at least at order two at
zero. Recall that a Klainerman vector field for $D_t - P(D_x)$ is a space-time vector field $Z$ such that $[Z,D_t-P(D_x)]$ is
zero (or a multiple of $D_t - P(D_x)$). 
For the water waves system, $Z$ will be $t\partial_t+2x\partial_x$ or $D_x$. In that way,
$(D_t - P(D_x))Z^k u = Z^kN(u)$ for any $k$, and since $P(\xi)$ is real valued, an easy energy inequality shows that
\begin{equation}
  \label{eq:2}
  \norm{Z^k u(t,\cdot)}_{L^2} \leq \norm{Z^k u(1,\cdot)}_{L^2} + \int_1^t \norm{Z^k N(u)(\tau,\cdot)}_{L^2}\,d\tau,
\end{equation}
for any $t\geq 1$. Assume first that $N(u)$ is cubic, so that
\begin{equation}
  \label{eq:3}
  \norm{Z^k N(u)}_{L^2} \leq C\norm{u}_{L^\infty}^2 \norm{Z^ku}_{L^2} + C\sum_{\substack{k_1+k_2+k_3\leq k\\ k_1,
      k_2\leq k_3\leq k-1}} \norm{Z^{k_1}u}_{L^\infty}\norm{Z^{k_2}u}_{L^\infty}\norm{Z^{k_3}u}_{L^2}.
\end{equation}
Assuming an a priori $L^\infty$ bound, one can deduce from \e{eq:2} an $L^2$ estimate. More precisely, introduce the
following property, where $s$ is a large even integer:
\[\begin{split}
\textrm{ For } t \textrm{ in some interval } [1,T[,\ \norm{u(t,\cdot)}_{L^\infty} = O(\eps/\sqrt{t}) \\
\textrm{ and for }
k = 0,\dots,s/2,\ \norm{Z^k u(t,\cdot)}_{L^\infty} = O\bigl(\eps t^{-\frac{1}{2} + \tilde\delta'_k}\bigr),
\end{split}\tag{A}\]
where $\tilde\delta'_k$ are small positive numbers. Plugging these a priori bounds in \e{eq:2}, \e{eq:3}, we get
\begin{equation}\label{eq:4}
\ba
\norm{Z^k u(t,\cdot)}_{L^2} \leq \norm{Z^k u(1,\cdot)}_{L^2} &+ C\eps^2\int_1^t  \norm{Z^k
   u(\tau,\cdot)}_{L^2}\,\frac{d\tau}{\tau}\\
 &+ C\eps^2\int_1^t \norm{Z^{k-1}
   u(\tau,\cdot)}_{L^2}\tau^{2\tilde{\delta}'_{k/2}-1}\,d\tau.
\ea
\end{equation}
Gronwall inequality implies then that
\[\norm{Z^k u(t,\cdot)}_{L^2} = O(\eps t^{\delta_k}), \ k\leq s,\tag{B}\]
for some small $\delta_k>0$ ($\delta_k>C\eps^2$ and $\delta_k>2\tilde\delta'_{k/2}$).

The proof of global existence is done classically using a bootstrap argument allowing one to to show that if $(A)$ and $(B)$
are assumed to hold on some interval, they actually hold on the same interval with smaller constants in the estimates. 

We
have outlined above the way of obtaining $(B)$, assuming $(A)$ for a solution of the model equation \e{eq:1}. In this 
subsection of the introduction, we shall explain, in a non technical way, the new difficulties that have to be solved to prove
$(B)$ for the water waves equation. Actually, the proof of a long time energy inequality for system \e{intro:3} faces two
serious obstacles, that we describe now.

\medskip

\textbf{$\bullet$ Apparent loss of derivatives in energy inequalities}

\medskip

This difficulty already arises for local existence results, and was solved initially by Nalimov~\cite{Nalimov} and
Wu~\cite{WuInvent,WuJAMS}. For long time existence problems, Wu~\cite{Wu10} uses arguments combining the Eulerian and
Lagrangian formulations of the system. The approach followed in our companion paper~\cite{AlDel} is purely Eulerian. We explain the idea on the model obtained from
\e{intro:3} paralinearizing the equations and keeping only the quadratic terms. If we denote $U =
\bigl[\begin{smallmatrix}\eta\\ \abs{D_x}^{1/2}\psi\end{smallmatrix}\bigr]$, such a model may be written as
\[\partial_t U = T_AU\]
where $T_A$ is the paradifferential operator with symbol $A$, 
and where $A(U,x,\xi)$ is a
matrix of symbols $A(U,x,\xi) = A_0(U,x,\xi) + A_1(U,x,\xi)$, with
\[
A_0(U,x,\xi) = \begin{bmatrix}
  -i(\partial_x\psi)\xi & \abs{\xi}^{1/2}\\ -\abs{\xi}^{1/2} &  -i(\partial_x\psi)\xi
\end{bmatrix}, \ 
A_1(U,x,\xi) = (\abs{D_x}\psi)\la \xi\ra
\begin{bmatrix}
  -1& 0\\0 &1
\end{bmatrix}.\]
Because of the $A_1$ contribution, which is self-adjoint, the eigenvalues of $A(U,x,\xi)$ are not purely imaginary. For large
$\abs{\xi}$, there is one eigenvalue with positive real part, which shows that one cannot expect for the solution of
$\partial_tU = T_AU$ energy inequalities without derivative losses. A way to circumvent this difficulty is well known, and
consists in using the ``good unknown'' of Alinhac~\cite{Alipara}. For our quadratic model, this just means introducing as
a new unknown $\tilde{U} = \bigl[
\begin{smallmatrix}
  \eta \\ \abs{D_x}^{1/2}\omega
\end{smallmatrix}\bigr]$, where $\omega = \psi-T_{\abs{D_x}\psi}\eta$ 
is the (quadratic approximation of the) good
unknown. In that way, ignoring again remainders and terms which are at least cubic, one gets for $\tilde{U}$ an evolution
equation $\partial_t\tilde{U} = T_{A_0}\tilde{U}$. Since $A_0$ is anti-self-adjoint, one gets $L^2$ or Sobolev energy
inequalities for $\tilde{U}$. In particular, if for some $s$, 
$\norm{\abs{D_x}^{1/2}\omega}_{H^s} + \norm{\eta}_{H^s}$ is under
control, and if one has also an auxiliary bound for 
$\norm{\Dx\psi}_{L^\infty}$, one gets an estimate for
$\norm{\abs{D_x}^{1/2}\psi}_{H^{s-1/2}} + \norm{\eta}_{H^s}$.

\medskip

\textbf{$\bullet$  Quadratic terms in the nonlinearity}

\medskip

In the model equation \e{eq:1} discussed above, we considered a cubic nonlinearity: this played an essential role to make
appear in the first integral in the right hand side of \e{eq:4} the almost integrable factor $1/\tau$. For a quadratic
nonlinearity, we would have had instead a $1/\sqrt{\tau}$-factor, which would have given in $(B)$, through Gronwall, a
$O(e^{\eps\sqrt{t}})$-bound, instead of $O(\eps t^{\delta_k})$. The way to overcome such a difficulty is well known
since the work of Shatah~\cite{Shatah} devoted to the non-linear Klein-Gordon equation: it is to use a normal forms method to
eliminate the quadratic part of the nonlinearity, up to terms that do not contribute to the Sobolev energy inequality. 

In
practice, one looks for a local diffeomorphism at 0 in $H^s$, for $s$ large enough, so that the Sobolev energy inequality
written for the equation obtained by conjugation by this diffeomorphim be of the form \e{eq:4}. Nonlinear changes of
unknowns, reducing the water waves system to a cubic equation, have been known for quite a time (see Craig \cite{Craig96} or 
 Iooss and Plotnikov \cite[Lemma $1$]{IP-SW1}). However, these transformations were losing derivatives, as a consequence of
 the quasi-linear character of the problem. Nevertheless, one can construct a bona fide change of unknown, without
 derivatives losses, if one notices that it is not necessary to eliminate the whole quadratic part of the nonlinearity, but
 only the part of it that would bring non zero contributions in a Sobolev energy inequality. This is what we do in our companion
 paper~\cite{AlDel}. Let us also mention  that the analysis of normal forms for the water waves system 
is motivated by physical considerations, such as the derivations of various equations 
in asymptotic regimes (see~\cite{CrSuSu,CSS,SW,TW}). 

\medskip

Our proof of $L^2$-estimates of type $(B)$, assuming that a priori inequalities of type $(A)$ hold, is performed in~\cite{AlDel}
 using the ideas that we just outlined. Of course, the models we have discussed so far do not make justice
to the full complexity of the water waves system. In particular, the good unknown $\omega$ is given by a more involved
formula than the one indicated above, and one also needs to define precisely the Dirichlet-Neumann operator. The latter is
done in~\cite{AlDel}. We recall in   section~\ref{chap:1} below the main properties of the operator $G(\eta)$ when $\eta$ belongs to a space of the
form $C^\gamma(\R)\cap L^2(\R)$ with $\gamma>2$, and is small enough. Once $G(\eta)\psi$ has been defined, one can introduce
functions of $(\eta,\psi)$, $B = (\partial_y\phi)\vert_{y=\eta}$, 
$V = (\partial_x\phi)\vert_{y=\eta}$, where $\phi$ is the 
harmonic potential solving \e{intro:1}. Explicit expressions of these quantities are given by
\[B = \frac{G(\eta)\psi + (\partial_x\eta)(\partial_x\psi)}{1+ (\partial_x\eta)^2}, \ V
= \partial_x\psi - B\partial_x\eta.\]
The good unknown for the water waves equation is given by $\omega = \psi-T_B\eta$. 
Following the analysis in \cite{ABZ3,ABZ1,AM}, we prove in~\cite{AlDel} an expression for $G(\eta)\psi$
in terms of $\omega$:
\[G(\eta)\psi = \abs{D_x}\omega - \partial_x(T_V\eta) + F(\eta)\psi,\]
where $F(\eta)\psi$ is a quadratic \emph{smoothing term}, that belongs to $H^{s+\gamma-4}$ if $\eta$ is in $C^\gamma\cap H^s$
and $\abs{D_x}^{1/2}\psi$ belongs to $C^{\gamma-1/2}\cap H^{s-1/2}$. This gives a quite explicit expression for the main
contributions to $G(\eta)\psi$. Moreover, we prove as well tame estimates, that complement similar results  due to
Craig, Schanz and Sulem (see \cite{CSS}, \cite[Chapter~$11$]{SuSu} 
and \cite{ASL,IP}), and establish bounds
for the approximation of $G(\eta)\psi$ (resp.\ $F(\eta)\psi$) 
by its Taylor expansion at order two $G_{\leq 2}(\eta)\psi$
(resp.\ $F_{\leq 2}(\eta)\psi$). 

\subsection{Klainerman-Sobolev inequalities}

As previously mentioned, the proof of global existence relies on a bootstrap argument on properties $(A)$ and $(B)$. We have
indicated in the preceding section how $(B)$ may be deduced from $(A)$. On the other hand, one has to prove that conversely,
$(A)$ and $(B)$ imply that~$(A)$ holds with smaller 
constants in the inequalities. The first step is to show that if the
$L^2$-estimate~$(B)$ holds for $k\leq s$, then bounds of the form
\[\norm{Z^k u(t,\cdot)}_{L^\infty} = O(\eps t^{-\frac{1}{2}+\delta'_k}),\ k\leq s-100\tag{A'}\]
are true, for small positive $\delta'_k$. This is not $(A)$, since the $\delta'_k$ may be larger that the $\tilde{\delta}'_k$
of $(A)$, and since this does not give a \emph{uniform} bound for $\lA u(t,\cdot)\rA_{L^\infty}$. But this first information
will allow us to deduce, in the last step of the proof, estimates of the form $(A)$ from $(A')$ and the equation.

Let us make a change of variables $x\to x/t$ in the water waves system. If $u(t,x)$ is given by 
$u(t,x)= (\abs{D_x}^{1/2}\psi+ i\eta)(t,x)$, we
define $v$ by $u(t,x) = \frac{1}{\sqrt{t}}v(t,x/t)$. We set $h = 1/t$ and eventually consider $v$ as a family of functions of $x$
depending on the semi-classical parameter~$h$. Moreover, for $a(x,\xi)$ a function satisfying convenient symbol estimates, and
$(v_h)_h$ a family of functions on $\R$, we define
\[\Oph(a)v_h = a(x,hD)v_h = \frac{1}{2\pi}\int e^{ix\xi} a(x,h\xi)\hat{v}_h(\xi)\,d\xi.\]
Then the water waves system is equivalent to the equation
\begin{equation}
  \label{eq:5}
  \bigl(D_t - \Oph\bigl(x\xi+\abs{\xi}^{1/2}\bigr)\bigr)v 
  = \sqrt{h}Q_0(V) + h\left[C_0(V) -\frac{i}{2}v\right] + h^{1+\kappa}R(V),
\end{equation}
where we used the following notations

$\bullet$ $Q_0$ (resp.\ $C_0$) is a nonlocal quadratic (resp.\ cubic) form of $V = (v,\bar{v})$ that may be written as a linear
combination of expressions $\Oph(b_0)[\prod_{j=1}^\ell\Oph(b_j)v_\pm]$, $\ell=2$ 
(resp.\ $\ell = 3$), where $b_\ell(\xi)$ are
homogeneous functions of degree $d_\ell\geq 0$ with $\sum_0^2 d_\ell = 3/2$ 
(resp.\ $\sum_0^3 d_\ell = 5/2$) and $v_+ = v, v_-
= \bar{v}$.

$\bullet$ $R(V)$ is a remainder, made of the contributions vanishing at least at order four at $V=0$.

To simplify the exposition in this introduction, we shall assume that $v$ satisfies $\varphi(hD)v = v$ for some
$C^\infty_0(\R-\{0\})$-function $\varphi$, equal to one on a large enough compact subset of of $\R-\{0\}$. Such a property is
not satisfied by solutions of \e{eq:5}, but one can essentially reduce to such a situation performing a dyadic decomposition
$v = \sum_{j\in\xZ}\varphi(2^{-j}hD)v$.

The Klainerman vector field associated to the linearization of the water waves equation may be written, in the new
coordinates that we are using, as $Z = t\partial_t+x\partial_x$. Remembering $h=1/t$ and 
expressing $\partial_t$ from $Z$ in equation \e{eq:5}, we
get
\begin{equation}
  \label{eq:6}
  \Oph\bigl(2x\xi + \abs{\xi}^{1/2}\bigr)v = -\sqrt{h}Q_0(V) + h\left[\frac{i}{2}v - iZv - C_0(V)\right] - h^{1+\kappa}R(V).
\end{equation}
Since we factored out the expected decay in $1/\sqrt{t}$, 
our goal is to deduce from assumptions $(A)$ and $(B)$ estimates of the form $\norm{Z^k v}_{L^\infty} = O(\eps
h^{-\delta'_k})$ for $k\leq s-100$.

\begin{prop*}
  Assume that for $t$ in some interval $[T_0,T[$ (i.e. for $h$ in some interval $]h',h_0]$), one has estimates $(A)$ and
  $(B)$:
  \begin{equation}
    \label{eq:7}
    \norm{Z^kv}_{L^\infty} = O\bigl(\eps h^{-\tilde\delta'_k}\bigr),\ k\leq s/2,
    \ \norm{Z^kv}_{L^2} = O\bigl(\eps h^{-\delta_k}\bigr),\ k\leq s.
  \end{equation}
Denote $\Lambda = \{(x,d\omega(x)); x \in  \R^*\}$ where $\omega(x) = 1/(4\abs{x})$.  Then, if $\gamma_\Lambda$ is
smooth,  supported close to $\Lambda$ and equal to one on a neighborhood of  $\Lambda$, and if $\gamma_\Lambda^c = 1
- \gamma_\Lambda$, we have for $k\leq s-100$
\begin{equation}
  \label{eq:8}
  \norm{Z^k\Oph(\gamma_\Lambda^c)v}_{L^2} = O\big(\eps h^{\frac{1}{2}-\delta'_k}\big),
\end{equation}
\begin{equation}
  \label{eq:9}
  \norm{(hD_x-d\omega)Z^k\Oph(\gamma_\Lambda)v}_{L^2} = O\big(\eps h^{1-\delta'_k}\big),
\end{equation}
\begin{equation}
  \label{eq:10}
  \norm{Z^k v}_{L^\infty} = O\big(\eps h^{-\delta'_k}\big).
\end{equation}
\end{prop*}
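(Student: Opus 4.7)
The plan is to derive all three bounds from equation~\eqref{eq:6} applied to $Z^k v$. Since $Z$ is a Klainerman vector field for $D_t-\Oph(p)$ with $p(x,\xi)=2x\xi+|\xi|^{1/2}$, commuting $Z^k$ through~\eqref{eq:6} and controlling commutators by semiclassical calculus yields
\begin{equation*}
\Oph(p)\, Z^k v = \sqrt{h}\,\widetilde Q_0 + h\,\widetilde L + h^{1+\kappa}\,\widetilde R,
\end{equation*}
where $\widetilde Q_0$, $\widetilde L$, and $\widetilde R$ are finite multilinear expressions in $Z^j v$ for $j\leq k+1$, controlled in $L^2$ by hypotheses~$(A)$ and~$(B)$. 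The characteristic set of $p$ is exactly $\Lambda$, and this drives the whole argument.

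For~\eqref{eq:8}, the symbol $p$ is semiclassically elliptic on $\mathrm{supp}\,\gamma_\Lambda^c$, so a parametrix $\Oph(e_c)$ satisfies $\Oph(e_c)\Oph(p)=\Oph(\gamma_\Lambda^c)+O(h^\infty)$. Applying it to the $Z^k$-equation controls $\Oph(\gamma_\Lambda^c)Z^k v$ in $L^2$ by the right-hand side. The leading quadratic contribution yields $\sqrt h\,\|\widetilde Q_0\|_{L^2}\lesssim\sqrt h\,\|v\|_{L^\infty}\|Z^{\le k}v\|_{L^2}=O(\eps^2 h^{1/2-\delta_k})$, while the cubic, linear, and remainder terms are smaller by at least a further $\sqrt h$. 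Choosing $\delta'_k$ slightly above $\delta_k$ then yields~\eqref{eq:8}.

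For~\eqref{eq:9}, factor $p(x,\xi)=e(x,\xi)\bigl(\xi-d\omega(x)\bigr)$ near $\Lambda$ with $e$ elliptic. Symbol calculus gives $\Oph(\gamma_\Lambda\,e^{-1})\Oph(p)=\Oph(\gamma_\Lambda)(hD_x-d\omega(x))+O(h)$, so applying $\Oph(\gamma_\Lambda\,e^{-1})$ to the $Z^k$-equation expresses $(hD_x-d\omega)\Oph(\gamma_\Lambda)Z^k v$ by a controlled right-hand side. The naive bound of the $\sqrt h\,\widetilde Q_0$ term only delivers $O(\eps h^{1/2-\delta'_k})$, short by a factor $\sqrt h$; this is the critical point of the proof. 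The missing half-power is extracted from non-resonance of the dispersion relation: decomposing each input of $\widetilde Q_0(V)$ as $\Oph(\gamma_\Lambda)v+\Oph(\gamma_\Lambda^c)v$ and using that $v$ is spectrally concentrated on $\Lambda$ while $\bar v$ sits on $-\Lambda$, the pure $\pm\Lambda$ interactions are supported near $\pm 2\,d\omega(x)$ or near $0$, all disjoint from $\Lambda$, hence annihilated by $\Oph(\gamma_\Lambda)$ modulo $O(h^\infty)$. The remaining contributions contain at least one factor $\Oph(\gamma_\Lambda^c)Z^j v$, already of size $O(\eps h^{1/2-\delta'_j})$ in $L^2$ by the already-established~\eqref{eq:8}, which supplies the missing $\sqrt h$ and gives~\eqref{eq:9}.

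For~\eqref{eq:10}, split $Z^k v=\Oph(\gamma_\Lambda)Z^k v+\Oph(\gamma_\Lambda^c)Z^k v$. The far-from-$\Lambda$ part is bounded by semiclassical Sobolev: using the frequency localization $\varphi(hD)v=v$, $\|hD_x\Oph(\gamma_\Lambda^c)Z^k v\|_{L^2}\lesssim\|\Oph(\gamma_\Lambda^c)Z^k v\|_{L^2}$, whence $\|\Oph(\gamma_\Lambda^c)Z^k v\|_{L^\infty}^2\lesssim h^{-1}\|\Oph(\gamma_\Lambda^c)Z^k v\|_{L^2}^2=O(\eps^2 h^{-2\delta'_k})$ by~\eqref{eq:8}. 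For the near-$\Lambda$ part, write $\Oph(\gamma_\Lambda)Z^k v=e^{i\omega(x)/h}a(x)$; then $\partial_x a=(i/h)e^{-i\omega(x)/h}(hD_x-d\omega)\Oph(\gamma_\Lambda)Z^k v$, so~\eqref{eq:9} yields $\|\partial_x a\|_{L^2}=O(\eps h^{-\delta'_k})$ while~$(B)$ gives $\|a\|_{L^2}=O(\eps h^{-\delta_k})$. The one-dimensional Sobolev inequality $\|a\|_{L^\infty}^2\le 2\|a\|_{L^2}\|\partial_x a\|_{L^2}$ then delivers~\eqref{eq:10} provided $\delta'_k\geq\delta_k$. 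The main obstacle is the third paragraph: extracting a full power of $h$ rather than $\sqrt h$ near $\Lambda$ rests on a careful symbolic analysis of the non-resonance of $|\xi|^{1/2}$, propagated consistently through all Klainerman derivatives $Z^k$.
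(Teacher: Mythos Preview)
Your proposal follows essentially the same approach as the paper's sketch: ellipticity of $2x\xi+|\xi|^{1/2}$ away from $\Lambda$ for \eqref{eq:8}, the non-resonance observation that $Q_0(V_\Lambda)$ is microlocally supported near $\pm 2\Lambda$ and $0\cdot\Lambda$ (hence killed by $\Oph(\gamma_\Lambda)$) for \eqref{eq:9}, and the one-dimensional Gagliardo--Nirenberg inequality applied to $e^{-i\omega/h}Z^k\Oph(\gamma_\Lambda)v$ for \eqref{eq:10}.

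One point you gloss over: in the decomposition of $Z^kQ_0(V)-Z^kQ_0(V_\Lambda)$ into cross terms, some contributions are of the form $B(Z^{k}v_\Lambda,\,v_{\Lambda^c})$, where the factor carrying all $Z$-derivatives is the $\Lambda$-localized one and must be placed in $L^\infty$. The paper handles this by noting that such terms can be absorbed into the left-hand side of \eqref{eq:10} at the end of the argument---a mild bootstrap that closes because the other factor is already $O(\eps h^{1/2})$ in $L^2$. You should be aware that \eqref{eq:9} and \eqref{eq:10} are in fact coupled in this way, not proved strictly sequentially.
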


\textsl{Idea of proof.} One applies $k$ vector fields $Z$ to \e{eq:6} and uses their commutation properties to the linearized
equation. In that way, taking into account the assumptions, one gets
\begin{equation}
  \label{eq:11}
  \Oph\big(2x\xi+\abs{\xi}^{1/2}\big)Z^k v = O_{L^2}\big(\eps h^{\frac{1}{2}-\delta'_k}\big)
\end{equation}
for some small $\delta'_k>0$. One remarks that $2x\xi + \abs{\xi}^{1/2}$ vanishes exactly on $\Lambda$. Consequently, this symbol is elliptic on the
support of $\gamma_\Lambda^c$, and this allows one to get \e{eq:8} by ellipticity. 

To prove the second inequality, one uses
the fact that, 
\begin{equation}
  \label{eq:12}
  \Oph\big(2x\xi+\abs{\xi}^{1/2}\big)Z^k \Oph(\gamma_\Lambda)v 
  = -\sqrt{h}\Oph(\gamma_\Lambda)Z^kQ_0(V) + O\big(\eps h^{1-\delta'_k}\big).
\end{equation}
We may decompose $v= v_\Lambda + v_{\Lambda^c}$ where $v_\Lambda = \Oph(\gamma_\Lambda)v$ and $v_{\Lambda^c} =
\Oph(\gamma_\Lambda^c)v$. We may write $Z^kQ_0(V)-Z^k Q_0(v_\La,\bar{v}_{\La})=
B(v_{\La},Z^k v_{\La^c})+\cdots$ where $B$ is the polar form of 
$Q_0$. By~\e{eq:8}, $\norm{Z^k v_{\Lambda^c}}_{L^2} = O(\eps h^{\frac{1}{2}-\delta'_k})$, and by
assumption $\norm{v_\La}_{L^\infty} = O(\eps)$. It follows that 
$\blA B(v_{\La},Z^k v_{\La^c})\brA_{L^2}=O\bigl(\eps h^{\mez-\delta_k'}\bigr)$. The other contributions to 
$Z^kQ_0(V)-Z^k Q_0(v_\La,\bar{v}_{\La})$ may be estimated in a similar way, up to extra 
contributions, that we do not write explicitly in this outline, and that may be absorbed in the left hand side of \e{eq:10} at the end of the reasoning. 
The right hand side of \e{eq:12} may thus be
written
\begin{equation}\label{eq:13}
  -\sqrt{h}\Oph(\gamma_\Lambda)Z^kQ_0(V_\Lambda) + O_{L^2}\big(\eps h^{1-\delta'_k}\big),
\end{equation}
where $V_\Lambda = (v_\Lambda,\bar{v}_\Lambda)$. One notices then that since $v_\Lambda$ (resp.\ $\bar{v}_\Lambda$) is
microlocally supported close to $\Lambda$ (resp. $-\Lambda$), $Q_0(V_\Lambda)$ is microlocally supported close to the union of $2\Lambda$, 
$0\Lambda$ and $-2\Lambda$, so far away from the support of the cut-off $\gamma_\Lambda$ (where 
$\ell\Lambda=\{(x,\ell d\omega(x));x\in\xR^*\}$). 

Consequently, the first term in \e{eq:13} vanishes, 
and we get 
\[\Oph\big(2x\xi+\abs{\xi}^{1/2}\big)Z^k \Oph(\gamma_\Lambda)v= O_{L^2}\big(\eps h^{1-\delta'_k}\big).\] 
Since $2x\xi+\abs{\xi}^{1/2}$ and $\xi-d\omega(x)$ have the same zero set, namely 
$\Lambda$, one deduces  \e{eq:9} from this estimate using symbolic calculus. 

Finally, to obtain \e{eq:10}, we write
\[\norm{Z^{k} v_\Lambda}_{L^\infty} = \norm{e^{-i\omega/h}Z^{k} v_\Lambda}_{L^\infty} 
\leq C \norm{e^{-i\omega/h}Z^{k} v_\Lambda}^{1/2}_{L^2} 
\norm{D_x(e^{-i\omega/h}Z^{k} v_\Lambda)}^{1/2}_{L^2}.\] 
The last factor is $h^{-1/2}\norm{(hD_x -d\omega)Z^{k} v_\Lambda}^{1/2}_{L^2}$, which is $O(\sqrt{\eps}h^{-\delta'_k/2})$
by \e{eq:9}. Moreover, \e{eq:8} and Sobolev inequality imply that $\norm{Z^{k} v_{\Lambda^c}}_{L^\infty} =
O\bigl(\eps h^{-\delta'_k}\bigr)$, since we have assumed that $v$ is spectrally localized for $\abs{\xi} \sim 1/h$. This
gives \e{eq:10}.

\subsection{Optimal $L^\infty$ bounds}

As seen in the preceding section, one can deduce from the $L^2$-estimates $(B)$ some $L^\infty$-estimates \e{eq:10}, which
are \emph{not} the optimal estimates of the form $(A)$ that we need (because the exponents $\delta'_k$ are larger than
$\tilde{\delta}'_k$, and because $\delta'_0$ is positive, while we need a uniform estimate when no $Z$ field acts on $v$). In
order to get $(A)$, we deduce from the PDE \e{eq:5} an ODE satisfied by $v$.

\begin{prop*}
  Under the conclusions of the preceding proposition, we may write
  \begin{equation}
    \label{eq:14}
   v = v_\Lambda + \sqrt{h}(v_{2\Lambda} +  v_{-2\Lambda}) + h(v_{3\Lambda} + v_{-\Lambda} +v_{-3\Lambda}) +h^{1+\kappa}g,
  \end{equation}
where $\kappa>0$, $g$ satisfies bounds of the form $\norm{Z^k g}_{L^\infty} = O(\eps h^{-\delta'_k})$, and
$v_{\ell\Lambda}$ is microlocally supported close to 
$\ell\Lambda$ and is a semi-classical lagrangian distribution
along $\ell\Lambda$, 
as well as $Z^kw_{\ell\Lambda}$ for $k\leq s/2$, in the following sense
\begin{equation}
  \label{eq:15}
  \norm{Z^k v_{\ell\Lambda}}_{L^\infty} = O(\eps h^{-\delta'_k}),
\end{equation}
\begin{equation}
  \label{eq:16}
  \norm{\Oph(e_\ell(x,\xi)) Z^k v_{\ell\Lambda}}_{L^\infty} = O(\eps h^{1-\delta'_k}),\ \ell \in \{1, -2,2\},
\end{equation}
\begin{equation}
  \label{eq:17}
  \norm{\Oph(e_\ell(x,\xi)) Z^k v_{\ell\Lambda}}_{L^\infty} = O(\eps h^{\frac{1}{2}-\delta'_k}),\ \ell \in \{-3, -1,3\},
\end{equation}
if $e_\ell$ vanishes on $\ell\Lambda$.
\end{prop*}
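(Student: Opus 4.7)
\textsl{Proof proposal.} The plan is to use equation \eqref{eq:6} as a microlocal factorization device. Since the symbol $2x\xi+\abs{\xi}^{1/2}$ vanishes only on $\Lambda$, the operator $\Oph\bigl(2x\xi+\abs{\xi}^{1/2}\bigr)$ is elliptic in a neighborhood of every $\ell\Lambda$ with $\ell\neq 1$; the nonlinearities $Q_0(V)$ and $C_0(V)$ then force the pieces of $v$ away from $\Lambda$ to live on the Lagrangians dictated by their algebraic structure, and the prefactors $\sqrt{h}$ and $h$ appearing on the right hand side of \eqref{eq:6} directly produce the scalings $\sqrt h$ and $h$ in \eqref{eq:14}.

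Concretely, fix smooth cut-off symbols $\gamma_{\ell\Lambda}$ for $\ell\in\{-3,-2,-1,1,2,3\}$, supported in small neighborhoods of the respective Lagrangians, with $\gamma_{1\Lambda}=\gamma_\Lambda$ taken from the previous proposition, and let $\gamma_\infty=1-\sum_{\ell}\gamma_{\ell\Lambda}$. Set $v_\Lambda=\Oph(\gamma_\Lambda)v$; the complement $\Oph(\gamma_\Lambda^c)v$ already satisfies the $L^2$-bound \eqref{eq:8}. For $\ell\neq 1$, apply $\Oph(\gamma_{\ell\Lambda})$ to \eqref{eq:6}, substitute $v=v_\Lambda+(v-v_\Lambda)$ in $Q_0,C_0,R$, and invert $\Oph(2x\xi+\abs{\xi}^{1/2})$ by an elliptic parametrix $\Oph(q_\ell)$ with $q_\ell=\gamma_{\ell\Lambda}/(2x\xi+\abs{\xi}^{1/2})$. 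Modulo $O(h)$ symbolic errors, this gives
\begin{equation*}
  \Oph(\gamma_{\ell\Lambda})v
    = -\sqrt{h}\,\Oph(q_\ell)Q_0(V_\Lambda) - h\,\Oph(q_\ell)\widetilde C_0(V_\Lambda) + O(h^{1+\kappa}),
\end{equation*}
where $\widetilde C_0$ collects $C_0$ together with the polar bilinear interaction of $v_\Lambda$ with the already-built $\sqrt h$-order corrections $v_{\pm 2\Lambda}$. Rescaling by $\sqrt h$ when $\ell=\pm 2$ and by $h$ when $\ell\in\{-3,-1,3\}$ defines the $v_{\ell\Lambda}$'s, and the $L^\infty$-bound \eqref{eq:15} follows from \eqref{eq:10} applied to each factor of $Q_0$ and $C_0$.

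The Lagrangian bounds \eqref{eq:16}--\eqref{eq:17} rest on the following microlocal algebra: each summand of $Q_0$ has the form $\Oph(b_0)[\Oph(b_1)v_{\pm_1}\cdot\Oph(b_2)v_{\pm_2}]$ with $v_+=v,v_-=\bar v$, and the product of two semi-classical Lagrangian distributions with phases $e^{i\phi_1/h},e^{i\phi_2/h}$ is Lagrangian on $\{(x,d\phi_1+d\phi_2)\}$. Applied to $V_\Lambda$, this forces $Q_0(V_\Lambda)$ to split into pieces microlocalized on $\pm 2\Lambda$ and on $0\Lambda$, the latter being killed by the spectral localization $\abs{\xi}\sim 1/h$; similarly $C_0(V_\Lambda)$ splits on $\pm 3\Lambda$ and $\pm\Lambda$. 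Elliptic inversion preserves the Lagrangian structure, so $\Oph(e_\ell)v_{\ell\Lambda}=O(h)$ when $\ell=\pm 2$, which is \eqref{eq:16}. For $\ell\in\{-3,-1,3\}$ an extra source enters through the polar term $B(V_\Lambda,\sqrt h\,v_{\pm 2\Lambda})$ in $\widetilde C_0$; this contribution is Lagrangian only modulo an $O(h^{1/2})$ symbolic remainder, which accounts for the weaker bound \eqref{eq:17}. Commutation with $Z=t\partial_t+2x\partial_x$ is built in, since $Z$ preserves $2x\xi+\abs{\xi}^{1/2}$ up to $O(h)$ and preserves each cut-off $\gamma_{\ell\Lambda}$ up to a symbol still adapted to $\ell\Lambda$; applying $Z^k$ before the parametrix gives the same decomposition for $Z^k v$, with losses $h^{-\delta'_k}$ inherited from \eqref{eq:10}.

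The main obstacle will be to organize the construction self-consistently: the $v_{\pm 2\Lambda}$'s just built appear as sources for $v_{-\Lambda},v_{\pm 3\Lambda}$ through the polar form of $Q_0$, so the $v_{\ell\Lambda}$'s must be defined iteratively, order by order in $\sqrt h$, checking at each step that the residual is of the next order. One must verify that all leftover pieces — the contribution of $h^{1+\kappa}R(V)$, the $O(h)$ errors from symbolic calculus and from the parametrix inversion, the commutators of $Z^k$ with $\Oph(q_\ell)$ and with the cut-offs, and the part $\Oph(\gamma_\infty)v$ supported outside $\bigcup_\ell\ell\Lambda$ (for which one uses the $L^2$-bound \eqref{eq:8} combined with the spectral localization $\abs{\xi}\sim 1/h$) — all fall into the remainder $h^{1+\kappa}g$, whose $L^\infty$ bound $O(\eps h^{-\delta'_k})$ then follows from the construction and from the already-obtained estimates on the explicit summands of \eqref{eq:14}.
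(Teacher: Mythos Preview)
Your proposal follows the paper's approach and is essentially correct: microlocalize \eqref{eq:6}, invert $\Oph(2x\xi+|\xi|^{1/2})$ elliptically near each $\ell\Lambda$ with $\ell\neq 1$, read off $v_{\ell\Lambda}$ from the Lagrangian structure of $Q_0(V_\Lambda)$ and $C_0(V_\Lambda)$, and iterate in powers of $\sqrt h$. Two points deserve sharpening.

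First, the vanishing of the $0\Lambda$ contribution of $Q_0(V_\Lambda)$ is not a consequence of the spectral localization $|\xi|\sim 1/h$ but of the explicit structure of $Q_0$: the outer Fourier multipliers $|\xi|^{1/2},|\xi|,\xi$ in the expression of $Q_0$ vanish at $\xi=0$, and the bilinear cross terms in $(v_\Lambda,\bar v_\Lambda)$ cancel pairwise (this is the computation around \eqref{3417} in the detailed proof). Your heuristic gives the right conclusion but the wrong mechanism.

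Second, and more substantively, your handling of the residual piece $\Oph(\gamma_\infty)v$ via the $L^2$ bound \eqref{eq:8} combined with Sobolev only yields $\|\Oph(\gamma_\infty)Z^kv\|_{L^\infty}=O(\eps h^{-\delta'_k})$, which does not fit into $h^{1+\kappa}g$. The obstruction is the linear term $h\bigl[\tfrac{i}{2}v-iZv\bigr]$ on the right of \eqref{eq:6}: after cut-off by $\gamma_\infty$ and elliptic inversion, it reintroduces the unknown itself, so a single pass gives only $O(h)$. The paper resolves this by iterating the equation (Lemma~\ref{ref:346}): each application of the parametrix to the linear term gains a factor $h_j\le h^\sigma$, and after $N_1-N_0$ iterations with $(N_1-N_0)\sigma\ge 1$ the residual is pushed to $O(h^{1+\sigma})$, which then feeds into the remainder $g$.
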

\begin{rema*}
  Consider a function $w = \alpha(x)\exp(i\omega(x)/h)$. If $\alpha$ is smooth and bounded as well as its derivatives, we see
  that $(hD_x - d\omega(x))w = O_{L^\infty}(h)$ i.e.\ $w$ satisfies the second of the above conditions with $\ell=1$, where $e_1(x,\xi) = \xi
  - d\omega(x)$ is an equation of $\Lambda$. The conclusion of the proposition thus means 
  that $v_{\ell\Lambda}$ enjoys a
  weak form of such an oscillatory behavior.
\end{rema*}

The proposition is proved using equation \e{eq:6}. For instance, the bound \e{eq:16} for $v_\Lambda = \Oph(\gamma_\Lambda)v$
is proved in the same way as \e{eq:9}, with $L^2$-norms replaced by $L^\infty$ ones, using \e{eq:10} to estimate the right
hand side. In the same way, one defines $v_{\pm2\Lambda}$ as the cut-off of $v$ close to $\pm2\Lambda$. As in the proof of
\e{eq:8}, one shows an $O_{L^\infty}(h^{\frac{1}{2}-\delta'_k})$ bound for $Z^k v_{\Lambda^c}$, which implies that the main
contribution to $Q_0(v,\bar{v})$ is  $Q_0(v_\Lambda,\bar{v}_{\Lambda})$. Localizing \e{eq:6} close to $\pm2\Lambda$, one gets an
elliptic equation that allows to determine  $v_{\pm2\Lambda}$ as a quadratic function of
$v_\Lambda,\bar{v}_{\Lambda}$. Iterating the argument, one gets the expansion of the proposition. 
One does not get in the $\sqrt{h}$-terms of the expansion a 
contribution associated to $0\Lambda$ because 
$Q_0(V)$ may be factored out by a Fourier multiplier 
vanishing on the zero section. Consequently, non 
oscillating terms form part of the $O(h^{1+\kappa})$ remainder.

\medskip

Let us use the result of the preceding proposition to obtain an ODE satisfied by $v$:
\begin{prop*}
  The function $v$ satisfies an ODE of the form
  \begin{equation}
    \label{eq:18}
\ba
    D_tv &= \frac{1}{2}(1-\chi(h^{-\beta}x))\abs{d\omega}^{1/2}v  
    - i\sqrt{h}(1-\chi(h^{-\beta}x))\Big[\Phi_2(x)v^2 +
    \Phi_{-2}(x)\bar{v}^2\Big]\\
&\quad + h (1-\chi(h^{-\beta}x))\Big[\Phi_3(x)v^3 +  \Phi_{1}(x)\abs{v}^2v + \Phi_{-1}(x)\abs{v}^2\bar{v} + \Phi_{-3}(x)\bar{v}^3\Big] \\
&\quad+
O(\eps h^{1+\kappa}),
\ea
  \end{equation}
where $\kappa>0, \beta>0$ are small, $\Phi_\ell$ are real valued functions of $x$ defined on $\R^*$ and $\chi$ is in
$C^\infty_0(\R)$, equal to one close to zero.
\end{prop*}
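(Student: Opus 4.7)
The plan is to derive the ODE by substituting the microlocal expansion \e{eq:14} into the PDE \e{eq:5}, and then using the semi-classical Lagrangian description \e{eq:15}--\e{eq:17} of each $v_{\ell\Lambda}$ to compute $\Oph(x\xi+\abs{\xi}^{1/2})v$, $Q_0(V)$, and $C_0(V)$ modulo $O(h^{1+\kappa})$. For any semi-classical Lagrangian distribution $w$ microlocalized near $\ell\Lambda$ and any symbol $a(x,\xi)$, a Taylor expansion at $\xi=\ell d\omega(x)$ gives
\[\Oph(a)\,w = a(x,\ell d\omega(x))\, w + \Oph\bigl(\partial_\xi a(x,\ell d\omega(x))\bigr)\,(hD_x-\ell d\omega(x))w + O(h^{\alpha}),\]
and the second term is itself an $O(h\cdot h^{-\delta'_k})$ or $O(h\cdot h^{1/2-\delta'_k})$ quantity by \e{eq:16} or \e{eq:17}. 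The key observation is that for $\ell=1$ the factor $\partial_\xi(x\xi+\abs{\xi}^{1/2})|_{\xi=d\omega(x)} = x+\tfrac{\sign(d\omega)}{2\abs{d\omega}^{1/2}}$ vanishes identically (the Hamiltonian flow of $x\xi+|\xi|^{1/2}$ is tangent to $\Lambda$), so the first subprincipal correction disappears, and one gets $\Oph(x\xi+\abs{\xi}^{1/2})v_\Lambda=\omega(x)v_\Lambda+O(h^{1-\delta'_k})$, where $\omega(x)=1/(4|x|)=\tfrac{1}{2}|d\omega(x)|^{1/2}$.

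The next step is to organize the right-hand side of \e{eq:5} by powers of $h^{1/2}$. Writing $v_\Lambda = v-\sqrt{h}(v_{2\Lambda}+v_{-2\Lambda})-h(v_{3\Lambda}+v_{-\Lambda}+v_{-3\Lambda})+O(h^{1+\kappa})$ from \e{eq:14}, the leading order term reads $\omega(x)v$, which yields the first term in \e{eq:18}. At order $\sqrt{h}$ one collects: (i) the pieces $\sqrt{h}\bigl[(a|_{\pm2\Lambda}-\omega)v_{\pm2\Lambda}\bigr]$ coming from $\Oph(a)$ acting on the $\sqrt{h}$-level of \e{eq:14}; (ii) the contribution $\sqrt{h}Q_0(v_\Lambda,\bar v_\Lambda)$, which decomposes microlocally as a sum of pieces supported near $2\Lambda$, $-2\Lambda$, and $0\Lambda$, the last vanishing thanks to the zero-section factorization noted after \e{eq:17}. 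To close the system, one microlocalizes \e{eq:6} near $\pm2\Lambda$: since $(2x\xi+|\xi|^{1/2})|_{\pm2\Lambda}$ never vanishes, one obtains the elliptic relations
\[(2x\xi+\abs{\xi}^{1/2})\bigr|_{\pm2\Lambda}\, v_{\pm2\Lambda} = -Q_0^{\pm2\Lambda}(v_\Lambda,\bar v_\Lambda)+O(\sqrt{h}),\]
which express $v_{\pm2\Lambda}$ as explicit quadratic forms in $(v_\Lambda,\bar v_\Lambda)=(v,\bar v)+O(\sqrt{h})$. Substituting back produces the quadratic contribution $-i\sqrt{h}\bigl[\Phi_2(x)v^2+\Phi_{-2}(x)\bar v^2\bigr]$ with $\Phi_{\pm2}$ rational combinations of $x$, $d\omega(x)$ and the principal symbols of $Q_0$; the reality follows from the Hermitian structure $\overline{Q_0(V)}$ = analog of $Q_0(\overline V)$, i.e.\ from $v_{-2\Lambda}=\overline{v_{2\Lambda}}$.

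The order $h$ analysis is of the same nature. One gathers the Taylor corrections from $\Oph(a)$ on the $h$-pieces of \e{eq:14}, the cross contributions of $\sqrt{h}Q_0(V)$ (from replacing one occurrence of $v_\Lambda$ by a $\sqrt{h} v_{\pm2\Lambda}$ and then using the quadratic elliptic relation already obtained), the cubic term $hC_0(v_\Lambda,\bar v_\Lambda)$ (which is microlocally supported at $\pm\Lambda$ and $\pm3\Lambda$), and the linear $-ih v/2$. Microlocalizing \e{eq:6} near $\pm\Lambda$ and $\pm3\Lambda$—where $2x\xi+|\xi|^{1/2}$ is again elliptic—yields cubic elliptic relations that determine $v_{\pm\Lambda},v_{\pm3\Lambda}$ as cubic functions of $(v,\bar v)+O(\sqrt{h})$. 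Inserting these relations gives the cubic block $h\bigl[\Phi_3 v^3+\Phi_1|v|^2v+\Phi_{-1}|v|^2\bar v+\Phi_{-3}\bar v^3\bigr]$ of \e{eq:18}, and the reality of each $\Phi_\ell$ again comes from the compatibility of the decomposition with complex conjugation. Finally, the cutoff $1-\chi(h^{-\beta}x)$ excises the zone $|x|\lesssim h^\beta$ where $\omega(x)=1/(4|x|)$ and the rational $\Phi_\ell$ are singular; by the semiclassical bounds \e{eq:7} and the fact that the symbols $x\xi+|\xi|^{1/2}$ and $2x\xi+|\xi|^{1/2}$ force $|\xi|\sim 1$, one checks that the error introduced by this cutoff is absorbed in the $O(\eps h^{1+\kappa})$ remainder for $\beta>0$ small enough compared to $\kappa$ and the $\delta'_k$.

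The main obstacle is the precise bookkeeping at order $h$: one must track simultaneously the second Taylor correction in $\Oph(a)v_\Lambda$, the $O(\sqrt h)$ corrections in the quadratic elliptic solvers for $v_{\pm2\Lambda}$, and the redistribution of microlocal supports under $Q_0$ and $C_0$, then verify that after substitution of all elliptic relations the accumulated contribution has the claimed form with \emph{real} coefficients $\Phi_\ell$. A secondary technical point is that everything must be carried out in $L^\infty$ rather than $L^2$: the cutoff $\varphi(hD)$-localization used in the preceding $L^\infty$ arguments must be complemented by tame estimates on the symbolic remainders that tolerate the small $h^{-\delta'_k}$-losses produced by the iterated action of $Z=t\partial_t+x\partial_x$.
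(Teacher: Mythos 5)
Your proposal follows essentially the same route as the paper: substitute the expansion \e{eq:14} into \e{eq:5}, replace each pseudo-differential factor acting on a piece microlocalized on $\ell\Lambda$ by the restriction of its symbol to $\{\xi=\ell d\omega\}$ (the error being controlled by \e{eq:16}--\e{eq:17}), use the ellipticity of $2x\xi+\abs{\xi}^{1/2}$ off $\Lambda$ to express $v_{\ell\Lambda}$, $\ell\neq 1$, as explicit polynomials in $(v_\Lambda,\bar v_\Lambda)$, and finally invert \e{eq:14} to express $v_\Lambda$ back in terms of $v$. Your observation that $\partial_\xi(x\xi+\abs{\xi}^{1/2})$ vanishes on $\Lambda$, so that $x\xi+\abs{\xi}^{1/2}-\omega(x)$ vanishes to second order there, is correct and is exactly the mechanism the paper exploits (it is what produces, besides $\tfrac12\abs{d\omega}^{1/2}v$, a subprincipal term $\tfrac{i}{2}hv_\Lambda$ that cancels the $-\tfrac{i}{2}hv$ of \e{eq:5}); you rightly flag this second-order bookkeeping as the delicate point.

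One step of your argument is wrong as stated: the reality of the coefficients $\Phi_\ell$ does \emph{not} follow from an identity $v_{-2\Lambda}=\overline{v_{2\Lambda}}$, because that identity is false. The elliptic division by $(2x\xi+\abs{\xi}^{1/2})\vert_{\pm2\Lambda}=(\sqrt2\mp2)\abs{d\omega}^{1/2}$ produces \emph{different} real factors for the two components (compare \e{344}, where the coefficients are $\tfrac{1+\sqrt2}{4}$ and $\tfrac{1-\sqrt2}{4}$), so $v_{-2\Lambda}$ is not the conjugate of $v_{2\Lambda}$. The reality of the $\Phi_\ell$ instead comes out of the explicit structure of the nonlinearity: the coefficients of $Q_0$ in \e{327} are purely imaginary and those of $C_0$ in \e{328} are real, while all symbols restricted to $\ell\Lambda$ are real; hence $Q_0(V_\Lambda)$ and therefore $v_{\pm2\Lambda}$ are $(-i)\times(\text{real})\times(\text{quadratic})$, so the order-$h$ terms obtained by recombining two such factors, or from $C_0(V_\Lambda)$ directly, are real, and the order-$\sqrt h$ terms carry exactly the overall factor $-i$ displayed in \e{eq:18}. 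Since the reality of the $\Phi_\ell$ (in particular of $\Phi_1$) is what makes the subsequent normal-form argument and the $\log t$ phase correction work, this point must be justified by the explicit computation rather than by a symmetry that does not hold.
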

To prove the proposition, one plugs expansion \e{eq:14} in equation \e{eq:5}. The key point is to use \e{eq:16}, \e{eq:17} to
express all (pseudo-)differential 
terms from multiplication operators and remainders. For instance, if $b(\xi)$ is some symbol, one may write
$b(\xi) = b\vert_{\ell\Lambda} + e_\ell$ where $e_\ell$ vanishes on $\ell\Lambda = \{\xi=\ell d\omega\}$. Consequently
\[\Oph(b)v_{\ell\Lambda} = b(\ell d\omega)v_{\ell\Lambda} + \Oph(e_\ell)v_{\ell\Lambda},\]
and by \e{eq:16}, when $\ell = -2, 1, 2$, one gets $\norm{\Oph(e_\ell)v_{\ell\Lambda}}_{L^\infty} = O(\eps
h^{1-\delta'_0})$. Since $Q_0(v_\Lambda,\bar{v}_\Lambda)$ is made of expressions of type
\[S = \Oph(b_0)[(\Oph(b_1)v_\Lambda)(\Oph(b_2)v_\Lambda)]\]
(and similar ones replacing $v_\Lambda$ by $\bar{v}_{\Lambda}$), one gets, using that $v_\Lambda^2$ is lagrangian along
$2\Lambda$,
\[S = b_0(2d\omega)b_1(d\omega)b_2(d\omega)v_\Lambda^2 + O_{L^\infty}(h^{1-\delta'_0}).\]
One applies a similar procedure to the other pseudo-differential terms of equation \e{eq:5}, namely $\Oph(x\xi+\abs{\xi}^{1/2})v$ and
$C_0(V)$, where $v$ is expressed using \e{eq:14} in which the $v_{\ell\Lambda}$ are written as explicit quadratic or cubic forms
in $(v_\Lambda,\bar{v}_{\Lambda})$. This permits to write all those terms 
as polynomial expressions in
$(v_\Lambda,\bar{v}_{\Lambda})$ with $x$-depending coefficients, 
up to a remainder vanishing like $h^{1+\kappa}$ when $h$ goes to zero. Expressing back
  $v_\Lambda$ from $v$, one gets the ODE \e{eq:18}.

\medskip

As soon as the preceding proposition has been established, 
the proof of optimal $L^\infty$-estimates for $v$ is
straightforward. Applying a Poincar\'e normal forms method to \e{eq:18}, one is reduced to an equivalent ODE of the form
\[D_t f = \frac{1}{2}(1-\chi(h^{-\beta}x))\abs{d\omega}^{1/2}
\bigg[1+\frac{\abs{d\omega}^2}{t}\abs{f}^2\bigg]f +
O(\eps t^{-1-\kappa}).\]
This implies that $\partial_t\abs{f}^2$ is integrable in time, whence a uniform bound for $f$ and explicit asymptotics when
$t$ goes to infinity. Expressing $v$ in terms of $f$, and writing $u(t,x) = \frac{1}{\sqrt{t}}v(t,x/t)$, one obtains the
uniform $O(t^{-1/2})$ bound for $u$ given in $(A)$ as well as the asymptotics of the statement of the main theorem. Estimates
for $Z^ku$ are proved in the same way.

\renewcommand{\theequation}{\thesection.\arabic{equation}}
\renewcommand{\thesubsection}{\arabic{section}.\arabic{subsection}}

\pagebreak

\section{Statement of the main result}\label{chap:1} 

We have already written in the introduction the water waves equations under the form of the Craig-Sulem-Zakharov system
\e{intro:3}. We shall give here the precise definition of the Dirichlet-Neuman operator that is used in that system, and
state some of its properties that are used in the rest of this paper, as well as in the companion paper~\cite{AlDel}. Theses
properties, that are essentially well known,  are proved in that reference. Once the Dirichlet-Neuman operator has been
properly defined, we give the precise statement of our global existence result. Next, we explain the strategy of proof, which
relies on a bootstrap argument on some a priori $L^2$ and $L^\infty$ estimates. The $L^2$ bounds are proved in the companion
paper~\cite{AlDel}. The $L^\infty$ ones, that represent the main novelty of our method, are established in
sections~\ref{S:3.1} to \ref{S:35} of the present paper.

\subsection{Dirichlet-Neumann operator}\label{S:11}

Let $\eta\colon \xR\rightarrow\xR$ be a smooth enough function and consider the 
open set
$$
\Omega\defn \{\,(x,y)\in\xR\times\xR \,;\, y<\eta (x)\,\}.
$$
It $\psi\colon\xR\rightarrow\xR$ is another function, and if we call 
$\phi\colon\Omega\rightarrow \xR$ the unique solution of 
$\Delta\phi=0$ in~$\Omega$ satisfying 
$\phi\arrowvert_{y=\eta(x)}=\psi$ and a convenient vanishing condition at $y\rightarrow -\infty$, one defines 
the Dirichlet-Neumann operator\index{Dirichlet-Neumann operator!$G(\eta)$} $G(\eta)$ by 
\begin{equation*}
G(\eta)\psi   =
\sqrt{1+(\partial_x\eta)^2}\,
\partial _n \phi\arrowvert_{y=\eta},
\end{equation*}
where~$\partial_n$ is the outward normal derivative on~$\partial\Omega$, so that
$$
G(\eta)\psi=(\partial_y \phi)(x,\eta(x))- (\partial_x \eta)(\partial_x \phi)(x,\eta(x)).
$$
In this subsection, we recall the estimates obtained in \cite{AlDel} for $G(\eta)$.

One may reduce the problem to the negative half-space through the change of coordinates
$(x,y)\mapsto (x,z=y-\eta(x))$, which sends $\Omega$ on 
$\{ (x,z)\in \xR^2\,;\, z<0\}$. 
Then $\phi(x,y)$ solves $\Delta\phi=0$ if and only if $\varphi(x,z)=\phi(x,z+\eta(x))$ is a solution of 
$P\varphi=0$ in $z<0$, where
\be\label{111}
P=(1+\eta'^2)\partial_z ^2+\px^2-2\eta'\px\partial_z-\eta''\partial_z 
\ee
(we denote by $\eta'$ the derivative $\px\eta$). The boundary condition 
becomes $\varphi(x,0)=\psi(x)$ and $G(\eta)$ is given by
$$
G(\eta)\psi=\bigl[ (1+\eta'^2)\partial_z\varphi-\eta'\partial_x \varphi\bigr]\big\arrowvert_{z=0}.
$$
It is convenient and natural to try to solve the boundary value problem 
$$
P\varphi=0, \quad\varphi\arrowvert_{z=0}=\psi
$$
when $\psi$ lies in homogeneous Sobolev spaces. 
Let us introduce them and fix some notation.

We denote by \index{Function spaces!$\Sr'_\infty(\xR)$, $\Sr'_1(\xR)$} 
$\Sr'_\infty(\xR)$ (resp.\ $\Sr'_1(\xR)$) the quotient space 
$\Sr'(\xR)/\xC[X]$ (resp.\ $\Sr'(\xR)/\xC$). If $\Sr_\infty(\xR)$ (resp.\ $\Sr_1(\xR)$) 
is the subspace of $\Sr(\xR)$ made of the functions orthogonal to any polynomial 
(resp.\ to the constants), $\Sr'_\infty(\xR)$ (resp.\ $\Sr'_1(\xR)$) is the dual of 
$\Sr_\infty(\xR)$ (resp.\ $\Sr_1(\xR)$). Since 
the Fourier transform realizes an isomorphism 
from $\Sr_\infty(\xR)$ (resp.\ $\Sr_1(\xR)$) to
$$
\widehat{\Sr}_\infty(\xR)=\{ u\in \Sr(\xR)\,;\, u^{(k)}(0)=0 \text{ for any }k\text{ in }\xN\}
$$ 
(resp. $\widehat{\Sr}_1(\xR)=\{ u\in \Sr(\xR)\,;\, u(0)=0\}$), we get by duality that the Fourier transform defines an isomorphism 
from $\Sr'_\infty(\xR)$ to $(\widehat{\Sr}_\infty(\xR))'$, which is the quotient of $\Sr'(\xR)$ by the subspace of distributions supported in $\{0\}$ 
(resp.\ from $\Sr'_1(\xR)$ to $(\widehat{\Sr}_1(\xR))'=\Sr'(\xR)/{\rm Vect}\, (\delta_0)$). 

Let $\phi\colon\xR\rightarrow \xR$ be a function defining a Littlewood-Paley decomposition
and set for $j\in \xZ$, $\Delta_j=\phi(2^{-j}D)$. Then for any $u$ in $\Sr'_\infty(\xR)$, the series $\sum_{j\in\xZ} \Delta_j u$ 
converges to $u$ in $\Sr'_\infty(\xR)$ (for the weak-$*$ topology associated to the natural topology on $\Sr_\infty(\xR)$). 
Let us recall (an extension of) the usual definition of 
homogeneous Sobolev or H\"older spaces. 

\begin{defi}\label{ref:111}
Let $s',s$ be real numbers. One denotes by \index{Function spaces!$\h{s',s}$, homogeneous Sobolev spaces}
$\h{s',s}(\xR)$ (resp.\ \index{Function spaces!$\C{s',s}$, homogeneous Zygmund spaces}$\C{s',s}(\xR)$) 
the space of elements $u$ in $\Sr'_\infty(\xR)$ 
such that there is a sequence $(c_j)_{j\in\xZ}$ in $\ell^2(\xZ)$ (resp.\ a constant $C>0$) with for any $j$ 
in $\xZ$,
$$
\lA \Delta_j u\rA_{L^2}\le c_j 2^{-js' -j_+ s}
$$
(resp.\
$$
\lA \Delta_j u\rA_{L^\infty}\le C 2^{-js'-j_+ s})
$$
where $j_+=\max(j,0)$. We set $\h{s'}$ (resp.\ $\C{s'}$) when $s=0$.
\end{defi}
The series $\sum_{j=0}^{+\infty}\Delta_j u$ always converges in $\Sr'(\xR)$ under 
the preceding assumptions, but 
the same is not true for $\sum_{j=-\infty}^{-1}\Delta_j u$. If $u$ is in $\h{s',s}(\xR)$ 
with $s'<1/2$ (resp.\ in $\C{s',s}(\xR)$ with $s'<0$), 
then $\sum_{j=-\infty}^{-1}\Delta_j u$ converges normally in $L^\infty$, so in $\Sr'(\xR)$, and 
$u\rightarrow \sum_{-\infty}^{+\infty}\Delta_j u$ 
gives the unique dilation and translation invariant realization of $\h{s',s}$ (resp.\ $\C{s',s}(\xR)$) 
as a subspace of $\Sr'(\xR)$. 
One the other hand, if $s'\in [1/2,3/2\por$ (resp. $s'\in [0,1\por$), the space $\h{s'}(\xR)$ (resp. $\C{s'}(\xR)$) admits no translation commuting realization as a 
subspace of $\Sr'(\xR)$, but the map $u\rightarrow \sum_{-\infty}^{+\infty}\Delta_j u$ defines a dilation 
and translation commuting realization of these spaces as subspaces of $\Sr'_1(\xR)$. We refer to Bourdaud~\cite{Bourdaud} for
these properties. 

Recall also that if $s$ is in $\R$ (resp. $\gamma$ is in $\R-\N$), the usual Sobolev space $H^s(\R)$ (resp. the space
$C^\gamma(\R)$) is defined as the space of elements $u$ of $\Sr'(\xR)$ satisfying, for any $j$ in $\N$, $\norm{\Delta_j u}_{L^2} \leq c_j2^{-js}$
(resp. $\norm{\Delta_j u}_{L^\infty} \leq C2^{-js}$) for some $\ell^2(\N)$-sequence $(c_j)_j$ (resp. some constant $C$), and
$\chi(D)u\in L^2$ (resp. $\chi(D)u\in L^\infty$) for some $C^\infty_0(\R)$-function $\chi$ equal to one on a large enough
neighborhood of zero. Moreover, if $\gamma$ is in $\N
$, we denote by $C^\gamma(\R)$ the space of $\gamma$ times continuously differentiable functions, which are \emph{bounded}
as well as their derivatives (endowed with the natural norms).

The main result about the Dirichlet-Neumann operator that we shell use in that paper is the following proposition, which is
proved in the companion paper~\cite{AlDel} (see Corollary 1.1.8.): \todo{Ref Compagnon}

\begin{prop}\label{ref:118} Let $\gamma$ be a real number, $\gamma>2, \gamma\not\in\frac{1}{2}\N$. There is some $\delta>0$ such that, for any  $\eta$  in $L^2\cap \eC{\gamma}(\xR)$ 
satisfying $\etapetitgamma+\lA \eta'\rA_{\eC{-1}}^{1/2}\lA \eta'\rA_{H^{-1}}^{1/2}<\delta$, one may define for $\psi$ in $\Hmez$ the Dirichlet-Neumann operator $G(\eta)$ as 
a bounded operator from $\dot{H}^{1/2}(\xR)$ 
to $\dot{H}^{-1/2}(\xR)$ that satisfies an estimate
\be\label{1137}
\lA G(\eta)\psi\rA_{\dot{H}^{-1/2}}\le 
\Cetagamma\Dxmezpsi.
\ee
In particular, if we define $G_{1/2}(\eta)=\Dx^{-\mez}G(\eta)$, we 
obtain a bounded operator from $\dot{H}^{1/2}(\xR)$ to $L^2(\xR)$ satisfying 
\be\label{1138}
\lA G_{1/2}(\eta)\psi\rA_{L^{2}}\le 
\Cetagamma \Dxmezpsi.
\ee
Moreover, $G(\eta)$ satisfies when $\psi$ is in $\C{\mez,\gamma-\mez}(\xR)$
\be\label{1139}
\lA G(\eta)\psi\rA_{\eC{\gamma-1}}\le 
\Cetagamma \blA \Dxmez \psi\brA_{\eC{\gamma-\mez}}.
\ee
where $C(\cdot)$ is a non decreasing continuous function of 
its argument.

If we assume moreover that for some $0<\theta'<\theta<\mez$, 
$\blA \eta'\brA_{H^{-1}}^{1-2\theta'}\blA \eta'\brA_{\eC{-1}}^{2\theta'}$ is bounded, then 
$\Dx^{-\mez+\theta}G(\eta)$ satisfies
\be\label{1140}
\blA \Dx^{-\mez+\theta}G(\eta)\psi\brA_{\eC{\gamma-\mez-\theta}}
\le C\bigl( \blA \eta'\brA_{\eC{\gamma-1}}\bigr) \blA \Dxmez\psi\brA_{\eC{\gamma-\mez}}.
\ee
\end{prop}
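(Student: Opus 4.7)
The plan is to follow the paradifferential construction of the Dirichlet--Neumann operator developed in \cite{AM,ABZ1,ABZ3}, adapting it to the homogeneous Sobolev and Zygmund scales introduced just above. The argument splits naturally into three steps: an existence/variational step for the harmonic extension, a paradifferential factorization of the straightened Laplacian in the conjugated variables, and a translation of the resulting parametrix into the estimates \eqref{1137}--\eqref{1140}.

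First I would straighten the surface by the change of variables $(x,y)\mapsto (x,z=y-\eta(x))$, sending $\Omega$ onto $\{z<0\}$ and transforming $\Delta\phi=0$ into $P\varphi=0$, with $P$ given by \eqref{111} and $G(\eta)\psi=[(1+\eta'^2)\partial_z\varphi-\eta'\partial_x\varphi]\vert_{z=0}$. Under the smallness hypothesis on $\eta'$, the symmetric bilinear form attached to $P$ is coercive on the completion of $C^\infty_c$ for the Dirichlet energy, so Lax--Milgram yields a unique variational solution $\varphi$ with $\nabla_{x,z}\varphi\in L^2$ whenever $\psi\in\dot{H}^{1/2}$. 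An integration by parts, together with the trace theorem at $z=0$, then gives the $\dot{H}^{-1/2}$ bound \eqref{1137}, and \eqref{1138} follows immediately by composition with $\Dx^{-\mez}$.

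The sharper bounds require a paradifferential factorization. At fixed $z$, the characteristic roots of the principal symbol of $P$ are, to leading order, $a^\pm(x,\xi)=(i\eta'\xi\pm|\xi|)/(1+\eta'^2)$, which are elliptic of order one. Using paradifferential calculus one writes $P=T_{1+\eta'^2}(\partial_z-T_{a^+})(\partial_z-T_{a^-})+R$ with $R$ a remainder of controlled order, thereby decoupling incoming and outgoing modes. The outgoing factor fixes the behavior as $z\to-\infty$; the remaining scalar parabolic equation $(\partial_z-T_{a^+})w=(\text{remainder})$ with $w\vert_{z=0}=\psi$ produces, back in the original variables, the symbolic identity $G(\eta)\psi=T_\lambda\psi$ modulo a smoother operator, with $\lambda$ elliptic of order one and $|\lambda|\sim|\xi|$. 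The Hölder bound \eqref{1139} then follows from the boundedness of paradifferential operators on Zygmund classes, combined with the paralinearization of the nonlinear functions of $\eta'$ entering $\lambda$; the refined estimate \eqref{1140} is obtained by distributing $\Dx^{-\mez+\theta}$ between the parabolic solution operator and $T_\lambda$, using the mixed-norm control of $\lA\eta'\rA_{H^{-1}}^{1-2\theta'}\lA\eta'\rA_{\eC{-1}}^{2\theta'}$ to gain the $\theta$ additional derivatives.

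The main obstacle I expect is the quantitative control of the remainders with exactly the regularity announced. Since $\eta$ is only in $L^2\cap\eC{\gamma}$ and the smallness is measured in a relatively weak mixed norm, the symbols $a^\pm$ have limited $x$-regularity, so the parametrix construction must not exhaust the derivative budget. A Bony-style decomposition, treating the high frequencies of $\eta$ as paracoefficients, is required so that $\lA\eta'\rA_{\eC{\gamma-1}}$ enters only through the continuous prefactor $C(\cdot)$ rather than polynomially; this tame dependence is what makes the estimates usable in the bootstrap of Section~\ref{Sintro:GSP}. Since the proposition is quoted from Corollary 1.1.8 of \cite{AlDel}, completing the plan ultimately amounts to verifying that the estimates proved there are stated in exactly the form displayed above.
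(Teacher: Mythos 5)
The paper you are working from does not actually prove this proposition: it is quoted from Corollary~1.1.8 of the companion paper \cite{AlDel}, and the sentence introducing the statement says so explicitly. There is therefore no in-paper proof to measure your argument against; what can be assessed is whether your outline matches the strategy of the companion paper and whether it could be completed as written. On the first point, yes: flattening the domain via $(x,y)\mapsto(x,z=y-\eta(x))$, solving the variational problem for the operator $P$ of \eqref{111}, and then performing a paradifferential factorization of $P$ into first-order factors $\partial_z-T_{a^\pm}$ in the spirit of \cite{AM,ABZ1,ABZ3} is exactly the route taken there, and it is the route this paper's introduction alludes to when it recalls the identity $G(\eta)\psi=\abs{D_x}\omega-\partial_x(T_V\eta)+F(\eta)\psi$ with a smoothing remainder.

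As a proof, however, what you have written is a plan, and the two places where it is thinnest are precisely where the content of the proposition lies. First, the variational step is not routine in the homogeneous setting: $\psi$ lives in $\dot H^{1/2}(\mathbb{R})$, which (as the paper's own discussion of realizations of $\dot H^{s'}$ for $s'\in[1/2,3/2)$ makes clear) admits no translation-commuting realization inside $\mathcal{S}'(\mathbb{R})$, so ``Lax--Milgram plus the trace theorem'' requires you to say in which space the extension lives, why the Dirichlet energy controls it, and why the specific smallness quantity $\lA\eta'\rA_{\eC{-1}}^{1/2}\lA\eta'\rA_{H^{-1}}^{1/2}$ (rather than, say, a Lipschitz norm of $\eta$) is the right one for coercivity; that mixed norm is not cosmetic, and its relatives reappear in the hypothesis for \eqref{1140}. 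Second, the H\"older bounds \eqref{1139}--\eqref{1140} require genuinely tame remainder estimates: the factorization must be organized so that $\lA\eta'\rA_{\eC{\gamma-1}}$ enters only through the nondecreasing prefactor $C(\cdot)$, and the gain of $\theta$ derivatives in \eqref{1140} must be traded exactly against the extra hypothesis on $\lA\eta'\rA_{H^{-1}}^{1-2\theta'}\lA\eta'\rA_{\eC{-1}}^{2\theta'}$. You correctly identify both difficulties, but you do not resolve them; completing the argument really does amount to reproducing, or carefully citing, the detailed estimates of the companion paper rather than the general shape of the method.
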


\subsection{Global existence result}

The goal of this paper is to prove global existence of small 
solutions with decaying Cauchy data of the Craig-Sulem-Zakharov system. We thus look for a couple of real valued functions $(\eta,\psi)$ 
defined on $\xR\times \xR$ satisfying for $t\ge 1$ the system 
\begin{equation}\label{121}
\left\{
\begin{aligned}
&\partial_t \eta=G(\eta)\psi,\\
&\partial_t \psi + \eta+ \frac{1}{2}(\partial_x \psi)^2
-\frac{1}{2(1+(\partial_x\eta)^2)}\bigl(G(\eta)\psi+\partial_x  \eta \partial_x \psi\bigr)^2= 0,
\end{aligned}
\right.
\end{equation}
with Cauchy data small enough in 
a convenient space. 

The operator $G(\eta)$ in \eqref{121} being defined as in the preceding subsection, we set, for $\eta,\psi$ smooth enough 
and small enough functions
\be\label{122}
\B(\eta)\psi=\frac{G(\eta)\psi+\partial_x  \eta \partial_x \psi}
{1+(\partial_x\eta)^2}\cdot
\ee

Before stating our global existence  result, let us recall a known local existence theorem 
(see~\cite{WuInvent,LannesLivre,ABZ3}).  

\begin{prop}\label{ref:121}
Let  $\gamma$ be in $]7/2,+\infty[\setminus \mez\xN$, $s\in \xN$ with $s>2\gamma-1/2$. 
There are $\delta_0>0$, $T>1$ such that for any couple $(\eta_0,\psi_0)$ 
in $H^{s}(\xR)\times \h{\mez,\gamma}(\xR)$ satisfying
\be\label{123}
\psi_0-T_{\B(\eta_0)\psi_0}\eta_0 \in \h{\mez,s}(\xR), \quad 
\lA \eta_0\rA_{\eC{\gamma}}+\blA \Dxmez \psi_0\brA_{\eC{\gamma-\mez}}<\delta_0,
\ee
equation \eqref{121} with Cauchy data $\eta\arrowvert_{t=1}=\eta_0$, $\psi\arrowvert_{t=1}
=\psi_0$ has a unique solution $(\eta,\psi)$ which is continuous on $[1,T]$ with values in 
\be\label{124}
\left\{\, (\eta,\psi)\in H^s(\xR)\times \h{\mez,\gamma}(\xR)\,;\, \psi-
T_{B(\eta)\psi}\eta\in \h{\mez,s}(\xR)\,\right\}.
\ee
Moreover, if the data are $O(\eps)$ on the indicated spaces, then $T\ge c/\eps$.
\end{prop}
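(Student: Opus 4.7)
The plan is to follow the paradifferential approach to water waves local well-posedness developed in \cite{AM, ABZ1, ABZ3}. Since Proposition \ref{ref:121} is a known result, my task is mainly to explain how Proposition \ref{ref:118} together with standard paradifferential symmetrization yields it. First I would paralinearize system \eqref{121} using the structure recalled in the introduction: write $B = \B(\eta)\psi$, $V = \partial_x\psi - B\partial_x\eta$, and the good unknown $\omega = \psi - T_B\eta$. The Dirichlet--Neumann operator decomposes as $G(\eta)\psi = |D_x|\omega - \partial_x(T_V \eta) + F(\eta)\psi$ with $F(\eta)\psi$ a smoothing remainder. Substituting into \eqref{121} gives a paradifferential system for $(\eta,\omega)$ of the schematic form
\begin{equation*}
\partial_t \eta + T_V \partial_x \eta = |D_x|\omega + R_1, \qquad \partial_t \omega + T_V \partial_x \omega + \eta = R_2,
\end{equation*}
where $R_1, R_2$ are controlled via \eqref{1137}--\eqref{1140} by $\bigl(\lA \eta\rA_{\eC{\gamma}}+\blA \Dxmez \psi\brA_{\eC{\gamma-\mez}}\bigr)$ times a Sobolev norm, with no loss of derivatives.

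Next I would symmetrize the system. Setting $\Phi = \Dxmez\omega + i\eta$, the leading part of the linearized operator becomes $\partial_t + T_V\partial_x + i\,T_{|\xi|^{1/2}}$. Since $V$ and $|\xi|^{1/2}$ are real valued, this operator is anti-self-adjoint modulo operators of order zero (in the sense of paradifferential calculus), so the standard Weyl--G\r{a}rding commutator trick yields an energy inequality
\begin{equation*}
\frac{d}{dt}\|\Phi(t)\|_{H^{s-1/2}}^2 \le \mathcal{F}\!\left(\lA \eta\rA_{\eC{\gamma}}+\blA \Dxmez \psi\brA_{\eC{\gamma-\mez}}\right)\|\Phi\|_{H^{s-1/2}}^2 + \text{lower order},
\end{equation*}
with $\mathcal{F}$ a non-decreasing continuous function. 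The hypothesis $s > 2\gamma-1/2$ ensures that tame estimates in Sobolev norms close, while the smallness of the H\"older norms keeps us inside the domain where Proposition \ref{ref:118} applies uniformly.

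The existence argument is then standard: regularize by applying a Friedrichs mollifier $J_\varepsilon$ to obtain an ODE in Sobolev space, prove the approximate solutions exist on a uniform time interval by the energy inequality above, and pass to the limit, using a contraction estimate on the difference of two solutions (obtained by running the same symmetrization on the linearized equation) to get uniqueness and continuity in time. The regularity class in \eqref{124} is preserved because the good unknown $\omega$ satisfies an equation whose right-hand side is bounded in $\h{\mez,s}$ as soon as $\eta\in H^s$ and $\Dxmez\psi\in\eC{\gamma-\mez}$. For the quantitative lifespan, the nonlinearity is at least quadratic in $(\eta,\Dxmez\psi)$, so when the data are of size $\varepsilon$ one gets $\frac{d}{dt}\mathcal{N}(t) \le C\,\varepsilon\,\mathcal{N}(t)$ on the relevant norm $\mathcal{N}$, hence $T\ge c/\varepsilon$ by a Gr\"onwall-type bootstrap on the smallness condition \eqref{123}.

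The main obstacle is the apparent loss of one derivative in \eqref{121}: $G(\eta)\psi$ is of order one in $\psi$ but also depends on $\eta$ at the same level of regularity, so a naive energy estimate on $(\eta,\psi)$ in $H^s$ produces uncontrollable quasilinear terms. This is precisely overcome by Alinhac's good unknown $\omega$ together with the symbolic structure of $F(\eta)\psi$ from Proposition \ref{ref:118}; since these ingredients are recorded in \cite{AlDel} and are the exact analogues of \cite[Section 4]{ABZ3}, the proof reduces to citing those results and verifying that the smallness assumption \eqref{123} places us in the regime where the estimates are valid.
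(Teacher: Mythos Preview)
The paper does not give its own proof of this proposition: it is stated as a known local existence result, with the references \cite{WuInvent,LannesLivre,ABZ3} given in lieu of a proof. Your sketch follows the paradifferential symmetrization strategy of \cite{ABZ3}, which is precisely one of those cited sources, so your approach is consistent with what the paper invokes; there is nothing further to compare.
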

\begin{remas} $\bullet$ The assumption $\psi_0\in \h{\mez,\gamma}$ implies that $\psi_0$ is in 
$\C{\mez,\gamma-\mez}$ so that Proposition~\ref{ref:118} shows that 
$G(\eta_0)\psi_0$ whence $\B(\eta_0)\psi_0$ is in 
$\eC{\gamma-1}\subset L^\infty$. Consequently, by the 
first half of \eqref{123}, $\Dxmez\psi$ is in $H^{s-\mez}\subset \eC{\gamma-\mez}$ as 
our assumption on $s$ implies that $s>\gamma+1/2$. This gives sense to the second assumption~
\eqref{123}. 

$\bullet$ As already mentioned in the introduction, the difficulty in the analysis of equation~\eqref{121} is that writing 
energy inequalities on the function $(\eta,\Dxmez\psi)$ makes appear an apparent loss 
of half a derivative. A way to circumvent that difficulty is to bound the 
energy not of $(\eta,\Dxmez\psi)$, but of $(\eta,\Dxmez\omega)$, where $\omega$ is 
the ``good unknown'' of Alinhac, defined by $\omega=\psi-T_{\B(\eta)\psi}\eta$ (see subsection~\ref{Sintro:GSP} of the introduction). This explains why the regularity assumption~\eqref{123} on the Cauchy data concerns 
$\psi_0-T_{\B(\eta_0)\psi_0}\eta_0$ and not $\psi_0$ itself. Notice that this function is in 
$\h{\mez,s}$ while $\psi_0$ itself, written 
from $\psi_0=\omega_0+T_{\B(\eta_0)\psi_0}\eta_0$ is only in $\h{\mez,s-\mez}$, because 
of the $H^s$-regularity of $\eta_0$.

$\bullet$ By \eqref{1139} if $\psi$ is in $\C{\mez,\gamma-\mez}$ and $\eta$ is in $\eC{\gamma}$, 
$G(\eta)\psi$ is in $\eC{\gamma-1}$, so $\B(\eta)\psi$ is also in $\eC{\gamma-1}$ with 
$\lA \B(\eta)\psi\rA_{\eC{\gamma-1}}\le \Cetagamma \blA \Dxmez\psi\brA_{\eC{\gamma-\mez}}$. 
In particular, as a paraproduct with an $L^\infty$-function acts on any H\"older space,
$$
\blA \Dxmez T_{\B(\eta)\psi}\eta\brA_{\eC{\gamma-\mez}}
\le \Cetagamma \lA \eta\rA_{\eC{\gamma}}\blA \Dxmez\psi\brA_{\eC{\gamma-\mez}}.
$$
This shows that for $\lA \eta\rA_{\eC{\gamma}}$ small enough, $\psi\rightarrow \psi-T_{\B(\eta)\psi}\eta$ is an isomorphism
from $\C{\mez,\gamma-\mez}$ to itself. In particular, if we are given a small enough $\omega$ in $\h{\mez,s}\subset \C{\mez,\gamma-\mez}$, we may find a unique $\psi$ in 
$\C{\mez,\gamma-\mez}$ such that $\omega=\psi-T_{\B(\eta)\psi}\eta$. In other words, 
when interested only in $\eC{\gamma-\mez}$-estimates for $\Dxmez\omega$, we may as well 
establish them on $\Dxmez\psi$ instead, as soon as 
$\lA \eta\rA_{\eC{\gamma}}$ stays small enough. 

\end{remas}

Let us state now our main result. 

We fix real numbers 
$s,s_1,s_0$ satisfying, for some large enough numbers $a$ and $\gamma$ 
with $\gamma\not\in \mez\xN$ and $a\gg \gamma$, 
the following conditions
\be\label{125}
s,s_0,s_1\in\xN,\quad 
s-a\ge s_1\ge s_0\ge \frac{s}{2}+\gamma.
\ee

\begin{theo}\label{ref:122}
There is $\eps_0>0$ such that for any $\eps\in ]0,\eps_0]$, any couple 
of functions $(\eta_0,\psi_0)$ satisfying for any integer $p\le s_1$
\be\label{126}
\begin{aligned}
&(x\px)^p\eta_0\in H^{s-p}(\xR),\quad (x\px)^p\psi_0\in \h{\mez,s-p-\mez}(\xR),\\
&(x\px)^p\bigl( \psi_0-T_{\B(\eta_0)\psi_0}\eta_0\bigr)\in \h{\mez,s-p}(\xR),
\end{aligned}
\ee
and such that the norm of the above functions in the indicated spaces is 
smaller than $1$, equation \eqref{121} with the Cauchy data 
$\eta\arrowvert_{t=1}=\eps\eta_0$, $\psi\arrowvert_{t=1}=\eps\psi_0$ 
has a unique solution 
$(\eta,\psi)$ which is defined and continuous on $[1,+\infty[$ with values 
in the set~\eqref{124}.

Moreover, $u=\Dxmez \psi+i\eta$ admits the following 
asymptotic expansion as $t$ goes to $+\infty$: 

There is a continuous function $\underline{\alpha}\colon \xR
\rightarrow \xC$, depending of $\eps$ but 
bounded uniformly in $\eps$, such that
\be\label{127}
u(t,x)=\frac{\eps}{\sqrt{t}}\underline{\alpha}\Bigl(\frac{x}{t}\Bigr)
\exp\Bigl(\frac{it}{4|x/t|}+\frac{i\eps^2}{64}\frac{\la \underline{\alpha}(x/t)\ra^2}{\la x/t\ra^5}\log (t)\Bigr)
+\eps t^{-\mez-\kappa}\rho(t,x)
\ee
where $\kappa$ is some positive number and $\rho$ 
is a function uniformly bounded for $t\ge 1$, $\eps\in ]0,\eps_0]$.
\end{theo}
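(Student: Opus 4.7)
The proof is a bootstrap argument on the solution given by the local existence result, Proposition \ref{ref:121}. Denote by $[1,T^\ast[$ the maximal interval of existence. On this interval, one posits the a priori bounds (A) and (B) described in the introduction: the $L^\infty$ estimates $\|Z^k u(t,\cdot)\|_{L^\infty} = O(\varepsilon t^{-1/2 + \tilde\delta_k'})$ for $k \leq s/2$ (with the uniform bound $O(\varepsilon t^{-1/2})$ for $k=0$) and the $L^2$ estimates $\|Z^k u(t,\cdot)\|_{L^2} = O(\varepsilon t^{\delta_k})$ for $k \leq s$, where $u = |D_x|^{1/2}\psi + i\eta$ and $Z$ ranges over $\partial_x$ and $t\partial_t + 2x\partial_x$ (the Klainerman fields commuting, up to scalar factors, with the linearized half-wave operator). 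The goal is to show that these estimates self-improve with strictly smaller constants, which by a standard continuation argument forces $T^\ast = +\infty$; the explicit asymptotic formula \eqref{127} then comes out as a by-product of the $L^\infty$ analysis.

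The $L^2$ side of the bootstrap, (A) $\Rightarrow$ (B), is established in the companion paper \cite{AlDel}. The two difficulties explained in Section \ref{Sintro:GSP} are handled as follows: the apparent half-derivative loss produced by the self-adjoint paradifferential part $A_1$ of the linearization is removed by passing to Alinhac's good unknown $\omega = \psi - T_{B(\eta)\psi}\eta$ (this is the reason the initial data assumption \eqref{126} is formulated on $(x\partial_x)^p(\psi_0 - T_{B(\eta_0)\psi_0}\eta_0)$ rather than on $\psi_0$ itself); and a paradifferential normal form, which only needs to cancel the resonant part of the quadratic terms and hence costs no derivative, reduces the effective equation to a cubic one. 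The energy inequality then closes via Gronwall because the Duhamel integrand contains the almost-integrable factor $\|u(\tau,\cdot)\|_{L^\infty}^2 \lesssim \varepsilon^2/\tau$ coming from (A). Commuting $Z^k$ through the system and repeating the same reductions yields (B) at every order $k \leq s$.

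The $L^\infty$ side, (B) $\Rightarrow$ (A), is the principal contribution of the present paper. One rescales semi-classically by $u(t,x) = t^{-1/2} v(t,x/t)$, $h = 1/t$, writing the system in the form of the semi-classical equation for $v$ whose linear symbol is $x\xi + |\xi|^{1/2}$. Replacing $D_t v$ via the identity $D_t = (Z - x D_x)/t$ converts the equation into $\mathrm{Op}_h(2x\xi + |\xi|^{1/2}) v = \text{(nonlinear)} + O(\sqrt{h})$, a symbol that vanishes exactly on the Lagrangian $\Lambda = \{(x,d\omega(x)) : x \neq 0\}$ with $\omega(x) = 1/(4|x|)$. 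Ellipticity away from $\Lambda$ combined with (B) gives $L^2$ control of $\mathrm{Op}_h(\gamma_\Lambda^c) Z^k v$; microlocalization close to $\Lambda$, together with the crucial fact that $Q_0(v_\Lambda,\bar v_\Lambda)$ has microsupport near $\pm 2\Lambda$ (disjoint from $\Lambda$), gives $L^2$ control of $(hD_x - d\omega) Z^k v_\Lambda$. Gagliardo-Nirenberg interpolation applied to the oscillating factor $e^{-i\omega/h} Z^k v_\Lambda$ then upgrades these $L^2$ bounds into weak $L^\infty$ estimates (A$'$) with non-sharp exponents $\delta_k'$.

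To bridge (A$'$) and the sharp (A), one iterates the microlocal analysis to obtain a full semi-classical Lagrangian expansion $v = v_\Lambda + \sqrt{h}(v_{2\Lambda} + v_{-2\Lambda}) + h(v_{3\Lambda} + v_{-\Lambda} + v_{-3\Lambda}) + h^{1+\kappa} g$, where each $v_{\ell\Lambda}$ is a semi-classical Lagrangian distribution along $\ell\Lambda$; the absence of a $0\Lambda$ term at order $\sqrt{h}$ reflects the fact that $Q_0$ factors through a Fourier multiplier vanishing on the zero section. Substituting this expansion into the equation and using $\mathrm{Op}_h(b) v_{\ell\Lambda} = b(\ell d\omega(x)) v_{\ell\Lambda} + O(h)$ collapses the PDE into a scalar ODE for $v$ whose quadratic part is non-resonant. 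A Poincaré normal form removes the $\sqrt{h}$ quadratic terms and leaves
\[
D_t f = \tfrac{1}{2} |d\omega(x)|^{1/2} \Big(1 + \frac{c(x)}{t} |f|^2\Big) f + O(\varepsilon t^{-1-\kappa}),
\]
so that $\partial_t |f|^2 = O(\varepsilon^2 t^{-1-\kappa})$ is integrable. This closes (A) uniformly in $k$ and produces the logarithmic phase correction visible in \eqref{127}. The main technical obstacle will be the coordination, order by order in $k$, between the $Z^k$-commutations of (i) the paradifferential reductions of \cite{AlDel}, (ii) the Lagrangian microlocalization underlying the semi-classical expansion, and (iii) the ODE reduction above; this is precisely what the decay hypotheses on $(x\partial_x)^p$ in \eqref{126} are tailored to guarantee.
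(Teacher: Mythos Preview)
Your proposal is correct and follows essentially the same approach as the paper. The paper's formal proof of Theorem~\ref{ref:122} is in fact very short: it packages the $L^2$ side as Theorem~\ref{ref:131} (proved in the companion paper~\cite{AlDel}) and the $L^\infty$ side as Theorem~\ref{ref:132} (the main content of the present paper), then runs exactly the bootstrap/continuation argument you describe on the quantities $M_s^{(k)}$ and $N_\rho^{(s_0)}$; your outline goes further and sketches the internal structure of those two theorems (good unknown, paradifferential normal form, semi-classical rescaling, microlocalization on $\Lambda$, Lagrangian expansion, ODE reduction), all of which matches the paper's Sections~\ref{S:3.1}--\ref{S:35}.
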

\begin{rema*}
If the integers $s,s_1,s_0$ are large enough, we shall see in section~\ref{S:35} that $\underline{\alpha}(x/t)$ vanishes when $x/t$ goes to 
zero at an order that increases with these integers. Because of that, we see that the singularity of the phase at $x/t=0$ is quite irrelevant: 
for $|x/t|$ small enough, the first term in the expansion is not larger 
than the remainder.
\end{rema*}

\subsection{Strategy of the proof}\label{S:13}

The proof of the main theorem relies on the simultaneous propagation through a bootstrap of $L^\infty$ and
$L^2$-estimates. We state here these two results. The first one is proved in the companion paper~\cite{AlDel}. The proof of
the second one is the bulk of the present paper. We show below   how these two results together  imply Theorem~\ref{ref:122}. 

The main point will be to prove $L^2$ and $L^\infty$-estimates for the action of the vector field 
\be\label{131}
Z=t\partial_t+2x\px
\ee
on the unknown in equation \eqref{121}. We introduce the following notation: 

We assume given $\gamma,s,a,s_0,s_1$ satisfying \eqref{125}. For $(\eta,\psi)$ a local smooth enough solution of \eqref{121}, we set $\omega=\psi-T_{\B(\eta)\psi}\eta$ and for any integer $k\le s_1$,\index{Norms!$M_s^{(k)}$}
\be\label{132}
M_s^{(k)}(t)=\sum_{p=0}^k \bigl( 
\blA Z^p \eta(t,\cdot)\brA_{H^{s-p}}
+\blA \Dxmez Z^p \omega(t,\cdot)\brA_{H^{s-p}}\bigr).
\ee
In the same way, for $\rho$ a positive number (that will be larger 
than $s_0$), we set for $k\le s_0$, \index{Norms!$N_\rho^{(s_0)}$}
\be\label{133}
N_\rho^{(k)}(t)=\sum_{p=0}^k \bigl( 
\blA Z^p \eta(t,\cdot)\brA_{\eC{\rho-p}}
+\blA \Dxmez Z^p \psi(t,\cdot)\brA_{\eC{\rho-p}}\bigr).
\ee
By local existence theory, for any given $T_0>1$, 
there is $\eps_0'>0$ such that 
if $\eps<\eps_0'$, equation \eqref{121} has a solution for 
$t\in [1,T_0]$. Moreover, 
assumptions \eqref{126} remain valid at $t=T_0$ (see Proposition~A.4.2. \todo{Ref Compagon} in the companion paper). Consequently, it is enough to prove 
Theorem~\ref{ref:122} with Cauchy data at $t=T_0$. 

The $L^2$ estimates that we need are given by the following theorem, that is proved in the companion paper~\cite{AlDel} (see
Theorem~1.2.2.\ of that paper).

\begin{theo}\label{ref:131}
There is a constant $B_2>0$ such that $M_s^{(s_1)}(T_0)<\uq B_2 \eps$, and for any constants $B_\infty>0$, 
$B_\infty'>0$ there is $\eps_0$ such that the following holds: 
Let $T>T_0$ be a number such that equation \eqref{121} 
with Cauchy data satisfying \eqref{126} has a solution satisfying the regularity properties of Proposition~\ref{ref:121} 
on $[T_0,T[\times \xR$ and such that 

$i)$ For any $t\in [T_0,T[$, and any $\eps\in ]0,\eps_0]$, 
\be\label{134}
\blA \Dxmez \psi(t,\cdot)\brA_{\eC{\gamma-\mez}}
+\lA \eta(t,\cdot)\rA_{\eC{\gamma}}\le B_\infty\eps t^{-\mez}.
\ee 

$ii)$ For any $t\in [T_0,T[$, any $\eps\in ]0,\eps_0]$
\be\label{135}
N_\rho^{(s_0)}(t)\le B_\infty \eps t^{-\mez+B_\infty' \eps^2}.
\ee
Then, there is an increasing sequence $(\delta_k)_{0\le k\le s_1}$, depending only on $B_\infty'$ and $\eps$ with $\delta_{s_1}<1/32$ such that for any $t$ in $[T_0,T[$, any 
$\eps$ in $]0,\eps_0]$, any $k\le s_1$,
\be\label{136}
M_s^{(k)}(t)\le \mez B_2\eps t^{\delta_k}.
\ee
\end{theo}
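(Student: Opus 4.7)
My proof plan would be to combine the paradifferential symmetrization using the good unknown of Alinhac with a paradifferential normal form reduction, and then close a Gronwall-type differential inequality on $M_s^{(k)}$ using the $L^\infty$ bootstrap assumptions \eqref{134}--\eqref{135}. The first step is to rewrite \eqref{121} paralinearly in terms of $U = (\eta, |D_x|^{1/2}\omega)$, using the factorization $G(\eta)\psi = |D_x|\omega - \partial_x(T_V \eta) + F(\eta)\psi$ recalled in the introduction. This produces a paradifferential evolution $\partial_t U = T_{A_0} U + \text{quadratic/cubic remainders}$, where $A_0(U,x,\xi)$ is anti-self-adjoint so that the (Sobolev) energy estimate on $U$ has no derivative loss. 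The initial bound on $M_s^{(s_1)}(T_0) < \tfrac14 B_2 \eps$ follows from a local theory argument applied to the data assumption \eqref{126} (using the scalar $Z$-fields).

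The second step is the key: a paradifferential normal form transformation $U \mapsto \Phi(U) = U + B(U,U)$ that eliminates exactly the quadratic terms of the nonlinearity which would produce non-integrable contributions $\eps^2/\sqrt{\tau}$ in the energy identity, while leaving behind only either cubic source terms or quadratic remainders that, on symmetrization, yield contributions of integrable type $1/\tau$ (after applying \eqref{134}). The classical obstacle here is that naive normal forms on quasilinear systems lose derivatives; as hinted in the introduction, the workaround is to only eliminate the part of the quadratic nonlinearity that gives non-zero contributions under symmetrization (so the transformation becomes paradifferential rather than semi-classical Poincar\'e), which preserves regularity. I would construct $\Phi$ carefully and check the bicontinuity of $\Phi$ between $U$-Sobolev norms and the transformed variable's Sobolev norms, using $N_\rho^{(s_0)}(t) \lesssim \eps t^{-1/2 + B_\infty'\eps^2}$ to absorb the quadratic corrector.

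The third step is to commute $Z^k$ with the transformed equation. Since $Z = t\partial_t + 2x\partial_x$ satisfies $[Z, |D_x|^{1/2}] = -|D_x|^{1/2}$ and $[Z, \partial_t - i|D_x|^{1/2}] = -(\partial_t - i|D_x|^{1/2})$, the linearized operator commutes up to scalar multiples. The commutators of $Z$ with paradifferential operators $T_a$ can be handled by standard paradifferential symbolic calculus, producing lower-order terms. Applying $Z^k$ and using a Leibniz-type distribution of the fields across the (polynomial) nonlinearity gives, after symmetrization,
\begin{equation*}
\frac{d}{dt}\bigl(M_s^{(k)}(t)\bigr)^2 \le C\bigl(N_\rho^{(s_0)}(t)\bigr)^2\, M_s^{(k)}(t)^2 \;+\; \text{cross terms with fewer }Z\text{'s}.
\end{equation*}
Using the bootstrap $N_\rho^{(s_0)}(t)^2 \le (B_\infty\eps)^2 t^{-1+2B_\infty'\eps^2}$ produces a coefficient $C_k \eps^2 t^{-1 + 2B_\infty'\eps^2}$, together with source terms already known inductively in $k$ to satisfy $\lesssim \eps t^{\delta_{k-1}}$.

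The final step is Gronwall: an induction on $k$ starting from $k=0$ shows
\begin{equation*}
M_s^{(k)}(t) \le M_s^{(k)}(T_0)\,\exp\!\Bigl(C_k\eps^2 \smallint_{T_0}^t \tau^{-1+2B_\infty'\eps^2} d\tau\Bigr) + \text{lower-order integrated terms},
\end{equation*}
which, choosing $\delta_k$ increasing with $\delta_k > C_k\eps^2 / (2B_\infty'\eps^2)$ and $\delta_k > 2\delta_{k-1}$ and ensuring $\delta_{s_1} < 1/32$ by taking $\eps_0$ small, yields \eqref{136}. The hardest step is undoubtedly the second one: constructing a paradifferential normal form that simultaneously removes all non-integrable quadratic interactions, commutes reasonably with the iterated vector fields $Z^k$, and does not lose derivatives in Sobolev scales. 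The loss-of-derivative issue is subtle because the coefficients $B(\eta)\psi$ and $V(\eta)\psi$ appearing in the symbol of $A_0$ are themselves of the same order as the unknown, so the normal form must be carefully calibrated in the paradifferential class to avoid spurious $\tfrac12$-derivative losses arising from the quasilinear structure.
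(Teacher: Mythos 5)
Your overall strategy — good unknown of Alinhac to symmetrize the paralinearized system without derivative loss, a \emph{partial} paradifferential normal form that removes only the quadratic terms contributing to the Sobolev energy identity, commutation of $Z^k$ using $[Z,\partial_t-i\lvert D_x\rvert^{1/2}]=-(\partial_t-i\lvert D_x\rvert^{1/2})$, and Gronwall — is exactly the one this paper outlines in its introduction and delegates to the companion paper \cite{AlDel}. However, there is a genuine error in your final Gronwall step. You write the energy inequality with the diagonal coefficient $C\bigl(N_\rho^{(s_0)}(t)\bigr)^2$ and then substitute the bootstrap bound $N_\rho^{(s_0)}(t)^2\le (B_\infty\eps)^2 t^{-1+2B_\infty'\eps^2}$. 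Gronwall then gives
\[
M_s^{(k)}(t)\le M_s^{(k)}(T_0)\exp\Bigl(C_k\eps^2\int_{T_0}^t\tau^{-1+2B_\infty'\eps^2}\,d\tau\Bigr)
= M_s^{(k)}(T_0)\exp\Bigl(\tfrac{C_k}{2B_\infty'}\,t^{2B_\infty'\eps^2}+O(1)\Bigr),
\]
and $\exp\bigl(a\,t^{b}\bigr)$ with $a,b>0$ grows faster than \emph{any} power of $t$; no choice of $\delta_k$ (in particular not $\delta_k>C_k\eps^2/(2B_\infty'\eps^2)$, which is independent of $\eps$) recovers \eqref{136}. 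This is precisely why the theorem carries the two separate hypotheses \eqref{134} and \eqref{135}.

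The correct bookkeeping, as in (eq:3)--(eq:4) of the introduction, is: in the cubic (post-normal-form) nonlinearity, the only term in which all $k$ vector fields fall on the $L^2$ factor is multiplied by $\lA u\rA_{L^\infty}^2$, which must be controlled by the \emph{uniform} bound \eqref{134} (no $Z$'s, hence no $t^{B_\infty'\eps^2}$ loss), giving a coefficient $C\eps^2/\tau$ and, after Gronwall, a loss $t^{C\eps^2}$ which is polynomial with exponent $O(\eps^2)$, made $<1/32$ by shrinking $\eps_0$. All terms in which at least one $Z$ lands on an $L^\infty$ factor — and hence pick up the loss from \eqref{135} — carry at most $k-1$ vector fields on the $L^2$ factor; these are not Gronwalled but simply integrated against the inductive bound $M_s^{(k-1)}(\tau)\lesssim\eps\tau^{\delta_{k-1}}$, yielding $\eps^3\tau^{\delta_{k-1}+2B_\infty'\eps^2}$ after integration, whence the requirement that $(\delta_k)$ be increasing with $\delta_k>\delta_{k-1}+2B_\infty'\eps^2$ and $\delta_k>C\eps^2$. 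With this repair (and keeping your steps 1--2, which are sound in outline), the argument closes as in the companion paper.
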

\begin{rema*}
We do not get for the $L^2$-quantities 
$M_s^{(k)}(t)$ a uniform estimate 
when $t\rightarrow +\infty$. Actually, the form of the principal term 
in the expansion \eqref{127} shows that the action of a $Z$-vector field on it generates a $\log(t)$-loss, so that one 
cannot expect \eqref{136} to hold true with $\delta_k=0$. 
For similar reasons, one could not expect that 
$N_\rho^{(s_0)}(t)$ in \eqref{135} be $O(t^{-1/2})$ when 
$t\rightarrow +\infty$. Such an estimate can be true only 
if no $Z$-derivative acts on the solution, as in \eqref{134}.
\end{rema*}

Let us write down next the $L^\infty$-estimates.

\begin{theo}\label{ref:132}
Let $T>T_0$ be a number such that the 
equation \e{121} with Cauchy data satisfying \e{126} has a solution on $[T_0,T[\times \xR$ satisfying the regularity properties of Proposition~$\ref{ref:121}$. Assume 
that, for some constant $B_2>0$, for any $t\in [T_0,T[$, any $\eps$ in $]0,1]$, 
any $k\le s_1$, 
\be\label{137}
\ba
&M_{s}^{(k)}(t)\le B_2 \eps t^{\delta_k},\\
&N_{\rho}^{(s_0)}(t)\le \sqrt{\eps}<1
\ea
\ee
Then there are constants $B_\infty,B_\infty'>0$ depending only on $B_2$ and some 
$\eps_0'\in ]0,1]$, independent of 
$B_2$, 
such that, for any $t$ in $[T_0,T[$, any $\eps$ 
in $]0,\eps_0']$,
\be\label{138}
\ba
&N_\rho^{(s_0)}(t)\le \mez B_\infty \eps t^{-\mez+\eps^2 B_\infty'},\\
&\blA \Dxmez \psi(t,\cdot)\brA_{\eC{\gamma-\mez}}+
\lA \eta(t,\cdot)\rA_{\eC{\gamma}}\le \mez B_\infty \eps t^{-\mez}.
\ea
\ee
\end{theo}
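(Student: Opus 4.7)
\textsl{Plan of proof.} The overall strategy is the semi-classical analysis outlined in subsections~\ref{Sintro:GSP}--1.3 of the introduction, applied to the complex unknown $u = \Dxmez\psi + i\eta$. First I would reduce the water waves system to a semi-classical equation of the form \eqref{eq:5} for $v(t,x) = \sqrt{t}\,u(t,tx)$ with $h = 1/t$, using the structural results on $G(\eta)$ recalled in Proposition~\ref{ref:118} together with the paralinearization and good-unknown identities from the companion paper. The expression of $\partial_t$ via the Klainerman field $Z = t\partial_t + 2x\px$ then yields an elliptic-type identity \eqref{eq:6}, in which the symbol $2x\xi + |\xi|^{1/2}$ vanishes exactly on the lagrangian $\Lambda = \{\xi = d\omega(x)\}$ with $\omega(x) = 1/(4|x|)$. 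All subsequent estimates will be deduced from this identity combined with the $L^2$-hypothesis \eqref{137}.

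Second, I would perform a dyadic decomposition $v = \sum_j \varphi(2^{-j}hD)v$ so as to work essentially with $v$ spectrally localized at $|\xi|\sim 1$, and then split $v = v_\Lambda + v_{\Lambda^c}$ using a cut-off $\gamma_\Lambda$ concentrated near $\Lambda$. Commuting $Z^k$ with the linearized equation (for $k \le s-100$), the $L^2$ bounds $M_s^{(k)} = O(\eps t^{\delta_k})$ and the weak bound $N_\rho^{(s_0)} \le \sqrt\eps$ give an estimate of the form \eqref{eq:11}. Ellipticity of $2x\xi + |\xi|^{1/2}$ on $\operatorname{supp}\gamma_\Lambda^c$ then yields \eqref{eq:8}. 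For $v_\Lambda$ one exploits the crucial microlocal cancellation: the quadratic nonlinearity $Q_0(V_\Lambda)$ is supported near $\pm 2\Lambda \cup 0\Lambda$, disjoint from $\operatorname{supp}\gamma_\Lambda$, so $\Oph(\gamma_\Lambda)Q_0(V_\Lambda) = 0$. This upgrades \eqref{eq:12} to an $O_{L^2}(\eps h^{1-\delta_k'})$ bound on $\Oph(2x\xi+|\xi|^{1/2})Z^k v_\Lambda$, hence by symbolic calculus on the equation of $\Lambda$ to \eqref{eq:9}. The $L^\infty$ bound \eqref{eq:10} then follows from the Gagliardo--Nirenberg-type inequality $\|Z^k v_\Lambda\|_{L^\infty} \le C\|e^{-i\omega/h}Z^k v_\Lambda\|_{L^2}^{1/2}\|D_x(e^{-i\omega/h}Z^k v_\Lambda)\|_{L^2}^{1/2}$ and Sobolev on $v_{\Lambda^c}$.

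Third, I would iterate this analysis at the $L^\infty$-level to obtain the lagrangian expansion
\[
v = v_\Lambda + \sqrt{h}(v_{2\Lambda}+v_{-2\Lambda}) + h(v_{3\Lambda}+v_{-\Lambda}+v_{-3\Lambda}) + h^{1+\kappa}g,
\]
with the bounds \eqref{eq:15}--\eqref{eq:17}. Each component $v_{\ell\Lambda}$ is defined as the localization of $v$ near $\ell\Lambda$; its equation of motion is obtained by localizing \eqref{eq:5} near $\ell\Lambda$, where $2x\xi + |\xi|^{1/2}$ is elliptic for $\ell \ne 1$ and one solves algebraically in terms of polynomials in $(v_\Lambda,\bar v_\Lambda)$. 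The absence of a $0\Lambda$-term at order $\sqrt h$ comes from the factorization of $Q_0(V)$ by a symbol vanishing on the zero section. Substituting the expansion back into \eqref{eq:5} and using that on $\ell\Lambda$ any pseudodifferential operator acts by multiplication up to an $O(h)$-error with explicit coefficients (this is the content of \eqref{eq:16}--\eqref{eq:17}), one obtains an ODE of the form \eqref{eq:18} for $v$ itself, with real-valued coefficients $\Phi_\ell(x)$. A Poincaré normal form eliminating the non-resonant quadratic and cubic monomials $v^2, \bar v^2, v^3, \bar v^2 v, v\bar v^2, \bar v^3$ reduces this to
\[
D_t f = \tfrac{1}{2}(1-\chi(h^{-\beta}x))|d\omega|^{1/2}\bigl[1 + \tfrac{|d\omega|^2}{t}|f|^2\bigr]f + O(\eps t^{-1-\kappa}),
\]
from which $\partial_t |f|^2 \in L^1(dt)$, giving a uniform bound on $f$ and hence on $v$, together with the modified-scattering phase.

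Fourth, I would close the bootstrap: from the uniform $L^\infty$-bound on $v$ and the corresponding $Z^k$-versions (obtained by commuting the same scheme to $Z^k v$ using the commutator relations for $Z$ and the linearized operator) one recovers $\|\Dxmez\psi\|_{C^{\gamma-1/2}} + \|\eta\|_{C^\gamma} = O(\eps t^{-1/2})$ and $N_\rho^{(s_0)}(t) = O(\eps t^{-1/2+B_\infty' \eps^2})$, with the logarithmic loss $t^{B_\infty'\eps^2}$ coming from the cubic normal-form term that cannot be removed. The main obstacle will be the microlocal bookkeeping in step three: controlling errors when $Z^k$ is distributed across the paradifferential factors of the quadratic and cubic forms $Q_0, C_0$, and checking that the remainder terms in the action of $\Oph$ on lagrangian distributions $v_{\ell\Lambda}$ (which involve both $\ell\Lambda$-localization and $Z^k$-regularity) can be absorbed into the $O(h^{1+\kappa})$ remainder in \eqref{eq:18}. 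This is where the gap between $s_0$ and $s_1$ in condition \eqref{125} is consumed, and where the decay assumption $(x\px)^p$ on the initial data becomes essential.
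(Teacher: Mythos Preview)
Your plan is correct and follows essentially the same route as the paper: the semi-classical reformulation (Section~\ref{S:32}), the Klainerman--Sobolev step via the $\Lambda/\Lambda^c$ splitting (Section~\ref{S:33}), the lagrangian expansion of $w$ (Section~\ref{S:34}), and the derivation of the ODE together with the Poincar\'e normal form (Section~\ref{S:35}) are exactly the machinery the paper develops. One technical point you omit is that the paper first performs a preliminary reduction (around \eqref{3534a}): it shows that it suffices to prove \eqref{138} under the \emph{additional} a priori bound $\|\eta\|_{C^\gamma}+\|\Dxmez\psi\|_{C^{\gamma-1/2}}\le \tilde A_0 t^{-1/2+\delta_0'}$, which is then recovered by a continuity argument; this is needed because the first condition in \eqref{337} of Proposition~\ref{ref:3.3.1} requires a weak $L^\infty$ input to start the iteration.
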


We deduce form the above results the global existence statements in Theorem~\ref{122}. 

\begin{proof}[Proof of Theorem~\ref{ref:122}] 
We take for $B_2$ the constant given by Theorem~\ref{ref:131}. Then Theorem~\ref{ref:132} 
provides constants $B_\infty>0$, $B_\infty'>0$, and given these $B_\infty,B_\infty'$, Theorem~\ref{ref:131} brings a small positive number $\eps_0$. 
We denote by $T_*$ the supremum of those $T>T_0$ such that a solution exists 
over the interval $[T_0,T[$, satisfies over this interval the regularity conditions 
of Proposition~\ref{ref:121} and the estimates 
\be\label{139}
\ba
&M_{s}^{(k)}(t)\le B_2 \eps t^{\delta_k}\quad \text{for } k\le s_1,\\
&N_\rho^{(s_0)}(t)\le B_\infty \eps t^{-\mez+\eps^2 B_\infty'},\\
&\blA \Dxmez \psi(t,\cdot)\brA_{\eC{\gamma-\mez}}+
\lA \eta(t,\cdot)\rA_{\eC{\gamma}}\le B_\infty \eps t^{-\mez}.
\ea
\ee
We have $T_*>T_0$:  By the choice of $B_2$ in the statement 
of Theorem~\ref{ref:131}, the first estimate \e{139} holds at $t=T_0$ with 
$B_2$ replaced by $B_2/2$. If $B_\infty$ is chosen from start large enough, we may 
as well assume that at $t=T_0$, the second and third inequalities in \e{139} 
hold with $B_\infty$ replaced by $B_\infty/2$. Consequently, 
the local existence results of Appendix~A.4 \todo{Ref Compagnon} in the companion paper~\cite{AlDel} show that a solution exists  
on some interval $[T_0,T_0+\delta[$, and will satisfy \e{139} 
on that interval if $\delta$ is small enough. 

If $T_*<+\infty$, and if we take $\eps_0$ small enough so that 
$B_\infty \sqrt{\eps_0}<1$, we see that \e{139} implies that assumptions 
\e{134}, \e{135}, and \e{137} of Theorem~\ref{ref:131} and 
\ref{ref:132} are satisfied. Consequently, \e{136} and \e{138} 
hold on the interval $[T_0,T_*[$ i.e.\ \e{139} is true on this interval with 
$B_2$ (resp.\ $B_\infty$) replaced by $B_2/2$ (resp.\ $B_\infty/2$). This 
contradicts the maximality of $T_*$. So $T_*=+\infty$ 
and the solution is global. We postpone the proof of \e{127} to the 
end of Section~\ref{S:35}.
\end{proof}

The rest of this paper will be devoted to the proof of Theorem~\ref{ref:132}.  Theorem~\ref{ref:131} is proved in the
companion paper~\cite{AlDel}.

\section{Classes of Lagrangian distributions}\label{S:3.1}
We denote by $h$ a semi-classical parameter belonging 
to $]0,1]$. If $(x,\xi)\mapsto m(x,\xi)$ is an order function from 
$T^*\xR$ to $\xC$, as defined in Appendix~\ref{S:A.2}, and if $a$ is a symbol in the class $S(m)$ of Definition~\ref{ref:A.2.1}, 
we set, for $(u_h)_h$ any family of elements of $\mathcal{S}'(\xR)$
\be\label{311}\index{Semi-classical notations!$\Oph(a)$}
\Oph(a)u=\frac{1}{2\pi}\int e^{ix\xi}a(x,h\xi,h)\widehat{u}(\xi)\, d\xi.
\ee
It turns out that we shall need extensions of this definition to more general classes of symbols. 
On the one hand, we notice that if $a$ is a continuous function such that $\la a(x,\xi,h)\ra\le m(x,\xi)$ and 
if $u$ is in $L^2(\xR)$, \eqref{311} is still meaningful.

We shall also use a formula of type \eqref{311} when the symbol $a$ is defined only on a subset of $T^*\xR$. 
Denote by $\pi_1:(x,\xi)\mapsto x$ and $\pi_2\colon (x,\xi)\mapsto \xi$ the two projections. 
For $F$ a closed subset of $T^*\xR$, and $r>0$, we set
$$
F_r=\left\{ \,(x,\xi)\in T^*\xR\,;\, d\left( (x,\xi),F\right)<r\,\right\}
$$
where $d$ is the euclidian distance. 

\begin{defi}\label{ref:3.1.1}
Let $m$ be an order function on $T^*\xR$, $F$ a 
closed non empty 
subset of $T^*\xR$ such that $\pi_2(F)$ is compact. 
We denote by $S(m,F)$ \index{Symbols!$S(m,F)$} the space 
of functions $(x,\xi,h)\mapsto a(x,\xi,h)$, defined on $F_{r_0}\times ]0,1]$ for some 
$r_0>0$, and satisfying for any $\alpha,\beta$ in $\xN$, any $(x,\xi)$ in $F_{r_0}$, any 
$h$ in $]0,1]$,
$$
\la \partial_x^\alpha \partial_\xi^\beta a(x,\xi,h)\ra\le C_{\alpha,\beta}m(x,\xi).
$$
\end{defi}
We define next the notion of a family of functions microlocally supported close to a subset $F$ as above. 

\begin{defi}\label{ref:3.1.2}
$i)$ Let $p$ be in $[1,+\infty]$. We denote by $E^p_\emptyset$ the space 
of families of $L^p$ functions $(v_h)_h$ indexed by $h\in ]0,1]$, 
defined on $\xR$ with values in $\xC$ such that for any $N$ in $\xN$, 
there is $C_N>0$ with $\lA v_h\rA_{L^p}\le C_Nh^N$ for any 
$h$ in $]0,1]$. 

$ii)$ Let $F$ be a closed non empty subset of $T^*\xR$ such that $\pi_2(F)$ is compact. 
We denote by $E^p_F$ the space of families of functions $(v_h)_h$ of $L^p(\xR)$ satisfying

$\bullet$ There are $N_0$ in $\xN$, $C_0>0$ and for any $h$ in $]0,1]$, 
$\lA v_h\rA_{L^p}\le C_0 h^{-N_0}$.

$\bullet$ For any $r>r'>0$, there is an element $\phi$ of $S(1)$ supported 
in $F_r$, equal to one on $\overline{F_{r'}}$ such that 
$(\Oph(\phi)v_h -v_h)_h$ belongs to $E^p_\emptyset$. 
We say that $(v_h)_h$ is microlocally suported close to $F$.
\end{defi}

\begin{remas}
$\bullet$ Notice that definition~\ref{ref:3.1.2} is non empty only if there exists at least one function $\phi$ in $S(1)$ supported in
$F_r$, equal to one on $\overline{F_{r'}}$. This holds if $F$ is not ``too wild'' when $\abs{x}$ goes to infinity, for
instance if $F$ is compact, or if $F = \pi_2^{-1}(K)$ for some compact subset $K$ of $\R$. In the sequel, we shall always
implicitly assume that such a property holds for the closed subsets in which are microlocally supported the different classes
of distributions we shall define. 

$\bullet$ It follows from Theorem~\ref{ref:A.2.2} of Appendix~\ref{S:A.2} that the last condition in Definition~\ref{ref:3.1.2} will hold for any element $\phi$ of 
$S(1)$, supported in $F_r$, equal to one on $\overline{F_{r'}}$.
\end{remas}

We may define the action of operators associated to symbols belonging to the class 
$S(1,F)$ on functions microlocally supported 
close to $F$, modulo elements of $E^p_\emptyset$. Let us notice first that if $a$ 
is in $S(1)$ and is supported in a domain $\{(x,\xi,h)\,;\,\la\xi\ra\le C\}$ for some 
$C>0$, then \eqref{311} defines an operator bounded on $L^p(\xR)$ for any $p$, 
uniformly in $h$. It follows then from 
the theorem of symbolic calculus~\ref{ref:A.2.2} of 
Appendix~\ref{S:A.2} that, 
if $a$ is in $S(1,F)$, if $\tilde{\phi}$ is in $S(1)$ 
supported in $F_r\cap(\overline{F_{r'}})^c$, 
for some $0<r'<r\ll 1$, then $(\Oph(a\tilde{\phi})v_h)_h$ is in 
$E^p_\emptyset$ for any $(v_h)_h$ in $E^p_F$. We may thus state:

\begin{defi}\label{ref:3.1.3}
Let $F$ be a closed set as in Definition~\ref{ref:3.1.2}, $a$ be an element of $S(1,F)$. 
For $(v_h)_h$ in $E^{p}_F$, we define 
\be\label{312}
\Oph(a)v_h=\Oph(a\phi)v_h.
\ee
where the right-hand side is defined by \eqref{311} and where $\phi$ is in $S(1)$, 
supported in $F_r\times ]0,1]$ for small enough $r>0$ and equal to one on 
$\overline{F_{r'}}\times ]0,1]$, for some $r'\in ]0,r[$. The definition is 
independent of the choice of $\phi$ modulo $E^p_\emptyset$, so that 
$\Oph(a)$ is well defined from 
$E^p_F/E^p_\emptyset$ to itself.
\end{defi}

Let $K$ be a compact subset of $T^*\xR$, $K_1=\pi_1(K)$ and let $\omega$ be a real valued function defined on an 
open neighborhood $U$ of $K_1$. Denote by $\chi_\omega$ the canonical transformation 
\begin{alignat*}{2}
\chi_\omega\colon& T^*U &&\rightarrow T^* U\\
&(x,\xi)&&\mapsto (x,\xi-\diff \omega(x)).
\end{alignat*}
Set $K'=\chi_\omega(K)$. 

\begin{lemm}\label{ref:3.1.4}
$i)$ Let $a$ be in $S(1,K')$. 
There is a symbol $b$ in $S(1,K)$ such that for any $(v_h)_h$ in $E^p_K$, we may write
\be\label{313}
e^{i\omega(x)/h}\Oph(a)\left( e^{-i\omega(x)/h}v_h\right)
=\Oph(a\circ \chi_\omega(x,\xi))v_h+h\Oph(b)v_h
\ee
modulo $E^p_\emptyset$.

$ii)$ If $(v_h)_h$ is in $E^p_K$, then 
$\left( e^{-i\omega(x)/h}v_h\right)_h$ is in $E^p_{K'}$.
\end{lemm}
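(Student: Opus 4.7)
The plan is to prove part (i) by a direct Schwartz-kernel computation that uses a $\xi$-shift to absorb the phase discrepancy $\omega(x)-\omega(y)$, and then to obtain part (ii) by iterating that computation to all orders and Borel-summing.

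For part (i), I would compute the Schwartz kernel of $e^{i\omega/h}\Oph(a)e^{-i\omega(\cdot)/h}$, namely
$$\frac{1}{2\pi h}\int e^{i[(x-y)\xi+\omega(x)-\omega(y)]/h}\,a(x,\xi,h)\,d\xi,$$
and exploit the smooth factorization $\omega(x)-\omega(y)=(x-y)\tilde\omega(x,y)$ with $\tilde\omega(x,x)=d\omega(x)$, valid near the diagonal of $U\times U$. The change of variables $\xi\mapsto\xi+\tilde\omega(x,y)$ reduces the problem to an amplitude operator with amplitude $c(x,y,\xi,h)=a(x,\xi-\tilde\omega(x,y),h)$. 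A first-order Taylor expansion of $c$ in $y$ at $y=x$ splits it as $a\circ\chi_\omega(x,\xi)+(y-x)R(x,y,\xi,h)$; the factor $(y-x)$ is absorbed by integration by parts through $(y-x)e^{i(x-y)\xi/h}=ih\partial_\xi e^{i(x-y)\xi/h}$, producing an overall extra $h$. The remaining amplitude operator is reduced to a left quantization $\Oph(b)$ by Theorem~\ref{ref:A.2.2}, and $b$ belongs to $S(1,K)$ because $\chi_\omega$ maps a neighborhood of $K$ diffeomorphically onto one of $K'$, so that derivatives of $a\in S(1,K')$ composed with the $\xi$-shift remain in $S(1,K)$. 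The cutoffs of Definition~\ref{ref:3.1.3} applied to $a$ transfer via the shift into cutoffs near $K$, and their complementary contributions yield only $E^p_\emptyset$-errors when applied to $v_h\in E^p_K$.

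For part (ii), the remark after Definition~\ref{ref:3.1.2} reduces the claim to exhibiting one cutoff $\phi'\in S(1)$ supported in a small neighborhood of $K'$ and equal to one on a smaller one, with $\Oph(\phi')(e^{-i\omega/h}v_h)-e^{-i\omega/h}v_h\in E^p_\emptyset$. Re-running the part (i) calculation with $a=\phi'$, but now carrying the Taylor expansion in $y-x$ to all orders and Borel-summing, yields an asymptotic expansion $e^{i\omega/h}\Oph(\phi')e^{-i\omega/h}\sim\sum_{k\ge 0}h^k\Oph(c_k)$ modulo operators of arbitrary order in $h$, with $c_0=\phi'\circ\chi_\omega$ and each $c_k$ ($k\ge 1$) a smooth combination of derivatives of $\phi'$ evaluated at $(x,\xi-d\omega(x))$. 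Choosing $\phi'$ to equal one on a neighborhood $V$ of $K'$ large enough that $\chi_\omega^{-1}(V)$ contains the microlocal support of $v_h$, every derivative of $\phi'\circ\chi_\omega$ vanishes on $\chi_\omega^{-1}(V)$, so each $c_k$ with $k\ge 1$ vanishes near the microlocal support of $v_h$. Theorem~\ref{ref:A.2.2} then gives $\Oph(c_0)v_h\equiv v_h$ and $\Oph(c_k)v_h\in E^p_\emptyset$ for $k\ge 1$, which yields (ii).

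I expect the main technical point to be the global-in-$(x,y)$ validity of the computation: $\tilde\omega(x,y)$ is smooth only on a neighborhood of the diagonal inside $U\times U$, whereas the kernel integral a priori runs over all $(x,y)\in\xR^2$. One first localizes with an auxiliary cutoff $\chi(x-y)$, and the off-diagonal contribution is disposed of by repeated integration by parts in $\xi$ against the non-stationary factor $e^{i(x-y)\xi/h}$, producing $O(h^\infty)$-errors on the support of the relevant cutoff. Keeping track of supports so that the final symbol $b$ lands in $S(1,K)$ and the remainders in $E^p_\emptyset$ is where most of the bookkeeping resides; the amplitude-to-symbol calculus and Borel summation invoked via Theorem~\ref{ref:A.2.2} are then standard.
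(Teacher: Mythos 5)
Your proof is correct and follows essentially the same route as the paper: conjugating by the phase, absorbing $\omega(x)-\omega(y)=(x-y)\tilde\omega(x,y)$ into a shift of the frequency variable, localizing near the diagonal (with the off-diagonal part killed by non-stationary phase), and Taylor-expanding at the diagonal with an integration by parts to extract the leading term $a\circ\chi_\omega$ plus an $O(h)$ symbol in $S(1,K)$. The only difference is in part (ii), where the paper simply reapplies the same formula with $a=\phi'$ and observes that the localized oscillatory integral collapses to $1+O(h^\infty)$ on a neighborhood of $K$, which is a bit lighter than your full asymptotic expansion with Borel summation, but both arguments are sound.
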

\begin{proof}
We shall prove both assertions at the same time. 
Remark first that since $(\theta v_h-v_h)_h$ is in $E^p_\emptyset$ 
if $\theta$ is in $C^\infty_0(U)$ equal to one on a neighborhood of $K_1$, 
we may always assume that $v_h$ is compactly supported in $U$. By symbolic calculus, 
and the assumption $a\in S(1,K')$, we may also assume that 
$a$ is compactly supported 
and that the first projection of the support is contained in $U$. Consequently, we may replace in \eqref{313} $\omega$ by a $C^\infty_0(\xR)$ function, 
equal to the given phase in a neighborhood of $K_1$.

We compute
\be\label{314}
e^{i\omega(x)/h}\Oph(a)\left(e^{-i\omega(x)/h}v_h\right) 
=\frac{1}{2\pi}\int e^{ix\xi} c(x,h\xi,h)\widehat{v_h}(\xi)\, d\xi
\ee
with
\be\label{315}
\begin{aligned}
c(x,\xi,h)&=\frac{1}{2\pi h}\int e^{-i[ y\eta-(\omega(x)-\omega(x-y))]/h}a(x,\xi-\eta,h)\, dy\, d\eta\\
&=\frac{1}{2\pi h}\int e^{-iy\eta/h}a(x,\xi-\eta-\theta(x,y),h)\, dy\, d\eta
\end{aligned}
\ee
where ${\displaystyle{\theta(x,y)=\frac{\omega(x)-\omega(x-y)}{y}}}$. Let 
$\kappa$ be a smooth function supported in a small 
neighborhood of zero in $\xR$ and equal to one close to zero. 
We insert under the last oscillatory integral in \eqref{315} a factor 
$\kappa(y)\kappa(\eta)$. The error introduced in that way is a symbol in $h^\infty S(\langle \xi\rangle^{-\infty})$. 
The action of the associated operator on $v_h$ gives an element of $E^p_\emptyset$. We have reduced ourselves to
\be\label{316}
\frac{1}{2\pi h}\int e^{-iy\eta/h}a(x,\xi-\eta-\theta(x,y),h)\kappa(y)\kappa(\eta)\, dy\, d\eta.
\ee
The argument of $a$ belongs to $K'_r$ if $(x,\xi)$ is in $K_{r'}$ with $r'\ll r$ 
and the support of $\kappa$ has been taken small enough. 
Moreover, it is given by $(x,\xi-\diff \omega(x)+O(y)+O(\eta))$, 
so that an integration by parts shows that \eqref{316} may be written 
$a\circ \chi_\omega+h b$ for some symbol $b$ in $S(1,K)$. This gives $i)$. 

To check $ii)$ we apply \eqref{314} with $a=\phi'$ an element of $S(1)$ 
supported in $K'_{r_0'}\times ]0,1]$, equal to one on $K'_{r_1'}\times ]0,1]$ 
for some $0<r'_1<r'_0$. We assume that 
$\lA \Oph(\phi)v_h-v_h\rA_{L^p}=O(h^\infty)$ for some 
$\phi$ in $S(1)$ supported in $K_{r_0}\times ]0,1]$, 
$\phi\equiv 1$ on $K_{r_1}\times ]0,1]$ with 
$r_1<r_0\ll r_1'$. Then if $(x,\xi)$ is in $K_{r_0}$ and 
$\supp \kappa$ has been taken small enough in \eqref{316}, we see that this integral is equal to
$$
\frac{1}{2\pi h}\int e^{-iy\eta/h}\kappa(y)\kappa(\eta)\, dy\, d\eta,
$$
which is equal to one modulo $O(h^\infty)$. 
We conclude from \eqref{314} where we replaced 
$v_h$ by $\Oph(\phi)v_h$ modulo $O(h^\infty)$ 
and from symbolic calculus 
that
$$
\blA e^{i\omega/h} \Oph(\phi')\bigl( e^{-i\omega/h} v_h\bigr)-v_h\brA_{L^p}=O(h^\infty),
$$
which is the wanted conclusion. 
\end{proof}

\paragraph*{Lagrangian distributions}

We consider $\Lambda$ a Lagrangian submanifold of $T^*(\xR\setminus \{0\})$ that is, 
since we are in a one-dimensional setting, a smooth curve of $T^*(\xR\setminus \{0\})$. We shall 
assume that
$$
\Lambda=\left\{\, (x,\diff \omega(x))\,;\, w\in \xR^*\,\right\}
$$
for $\omega$ a smooth function from $\xR^*$ to $\xR$. We want to define 
semi-classical lagrangian distributions on $\Lambda$ i.e.\ distributions 
generalizing families of 
oscillating functions $(\theta(x)e^{i\omega(x)/h})_h$. Since in our applications $\omega$ 
will be homogeneous of degree $-1$, so will have a singularity at zero, we shall define in a first step 
these distributions above a compact subset of $\xR\setminus\{0\}$. In a 
second step, the lagrangian distributions along $\Lambda$ will be defined as sums of 
conveniently rescaled Lagrangian distributions on a compact set. 

We fix $\sigma$, $\beta$ two small positive numbers and consider two Planck constants $h$ and $\hbar$ 
satisfying the inequalities
\be\label{317}
0<C_0^{-1}h^{1+\beta}\le \hbar\le C_0 h^\sigma\le 1
\ee
for some constant $C_0>0$. Notice that these inequalities imply that $O(\hbar^\infty)$ remainders will be also 
$O(h^\infty)$ remainders.

\begin{defi}\label{ref:3.1.5}
Let $F$ be a closed nonempty subset of $T^*\xR$ such that $\pi_2(F)$ is compact. 
Let $\nu,\mu$ be in $\xR$, $\gamma\in \xR_+$, $p\in [1,+\infty]$. 

$i)$ One denotes by 
$h^\nu \mB_p^{\mu,\gamma}[F]$\index{Lagrangian distributions!$h^\nu \mB_p^{\mu,\gamma}[F]$}
 the space of elements 
$(v_\hbar)_\hbar$ of $E^p_F/E^p_\emptyset$, indexed by $\hbar$ and 
depending on $h$, such that there is $C>0$ and for any $h,\hbar$ in $]0,1]$ 
satisfying \eqref{317} 
\be\label{318}
\lA v_\hbar\rA_{L^p}\le Ch^\nu \left(\frac{h}{\hbar}\right)^{\mu+\frac{1}{p}}\left(1+\frac{h}{\hbar}\right)^{-2\gamma}.
\ee
We denote
$$
h^\nu \mB_{p}^{\mu,\gamma}=\bigcup_{F}h^\nu \mB_p^{\mu,\gamma}[F],
$$
where the union is taken over all closed non empty subsets $F$ of $T^*\xR$ such that 
$\pi_2(F)$ is compact. 

$ii)$ Let $K$ be a compact subset of $T^*(\xR\setminus\{0\})$ such that $K\cap \Lambda\neq \emptyset$. 
Denote by $e$ an equation of $\Lambda$ defined on a neighborhood of $K$. One denotes by 
$\hLI$\index{Lagrangian distributions!$\hLI$} (resp.\ $\hLJ$) \index{Lagrangian distributions!$\hLJ$} 
the subspace of $\hBK$ made of those families of functions 
$(v_\hbar)_\hbar$ such that there is $C>0$ and for any $h,\hbar$ in $]0,1]$ 
satisfying \eqref{317}, one has the inequality
\be\label{319}
\lA \Ophbar(e)v_\hbar\rA_{L^p}\le C h^\nu 
\left(\frac{h}{\hbar}\right)^{\mu+\frac{1}{p}}\left(1+\frac{h}{\hbar}\right)^{-2\gamma}\left[ h^\mez+\hbar\right]
\ee
respectively, the inequality
\be\label{3110}
\lA \Ophbar(e)v_\hbar\rA_{L^p}\le C h^\nu 
\left(\frac{h}{\hbar}\right)^{\mu+\frac{1}{p}}\left(1+\frac{h}{\hbar}\right)^{-2\gamma}\hbar.
\ee
\end{defi}

Notice that by definition $\hLJ$ is included in $h^\nu L^p I_\Lambda^{\mu,\nu}[K]$. 
If $\tilde{e}$ is another equation of $\Lambda$ close to $K$, we may write $\tilde{e}=ae$ 
for some symbol $a\in S(1,K)$ on a neighborhood of $K$. By Theorem~\ref{ref:A.2.2}, 
$\Ophbar(\tilde e)=\Ophbar(a)\Ophbar(e)+\hbar \Ophbar(b)$ 
for another symbol $b$ in $S(1,K)$. Consequently 
\eqref{318} and \eqref{319} imply that the same estimate holds with $e$ replaced by $\tilde{e}$, 
so that the space $\hLI$ depends only on $\Lambda$. The same holds for $\hLJ$. 
In particular, because of our definition of $\Lambda$, we may take $e(x,\xi)=\xi-\diff\omega(x)$. 

\begin{exam}
Let $\theta$ be in $C^\infty_0(\xR\setminus\{0\})$ and 
set $v_\hbar(x)=\theta(x)e^{i\omega(x)/\hbar}$. Then $\vhbar$ is in $L^\infty J_\Lambda^{0,0}[K]$ 
for any compact subset of $T^*(\Rs)$ meeting $\Lambda$ such that 
$\supp \theta\subset \pi_1(K)$. Actually $\vhbar$ is microlocally supported close to $K$ 
and Lemma~\ref{ref:3.1.4} shows that 
$\Ophbar (\xi-\diff\omega(x))v_\hbar$ satisfies estimate \eqref{3110} with $p=\infty$, $\nu=\mu=\gamma=0$. 
Notice that in this example, one could apply $\Ophbar(\xi-\diff\omega(x))$ several times to 
$v_\hbar$, and gain at each step one factor $\hbar$ in the $L^\infty$ estimates. It turns 
out that the lagrangian distributions we shall have to cope with will not satisfy such a strong statement, but only 
estimates of type \eqref{319} or \eqref{3110}.
\end{exam}

\begin{prop}\label{ref:3.1.6}
Let $p\in [1,+\infty]$, $\mu,\gamma$ in $\xR$, $K$ a compact 
subset of $T^*(\Rs)$ with $\Lambda\cap K\neq \emptyset$. 

$i)$ Let $a$ be in $S(1,K)$ and $\vhbar$ an element of $\LI$. Then 
$\left((\Ophbar (a)-a(x,\diff\omega(x))v_\hbar\right)_\hbar$ is in $(h^{1/2}+\hbar)\mB_p^{\mu,\gamma}[K]$. 

Assume we are given a vector field $Z=\alpha(\hbar,x)D_\hbar+\beta(\hbar,x)D_x$ 
satisfying the following conditions: 
$\lA Z\hbar\rA_{L^\infty}=O(\hbar)$ and 
if $e$ is a symbol in $S(1,K)$ (resp.\ that vanishes on $\Lambda$), 
then $[Z,\Ophbar(e)]=\Ophbar(\tilde{e})$ for some other symbol $\tilde{e}$ in $S(1,K)$ 
(resp.\ that vanishes on $\Lambda$). Assume also that 
for some integer $k$, 
$Z^{k'}v_\hbar$ is in $\LI$ for $0\le k'\le k$. 
Then $Z^k\Ophbar(a)v_\hbar$ is in $\LI$ and 
$Z^k[(\Ophbar(a)-a(x,\diff\omega))v_\hbar]$ is in 
$(h^{1/2}+\hbar)\mB_p^{\mu,\gamma}[K]$.

$ii)$ Denote by $\Lambda_0$ the zero section of $T^*\xR$. 
Let $\vhbar$ be in $\LI$. Then $(e^{-i\omega/\hbar}v_\hbar)_\hbar$ is $L^pI_{\Lambda_0}^{\mu,\gamma}[K_0]$ 
where $K_0=\chi_\omega(K)$. Conversely, if $\vhbar$ is in $L^pI_{\Lambda_0}^{\mu,\gamma}[K_0]$, 
$(e^{i\omega/\hbar}v_\hbar)_\hbar$ is in $\LI$.

$iii)$ Let $\Lambda_1,\Lambda_2$ be two Lagrangian submanifolds of $T^*(\Rs)$ satisfying 
the same assumptions as $\Lambda$, let $K_1,K_2$ be compact subsets of $T^*(\Rs)$ with 
$\Lambda_1\cap K_1\neq \emptyset$, $\Lambda_2\cap K_2\neq \emptyset$. 
Set
$$
\Lambda_1+\Lambda_2=\left\{\, (x,\xi_1+\xi_2)\,;\, (x,\xi_1)\in \Lambda_1,(x,\xi_2)\in \Lambda_2\,\right\}
$$
and define in the same way $K_1+K_2$. Let $p_1,p_2$ be in $[1,\infty]$ 
with $\frac{1}{p_1}+\frac{1}{p_2}=\frac{1}{p}$, $\mu_1,\mu_2,\gamma_1,\gamma_2$ in $\xR$ with 
$\mu_1+\mu_2=\mu$, $\gamma_1+\gamma_2=\gamma$. Let $(v^\ell_\hbar)_\hbar$ be in 
$L^{p_\ell}I_{\Lambda_\ell}^{\mu_\ell,\gamma_\ell}[K_\ell]$ for $\ell=1,2$. 
Then $(v^1_\hbar \cdot v^2_\hbar)_{\hbar}$ is in $L^p I_{\Lambda_1+\Lambda_2}^{\mu,\gamma}[K_1+K_2]$. 

A similar statement holds for the classes $\LJ$ and $\mB_p^{\mu,\gamma}[K]$.
\end{prop}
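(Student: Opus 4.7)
The three parts will be established in turn, relying on the symbolic calculus of Appendix~\ref{S:A.2} and on Lemma~\ref{ref:3.1.4}.

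For part $i)$, I would Taylor expand $a$ along $\Lambda$: since $a\in S(1,K)$ and $\Lambda$ is the zero set of $e(x,\xi)=\xi-\diff\omega(x)$, one writes $a(x,\xi)=a(x,\diff\omega(x))+e(x,\xi)\tilde b(x,\xi)$ with $\tilde b\in S(1,K)$. By Theorem~\ref{ref:A.2.2}, $\Ophbar(e\tilde b)=\Ophbar(\tilde b)\Ophbar(e)+\hbar\,\Ophbar(r)$ for some $r\in S(1,K)$. Applied to $v_\hbar$, the first term is controlled via the $I$-hypothesis~\eqref{319}, which supplies exactly the desired $(h^{1/2}+\hbar)$ factor multiplying the $\mathcal{B}_p^{\mu,\gamma}[K]$ template; the second is already $\hbar$ times the $\mathcal{B}$-bound of $v_\hbar$. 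The iterated $Z^k$ assertion then follows by induction on $k$: under the commutation hypotheses, $[Z,\Ophbar(\tilde b)]$ stays in $\Ophbar(S(1,K))$ and $[Z,\Ophbar(e)]=\Ophbar(\tilde e)$ with $\tilde e$ vanishing on $\Lambda$, so Leibniz expansion of $Z^k$ on the decomposition produces only terms of the same form acting on $Z^{k'}v_\hbar$ with $k'\le k$, each of which belongs to $L^pI_\Lambda^{\mu,\gamma}[K]$ by the inductive hypothesis.

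Part $ii)$ is essentially a reformulation of Lemma~\ref{ref:3.1.4}. Taking $a(x,\xi)=\xi$, an equation of the zero section $\Lambda_0$, we have $a\circ\chi_\omega(x,\xi)=\xi-\diff\omega(x)$, an equation of $\Lambda$. Lemma~\ref{ref:3.1.4}$\,i)$ then gives
\[
\Ophbar(\xi)\bigl(e^{-i\omega/\hbar}v_\hbar\bigr)=e^{-i\omega/\hbar}\bigl[\Ophbar(\xi-\diff\omega)v_\hbar+\hbar\,\Ophbar(b)v_\hbar\bigr]\pmod{E^p_\emptyset},
\]
and the $L^p$-norm of the right-hand side is majorised by the $I$-hypothesis on $v_\hbar$ (first term) plus $\hbar$ times the $\mathcal{B}$-hypothesis (second term), which is precisely the $I$-bound for $e^{-i\omega/\hbar}v_\hbar$ along $\Lambda_0$. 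The $\mathcal{B}$-template is preserved since multiplication by a unimodular phase fixes $L^p$ norms, and microlocal localisation near $K_0=\chi_\omega(K)$ comes from Lemma~\ref{ref:3.1.4}$\,ii)$. The converse is the same computation with $\omega$ replaced by $-\omega$.

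For part $iii)$, Hölder with $1/p_1+1/p_2=1/p$ immediately gives the $\mathcal{B}$-template for the product. The $I$-estimate relies on the observation that an equation of $\Lambda_1+\Lambda_2$ is $e(x,\xi)=\xi-\diff\omega_1(x)-\diff\omega_2(x)$, where $\omega_\ell$ defines $\Lambda_\ell$; combining the Leibniz identity $\hbar D_x(v^1_\hbar v^2_\hbar)=(\hbar D_x v^1_\hbar)v^2_\hbar+v^1_\hbar(\hbar D_x v^2_\hbar)$ with the trivial $(\diff\omega_1+\diff\omega_2)v^1_\hbar v^2_\hbar=\diff\omega_1 v^1_\hbar\cdot v^2_\hbar+v^1_\hbar\cdot\diff\omega_2 v^2_\hbar$ yields
\[
\Ophbar(e)(v^1_\hbar v^2_\hbar)=\Ophbar(e_1)v^1_\hbar\cdot v^2_\hbar+v^1_\hbar\cdot\Ophbar(e_2)v^2_\hbar,
\]
with $e_\ell=\xi-\diff\omega_\ell$ an equation of $\Lambda_\ell$. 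Hölder applied to each summand, using the $I$-bound (with its $h^{1/2}+\hbar$ factor) on one factor and the $\mathcal{B}$-bound on the other, delivers the required template; the $J$-version is identical with $\hbar$ in place of $h^{1/2}+\hbar$. The remaining point, and the main obstacle, is to check that $v^1_\hbar v^2_\hbar$ is microlocally supported close to $K_1+K_2$: writing $v^\ell_\hbar=\Ophbar(\phi_\ell)v^\ell_\hbar$ modulo $E^{p_\ell}_\emptyset$ for smooth cut-offs $\phi_\ell$ near $K_\ell$, one applies $\Ophbar(\chi)$ with $\chi\in S(1)$ supported away from $K_1+K_2$ to the product and proves an $O(h^\infty)$ bound via a semiclassical ``frequencies add'' argument (non-stationary phase on the kernel of the resulting bilinear operator). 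This compositional statement is the genuinely delicate step, while all other ingredients, namely Hölder, Leibniz and symbolic calculus, are routine.
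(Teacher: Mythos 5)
Parts $i)$ and $ii)$ are correct and follow the paper's own route: the factorization $a(x,\xi)-a(x,\diff\omega(x))=b(x,\xi)(\xi-\diff\omega(x))$ with $b\in S(1,K)$ plus Theorem~\ref{ref:A.2.2} is exactly the paper's argument for $i)$, and the commutation hypotheses handle the $Z^k$ statement as you say. For $ii)$ the paper uses the exact identity $\Ophbar(\xi)(e^{-i\omega/\hbar}v_\hbar)=e^{-i\omega/\hbar}\Ophbar(\xi-\diff\omega)v_\hbar$ rather than Lemma~\ref{ref:3.1.4}$\,i)$ with its $\hbar\Ophbar(b)$ error, but since that error is $O(\hbar)$ times the $\mB$-bound it is harmless even for the $J$-classes, so your version is fine.

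In part $iii)$ your Leibniz--H\"older computation for the estimate \eqref{319} is correct, but the step you yourself flag as ``the genuinely delicate step'' --- that $v^1_\hbar v^2_\hbar$ is microlocally supported close to $K_1+K_2$ --- is left unproven, and a direct bilinear non-stationary-phase argument on $\Ophbar(\chi)(v^1_\hbar v^2_\hbar)$ is not needed. The paper's device is to use part $ii)$, which you have already established, \emph{before} taking the product: set $w^\ell_\hbar=e^{-i\omega_\ell/\hbar}v^\ell_\hbar$, which lies in $L^{p_\ell}I_{\Lambda_0}^{\mu_\ell,\gamma_\ell}[K_{\ell,0}]$ with $K_{\ell,0}=\chi_{\omega_\ell}(K_\ell)$. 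Since each $w^\ell_\hbar$ is microlocally supported near a set whose fiber projection sits near the zero section, the localization of $w^1_\hbar w^2_\hbar$ near $K_{1,0}+K_{2,0}$ reduces to the elementary fact that the semiclassical Fourier transform of a product is the convolution of the transforms, so the $\xi$-supports add. The Leibniz--H\"older step is then performed on the zero section, where $\Ophbar(e)=\hbar D_x$ exactly, and one conjugates back with $e^{i(\omega_1+\omega_2)/\hbar}$ using part $ii)$ again, noting that $\omega_1+\omega_2$ parametrizes $\Lambda_1+\Lambda_2$ near $K_1+K_2$. I recommend you replace your sketched bilinear kernel estimate by this reduction: it closes the gap with tools you already have in hand.
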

\begin{proof} 
$i)$ We may always modify $\omega$ outside a neighborhood of $\pi_1(K)$ so that 
it is compactly supported, and this will modify the quantities at hand only by an element of 
$E^p_\emptyset$. We may find a symbol $b$ in $S(1,K)$ so that
$$
a(x,\xi)-a(x,\diff\omega(x))=b(x,\xi)(\xi-\diff\omega(x))
$$
in $K_r$ for some small $r$. By the symbolic calculus of appendix~\ref{S:A.2},
$$
\Ophbar(a)v_\hbar-a(x,\diff\omega)v_\hbar=\Ophbar(b)\Ophbar(\xi-\diff\omega(x))v_\hbar
+\hbar \Ophbar (c)v_\hbar
$$
for a new symbol $e$ in $S(1,K)$. The conclusion follows from estimate \eqref{319}. 

If we make act a vector field $Z$ as in the statement on the last equality and use the 
commutation assumptions, we obtain the last statement of $i)$.

$ii)$ We have seen in Lemma~\ref{ref:3.1.4} that $(e^{-i\omega/\hbar}v_\hbar)_\hbar$ is in 
$E^p_{K_0}/E^p_\emptyset$. Since $\Ophbar(\xi)(e^{-i\omega/\hbar}v_\hbar)=
e^{-i\omega/\hbar}\Ophbar(\xi-\diff\omega(x))v_\hbar$, we deduce from \eqref{319} 
the statement. 

$iii)$ Denote by $\omega_1,\omega_2$ two smooth functions, that may be assumed to be 
compactly supported close to $\pi_1(K_1)$, $\pi_1(K_2)$ respectively, such that 
$\Lambda_\ell=\{(x,\diff\omega_\ell(x))\}$ close to $K_\ell$, $\ell=1,2$. 
Then $\omega=\omega_1+\omega_2$ parametrizes $\Lambda=\Lambda_1+\Lambda_2$ close to 
$K_1+K_2$. We define 
$w^\ell_\hbar=e^{-i\omega^\ell/\hbar}v_\hbar^\ell$. By $ii)$, 
$(w^\ell_\hbar)_\hbar$ is in $L^{p_\ell}I_{\Lambda_0}^{\mu_\ell,\gamma_\ell}[K_{\ell,0}]$, 
where $K_{\ell,0}=\chi_{\omega_\ell}(K_\ell)$. Writing 
a product from the convolution of the Fourier transforms of the factors, we 
see that $(w^1_\hbar w^2_\hbar)_\hbar$ is in 
$E^p_{K_{1,0}+K_{2,0}}/E^p_\emptyset$. 
Let us check that 
$w^1_\hbar w^2_\hbar$ satisfies estimate \eqref{319} when $e$ is an equation 
of $\Lambda_0$ i.e.\ $e(x,\xi)=\xi$ so that 
$\Ophbar(e)=\hbar D_x$. We write
$$
\lA \hbar D_x (w^1_\hbar w^2_\hbar)\rA_{L^p}\le 
\lA \hbar D_x w^1_\hbar \rA_{L^{p_1}}\lA w^2_\hbar\rA_{L^{p_2}}
+\lA w^1_\hbar\rA_{L^{p_1}} \lA \hbar D_x w^2_\hbar\rA_{L^{p_2}}
$$
and use \eqref{318}, \eqref{319} for each factor 
to get that $(w^1_\hbar w^2_\hbar)_\hbar$ is in 
$L^{p}I_{\Lambda_0}^{\mu,\gamma}[K_{1,0}+K_{2,0}]$. We just have to apply again $ii)$ 
to $v_\hbar=e^{i\omega/\hbar} (w^1_\hbar w^2_\hbar)$ to get the conclusion. The proof is similar for classes $\LJ$ and $\mB_p^{\mu,\gamma}[K]$.
\end{proof}

We have defined, up to now, classes of Lagrangian distributions 
microlocally supported close to a compact set of the phase space. 
We introduce next classes of Lagrangian distributions that do not obey 
such a localization property. 

From now on, we consider phase functions $\omega\colon\Rs\rightarrow \xR$ 
which are smooth, non zero, and positively homogeneous of degree $-1$. 
We set
\be\label{3111}
\Lambda=\left\{(x,\diff\omega(x))\,;\,x\in\Rs\right\}\subset T^*(\Rs)
\ee
so that $\Lambda$ is invariant under the action of $\xR_+^*$ on $T^*(\Rs)$ given by 
$\lambda\cdot(x,\xi)=(\lambda x,\lambda^{-2}\xi)$. For $h\in ]0,1]$, $C$ a positive 
constant, we introduce the notations\index{Semi-classical notations!$J(h,C)$}
\index{Semi-classical notations!$h_j$}
\index{Semi-classical notations!$j_0(h,C)$, $j_1(j,C)$}
\be\label{3112}
\begin{aligned}
&J(h,C)=\left\{ j\in\xZ\,;\, C^{-1}h^{2(1-\sigma)}\le 2^j \le Ch^{-2\beta}\right\},\\
&h_j=h 2^{-j/2} \text{ if }j\in J(h,C),\\[1ex]
&j_0(h,C)=\min (J(h,C))-1,\quad j_1(h,C)=\max(J(h,C))+1.
\end{aligned}
\ee
We note that \eqref{317} is satisfied by $\hbar=h_j$ if 
$j\in J(h,C)$ (for a constant $C=C_0^2$). 
For $j\in \xZ$, $v$ a distribution on $\xR$, we set
\index{Semi-classical notations!$\Tj$}
$$
\Theta_j^*v=v\bigl( 2^{j/2}\cdot\bigr),
$$
so that in particular, if $p\in [1,\infty]$, 
$\blA \Theta_j^*\brA_{\mathcal{L}(L^p,L^p)}=2^{-j/(2p)}$. 
If $a$ belongs to the class of symbols $S(m)$ and 
if $a_j(x,\xi)=a(2^{-j/2}x,2^j\xi)$ we notice that for $j\in J(h,C)$
\be\label{3113}
\Theta_{-j}^*\Oph(a)\Theta_j^*=\Op_{h_j}(a_j).
\ee
We fix a function $\varphi$ in $C^\infty_0(\xR^d)$ such that 
${\displaystyle{\sum_{j\in \xZ}\varphi(2^{-j}\xi)\equiv 1}}$. We define
\index{Semi-classical notations!$\Djh$}
$$
\varphi_0(\xi)=\sum_{j=-\infty}^{-1}\varphi(2^{-j}\xi),\quad 
\Djh=\Oph(\varphi(2^{-j}\xi))=\varphi(2^{-j}h D).
$$
\begin{defi}\label{ref:3.1.7}
Let $\nu\in\xR$, $p\in [1,\infty]$, $b\in \xR$. 
One denotes by \index{Lagrangian distributions!$h^\nu \mR_p^b$}$h^\nu \mR_p^b$ the space of families of 
$L^p$-functions $\vhbar$ such that there is $C>0$ and
\be\label{3114}
\begin{aligned}
&\blA \Djh v_h\brA_{L^p}\le C_0 h^\nu 2^{-j+b} \quad\text{for }j\ge j_0(h,C)\\
&\blA \Oph(\varphi_0(2^{-j_0(h,C)}\xi))v_h\brA_{L^p}\le C h^\nu,
\end{aligned}
\ee
where $j_+=\max(j,0)$.
\end{defi}
Clearly the definition is independent of the choice of $\varphi_0$. 

\begin{defi}\label{ref:3.1.8}
Let $\Lambda$ be a lagrangian submanifold of form \eqref{3111}, 
$K$ a compact subset of $T^*(\Rs)$ meeting $\Lambda$. 
Let $\nu,\mu$ be in $\xR$, $\gamma\in \xR_+$, $F$ 
a closed non empty subset of $T^*\xR$ 
such that $\pi_2(F)$ is compact in $\xR$. 
One denotes by \index{Lagrangian distributions!$\htLI$}
$\htLI$ (resp.\ \index{Lagrangian distributions!$\htLJ$}$\htLJ$, resp.\ \index{Lagrangian distributions!$\htBF$}$\htBF$) 
the space of families of functions $(v_h)_{h\in ]0,1]}$ such that

$\bullet$ For any $j\in J(h,C)$, there is a family $(v^j_{h_j})_{h_j}$, 
indexed by
$$
h_j \in \Big]0,\min  \Big(C_0^{\frac{1}{1-\sigma}}2^{\frac{j\sigma}{2(1-\sigma)}},C_0^{\frac{1}{\beta}}
2^{-j\frac{1+\beta}{2\beta}}\Big)\Big]
$$
which is an element 
of $\hLI$ (resp.\ $\hLJ$, resp.\ $\hBF$) with the constants in \eqref{318}, \eqref{319}, \eqref{3110} 
uniform in $j\in J(h,C)$. 

$\bullet$ For any $h\in ]0,1]$, $v_h=\sum_{j\in J(h,C)}\Theta_j^*v^j_{h_j}$.

One defines $\htB=\bigcup\htBF$ where the union is taken over 
the sets $F$ which are closed with $\pi_2(F)$ compact in $\xR$.
\end{defi}

\begin{rema}
$\bullet$ The interval of variation imposed to 
$h_j$ in the preceding definition is the one deduced from \eqref{317} with $\hbar = h_j$. 

$\bullet$ The building blocks $(v^j_{h_j})_{h_j}$ in the above definition are defined modulo 
$O(h_j^\infty)$ so modulo $O(h^\infty)$ since $h_j\le h^\sigma$. Since the cardinal of $J(h,C)$ is 
$O(|\log h|)$, we see that the classes introduced in the above definition are well defined modulo 
$O(h^\infty)$. 

$\bullet$ It follows from the above two definitions that 
$h^\nu \tilde{\mB}_p^{0,b} \subset h^\nu \mR_p^b$. Moreover, by \e{3114} and the fact that the cardinal of 
$\xZ_-\cap \{j\ge j_0(h,C)\}$ is $O(|\log h|)$, we see that if $u,v$ are in $\mR_\infty^b$ with $b>0$, then $uv$ is in 
$h^{-0}\mR_\infty^b\defn \cap_{\theta>0}h^{-\theta}\mR_{\infty}^b$.
\end{rema}

Let us prove a statement similar to $i)$ of 
Proposition~\ref{ref:3.1.6} for elements of the classes of distributions we just defined. 

\begin{prop}\label{ref:3.1.9}
We assume that the function $\omega$ defining $\Lambda$ satisfies either $\omega(x)\neq 0$ 
for all $x\in\xR^*$ or $\omega\equiv 0$. In the first (resp.\ second) case we denote 
by $K$ a compact subset of $T^*(\Rs)\setminus 0$ (resp.\ of $T^*(\Rs)$) such that 
$K\cap \Lambda\neq \emptyset$. Let $\mu\in\xR$, $\gamma\in \xR_+$, $p\in [1,\infty]$, 
$k\in \xN$ be given. Consider a function $(x,\xi)\mapsto a(x,\xi)$ smooth on 
$\xR^*\times \xR^*$ (resp.\ $\xR^*\times\xR$) satisfying for some real numbers $\ell,\ell',d,d'$ 
(resp.\ $\ell,\ell'$, $d\ge0$, $d'\ge 0$) and all $\alpha,\beta$ in $\xN$
\be\label{3115}
\la \px^\alpha\partial_\xi^\beta a(x,\xi)\ra \le C_{\alpha\beta}|x|^{\ell-\alpha}\langle x\rangle^{\ell'}
|\xi|^{d-\beta}\langle \xi\rangle^{d'}
\ee
when $(x,\xi)\in \xR^*\times \xR^*$ (resp.\ \eqref{3115} when $\beta\le d$ and
\be\label{3115a}
\px^\alpha\partial_\xi^\beta a(x,\xi)\equiv 0\quad\text{for }\beta>d,
\ee
when $(x,\xi)\in \xR^*\times\xR$).

Denote $Z=-hD_h+xD_x$ and let $(v_h)_h$ satisfy for any $k'\le k$, 
$(Z^{k'}v_h)_h\in \tLI^{\mu,\nu}[K]$ (resp.\ $\ti{\mB}_p^{\mu,\nu}[K]$). 
Then $(Z^k(\Oph(a)v_h))_h$ 
belongs to $\tLI^{\tilde{\mu},\tilde{\gamma}}[K]$ (resp.\ $\ti{\mB}_p^{\ti{\mu},\ti{\nu}}[K]$) where 
$\tilde{\mu}=\mu+2d-\ell -\ell'$, $\tilde{\gamma}=\gamma-\frac{\ell'}{2}-d'$. 

Moreover, under the assumption $(Z^{k'} v_h)_h\in L^p \ti{I}_\La^{\mu,\gamma}[K]$, 
if $\chi$ is in $C^\infty_0(\xR)$ is equal to one close to zero, 
and has small enough support, $Z^k((1-\chi)(xh^{-\beta})a(x,\diff\omega)v_h)$ is also 
in $\tLI^{\tilde{\mu},\tilde{\gamma}}[K]$ and
\be\label{3116}
Z^k \left[ \Oph(a)v_h-(1-\chi)(xh^{-\beta})a(x,\diff\omega)v_h\right]
\ee
belongs to $h^{1/2} \tilde{\mB}_p^{\tilde{\mu},\tilde{\gamma}}[K]+h\tilde{\mB}_p^{\tilde{\mu}-1,\tilde{\gamma}}[K]$. 

If we assume that $(Z^{k'}v_{h})_{h}$ is in $\tLJ^{\mu,\gamma}[K]$ for $k'\le k$, 
we obtain instead that 
$(Z^k\Oph(a)v_h)_h$ and 
$(Z^k(1-\chi)(xh^{-\beta})a(x,\diff\omega)v_h)_h$ belong to 
$\tLJ^{\tilde{\mu},\tilde{\gamma}}[K]$ and that \eqref{3116} is in $h\tilde{\mB}_p^{\tilde{\mu}-1,\tilde{\gamma}}[K]$. 

When $\omega\equiv 0$, if $Z^{k'}v_h$ is in $\tilde{\mB}_p^{\mu,\gamma}[K]$, we 
obtain that $(Z^{k}\Oph(a)v_h)_h$ is in $\tilde{\mB}_p^{\tilde{\mu},\tilde{\gamma}}[K]$.

The same results hold if we quantize $a$ by $\Oph(\overline{a})^*$ instead of $\Oph(a)$ 
i.e.\ under the same assumptions as above 
$(Z^k\Oph(\overline{a})^*(v_h))$ belongs 
to $\tLI^{\tilde \mu,\tilde \gamma}[K]$ and 
\be\label{3116a}
Z^k \left[ \Oph(\overline{a})^*v_h-(1-\chi)(xh^{-\beta})a(x,\diff\omega)v_h\right] 
\ee
belongs to the same spaces as indicated 
above 
after \eqref{3116}. In the same way, when $\omega\equiv 0$, 
and when $(Z^{k'}v_h)_{h}$ is 
in $\tilde{\mB}_p^{\mu,\gamma}[K]$, for $k'\le k$, 
$(Z^k(\Oph(\overline{a})^*v_h))_h$ is in $\tilde{\mB}_p^{\mu,\gamma}[K]$.
\end{prop}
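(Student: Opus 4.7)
The natural strategy is to reduce to the compact, single-scale situation already handled by Proposition~\ref{ref:3.1.6}, using the definition of the classes $\htLI$, $\htLJ$, $\htBF$ by dyadic rescaling. Writing $v_h=\sum_{j\in J(h,C)}\Theta_j^{*}v^j_{h_j}$ and invoking identity \eqref{3113}, one obtains
\[
\Oph(a)v_h = \sum_{j\in J(h,C)}\Theta_j^{*}\,\Op_{h_j}(a_j)\,v^j_{h_j},
\qquad a_j(x,\xi)=a(2^{-j/2}x,2^j\xi),
\]
so the whole question reduces to estimating, uniformly in $j\in J(h,C)$, the action of $\Op_{h_j}(a_j)$ on the building block $v^j_{h_j}\in h^\nu I_\Lambda^{\mu,\gamma}[K]$ (resp.\ $J_\Lambda$, $\mB_p$), and then recollecting the pieces using that $\Theta_j^{*}$ is an isometry between the appropriate $L^p$-normalized spaces.

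The key computation is then purely symbolic. Using \eqref{3115}, one checks that $a_j=c_j\,\tilde a_j$ where $c_j=2^{j(d-\ell/2)}\la 2^{-j/2}x\ra^{\ell'}\la 2^j\xi\ra^{d'}$ (evaluated at a point of $K$) and $\tilde a_j\in S(1,K)$ uniformly in $j$. Applying part $(i)$ of Proposition~\ref{ref:3.1.6} at the Planck constant $h_j$ then yields
\[
\Op_{h_j}(a_j)v^j_{h_j}=a_j(x,d\omega(x))\,v^j_{h_j}+ (h_j^{1/2}+h_j)\,w^j_{h_j},
\qquad w^j_{h_j}\in \mB_p^{\mu,\gamma}[K],
\]
with uniform constants in $j$ (respectively an $h_j$-remainder when $v^j_{h_j}\in\hLJ$). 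Translating the factor $2^{j(d-\ell/2)}\la 2^{-j/2}x\ra^{\ell'}\la 2^j\xi\ra^{d'}$ through the substitution $\hbar=h_j=h\,2^{-j/2}$ and matching with the weight $(h/\hbar)^{\mu+1/p}(1+h/\hbar)^{-2\gamma}$ defining the class~\eqref{318} shows that the new indices are exactly $\tilde\mu=\mu+2d-\ell-\ell'$ and $\tilde\gamma=\gamma-\ell'/2-d'$. The cut-off $(1-\chi)(xh^{-\beta})$ appearing in the statement is needed precisely because the principal-symbol substitution $a_j(x,d\omega(x))$ blows up when $x\to 0$, whereas $v^j_{h_j}$ is supported at scale $2^{j/2}\ge h^{1-\sigma}\gg h^{\beta}$ away from the origin, so multiplying by this cut-off changes nothing up to an $O(h^\infty)$ contribution.

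For the iterated-$Z$ part, the point is that $Z=-hD_h+xD_x$ essentially commutes with the rescaling: since $h_j D_{h_j}=hD_h$ and $\Theta_j^{*}$ commutes with $xD_x$, one has $Z\Theta_j^{*}=\Theta_j^{*}Z_j$ with $Z_j=-h_j D_{h_j}+xD_x$. On the other hand, a standard computation gives $[Z,\Op_{h_j}(a_j)]=\Op_{h_j}(\tilde a_j)$ where $\tilde a_j=(-\xi\partial_\xi+x\partial_x)a_j$; this operation preserves the class of symbols satisfying \eqref{3115} with the same $(\ell,\ell',d,d')$, and also preserves the subclass of symbols vanishing on $\Lambda$ (since $\Lambda$ is invariant under $(x,\xi)\mapsto(\lambda x,\lambda^{-2}\xi)$). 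Consequently the hypotheses of Proposition~\ref{ref:3.1.6}$(i)$ on $Z^{k'}v_h$, $k'\le k$, propagate to $Z^k\Oph(a)v_h$ after expanding commutators and applying the previous paragraph to each resulting term.

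\textbf{Expected obstacle.} The serious bookkeeping is the verification that the weights $2^{j(d-\ell/2)}\la 2^{-j/2}x\ra^{\ell'}\la 2^j\xi\ra^{d'}$ recombine, after rescaling and summation over $j\in J(h,C)$, to give exactly the announced shift $(\mu,\gamma)\mapsto(\mu+2d-\ell-\ell',\gamma-\ell'/2-d')$ and not something slightly worse; in particular one must check uniformity in $j$ for the two extreme behaviours $h/\hbar\ll 1$ (small $j$) and $h/\hbar\gg 1$ (large $j$), which is what fixes separately the roles of $\ell'$ and $d'$ in the formula for $\tilde\gamma$. The remaining statements $(1-\chi)(xh^{-\beta})a(x,d\omega)v_h\in\htLI^{\tilde\mu,\tilde\gamma}$ and the identification of the remainder \eqref{3116} in $h^{1/2}\tilde\mB_p^{\tilde\mu,\tilde\gamma}+h\tilde\mB_p^{\tilde\mu-1,\tilde\gamma}$ follow by applying the same rescaling argument to the first term and then using Proposition~\ref{ref:3.1.6}$(i)$ directly on the difference. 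The assertion for $\Oph(\bar a)^{*}$ is identical up to replacing the standard quantization by its formal adjoint, which in the symbolic calculus of Appendix~\ref{S:A.2} differs by an $\hbar$-lower order symbol satisfying the same estimates; the case $\omega\equiv 0$ is simpler since no phase correction is required and one only needs boundedness of $\Oph(a)$ on the $\mB_p^{\mu,\gamma}$-scale, which follows from the hypotheses \eqref{3115}–\eqref{3115a} that force $a$ to be polynomial in~$\xi$.
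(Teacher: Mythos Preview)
Your approach is the same as the paper's: decompose $v_h=\sum_{j\in J(h,C)}\Theta_j^{*}v^j_{h_j}$, use \eqref{3113} to reduce to $\Op_{h_j}(a_j)$ acting on the compactly microlocalized pieces, factor out the scalar $2^{j(d-(\ell+\ell')/2)+j_+(d'+\ell'/2)}$ from $a_j$ to identify the shift in $(\mu,\gamma)$, and then invoke Proposition~\ref{ref:3.1.6}$(i)$ together with the commutation of $Z$ with $\Op_{h_j}$.

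Two slips to correct. First, when you apply Proposition~\ref{ref:3.1.6}$(i)$ at Planck constant $h_j$, the remainder lies in $(h^{1/2}+h_j)\mB_p^{\mu,\gamma}[K]$, not $(h_j^{1/2}+h_j)\mB_p^{\mu,\gamma}[K]$: the $h^{1/2}$ is the \emph{outer} semiclassical parameter, and this is exactly what produces the two-term structure $h^{1/2}\tilde\mB_p^{\tilde\mu,\tilde\gamma}+h\tilde\mB_p^{\tilde\mu-1,\tilde\gamma}$ in \eqref{3116} once you write $h_j=h\cdot 2^{-j/2}$. Second, your commutator formula is off by a factor: one has $[Z,\Op_{h_j}(e)]=\Op_{h_j}\bigl((x\partial_x-2\xi\partial_\xi)e\bigr)$, because both the $-h\partial_h$ part and the $x\partial_x$ part contribute a $-\xi\partial_\xi e$. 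This is what matches the dilation $(x,\xi)\mapsto(\lambda x,\lambda^{-2}\xi)$ you correctly cite as preserving $\Lambda$; with only one $-\xi\partial_\xi$ the invariance argument would fail. With these fixes your outline is complete.
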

\begin{proof}
According to Definition~\ref{ref:3.1.8}, we represent 
$v_h=\sum_{j\in J(h,C)}\Theta_j^*v^j_{h_j}$ 
where $(v^j_{h_j})_{h_j}$ is a bounded sequence of elements of 
$\LI$, as well as $(Z^{k'}v_{h_j})_{h_j}$ for $k'\le k$. 
By \eqref{3113} and \eqref{3115},
\be\label{3117}
\Oph(a)v_h=\sum_{j\in J(h,C)}\Theta_j^* w^j_{h_j}
\ee
with $w^j_{h_j}=\Op_{h_j}(a_j)v^j_{h_j}$ and 
$$
a_j(x,\xi)=a\left(2^{-j/2}x,2^j\xi\right)=
2^{j\left(d-\frac{\ell+\ell'}{2}\right)+j_+ \left(d'+\frac{\ell'}{2}\right)}
b_j(x,\xi).
$$
When $(x,\xi)$ stays in a compact subset of $T^*(\Rs)\setminus 0$, 
\eqref{3115} shows that $\px^\alpha\partial_\xi^\beta b_j=O(1)$ 
uniformly in $j$. In the same way when $(x,\xi)$ stays in a compact subset of 
$T^*(\Rs)$, \eqref{3115} for $\beta\le d$ and \eqref{3115a} show also that 
$\px^\alpha\partial_\xi^\beta b_j=O(1)$ uniformly in $j$. Since 
$2^j=(h/h_j)^2$, it follows from Theorem~\ref{ref:A.2.2} 
of the appendix that $(w^j_{h_j})_{h_j}$ is a bounded sequence indexed by $j\in J(h,C)$ 
of elements of $L^pI_{\Lambda}^{\tilde{\mu},\tilde{\gamma}}[K]$. Moreover, 
the vector field $Z$ satisfies for any symbol $e$
$$
[Z,\Op_{h_j}(e)]=\Op_{h_j}\left((xD_x-2\xi D_\xi)e\right).
$$
Since either $\Lambda=\{(x,\diff\omega(x))\}$ with 
$\omega$ homogeneous of degree $-1$ or 
$\Lambda=\{(x,0)\}$, we see that $(xD_x-2\xi D_\xi)e$ vanishes on $\Lambda$ 
if $e$ does. Consequently, the assumption of the last statement in $i)$ of 
Proposition~\ref{ref:3.1.6} is satisfied and we conclude 
that $(Zw^j_{h_j})_{h_j}$ is a bounded sequence of elements of $L^pI_\Lambda^{\tilde{\mu},\tilde{d}}[K]$.

To prove \eqref{3116}, we use that again by $i)$ of Proposition~\ref{ref:3.1.6},
$$
w^j_{h_j}=a_j(x,\diff\omega)v^j_{h_j}+\left(h^{1/2}+h_j\right)r^j_{h_j}
$$
where $(r^j_{h_j})_{h_j}$ is a bounded sequence indexed by $j\in J(h,C)$ of 
elements of $\mB_p^{\tilde{\mu},\tilde{\gamma}}[K]$, that stay in that space 
if one applies $Z^{k'}$ ($k'\le k$) on them. Let $\chi$ be as in the statement of the proposition, 
with small enough support. Then, if $x$ is close to 
$\pi_1(K)$ and $j$ is in $J(h,C)$, $(1-\chi)(2^{-j/2}xh^{-\beta})\equiv 1$. 
Consequently, since $(v^j_{h_j})_{h_j}$, as well as 
$(Z^{k'}v^j_{h_j})_{h_j}$ is microlocalized close to $K$, we may write 
$v^j_{h_j}=v^j_{h_j}(1-\chi)(2^{-j/2}x h^{-\beta})$ modulo a remainder which is 
$O(h^{\infty}_j)=O(h^\infty)$ in $L^p$, as well as its $Z^{k'}$-derivatives, $0\le k'\le k$. 
Integrating such a remainder in the $r^j_{h_j}$ contributions, we may write, using that 
$\diff\omega$ is homogeneous of degree $-2$,
$$
w^j_{h_j}=(1-\chi)(2^{-j/2}xh^{-\beta})a\left(2^{-j/2}x,\diff\omega(2^{-j/2}x)\right)v^j_{h_j}
+h^{1/2}r^j_{h_j}+h\tilde{r}^j_{h_j}
$$
where $(r^j_{h_j})_{h_j}$ is as above and $\tilde{r}^j_{h_j}=2^{-j/2}r^j_{h_j}$ is such that 
$(Z^{k'}\tilde{r}^j_{h_j})_{h_j}$ is in $\mB_p^{\tilde{\mu}-1,\tilde{\gamma}}[K]$ for 
$k'\le k$. This gives \eqref{3116} if we plug this expansion in \eqref{3117}.

To check that $\left(Z^k ((1-\chi)(xh^{-\beta})a(x,\diff\omega)v_h)\right)_{h}$ is also 
in $\tLI^{\tilde \mu,\tilde \gamma}[K]$, we write the function on which acts $Z^k$ as 
$$
\sum_{j\in J(h,C)}\Theta_j^* \left[ a(2^{-j/2}x,\diff\omega(2^{-j/2}x))(1-\chi)(x2^{-j/2}h^{-\beta})
v^j_{h_j}\right]
$$
and remark that, as above, the assumptions of microlocal localization of $(v^j_{h_j})_{h_j}$ 
allow one to remove the cut-off $(1-\chi)$ up to $O(h^\infty)$ remainders. 
Since $\diff\omega$ is homogeneous of degree $-2$,
$$
a(2^{-j/2}x,\diff\omega(2^{-j/2}x))
=O\left(2^{j\left(d-\frac{\ell+\ell'}{2}\right)+j_+ \left(d'+\frac{\ell'}{2}\right)}\right)
$$
when $x$ stays in a compact subset of $\xR^*$, so that the above sum defines an element of 
$\tLI^{\tilde \mu,\tilde \gamma}[K]$. 

The statement of the proposition concerning the case when 
$(Z^{k'}v_h)_h$ is in $\tLJ[K]$ is proved similarly, as well as the one 
about $\tilde{\mB}_p^{\mu,\gamma}[K]$.

Finally, the statements concerning $\Oph(\overline{a})^*$ instead of 
$\Oph(a)$ are proved in the same way: one may write \eqref{3117} with 
$w^j_{h_j}$ given by $\Oph(\overline{a_j})^*v^j_{h_j}$. By Theorem~\ref{ref:A.2.2} in 
the appendix, we know that there is a symbol 
$b_j$ in $S(1,K)$ uniformly in $j$, such that $\Op_{h_j}(\overline{a_j})^*=\Op_{h_j}(b_j)$. 
Moreover, $b_j(x,\xi)=a_j(x,\xi)+h_j c_j(x,\xi)$ for some other symbol 
$c_j$ in $S(1,K)$ uniformly in $j$. The statements concerning $\Op_h(\overline{a})^*v_h$ 
thus follows from those we just proved for $\Oph(a)v_h$.
\end{proof}

Let us study products. 

\begin{prop}\label{ref:3110}
Let $p_1,p_2,p$ be in $[1,+\infty]$ with $\frac{1}{p_1}+\frac{1}{p_2}=\frac{1}{p}$, $\gamma_1,\gamma_2$ in $\xR$, 
$\Lambda_1,\Lambda_2$ be two Lagrangian submanifolds of $\cotangent$ of the form \eqref{3111}, defined in terms 
of phase functions $\omega_1,\omega_2$ homogeneous 
of degree $-1$, $\omega_1\not\equiv 0$, $\omega_2\not\equiv 0$. Let $K_1$, $K_2$ be two compact subsets of $\cotangent$ with $K_\ell\cap \Lambdal\neq \emptyset$, $\ell=1,2$. Let 
$(\vl)_h$ be an element of $\tilde{\mB}_{\pl}^{\mul,\gammal}[K_\ell]$ (resp.\ $L^{\pl}\tilde{I}_{\Lambdal}^{\mul,\gammal}[K_\ell]$, 
resp.\ $L^{\pl}\tilde{J}_{\Lambdal}^{\mul,\gammal}[K_\ell]$) $\ell=1,2$. 

There is a compact subset $K$ of $\cotangent$ with $K\cap (\Lambda_1+\Lambda_2)\neq \emptyset $ 
such that $(v_h^1\cdot v_h^2)_h$ belongs to $\tilde{\mB}_{p}^{\mu,\gamma}[K]$ 
(resp.\ $L^{p}\tilde{I}_{\Lambda_1+\Lambda_2}^{\mu,\gamma}[K]$, 
resp.\ $L^{p}\tilde{J}_{\Lambda_1+\Lambda_2}^{\mu,\gamma}[K]$) with $\mu=\mu_1+\mu_2$, $\gamma=\gamma_1+\gamma_2$. Moreover, for any neighborhood $\Omega$ of $\Lambda_1+\Lambda_2$, any compact subset $L$ 
of $\xR\setminus\{0\}$, there are neighborhoods $\Omega_\ell$ of $\Lambda_\ell$, $\ell=1,2$, 
such that if $K_\ell\subset \Omega_\ell \cap \pi^{-1}_1(L)$, $\ell=1,2$, then $K\subset \Omega$.
\end{prop}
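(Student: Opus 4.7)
The plan is to reduce the global statement to the compact-scale product formula, Proposition~\ref{ref:3.1.6}(iii), via the dyadic decomposition of Definition~\ref{ref:3.1.8}. Writing
\begin{equation*}
v^\ell_h = \sum_{j\in J(h,C)} \Theta_j^* v^{\ell,j}_{h_j},\quad \ell=1,2,
\end{equation*}
with each $(v^{\ell,j}_{h_j})_{h_j}$ in the corresponding compact class on $K_\ell$ at Planck parameter $h_j$, uniformly in $j$, the product splits into
\begin{equation*}
v^1_h v^2_h = \sum_{j_1,j_2}\bigl(\Theta_{j_1}^* v^{1,j_1}_{h_{j_1}}\bigr)\bigl(\Theta_{j_2}^* v^{2,j_2}_{h_{j_2}}\bigr).
\end{equation*}

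The first step is to discard most terms by an $x$-support argument. Since $K_\ell\subset T^*(\xR\setminus\{0\})$, $\pi_1(K_\ell)$ is a compact subset of $\xR^*$, and, modulo $O(h^\infty)$, the $x$-support of $\Theta_{j_\ell}^* v^{\ell,j_\ell}_{h_{j_\ell}}$ lies in a neighborhood of $2^{-j_\ell/2}\pi_1(K_\ell)$. These supports are disjoint as soon as $|j_1-j_2|>M$ for a constant $M$ depending only on $\pi_1(K_1)$ and $\pi_1(K_2)$, so only pairs with $j_2=j_1+k$, $|k|\le M$, contribute. For those, I factor out the common rescaling via the identity $(\Theta_{j_1}^* a)(\Theta_{j_1+k}^* b) = \Theta_{j_1}^*\bigl(a\cdot \Theta_k^* b\bigr)$.

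At Planck scale $h_{j_1}$, the factor $\Theta_k^* v^{2,j_1+k}_{h_{j_1+k}}$ is microlocally supported close to the image $K_2^{(k)}$ of $K_2$ under $(x,\xi)\mapsto(2^{-k/2}x, 2^k\xi)$; since $\omega_2$ is positively homogeneous of degree $-1$, the lagrangian $\Lambda_2$ is invariant under this map, so $K_2^{(k)}$ still meets $\Lambda_2$, and using $h_{j_1+k}/h_{j_1}=2^{-k/2}\in[2^{-M/2},2^{M/2}]$ one verifies that $\Theta_k^* v^{2,j_1+k}_{h_{j_1+k}}$ belongs to the compact class on $K_2^{(k)}$ at scale $h_{j_1}$ with estimates uniform in $j_1$ and $k$. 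Proposition~\ref{ref:3.1.6}(iii) applied at scale $h_{j_1}$ then gives
\begin{equation*}
v^{1,j_1}_{h_{j_1}}\cdot \Theta_k^* v^{2,j_1+k}_{h_{j_1+k}} \in L^p I^{\mu,\gamma}_{\Lambda_1+\Lambda_2}[K_1+K_2^{(k)}]
\end{equation*}
uniformly in $j_1$ and $|k|\le M$. Setting $K:=\bigcup_{|k|\le M}(K_1+K_2^{(k)})$, the reassembled double sum $\sum_{j_1,|k|\le M}\Theta_{j_1}^*[v^{1,j_1}_{h_{j_1}}\cdot \Theta_k^* v^{2,j_1+k}_{h_{j_1+k}}]$ matches Definition~\ref{ref:3.1.8} for $L^p\tilde{I}^{\mu,\gamma}_{\Lambda_1+\Lambda_2}[K]$. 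The $\tilde{\mB}$ and $L^p\tilde{J}$ cases follow identically from the corresponding parts of Proposition~\ref{ref:3.1.6}.

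For the uniformity claim, once $\pi_1(K_\ell)\subset L$ is imposed, $M$ is determined solely by $L$, so the union defining $K$ is finite in $k$; as the $\Omega_\ell$ shrink toward $\Lambda_\ell$, each piece $K_1+K_2^{(k)}$ shrinks toward $\Lambda_1+\Lambda_2$ (using $\Lambda_2^{(k)}=\Lambda_2$), and taking $\Omega_\ell$ small enough uniformly in the finite range of $k$ forces $K\subset\Omega$. The principal technical obstacle is the off-diagonal bookkeeping: one must correctly identify the rescaled compact $K_2^{(k)}$ absorbing the scale mismatch $h_{j_1+k}/h_{j_1}=2^{-k/2}$, transfer the defining inequalities \eqref{318}--\eqref{3110} through $\Theta_k^*$, and check that the pair $(h,h_{j_1})$ still satisfies \eqref{317}, so that Proposition~\ref{ref:3.1.6}(iii) can be invoked with bounds uniform in $j_1$ across $|k|\le M$. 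All these verifications rest on the invariance of the lagrangians $\Lambda_\ell$ under the dyadic scaling $(x,\xi)\mapsto(\lambda x,\lambda^{-2}\xi)$.
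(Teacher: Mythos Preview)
Your proof is correct and follows essentially the same route as the paper: dyadic decomposition via Definition~\ref{ref:3.1.8}, reduction to near-diagonal pairs $|j_1-j_2|\le M$ by the $x$-localization of the building blocks, rescaling to a common Planck parameter $h_{j_1}$ using the invariance of $\Lambda_2$ under $(x,\xi)\mapsto(\lambda x,\lambda^{-2}\xi)$, and application of Proposition~\ref{ref:3.1.6}(iii). The only cosmetic difference is that the paper packages the finitely many $k$-terms into a single factor $\tilde v^{1,j_1}_{h_{j_1}}=\sum_{|j_1-j_2|\le C_0}\Theta_{j_2-j_1}^* v^{2,j_2}_{h_{j_2}}$ living on one enlarged compact $\tilde K_2$ and invokes Proposition~\ref{ref:3.1.6}(iii) once, whereas you apply it for each $k$ and take the finite union $K=\bigcup_{|k|\le M}(K_1+K_2^{(k)})$; these are equivalent.
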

\begin{proof}
By Definition~\ref{ref:3.1.8}, we may write for $\ell=1,2$, 
$$
v_h^\ell=\sum_{\jl\in J(h,C)}\Theta_{\jl}^*v^{\ell,\jl}_{h_{\jl}}
$$
where $(v^{\ell,\jl}_{h_{\jl}})_{h_{\jl}}$ is a bounded sequence of $L^{\pl}I_{\Lambdal}^{\mul,\gammal}[K_\ell]$. We write
\be\label{3117a} 
v_h^1\cdot v_h^2=\sum_{j_1\in J(h,C)}\Theta_{j_1}^*w^{j_1}_{h_{j_1}}
\ee
with
\be\label{3118}
w^{j_1}_{h_{j_1}}=v^{1,j_1}_{h_{j_1}}\sum_{j_2\in J(h,C)}\Theta_{j_2-j_1}^*v^{2,j_2}_{h_{j_2}}.
\ee
Because of the microlocal localization properties of $(v^{\ell,\jl}_{h_{\jl}})_{h_{\jl}}$ we may, up to an 
$O(h^\infty)$ remainder in $L^{\pl}$, replace 
$(v^{\ell,\jl}_{h_{\jl}})_{h_{\jl}}$ by $(\theta v^{\ell,\jl}_{h_{\jl}})_{h_{\jl}}$ where $\theta$ is in $C^\infty_0(\xR)$ and 
is equal to one on a large enough compact subset of $\xR^*$. This shows that in \eqref{3118}, we may limit the summation 
to those $j_2$ such that $|j_2-j_1|\le C_0$ for some large enough $C_0$, up to remainders which are $O(h^\infty)$ in 
$L^p$. Define 
$$
\tilde{v}^{1,j_1}_{h_{j_1}}=
\sum_{\substack{j_2\in J(h,C)\\ |j_1-j_2|\le C_0}}
\Theta_{j_2-j_1}^*v^{2,j_2}_{h_{j_2}}.
$$
Then $(\tilde{v}^{1,j_1}_{h_{j_1}})_{h_{j_1}}$ is a bounded sequence of 
$\mB_{p_2}^{\mu_2,\gamma_2}[\tilde{K}_2]$ 
(resp.\ $L^{p_2}I_{\Lambda_2}^{\mu_2,\gamma_2}[\tilde{K}_2]$, 
resp.\ $L^{p_2}J_{\Lambda_2}^{\mu_2,\gamma_2}[\tilde{K}_2]$) 
for some large enough compact subset $\tilde{K}_2$ of $\cotangent$, as follows 
from \eqref{3113} and the homogeneity properties of $\Lambda_2$. We just need to apply $iii)$ 
of Proposition~\ref{ref:3.1.6} to conclude that $(w^{j_1}_{h_{j_1}})_{h_{j_1}}$ is a bounded sequence of elements 
of $\mB_{p}^{\mu,\gamma}[K_1+\tilde{K}_2]$ 
(resp.\ $L^{p}I_{\Lambda_1+\Lambda_2}^{\mu,\gamma}[K_1+\tilde{K}_2]$, 
resp.\ $L^{p}J_{\Lambda_1+\Lambda_2}^{\mu,\gamma}[K_1+\tilde{K}_2]$). 

The last statement of the proposition follows 
from the fact that $K=K_1+\tilde{K}_2$, and that $\tilde{K}_2$ mat be taken 
in an arbitrary neighborhood of $\Lambda_2$ if $K_2$ is contained in an even smaller neighborhood of that submanifold. 
\end{proof}

\begin{prop}\label{ref:3.1.12ii} 
Let $F_1,F_2$ be closed subsets 
of $T^*\xR$ such that $\pi_2(F_\ell)$ is compact, 
$\ell=1,2$. Let $(v^\ell_h)$ be 
in $\tilde{\mB}_\infty^{\mu_\ell,\gamma}[F_\ell]$ 
with $\mu_\ell\ge 0$, $\gamma>\mu_\ell$. Then $v^1_h\cdot v^2_h$ is in $h^{-\theta}\mB_\infty^{\mu,\gamma}[F]$ 
with $\mu=\mu_1+\mu_2$ for any $\theta>0$ and some closed 
subset $F$ of $T^*\xR$ whose second projection is compact. 
\end{prop}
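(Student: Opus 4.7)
My plan is to reduce to $L^\infty$ bounds on the building blocks provided by Definition~\ref{ref:3.1.8} and then sum over the dyadic scales in $J(h,C)$. First I would write, for $\ell = 1,2$,
\[ v^\ell_h = \sum_{j_\ell \in J(h,C)} \Theta_{j_\ell}^* v^{\ell, j_\ell}_{h_{j_\ell}}, \]
where $(v^{\ell, j_\ell}_{h_{j_\ell}})_{h_{j_\ell}}$ is uniformly (in $j_\ell$) an element of $\mB_\infty^{\mu_\ell, \gamma}[F_\ell]$ at Planck constant $h_{j_\ell} = h 2^{-j_\ell/2}$. Since $\Theta_{j_\ell}^*$ is an $L^\infty$-isometry, inserting the basic estimate~\eqref{318} and using $h/h_{j_\ell} = 2^{j_\ell/2}$ yields
\[ \lA v^\ell_h\rA_{L^\infty} \le C \sum_{j_\ell \in J(h,C)} 2^{j_\ell \mu_\ell/2}(1 + 2^{j_\ell/2})^{-2\gamma}. \]

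The next step is to estimate this sum by splitting at $j_\ell = 0$. For $j_\ell \ge 0$ the summand is comparable to $2^{-j_\ell(\gamma - \mu_\ell/2)}$, a convergent geometric series since $\gamma > \mu_\ell/2$ (a consequence of $\gamma > \mu_\ell$ and $\mu_\ell \ge 0$). For $j_\ell < 0$ the summand is comparable to $2^{j_\ell \mu_\ell/2}$, which is summable to a constant if $\mu_\ell > 0$ and equal to $1$ if $\mu_\ell = 0$ — in which case the cardinality of $J(h,C) \cap \xZ_-$ is $O(|\log h|)$ by~\eqref{3112}. Hence $\lA v^\ell_h \rA_{L^\infty} \le C_{\theta} h^{-\theta_\ell}$ for any $\theta_\ell > 0$ (with $\theta_\ell = 0$ whenever $\mu_\ell > 0$), and multiplying the two factors gives $\lA v^1_h v^2_h \rA_{L^\infty} \le C h^{-\theta}$ for any $\theta > 0$. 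At $\hbar = h$ this matches the $\mB_\infty^{\mu_1+\mu_2, \gamma}$ bound up to the absorbing $h^{-\theta}$ factor, in the spirit of the remark following Definition~\ref{ref:3.1.8} for the $\mR$ classes.

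The last step — and the main technical obstacle — is to exhibit a closed set $F$ with $\pi_2(F)$ compact such that $v^1_h v^2_h$ is microlocally supported close to $F$ modulo $E^\infty_\emptyset$. Each rescaled block $\Theta_{j_\ell}^* v^{\ell, j_\ell}_{h_{j_\ell}}$ carries semi-classical frequencies of magnitude up to $C 2^{j_\ell}$, since the microlocal localization of $v^{\ell, j_\ell}_{h_{j_\ell}}$ at scale $h_{j_\ell}$ translates through $\Theta_{j_\ell}^*$ by the factor $h/h_{j_\ell} = 2^{j_\ell/2}$ in the dual variable. I would therefore truncate the decomposition at a threshold $J_0 = J_0(h, \theta)$ chosen logarithmically large in $1/h$: the retained low-$j_\ell$ piece is captured by $\Oph(\phi)$ for some $\phi \in S(1)$ supported in a neighborhood of an appropriately chosen $F$ with $\pi_2(F)$ compact, while the high-$j_\ell$ tail is of order $2^{-J_0(\gamma - \mu_\ell/2)}$ in $L^\infty$ and can be made $O(h^N)$ for any $N$ by taking $J_0 \gtrsim N(\gamma - \mu_\ell/2)^{-1}|\log_2 h|$. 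The strict inequality $\gamma > \mu_\ell$ (stronger than mere summability $\gamma > \mu_\ell/2$) provides precisely the margin needed both to handle the endpoint $\mu_\ell = 0$ and to absorb the logarithmic losses from the scale sums into the $h^{-\theta}$ prefactor, closing the argument. Carrying out this truncation symmetrically for both factors yields the claimed membership in $h^{-\theta}\mB_\infty^{\mu_1+\mu_2,\gamma}[F]$.
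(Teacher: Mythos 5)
Your argument has a genuine gap at the step where you pass from the $L^\infty$ bound to membership in the class. Membership in $h^{-\theta}\mB_\infty^{\mu,\gamma}[F]$ — which for a product of elements of the tilde classes can only be meant in the dyadic sense of Definitions~\ref{ref:3.1.5} and~\ref{ref:3.1.8} — is not the single estimate $\|v^1_hv^2_h\|_{L^\infty}=O(h^{-\theta})$ plus a support condition. It requires exhibiting a decomposition $v^1_hv^2_h=\sum_{j}\Theta_j^*w_{h_j}$ in which each block $w_{h_j}$ is microlocally supported, \emph{at its own scale} $h_j$, near a fixed $F$ with $\pi_2(F)$ compact, and obeys $\|w_{h_j}\|_{L^\infty}\le Ch^{-\theta}2^{j\mu/2}(1+2^{j/2})^{-2\gamma}$ uniformly in $j$. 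By summing the per-block bounds of $v^1_h$ and $v^2_h$ into global $L^\infty$ bounds before multiplying, you discard exactly the scale-by-scale information that the exponents $\mu$ and $\gamma$ are meant to encode; your inequality $\|v^1_hv^2_h\|_{L^\infty}\le Ch^{-\theta}$ coincides with the required bound only for the block $j=0$ and says nothing about the others. (Note also that even Definition~\ref{ref:3.1.5} demands the bound for \emph{every} admissible $\hbar$ in the range \eqref{317}, not just $\hbar=h$, so the sentence ``at $\hbar=h$ this matches the bound'' does not establish membership even in the non-tilde class.) The missing idea is a reorganization of the double sum $\sum_{j_1}\sum_{j_2}$ into a single dyadic sum: split into $j_2\le j_1$ and $j_1<j_2$, and for each $j_1$ group $w^1_{h_{j_1}}=v^{1,j_1}_{h_{j_1}}\sum_{j_2\le j_1}\Theta_{j_2-j_1}^*v^{2,j_2}_{h_{j_2}}$ into one block at scale $h_{j_1}$; the per-block bound $C|\log h|\,2^{j_1\mu/2-j_{1+}\gamma}$ then follows because each term of the inner sum is dominated by $2^{j_1\mu_2/2}$ (using $\mu_2\ge0$ and $j_2\le j_1$), at the cost of a $|\log h|$ factor absorbed by $h^{-\theta}$.

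The microlocal support argument is also not viable as stated. There is no fixed closed $F$ with $\pi_2(F)$ compact such that $v^1_hv^2_h$ is microlocally supported near $F$ at the single scale $h$: the block $\Theta_{j}^*v^{\ell,j}_{h_j}$ carries scale-$h$ frequencies of size $2^{j}$, which ranges up to $Ch^{-2\beta}$ and is unbounded as $h\to0$. Moreover your truncation threshold $J_0\gtrsim N(\gamma-\mu_\ell/2)^{-1}\log_2(1/h)$ exceeds $\max J(h,C)\sim2\beta\log_2(1/h)$ as soon as $N$ is large, so nothing is discarded and the compact frequency localization of the ``retained piece'' is precisely the unproved (and false) claim. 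The support condition has to be checked block by block at scale $h_{j_1}$, and this is what the grouping by $j_2\le j_1$ buys: at scale $h_{j_1}$ the factor $\Theta_{j_2-j_1}^*v^{2,j_2}_{h_{j_2}}$ has frequencies $O(2^{j_2-j_1})=O(1)$, hence is reproduced by $\Op_{h_{j_1}}(\tilde{\varphi})$ for a fixed cut-off supported in $|\xi|\le C$, and its product with $v^{1,j_1}_{h_{j_1}}$ remains microlocally supported in a set with compact $\xi$-projection.
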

\begin{proof}
We write \eqref{3117a}
$$ 
v_h^1\cdot v_h^2=\sum_{j_1\in J(h,C)}\Theta_{j_1}^*w^{1}_{h_{j_1}}
+\sum_{j_2\in J(h,C)}\Theta_{j_2}^*w^{2}_{h_{j_2}}
$$
with
$$
w^{1}_{h_{j_1}}=v^{1,j_1}_{h_{j_1}}\sum_{\substack{j_2\in J(h,C) \\ j_2\le j_1}}
\Theta_{j_2-j_1}^*v^{2,j_2}_{h_{j_2}}
$$
and a symmetric expression for $w_{h_{j_2}}^2$. 
Then $w^1_{h_{j_1}}$ is microlocally 
supported in some closed 
subset of $T^*\xR$, whose $\xi$ projection is compact, 
as
$$
\Op_{h_{j_1}}(\tilde{\varphi})
\Big[ \Theta_{j_2-j_1}^*v^{2,j_2}_{h_{j_2}}\Big]
=\Theta_{j_2-j_1}^*v^{2,j_2}_{h_{j_2}}
$$
if $\tilde{\varphi}$ is supported for $|\xi|\le C$, equal to one 
on $|\xi|\le C/2$, for some $C>0$. The $L^\infty$-norm of $w^1_{h_{j_1}}$ is bounded 
from above by
$$
2^{j_1\mu_1/2- j_{1+}\gamma}\sum_{\substack{j_2\in J(h,C) \\ j_2\le j_1}} 2^{j_2 \mu_2/2}2^{-j_{2 +}\gamma} 
\le C|\log h| 2^{j_1\mu/2-j_{1+}\gamma},
$$
since $\mu_\ell\ge 0$, $\gamma>\mu_\ell/2$.
\end{proof}

\section{The semi-classical water waves equation}\label{S:32}

Let us recall an equivalent form of the water waves equation that is obtained in the companion paper~\cite{AlDel} (see
Corollary 4.3.13.\ in that paper). If 
$(\eta,\psi)$ is a solution of the water waves equation, if 
$\Zr$ denotes the collection 
of vector fields $\Zr=(Z,\px)$ and if 
we assume that \index{Unknowns in Chapter~\ref{chap:3}!$u=\Dxmez \psi+i\eta$}$u=\Dxmez \psi+i\eta$ satisfies for $k$ smaller than some integer $s_0$, for 
$\alpha>0$ large enough and $d\in \xN$, 
$$
\sup_{[T_0,T]}\blA \Zr^k u(t,\cdot)\brA_{H^{d+\alpha}}<+\infty,\quad 
\sup_{[T_0,T]}\blA \Zr^k u(t,\cdot)\brA_{\eC{d+\alpha}}<+\infty
$$
on an interval $[T_0,T]$, we may write, using the notation $\mU=(u,\overline{u})$,
\be\label{321}
D_t u=\Dxmez u +\mQ+\mC+\widetilde{\mR}_0(\mU),
\ee
where $\mQ$ denotes the quadratic part of the nonlinearity
\be\label{322}
\begin{aligned}
\mQ&=-\frac{i}{8}\Dxmez \Bigl[ \bigl( \OD \Dx^{-\mez} (u+\bar{u})\bigr)^2
+\bigl(\Dxmez (u+\bar{u})\bigr)^2\Bigr]\\
&\quad +\frac{i}{4} \Dx \bigl( (u-\bar{u})\Dxmez (u+\bar{u})\bigr)-\frac{i}{4}\OD \bigl( (u-\bar{u})\OD \Dx^{-\mez}
(u+\bar{u})\bigr),
\end{aligned}
\ee
$\mC$ stands for the cubic contribution 
\be\label{323}
\begin{aligned}
\mC&=\frac{1}{8}\Dxmez \Bigl[ \big(\Dxmez (u+\bar{u})\big)\Dx \Big( (u-\bar{u})\Dxmez (u+\bar{u})\Big)\Big]\\
&\quad -\frac{1}{8}\Dxmez \Big[ \big( \Dxmez (u+\bar{u})\Big((u-\bar{u})\Dx^{\tdm}(u+\bar{u})\Big)\Big]\\
&\quad -\frac{1}{8}\Dx \Big[ (u-\bar{u})\Dx \Big((u-\bar{u})\Dxmez (u+\bar{u})\Big)\Big]\\
&\quad +\frac{1}{16}\Dx \Bigl[ (u-\bar{u})^2\Dx^{\tdm}(u+\bar{u})\Bigr] +\frac{1}{16}\Dx^2 \Bigl[ (u-\bar{u})^2\Dxmez (u+\bar{u})\Bigr]
\end{aligned}
\ee
and where $\widetilde{\mR}_0(\mU)$ is a remainder, vanishing at least at order $4$ at $\mU=0$, 
which satisfies for $k\le s_0$ the following estimates
\be\label{324}
\blA \Zr^k \Dx^{-\mez}\widetilde{R}_0(\mU)\brA_{H^d}\le C_k[u]\sum_{\substack{k_1+\cdots+k_4\le k\\ k_1,k_2,k_3\le k_4}}
\prod_{j=1}^{3}\blA \Zr^{k_j}u\brA_{\eC{d+\alpha}}\blA \Zr^{k_4}u\brA_{H^{d+\alpha}}
\ee
with a constant $C_k[u]$ depending only on $\blA \Zr^{(k-1)_+}u\brA_{\eC{d+\alpha}}$ and, 
if $\theta>0$ is small enough, 
\be\label{325}
\blA \Zr^k \Dx^{-\mez+\theta}\widetilde{R}_0(\mU)\brA_{\eC{d}}\le C_k[u]\sum_{k_1+\cdots+k_4\le k}
\prod_{j=1}^{4}\blA \Zr^{k_j}u\brA_{\eC{d+\alpha}}
\ee
where $C_k[u]$ depends only on $\blA \Zr^{(k-1)_+}u\brA_{\eC{d+\alpha}}$ and on a bound on 
$\blA u\brA_{L^2}^{1-2\theta'}\blA u\brA_{L^\infty}^{2\theta'}$ for some $\theta'\in ]0,\theta[$.

We make the change of variables $t=t'$, $x=t'x'$ and set $h=t'^{-1}$, 
$u(t,x)=h^{1/2}v(t',x')$, \index{Unknowns in Chapter~\ref{chap:3}!$v$, after change of variables}so that
$$
D_t u=h^\mez \Bigl[ (D_{t'}-x'h D_{x'})v+\frac{i}{2} hv\Bigr].
$$
The vector field $Z=t\partial_t +2x\px$ becomes $Z=t'\partial_{t'}+x'\partial_{x'}$. 
We deduce from \eqref{321} the following equation for $v$, in which 
we write $(t,x)$ instead of $(t',x')$, since we shall not go back to the 
old coordinates 
\be\label{326}
\bigl(D_{t}-\Oph(x\xi+|\xi|^\mez) \bigr)v
=\sqrt{h}\QV+h\Bigl[-\frac{i}{2}v+\CV\Bigr]
+h^{\frac{11}{8}}\RV
\ee
where $V=(v,\bar{v})$\index{Unknowns in Chapter~\ref{chap:3}!$V=(v,\bar{v})$},
\be\label{327}
\begin{aligned}
\QV&=-\frac{i}{8}\Oph(|\xi|^{\mez}) \Bigl[ \bigl( \Oph(\xi|\xi|^\mez) (v+\bar{v})\bigr)^2
+\bigl(\Oph(|\xi|^\mez) (v+\bar{v})\bigr)^2\Bigr]\\
&\quad +\frac{i}{4} \Oph(|\xi|)\bigl( (v-\bar{v})\Oph(|\xi|^\mez)(v+\bar{v})\bigr)\\
&\quad-\frac{i}{4}\Oph(\xi)\bigl( (v-\bar{v})\Oph(\xi|\xi|^{-\mez})(v+\bar{v})\bigr),
\end{aligned}
\ee
$\CV$ stands for the cubic contribution 
\be\label{328}
\begin{aligned}
\CV&=\frac{1}{8}\Oph(|\xi|^{\mez})  \Bigl[ \Big(\Oph\big(\la\xi\ra^\mez\big)(v+\bar{v})\Big) \Oph(\la \xi\ra)\Big((v-\bar{v}) \Oph\big(\la\xi\ra^\mez\big)(v+\bar{v})\Big)
\Big]\\
&\quad -\frac{1}{8}\Oph\big(\la\xi\ra^\mez\big)\Big[ \Big( \Oph\big(\la\xi\ra^\mez\big)(v+\bar{v})\Big)\Big((v-\bar{v})\Oph\big(\la\xi\ra^\tdm\big)(v+\bar{v})\Big)\Big]\\
&\quad -\frac{1}{8}\Oph(\la\xi\ra)\Big[ (v-\bar{v})\Oph(\la\xi\ra)\Big((v-\bar{v})\Oph\big(\la\xi\ra^\mez\big)(v+\bar{v})\Big)\Big]\\
&\quad +\frac{1}{16}\Oph(|\xi|)  \Bigl[ (v-\bar{v})^2\Oph(|\xi|^{\tdm}) (v+\bar{v})\Bigr]\\
&\quad+\frac{1}{16}\Oph\big(|\xi|^{2}\big)\Bigl[ (v-\bar{v})^2\Oph(|\xi|^{\mez})  (v+\bar{v})\Bigr]
\end{aligned}
\ee
and where the remainder satisfies for $p=2$ or $\infty$ and a small positive number $\theta$, for any $d\in \xN$, $k\in\xN$ such that 
$\blA \japon^{\alpha+d}\Zr^k v(t,\cdot)\brA_{L^2}$ and $\blA \japon^{\alpha+d}\Zr^k v(t,\cdot)\brA_{L^\infty}$ are finite, 
the estimate
\begin{multline}\label{329}
\blA \japon^{d}\Zr^k \la hD_x\ra^{-\mez+\theta} \RV \brA_{L^p}\\
\le C_k[v] h^{\frac{1}{16}}\sum_{\substack{k_1+k_2+k_3\le k \\ k_1,k_2\le k_3}}
\prod_{j=1}^2\blA \japon^{\alpha+d}\Zr^{k_j}V\brA_{L^\infty}
\blA \japon^{\alpha+d}\Zr^{k_3}V\brA_{L^p}
\end{multline}
where $C_k[v]$ depends on
$$
h^{\frac{1}{16}} \blA \japon^{\alpha+d}\Zr^{(k-1)_+}v\brA_{L^\infty}
$$
and on a uniform bound for $\lA v\rA_{L^2}^{1-2\theta'}h^{\theta'}\lA v\rA_{L^\infty}^{2\theta'}$.

Actually \e{326}, \e{327}, \e{328} follow from \e{321}, \e{322}, \e{323}. The remainder, estimated by \e{324} and 
\e{325}, being at least quartic, would bring in factor a power $h^{3/2}$ in \e{326}. We retained only the power $h^{11/8}$, to keep the extra 
$h^{1/16}$ factor in the \rhs of \e{329}, and to keep also a $h^{1/16}$-factor in front of one of the $\blA \Zr^{k_j}v\brA_{\eC{d+\alpha}}$ in the 
\rhs of \e{324}, \e{325}. In that way, we obtain in \e{329} an estimate in terms of cubic expressions, modulo the indicated multiplicative constant. 
Notice also that the uniform bound assumed for $\lA v\rA_{L^2}^{1-2\theta'}h^{\theta'}\lA v\rA_{L^\infty}^{2\theta'}$ will be satisfied, when 
$\theta'>0$ will have been fixed. Actually, we shall obtain a uniform control of $\lA v\rA_{L^\infty}$, and a bound of $\lA v(t,\cdot)\rA_{L^2}$ in 
$O(t^\delta)$ for some $\delta>0$ as small as we want. Taking this $\delta$ smaller than $\theta'$ will provide the wanted
uniformity. Finally, notice also that the fact that order zero pseudo-differential operators are not $L^\infty$-bounded is
harmless in deriving \eqref{329} with $p=\infty$ from \eqref{325}, as we may always replace $\alpha$ by some larger value.

Our main task in the following subsections will be to deduce from equation \eqref{326} the oscillatory 
behavior of $v$ when $h$ goes to zero. We shall do that expressing $v$ from Lagrangian 
distributions as those defined in the preceding section. This structure will be uncovered writing from \eqref{3224} 
an equation for $v$ involving only $D_t$ derivatives. Actually, since $D_t=-ihZ-\Oph(x\xi)$, we may write 
\be\label{3211}
\Oph\bigl(2x\xi+|\xi|^\mez\bigr)v
=-\sqrt{h}\QV+h\Bigl[\frac{i}{2}v-iZV-\CV\Bigr]
-h^{\frac{11}{8}}\RV.
\ee
From nom on, we consider $v(t,\cdot)$ as a family of functions of $x$ indexed by $h=t^{-1}\in ]0,1]$. 
We do not write explicitly the parameter $h$ i.e.\ we write $v$ instead of $(v_h)_h$. 
Let us introduce the Lagrangian submanifold given by the zero set of the symbol in the left hand side of \eqref{3211} 
outside $\xi=0$ i.e.\ set
\be\label{3212}
\begin{aligned}
\Lambda&=\Bigl\{ (x,\xi)\in \cotangent \,;\, 
2x\xi+|\xi|^\mez =0\, ~\xi\neq 0\,\Bigr\}\\ \index{Lagrangian distributions!$\Lambda$, lagrangian}
&=\bigl\{ (x,\diff \omega(x))\,;\, x\in \xR^*\bigr\}
\end{aligned}
\ee
where\index{Lagrangian distributions!$\omega$, phase}
$$
\omega(x)=\frac{1}{4|x|}.
$$
In Section~\ref{S:35}, we shall need the exact expressions of 
$\QV$, $\CV$ given by \eqref{327}, \eqref{328}. 
Before that, we shall use only some less precise informations on the structure of these terms that we 
describe now. 

From now on, we denote by $\mZ$ the collection of vector fields $\mZ=(Z,h\px)$ and, 
if $v$ is a distribution on $\xR$, we define for any natural integer $k$ the vector 
valued function
$$
\mZ^k v=(Z^{k_1}(h\px)^{k_2}v)_{k_1+k_2\le k}.
$$

\begin{lemm}\label{ref:3.2.1}
Let $p$ be in $[1,+\infty]$. 

$i)$ Denote by $B_0$ the symmetric bilinear form associated to the quadratic form $Q_0$. 
Let $k$ be in $\xN^*$, and for every couple $(k_1,k_2)\in \xN\times \xN$ with $k_1+k_2=k$, 
take $p_{k_1}$, $p_{k_2}$ in $[1,+\infty]$ such that $\frac{1}{p_{k_1}}+\frac{1}{p_{k_2}}=\frac{1}{p}$. 
Then for any distributions $V_1=(v_1,\bar{v_1})$, $V_2=(v_2,\bar{v_2})$, 
any $j_0,j_1,j_2$ in $\xZ$,
\be\label{3213}
\begin{aligned}
\blA \Delta_{j_0}^h \mZ^k B_0\bigl( \Delta_{j_1}^h V_1, \Delta_{j_2}^h V_2\bigr) 
\brA_{L^p}
&\le C 2^{j_0+\mez \min (j_1,j_2)} \indicator{\max(j_1,j_2)\ge j_0-C}\\
& \quad \times \sum_{k_1+k_2\le k}\blA \mZ^{k_1}\Delta_{j_1}^h V_1\brA_{L^{p_{k_1}}}
\blA \mZ^{k_2}\Delta_{j_2}^h V_2\brA_{L^{p_{k_2}}}
\end{aligned}
\ee
for some positive constant $C$. In the same way
\be\label{3214}
\begin{aligned}
&\blA \Oph\bigl(\varphi_0\bigl(h^{-2(1-\sigma)} \xi\bigr)\bigr)
\mZ^k B_0\bigl( \Delta_{j_1}^h V_1, \Delta_{j_2}^h V_2\bigr) 
\brA_{L^p}\\
&\qquad \le C h^{2(1-\sigma)}2^{\mez \min (j_1,j_2)}\\
&\qquad \quad \times \sum_{k_1+k_2=k}\blA \mZ^{k_1}\Delta_{j_1}^h V_1\brA_{L^{p_{k_1}}}
\blA \mZ^{k_2}\Delta_{j_2}^h V_2\brA_{L^{p_{k_2}}}.
\end{aligned}
\ee

$ii)$ Let $T_0$ be the trilinear symmetric form associated to $C_0$. Then for any 
$k\in \xN$, for some constant $C$,
\be\label{3215}
\begin{aligned}
&\blA \Delta_{j_0}^h \mZ^k T_0\bigl( \Delta_{j_1}^h V_1, \Delta_{j_2}^h V_2,\Delta_{j_3}^h V_3\bigr) 
\brA_{L^p}\\
&\qquad \le C 2^{j_0/2+2\max (j_1,j_2,j_3)} \indicator{\max(j_1,j_2,j_3)\ge j_0-C}\\
&\qquad \quad \times \sum_{k_1+k_2+k_3\le k}\blA \mZ^{k_1}\Delta_{j_1}^h V_1\brA_{L^{p_{k_1}}}
\blA \mZ^{k_2}\Delta_{j_2}^h V_2\brA_{L^{p_{k_2}}}\blA \mZ^{k_3}\Delta_{j_3}^h V_3\brA_{L^{p_{k_3}}}
\end{aligned}
\ee
where $\frac{1}{p_{k_1}}+\frac{1}{p_{k_2}}+\frac{1}{p_{k_3}}=\frac{1}{p}$. 

In particular, for any $d$ in $\xR_+$, any $p$ in $[1,+\infty]$, any $\alpha>2$
\be\label{3216}
\begin{aligned}
&\blA \Delta_{j}^h \mZ^k T_0(V_1,V_2,V_3) \brA_{L^p}\\
&\qquad \le C 2^{j/2-j_{+}d} \sum_{\substack{k_1+k_2+k_3\le k \\ k_1,k_2\le k_3}}
\prod_{\ell=1}^2 \blA \mZ^{k_\ell}\langle h \OD\rangle^{\alpha+d} V_\ell\brA_{L^{\infty}}
\blA \mZ^{k_3}\langle h\OD\rangle^{\alpha+d} V_3\brA_{L^{p}}.
\end{aligned}
\ee

If, in the left hand side, one replaces $\Delta_j^h$ 
by $\Oph\bigl(\varphi_0\bigl(h^{-2(1-\sigma)} \xi\bigr)\bigr)$, the same 
estimates hold with the factor $2^{j/2-j_{+}d} $ in the right hand side replaced by $h^{1-\sigma}$.

$iii)$ The remainder $\RV$ satisfies for any $d\in \xR_+$, any $j$ in $\xZ$, 
with $2^j\ge c h^{2(1-\sigma)}$ estimates
\be\label{3217a}
\begin{aligned}
&\blA \Delta_{j}^h \mZ^k \RV \brA_{L^2}\\
&\qquad \le C 2^{j/2-j_{+}d} \sum_{\substack{k_1+k_2+k_3\le k \\ k_1,k_2\le k_3}}
\prod_{\ell=1}^2 \blA \mZ^{k_\ell}\langle h \OD\rangle^{\alpha+d} V\brA_{L^{\infty}}
\blA \mZ^{k_3}\langle h\OD\rangle^{\alpha+d} V\brA_{L^{2}}
\end{aligned}
\ee
and
\be\label{3217b}
\blA \Delta_{j}^h \mZ^k \RV \brA_{L^\infty}
\le C 2^{j/2-j_{+}d} \sum_{k_1+k_2+k_3\le k }
\prod_{\ell=1}^3 \blA \mZ^{k_\ell}\langle h \OD\rangle^{\alpha+d} V\brA_{L^{\infty}}
\ee
where $C$ depends only on 
$h^{1/16}\blA \langle h\OD\rangle^{\alpha+d} \Zr^{(k-1)_+}V\brA_{L^\infty}$ for some large enough $\alpha>0$. 

If, in the left hand side of \eqref{3217a}, \eqref{3217b}, $\Delta_j^h$ is replaced by 
by $\Oph\bigl(\varphi_0\bigl(h^{-2(1-\sigma)} \xi\bigr)\bigr)$, similar estimates hold 
with $2^{j/2-j_{+}d} $ replaced by $h^{1-\sigma}$.
\end{lemm}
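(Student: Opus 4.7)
All three assertions are instances of semi-classical Littlewood--Paley (Bernstein) analysis applied to the explicit multilinear structure of $Q_0$, $C_0$ and to the remainder bounds stated earlier in this section. From the formulas for $Q_0$ and $C_0$, both are finite linear combinations of monomials of the form $\Oph(b_0)[\Oph(b_1)v_\pm\cdot\Oph(b_2)v_\pm]$ (resp.\ with a third inner factor for $C_0$), where each $\pm$ can be either $v$ or $\bar v$ independently, and each $b_\ell$ is a smooth Fourier multiplier homogeneous of degree $d_\ell\ge 0$ away from $\xi=0$, with $\sum_{\ell=0}^2 d_\ell=\tfrac{3}{2}$ (resp.\ $\sum_{\ell=0}^3 d_\ell=\tfrac{5}{2}$). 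The polarizations $B_0$ and $T_0$ are sums of analogous monomials obtained by distributing the arguments among $V_1,V_2$ (resp.\ $V_1,V_2,V_3$), so it suffices to bound one such monomial.

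For (i), consider a monomial $\Oph(b_0)[\Oph(b_1)(\Delta_{j_1}^h V_1)_\pm\cdot\Oph(b_2)(\Delta_{j_2}^h V_2)_\pm]$ followed by $\Delta_{j_0}^h$. Two standard ingredients enter. First, a semi-classical Bernstein inequality: for any $p\in[1,\infty]$, $\blA \Oph(b_\ell)\Delta_j^h f\brA_{L^p}\le C 2^{jd_\ell}\blA \Delta_j^h f\brA_{L^p}$, since on the support $|h\xi|\sim 2^j$ of $\Delta_j^h$ the multiplier $b_\ell(h\xi)$ has size $\sim 2^{jd_\ell}$. Second, the spectral support of a product: $\Delta_{j_0}^h(\Delta_{j_1}^h V_1\cdot\Delta_{j_2}^h V_2)$ vanishes unless $j_0\le\max(j_1,j_2)+C$, which produces the indicator. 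Multiplying these bounds yields the raw size $2^{j_0d_0+j_1d_1+j_2d_2}$; combined with $\sum d_\ell=\tfrac{3}{2}$ and the specific placements of $\Oph(|\xi|^{\mez})$ present in $Q_0$ --- using the symmetry of $B_0$ to place the higher-order inner weight on the factor with the smaller frequency index --- this rearranges into the exponent $j_0+\tfrac{1}{2}\min(j_1,j_2)$. The low-frequency version is identical except that $\Oph(\varphi_0(h^{-2(1-\sigma)}\xi))$ restricts $|h\xi|\lesssim h^{2(1-\sigma)}$, replacing the outer factor $2^{j_0 d_0}$ by $h^{2d_0(1-\sigma)}$; the dominant case $d_0=1$ produces the factor $h^{2(1-\sigma)}$. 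The $\mZ^k$-derivatives are distributed by a Leibniz argument: both $Z=t\partial_t+x\partial_x$ and $h\partial_x$ commute with Fourier multipliers of homogeneous symbols $b(\xi)$ \emph{modulo} multipliers of the same degree (the symbolic commutator being $-\xi\partial_\xi b$ for $Z$ and zero for $h\partial_x$), so $\mZ^k$ distributes across each monomial, yielding the sum over $k_1+k_2\le k$.

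Part (ii) for $T_0$ is identical to (i) with three inner factors; its simplified version follows by summing the geometric series in $(j_1,j_2,j_3)$ against the weight $\la h\OD\ra^{-\alpha-d}$ supplied on the right, which converges for $\alpha>2$. Part (iii) on the remainder $\mR(V)$ is deduced directly from the a priori bound on $\la hD_x\ra^{-\mez+\theta}\mR(V)$ established earlier in this section: that quantity is controlled in $L^p$ by a product of three $\mZ^{k_j}\la h\OD\ra^{\alpha+d}V$-norms. Applying $\Delta_j^h\la hD_x\ra^{\mez-\theta}$ on both sides, which has operator norm $\lesssim 2^{j(\mez-\theta)}$ on $\Delta_j^h$-localized functions, and absorbing the extra powers of $\la h\OD\ra$ into the weighted norms on the right, yields the claimed bounds; the hypothesis $2^j\ge c h^{2(1-\sigma)}$ ensures one remains in the regime where the a priori bound applies, and the low-frequency version replaces $2^{j/2}$ by $h^{1-\sigma}$ in the same manner as in (i).

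\textbf{Main obstacle.} The delicate point is the bookkeeping of the degrees $d_\ell$ in the quadratic estimate: extracting $\min(j_1,j_2)$ rather than $\max(j_1,j_2)$ requires pairing the higher-order inner weight of each monomial in $Q_0$ with the low-frequency input, using the symmetry of $B_0$ and the explicit placements in the formulas for $Q_0$, and checking that the $\mZ$-commutators do not disturb the degree balance. Once these placements are correctly identified, the proof reduces to a routine Bernstein--Leibniz computation.
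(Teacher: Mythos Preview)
Your outline for parts (ii) and (iii) is essentially correct and matches the paper's argument. However, there is a genuine gap in your treatment of part (i).

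You claim that the exponent $j_0+\tfrac12\min(j_1,j_2)$ arises by ``using the symmetry of $B_0$ to place the higher-order inner weight on the factor with the smaller frequency index.'' This does not work. Symmetrizing a bilinear form means \emph{summing} over both placements, not choosing the favorable one; and for the first line of \eqref{327} the inner weights are equal ($d_1=d_2=\tfrac12$, $d_0=\tfrac12$), so there is nothing to choose. A naive Bernstein bound on that term gives $2^{j_0/2+j_1/2+j_2/2}$, and in the low--low $\to$ low regime $j_1=j_2=N$, $j_0=0$ this is $2^N$, whereas the claimed bound is $2^{N/2}$. So the generic argument fails by an arbitrarily large factor.

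The paper's proof exploits \emph{cancellations} in the bilinear Fourier symbols, not symmetry. For the first line of \eqref{327}, the combined symbol of the two squared terms is (up to harmless factors) $(\xi_1\xi_2+|\xi_1||\xi_2|)/(|\xi_1|^{1/2}|\xi_2|^{1/2})$, which is supported on $\{\xi_1\xi_2\ge 0\}$; on that set $|\xi|=|\xi_1|+|\xi_2|$, forcing $j_0\ge\max(j_1,j_2)-C$, and then $2^{j_0/2+j_1/2+j_2/2}\le C\,2^{j_0+\min(j_1,j_2)/2}$. For the sum of the last two lines of \eqref{327}, the combined symbol contains $|\xi||\eta|-\xi\eta$, supported on $\{\xi\eta\le 0\}$; on that set $|\xi-\eta|\ge\max(|\xi|,|\eta|)$, which pins the half-derivative onto the lower-frequency factor and again yields the $\min$ exponent. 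These sign restrictions are the missing ingredient in your argument; once you insert them, the rest of your Bernstein--Leibniz bookkeeping goes through.
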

\begin{proof}
$i)$ Consider the contribution to $B_0(V_1,V_2)$ of the first term in the right hand side of \eqref{327}. 
Its Fourier transform may be written as the symmetrization of a multiple of 
$$
h^2 \int \frac{|\xi h|^\mez}{(h |\xi-\eta|)^\mez (h|\eta|^\mez)} \bigl( (\xi-\eta)\eta
+|\xi-\eta|\la \eta\ra\bigr) \widehat{f_1}(\xi-\eta)\widehat{f_2}(\eta)\, d\eta.
$$
where $f_1=v_1+\bar{v_1}$, $f_2=v_2+\bar{v_2}$. On the support of the integrand, 
$(\xi-\eta)\eta \ge0$ so that $|\xi|=|\xi-\eta|+|\eta|$. Consequently, the contribution 
of this term to $ \Delta_{j_0}^h B_0\bigl( \Delta_{j_1}^h V_1, \Delta_{j_2}^h V_2\bigr)$ 
will be non zero only when $j_0\ge \max(j_1,j_2)-C$ for some $C>0$. 
In the same way, the contribution to $B_0(V_1,V_2)$ of the sum 
of the last two terms in \eqref{327} may be written, after Fourier transform an up to symmetries, 
as a multiple of 
$$
h^{\tdm} \int \frac{\la \xi\ra\la\eta\ra-\xi\eta}{\la\eta\ra^\mez}
\widehat{f_1}(\xi-\eta)\widehat{f_2}(\eta)\, d\eta.
$$
On the support of the integrand $\xi\eta\le 0$, whence $|\xi-\eta|\ge \max (|\xi|,|\eta|)$ so that 
the contribution to 
$$
\Delta_{j_0}^h B_0\bigl( \Delta_{j_1}^h V_1, \Delta_{j_2}^h V_2\bigr)
$$ 
will be non zero only if $j_2\le j_1+C$ for some $C>0$. Using these inequalities and taking into account 
the distribution of the derivatives on the different factors, we conclude
\be\label{3217c}
\blA \Delta_{j_0}^h B_0\bigl( \Delta_{j_1}^h V_1, \Delta_{j_2}^h V_2\bigr)\brA_{L^2}
\le C 2^{j_0+\mez \min(j_1,j_2)} \blA \Delta_{j_1}^h V_1\brA_{L^{p_{1}}}
\blA \Delta_{j_2}^h V_2\brA_{L^{p_{2}}}
\ee
if $\frac{1}{p_1}+\frac{1}{p_2}=\frac{1}{2}$. Moreover, by spectral localization, 
we have always $\max(j_1,j_2)\ge j_0-C$ for some $C>0$. If one makes act 
$\mZ^k$ on $B_0(V_1,V_2)$, the above properties of spectral localization 
are not affected since, if $a(\xi)$ is smooth outside zero, 
$[Z,\Oph(a)]=-2\Oph(\xi a'(\xi))$. Distributing the $Z$-derivatives on the different factors, 
one gets \eqref{3213}. The proof of \eqref{3214} is similar. 

$ii)$ We notice first that in all contributions in \eqref{328}, 
$\Oph(\la\xi\ra^\mez)$ is always in factor. This allows to make appear the $2^{j_0/2}$-factor in \eqref{3215}. 
Since the sum of the powers of $\la \xi\ra$ appearing in each term of \eqref{328} is equal to $5/2$, we 
get as well the factor $2^{2\max (j_1,j_2,j_3)}$ in \eqref{3215}. The cut-of for $\max(j_1,j_2,j_3)\ge j_0-C$ 
follows from the spectral localization of each factor. Finally, making act $\mZ^k$ on 
$T_0$ and commuting each vector field with $\Oph(|\xi|^\mez)$, 
$\Oph(\xi\la\xi\ra^{-\mez}), \ldots$ we obtain \eqref{3215}. 

To deduce \eqref{3216} from \eqref{3215}, we decompose in the left hand side of \eqref{3216}, 
$$
V_\ell=\Oph(\varphi_0(\xi))V_\ell+\sum_{j_\ell\ge 0}\Delta_{j_\ell}^hV_\ell.
$$
Because of the spectral localization, we get for $j_\ell\ge 0$,
\begin{align*}
&\blA Z^{k_\ell}\Delta_{j_\ell}^h V_\ell\brA_{L^p}\le C 2^{-j_\ell (\alpha+d)}
\sum_{k_\ell'\le k_\ell}\blA Z^{k_\ell'} \langle h\OD\rangle^{\alpha+d} V_\ell\brA_{L^p},\\
&\blA Z^{k_\ell}\Oph(\varphi_0) V_\ell\brA_{L^p}\le C 
\sum_{k_\ell'\le k_\ell}\blA Z^{k_\ell'} \langle h\OD\rangle^{\alpha+d} V_\ell\brA_{L^p}.
\end{align*}
We plug these estimates in \eqref{3215} with $p_{k_1}=p_{k_2}=\infty$, $p_{k_3}=p$ 
and in the similar inequality where some $\Delta_{j_\ell}^h V_\ell$ is replaced by 
$\Oph(\varphi_0) V_\ell$. We obtain a bound given by the product of 
the sum in the right hand side of \eqref{3216} multiplied by
$$
C 2^{j/2}\sum_{\substack{\max(j_1,j_2,j_3)\ge j-C \\ j_\ell\ge 0}} 2^{2\max (j_1,j_2,j_3)
-(j_1+j_2+j_3)(\alpha+d)}.
$$
Since $\alpha >2$, this is bounded by $C 2^{j/2-j_+ d}$ as wanted. 
The analogous statement, when $\Delta_j^h$ is replaced by 
$\Oph\bigl(\varphi_0\bigl( h^{-2(1-\sigma)}\xi\bigr)\bigr)$ in the left hand side 
of \eqref{3216} is obtained in the same way. 

$iii)$ Inequalities \eqref{3217a}, \e{3217b} follows from \eqref{329} with $p=2$ or 
$p=\infty$, using that the loss $2^{-j\theta}\le c h^{-2\theta(1-\sigma)}$ is absorbed 
by the extra $h^{1/16}$ factor in the \rhs of \e{329}, if $\theta$ has been taken small enough.
\end{proof}

Let us introduce the following decomposition of a solution $v$ of \eqref{326}. 
Fix $\sigma,\beta$ some small positive numbers, $\varphi_0$ in $C^\infty_0(\xR)$ the function 
equal to one close to zero introduced before Definition~\ref{ref:3.1.6}. We decompose the solution 
$v$ of \eqref{326} as 
\be\label{3218}
\begin{aligned}
&v=v_L+w+v_H,\\ \index{Unknowns in Chapter~\ref{chap:3}!$v_L$, $v_H$} 
\index{Unknowns in Chapter~\ref{chap:3}!$w$} 
&v_L=\Oph\bigl(\varphi_0\bigl(h^{-2(1-\sigma)}\xi\bigr)v,\\
&v_H=\Oph\bigl((1-\varphi_0)(h^{-\beta}\xi)\bigr)v.
\end{aligned}
\ee
We notice that if $C$ is a large enough constant, $w=v-v_L-v_H$ may be written 
$\sum_{j\in J(h,C)}\Delta_j^h w$.

\begin{prop}\label{ref:3.2.2}
Assume that for some $k\in\xN$, some $a>b+\tdm+\frac{1}{\beta}+\alpha$, some 
positive constants $\delta_k,\delta_k',A_k,A_k'$ a solution $v$ of \eqref{326} satisfies for any $h$ 
in an interval $]h',1]$, with $h'\in ]0,1]$ given, the a priori $L^2$-bounds
\be\label{3219}
\begin{aligned}
&\blA \oplow \mZ^k v\brA_{L^2}\le \eps A_k h^{-\delta_k},\\
&\blA \Djh \mZ^k v\brA_{L^2}\le \eps A_k h^{-\delta_k} 2^{-j_+ a} \quad 
\text{for }2^j\ge C^{-1}h^{2(1-\sigma)}
\end{aligned}
\ee
and the a priori $L^\infty$-bounds
\be\label{3220}
\begin{aligned}
&\blA \oplow \mZ^k v\brA_{L^\infty}\le \eps A_k' h^{-\delta_k'},\\
&\blA \Djh \mZ^k v\brA_{L^\infty}\le \eps A_k' h^{-\delta_k'} 2^{-j_+ b} \quad 
\text{for }2^j\ge C^{-1}h^{2(1-\sigma)}.
\end{aligned}
\ee
Then, if $\delta_k$, are small enough, one gets that
\be\label{3221}
h^{-\frac{3}{8}}\underline{v}=h^{-\frac{3}{8}} (v_L+v_H) \quad\text{belongs to an $\eps$-neighborhood 
of $0$ in }\mR_\infty^b,
\ee
with the notation introduced in Definition~\ref{ref:3.1.7}. Moreover, $w=v-\underline{v}$ satisfies
\be\label{3222}
\bigl( D_t-\opx\bigr)w=\sqrt{h}\QW+h\Bigl[-\frac{i}{2}w+\CW\Bigr]+h^{\frac{5}{4}}R_0(V)
\ee
where $\mZ^k R_0(V)$ belongs to $\mR_\infty^b$, and is an $\eps$-neighborhood of zero in that space. 
\end{prop}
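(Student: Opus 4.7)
The proof splits into two assertions: the pointwise bound (3.2.21) on $\underline{v} = v_L + v_H$, and the effective equation (3.2.22) for $w = v - \underline{v}$ with controlled remainder $R_0(V)$. The plan is to prove (3.2.21) via Bernstein against the $L^2$-hypothesis (3.2.19), then substitute $v = w + \underline{v}$ into (3.2.6) to read off (3.2.22), and control the $h^{5/4}$ remainder using the bilinear and trilinear estimates of Lemma~\ref{ref:3.2.1}.

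For (3.2.21), I verify the two $L^\infty$-bounds in Definition~\ref{ref:3.1.7} applied to $u = h^{-3/8}\underline{v}$. Since $v_L$ is Fourier-supported in $\{|\xi|\lesssim h^{-1+2\sigma}\}$, Bernstein combined with (3.2.19) yields
\[
\|v_L\|_{L^\infty} \le C h^{(1-2\sigma)/2}\|v_L\|_{L^2} \le C\eps h^{1/2-\sigma-\delta_0} \le C\eps h^{3/8}
\]
provided $\sigma+\delta_0 < 1/8$. For $v_H$, Fourier-supported in $\{|\xi|\gtrsim h^{\beta-1}\}$, at dyadic indices $j$ away from the lower boundary of this support, Bernstein and (3.2.19) give
\[
\|\Delta_j^h v_H\|_{L^\infty} \le C(2^j/h)^{1/2}\|\Delta_j^h v\|_{L^2} \le C\eps h^{-1/2-\delta_0}2^{j/2-j_+a},
\]
and the hypothesis $a > b + 3/2 + 1/\beta + \alpha$, combined with the fact that $2^j\gtrsim h^{-2\beta}$ for the extremal $j$'s, absorbs the $h^{-7/8-\delta_0}$ excess against the target $\eps h^{3/8}2^{-j_+ b}$. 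For dyadic blocks near the lower boundary of $\supp v_H$, where Bernstein alone is too weak, the bound is instead extracted from (3.2.11) itself: the symbol $2x\xi+|\xi|^{\mez}$ is elliptic at the high frequencies supporting $v_H$ (dominated by $2x\xi$ for $x$ bounded away from $0$), so inverting it expresses $v_H$ in terms of the right-hand side $\sqrt{h}\QV + h\CV + h^{11/8}\RV$ divided by this elliptic symbol, recovering the $O(\eps h^{3/8})$ needed.

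For (3.2.22), substituting $v = w + \underline{v}$ into (3.2.6) and rearranging produces
\[
h^{5/4}R_0(V) = \sqrt{h}[\QV-\QW] + h[\CV-\CW] - \tfrac{ih}{2}\underline{v} + h^{11/8}\RV - \bigl(D_t-\Oph(x\xi+|\xi|^{\mez})\bigr)\underline{v}.
\]
Polarizing, $\QV-\QW = 2B_0(\underline{V}, W) + Q_0(\underline{V})$, and similarly for $\CV-\CW$, so every term contains at least one factor of $\underline{V}$. Apply the dyadic bilinear bounds (3.2.13)--(3.2.14) and trilinear bounds (3.2.15)--(3.2.16), plugging in $O(\eps h^{3/8}2^{-j_+ b})$ for the $\underline{V}$-factor from (3.2.21) and (3.2.20) for the remaining factors: the spectral indicator $\mathbf{1}_{\max(j_\ell)\ge j_0-C}$ in Lemma~\ref{ref:3.2.1}, combined with the extreme frequency localization of $\underline{v}$ (very low for $v_L$, very high for $v_H$), forces the output dyadic block to meet the $\eps h^{5/4}2^{-j_+ b}$ budget of $\mR_\infty^b$; commuting $\mZ^k$ through the bilinear/trilinear expressions handles the $\mZ^k R_0\in\mR_\infty^b$ assertion analogously. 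The quartic term $h^{11/8}\RV$ is bounded by (3.2.17a)--(3.2.17b), the $h^{1/16}$ safety factor in (3.2.9) absorbing the $h^{-\delta_k}$-losses in the $\mZ^k$-norms of $V$.

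The most delicate piece is $(D_t-\Oph(x\xi+|\xi|^{\mez}))\underline{v}$. Rewriting $D_t = -ihZ - 2\Oph(x\xi)$, it becomes $-ihZ\underline{v} - \Oph(2x\xi+|\xi|^{\mez})\underline{v}$; on $\supp v_L$ the symbol $2x\xi+|\xi|^{\mez}$ has size $O(h^{1/2-\sigma})$, and on $\supp v_H$ the ellipticity argument above reappears, while $-ihZ\underline{v}$ is controlled from the $\mZ$-hypotheses (3.2.19)--(3.2.20) after commuting $Z$ across the cutoffs defining $v_L, v_H$. The main obstacle is that these cutoffs depend explicitly on $t$ through $h$, so the commutators $[Z,\oplow]$ and $[Z,\Oph((1-\varphi_0)(h^{-\beta}\xi))]$ generate genuine new terms; controlling them parallels the computations in the proof of Lemma~\ref{ref:3.1.4}, and fitting all the $\delta_k,\delta_k'$ losses into the $h^{5/4}$ budget is the principal technical difficulty.
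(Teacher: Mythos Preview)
Your overall architecture---substitute $v=w+\underline{v}$, polarize the nonlinearities, and invoke Lemma~\ref{ref:3.2.1}---matches the paper. Two points need correction.

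For \eqref{3221}, Bernstein against the $L^2$-hypothesis \eqref{3219} suffices \emph{uniformly} on all dyadic blocks of $v_H$: with $2^j\gtrsim h^{-2\beta}$ and $a>b+\tfrac{3}{2}+\tfrac{1}{\beta}$, one has $h^{-1/2-\delta_k}2^{j/2-ja}\le Ch^{2\beta(a-b-1/2)}2^{-jb}\le Ch^{2}2^{-jb}$, far better than the target $h^{3/8}2^{-j_+b}$. There is no boundary layer where Bernstein fails. Moreover, your fallback ellipticity claim is wrong: the zero set $\Lambda=\{2x\xi+|\xi|^{1/2}=0\}$ passes through the high-frequency region (at large $|\xi|$, the corresponding $x$ is small but nonzero), so the symbol is \emph{not} elliptic on $\supp\widehat{v_H}$.

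The genuine gap is in your treatment of $(D_t-\Oph(x\xi+|\xi|^{1/2}))\underline{v}$. Splitting it as $-ihZ\underline{v}-\Oph(2x\xi+|\xi|^{1/2})\underline{v}$ and bounding each piece separately cannot work: both pieces contain the multiplication operator $x\cdot hD_x$, which is unbounded on $L^\infty$ (you have no weighted control on $\underline{v}$). Your claim that ``on $\supp v_L$ the symbol $2x\xi+|\xi|^{1/2}$ has size $O(h^{1/2-\sigma})$'' is simply false, since $x$ ranges over all of $\mathbb{R}$. The paper's route avoids this entirely: writing $\underline{v}=(\Id-\Sigma_h)v$ with $\Sigma_h$ the mid-frequency projector, one computes the commutator
\[
\bigl[D_t-\Oph(x\xi+|\xi|^{1/2}),\,\Sigma_h\bigr]=h\,\tilde{\Sigma}_h,
\]
where $\tilde{\Sigma}_h$ is a \emph{bounded} Fourier multiplier (the unbounded $x$-contributions from $D_t$ and from $\Oph(x\xi)$ cancel in the commutator, since $[x,\varphi_0(\lambda hD_x)]=i\lambda h\varphi_0'(\lambda hD_x)$). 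This gives
\[
(D_t-\Oph(x\xi+|\xi|^{1/2}))\underline{v}=(\Id-\Sigma_h)\bigl[\text{RHS of \eqref{326}}\bigr]-h\tilde{\Sigma}_h v,
\]
and now every term on the right is a low/high-frequency localization of quantities already bounded by Lemma~\ref{ref:3.2.3}, with no loss of a $Z$-derivative and no unbounded weight. Replace your direct estimate by this commutator computation.
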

Notice, for further reference, that as we did for \eqref{3211}, we deduce from \eqref{3222}
\be\label{3222a}
\opdeux w=-\sqrt{h}\QW
+h\Bigl[\frac{i}{2}w-iZw-\CW\Bigr]-h^{\frac{5}{4}}R_0(V).
\ee

The proposition will be proved using the following lemma.
\begin{lemm}\label{ref:3.2.3}
$i)$ Assume that estimates \eqref{3219}, \eqref{3220} hold. Then if 
$a>b+\tdm+\frac{1}{\beta}$, $a>b+\alpha+1+\frac{1}{2\beta}$, 
$b>\alpha>2$, 
\be\label{3223}
\begin{aligned}
&\blA \oplow \mZ^k \QV\brA_{L^\infty}\le c_k \eps^2 h^{\tq},\\
&\blA \Djh\ophigh  \mZ^k \QV\brA_{L^\infty}\le c_k \eps^2 h^{\tq} 2^{-j_+b} \quad 
\text{for any }j,
\end{aligned}
\ee
and
\be\label{3224}
\begin{aligned}
&\blA \oplow \mZ^k \CV\brA_{L^\infty}\le c_k \eps^3 h^{\uq},\\
&\blA \Djh\ophigh  \mZ^k \CV\brA_{L^\infty}\le c_k \eps^3 h^{\uq} 2^{-j_+b} \quad 
\text{for any }j,
\end{aligned}
\ee
if $\delta_k,\delta_k'$ in \eqref{3220} are small enough and $c_k$ is a convenient constant. 

$ii)$ Assume \eqref{3219} and the same inequalities on $a, b$ as above. 
Then if $\delta_k,\delta_k'$ are small enough 
\be\label{3225}
\begin{aligned}
&\blA \oplow \mZ^k v\brA_{L^\infty}\le c_k \eps h^{\frac{7}{16}-\sigma},\\
&\blA \Djh\ophigh  \mZ^k v\brA_{L^\infty}\le c_k \eps h^{\frac{7}{8}} 2^{-j_+b} \quad \text{for any }j,\\
&\blA 2^{j\ell}\Djh\oplow \mZ^k v\brA_{L^\infty}\le c_k \eps h^{\frac{3}{8}-\sigma+2\ell(1-\sigma)},\quad\ell\ge 0.
\end{aligned}
\ee
Moreover, if we assume \e{3219} and \e{3220},
\begin{multline}\label{3226}
\blA \oplow \mZ^k \bigl[ \QV-\QW\bigr]\brA_{L^\infty}\\
+\sup_{j\ge j_0(h,C)}2^{j_+b} \blA \Djh \Zr^k 
\bigl[ \QV-\QW\bigr]\brA_{L^\infty}
\le c_k \eps^2 h^{\tq}
\end{multline}
and 
\begin{multline}\label{3227}
\blA \oplow \mZ^k \bigl[ \CV-\CW\bigr]\brA_{L^\infty}\\
+\sup_{j\ge j_0(h,C)}2^{j_+b} \blA \Djh \Zr^k \bigl[ \CV
-\CW\bigr]\brA_{L^\infty}
\le c_k \eps^3 h^{\uq}.
\end{multline}
\end{lemm}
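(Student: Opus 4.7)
My strategy is to combine the bilinear and trilinear dyadic estimates of Lemma~\ref{ref:3.2.1} with Bernstein's inequality to convert the a priori $L^2$ and $L^\infty$ assumptions \eqref{3219}, \eqref{3220} into the claimed bounds on $\QV, \CV$ and $v$. For part~$i)$ I would insert a full Littlewood--Paley decomposition of each argument of $\QV = B_0(V,V)$ and $\CV = T_0(V,V,V)$, with the very-low-frequency piece handled by $\Oph(\varphi_0(h^{-2(1-\sigma)}\xi))$ and the remaining dyadic pieces by $\Djh\ophigh$ for $j \ge j_0(h,C)$. I apply \eqref{3213}--\eqref{3214} to $\QV$, placing the factor carrying the \emph{minimum} frequency in $L^2$ (after a Bernstein reduction $\lA \Djh u\rA_{L^\infty} \le Ch^{-\mez}2^{j/2}\lA \Djh u\rA_{L^2}$ when needed) and the other in $L^\infty$; the $2^{\mez\min(j_1,j_2)}$ gain from \eqref{3213} (resp.\ the $h^{2(1-\sigma)}$ prefactor from \eqref{3214}) then converts into a power of $h$ and, after summation that converges provided $a > b + \tdm + \tfrac{1}{\beta} + \alpha$, produces the bound $\eps^2 h^{\tq}$ of \eqref{3223} as soon as $\delta_k, \delta_k'$ are small enough. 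The trilinear estimates \eqref{3215}, \eqref{3216} yield \eqref{3224} in exactly the same way; the extra factor of $\eps$ from the cubic structure is what promotes $h^{\tq}$ to $h^{\uq}$.

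For part~$ii)$, inequalities~\eqref{3225} are a direct consequence of Bernstein. For the very-low-frequency piece
\[\blA \oplow \mZ^k v\brA_{L^\infty} \le C h^{-\mez}\bigl(h^{2(1-\sigma)}\bigr)^{\mez}\blA \oplow \mZ^k v\brA_{L^2} \le C h^{\mez-\sigma}\cdot \eps A_k h^{-\delta_k},\]
so, absorbing the small loss $h^{-\delta_k}$, we obtain $\eps h^{7/16-\sigma}$. For $\Djh\ophigh$, Bernstein combined with the second line of \eqref{3219} gives $\lA \Djh u\rA_{L^\infty} \le C\eps h^{-\mez-\delta_k} 2^{j/2 - j_+ a}$, and the summation over $j \ge j_0(h,C)$ converges (using $a \gg b$) to yield $\eps h^{7/8}2^{-j_+ b}$. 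The third bound is the analogous Bernstein statement with the extra factor $2^{j\ell}$ absorbed against the low-frequency support $2^j \le Ch^{2(1-\sigma)}$.

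The difference estimates \eqref{3226}, \eqref{3227} rest on the observation that $V - W$ involves only $v_L + v_H$ (and their conjugates), and that both are well controlled: $v_L$ by the first and third lines of \eqref{3225}, $v_H$ by the high-frequency support which forces an arbitrarily strong $2^{-j_+ a}$-decay against a fixed $2^{-j_+ b}$-weight. I would expand $\QV - \QW$ as a finite sum of bilinear terms $B_0(v_L+v_H, V) + B_0(W, v_L+v_H)$ and apply \eqref{3213}--\eqref{3214} again, always placing the factor $v_L + v_H$ where it produces the largest gain: the $h^{2(1-\sigma)}$ prefactor in \eqref{3214} when $v_L$ appears, or the decay coming from $a \gg b$ when $v_H$ does. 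The same decomposition with $T_0$ instead of $B_0$ and \eqref{3215}--\eqref{3216} in place of \eqref{3213}--\eqref{3214} treats $\CV - \CW$ and produces \eqref{3227}.

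The principal technical obstacle is the simultaneous bookkeeping of three competing requirements: (a) summability over the dyadic indices $j_0, j_1, j_2$, which requires $a$ sufficiently larger than $b$ and $\alpha$ and dictates the precise threshold $a > b + \tdm + \tfrac{1}{\beta} + \alpha$; (b) exploitation of the spectral cut-off $\max(j_1,j_2) \ge j_0 - C$ in \eqref{3213}, \eqref{3215}, which forces the output frequency to lie where Bernstein's gain $h^{(1-\sigma)}$ is actually available when we localize the output by $\oplow$; and (c) distribution of $L^2$ versus $L^\infty$ norms across factors so that the losses $h^{-\delta_k}, h^{-\delta_k'}$ in \eqref{3219}, \eqref{3220} are strictly dominated by the positive $h$-powers arising from Bernstein, leaving the target exponents $\tq$ and $\uq$ intact.
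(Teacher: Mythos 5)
Your overall strategy---dyadic decomposition of each argument, the bilinear and trilinear estimates \eqref{3213}--\eqref{3216}, Bernstein/Sobolev to pass between the $L^2$ bounds \eqref{3219} and $L^\infty$ bounds, and the reduction of $\QV-\QW$, $\CV-\CW$ to terms containing one factor $v_L+v_H$ controlled by \eqref{3225}---is the paper's. Part $ii)$ as you describe it matches the paper's argument.

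There is, however, a genuine gap in the norm allocation for the high-frequency half of part $i)$. For the second estimates in \eqref{3223}, \eqref{3224} you propose to put the \emph{minimum}-frequency factor in $L^2$ (then Bernstein) and the remaining factor(s) in $L^\infty$ via \eqref{3220}. This does not close. On the support of $\Djh\ophigh$ one has $2^{j}\gtrsim h^{-2\beta}$, and \eqref{3213} carries the prefactor $2^{j+\mez\min(j_1,j_2)}\indicator{\max(j_1,j_2)\ge j-C}$. If the \emph{maximal}-frequency factor is estimated by \eqref{3220}, its decay $2^{-j_{2+}b}$ is exactly eaten by the target weight $2^{-j_+b}$ (since $2^{j_{2}}\gtrsim 2^{j}$), and nothing remains to absorb the full derivative $2^{j}$ in the prefactor, let alone the $h^{-\mez}$ from the single Bernstein reduction: one is left with a factor $2^{j}h^{-\mez}\ge c\,h^{-2\beta-\mez}$, which diverges. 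The surplus decay must come from the $L^2$ bounds, whose weight is $2^{-j_+a}$ with $a>b+\tdm+\frac1\beta$; the paper therefore estimates \emph{both} factors by Sobolev injection from \eqref{3219}, accepting the loss $h^{-1}$ and absorbing $2^{j}h^{-1}$ by $2^{-j_+(a-b-1)}\le C h^{2\beta(a-b-1)}\le Ch^{2}$. This is precisely where the hypothesis $a>b+\tdm+\frac1\beta$ enters, and your allocation never invokes it on the factor that needs it; the same objection applies, with two derivatives $2^{2\max(j_1,j_2)}$ to absorb, to the cubic estimate via \eqref{3215}. (Your allocation does work for the first, low-output-frequency, lines of \eqref{3223}--\eqref{3224}, where the paper more simply puts all factors in $L^\infty$ from \eqref{3220} and uses the prefactor $h^{2(1-\sigma)}$ of \eqref{3214}.)
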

\begin{proof}
$i)$ To obtain the first formula in \eqref{3223} we use \eqref{3214} 
with $p_{k_1}=p_{k_2}=p=\infty$. 
Using assumption \eqref{3220} we get a bound of the left hand side by 
$$
C\eps^2 {A_k'}^2 h^{-2\delta_k'+2(1-\sigma)}\sum_{j_1,j_2\in \xZ} 
2^{\mez\min(j_1,j_2)-j_{1+}b-j_{2+}b}
$$
which gives the conclusion since $\sigma\in ]0,1/2[$ and we take 
$\delta_k'$ small enough. To get the second inequality \eqref{3223} we use 
\eqref{3213} with $p_{k_1}=p_{k_2}=p=\infty$, and we estimate the $L^\infty$ norms  
in the right hand side using Sobolev injection and \eqref{3219}. We obtain 
$$
C\eps^2 A_k^2 h^{-2\delta_k} 2^j\sum_{\max(j_1,j_2)\ge j-C} 
2^{\mez\min(j_1,j_2)-j_{1+}a-j_{2+}a+\frac{j_1}{2}+\frac{j_2}{2}}h^{-1}.
$$
If one uses that by assumption $2^j\ge ch^{-2\beta}$, and the fact that $a>b+\tdm+\frac{1}{\beta}$, 
one gets the wanted estimate (for $\delta_k$ small enough). 

To obtain \eqref{3224}, one substitutes inside \eqref{3216} 
with $p=\infty$, $d=0$,
$$
V_\ell=\oplow V_\ell+\sum_{j_\ell\in\xZ}\Delta_{j_\ell}^h \opnolow V_\ell
$$
one uses \e{3220} to estimate two of the three factors of the 
right hand side, \e{3219} and Sobolev injection 
to bound the third one, and one makes similar computations 
as above, exploiting that for 
the left hand side of the second estimate \eqref{3224} not 
to vanish, it is necessary that one of the $j_\ell$ be larger than $j-C$, and the assumptions 
on $a$. 

$ii)$ Inequalities \eqref{3225} follow from \eqref{3219} and Sobolev injections, using the assumptions 
on $a$ and the fact that $\delta_k<1/16$. 

We estimate the contribution to \eqref{3226} corresponding to $j\in J(h,C)$. 
We write $\QV-\QW$ from $B_0(V-W,V)$ and $B_0(V-W,W)$. By \eqref{3213}, $\blA \Djh \Zr^kB_0(V-W,W)\brA_{L^\infty}$ is smaller than 
$$
C 2^j \underset{\max(j_1,j_2)\ge j-C}{\sum\sum}2^{\mez \min(j_1,j_2)}\blA \mZ^k\Delta_{j_1}^h (V-W)\brA_{L^\infty}
\blA \mZ^k\Delta_{j_2}^h V\brA_{L^\infty}.
$$
By the definition of $w=v-v_L-v_H$, $\Delta_{j_1}^h(V-W)$ is non zero only 
if $2^{j_1}\les h^{2(1-\sigma)}$ or $2^{j_1}\gtrsim h^{-2\beta}$. In the first case, we bound 
$\blA \Zr^k \Delta_{j_1}^h (V-W)\brA_{L^\infty} 2^{j_1/4}$ using the third inequality \e{3225} with $\ell=1/4$. We get a bound in 
$O\big(\eps h^{\frac{7}{8}-\frac{3\sigma}{2}}\big)$. In the second case 
$\blA \Zr^k \Delta_{j_1}^h (V-W)\brA_{L^\infty}$ is 
$O\big(\eps h^{\frac{7}{8}}2^{-j_+b}\big)$ by the second estimate \e{3225}. Using assumption \e{3220} to estimate $\blA \Zr^k \Delta_{j_2}^h V\brA_{L^\infty}$ , we get a bound
$$
C\eps^2 h^{\frac{7}{8}-\delta_k'-\frac{3\sigma}{2}}2^j 
\underset{\max(j_1,j_2)\ge j-C}{\sum\sum}2^{\jd \min (j_1,j_2)-\uq j_{1-}}
2^{-(j_{1+}+j_{2+})b}
\le C\eps^2 2^{-j_+(b-1)}h^{\frac{7}{8}-\delta_k'-\tdm \sigma}.
$$
Since $2^j\le Ch^{-2\beta}$ with $\beta\ll 1$ and $\sigma\ll 1$, 
$\delta_k'\ll 1$, we obtain the wanted conclusion.

One studies in the same way the contributions of indices $j$ in $J(h,C)$ to 
\eqref{3227}, expressing $\CV-\CW$ from $T_0(V-W,V,V)$ and from similar expressions and using \eqref{3216}. 

To estimate the first term in the left hand side of \eqref{3226}, \eqref{3227}, or the contribution of 
$j\ge j_1(h,C)$ to the latter, we just need to apply \eqref{3223}, \eqref{3224}, and to notice that 
these inequalities remain true with $V$ replaced by~$W$. 
\end{proof}

\begin{proof}[Proof of Proposition~\ref{ref:3.2.2}]
Notice first that \eqref{3221} follows 
from \eqref{3225} if $\sigma\ll 1$. 
Denote $\Sigma_h=\Id-\oplow-\ophigh$ so that 
$w=\Sigma_h v$ by definition and $\underline{v}=(\Id-\Sigma_h)v$. 
We notice that 
$\bigl[ D_t-\opx ,\Sigma_h\bigr]=h\tilde{\Sigma}_h$, 
where $\tilde{\Sigma}_h$ may be written as a linear combination of quantities 
$\Oph\bigl(\tilde{\varphi_0}\bigl( h^{-2(1-\sigma)}\xi\bigr)\bigr)$, 
$\Oph\bigl(\tilde{\varphi_0}\bigl( h^{2\beta}\xi\bigr)\bigr)$ for new functions 
$\tilde{\varphi_0}$ in $C^\infty_0(\xR^*)$. We deduce from \eqref{326} 
\begin{align*}
\bigl(D_t-\opx \bigr)\underline{v}&=\sqrt{h}(\Id-\Sigma_h)\QV\\
&\quad +h(\Id-\Sigma_h)\Bigl(-\frac{i}{2}+\CV\Bigr)\\
&\quad +h^{\frac{11}{8}}(\Id-\Sigma_h)\RV\\
&\quad -h\tilde{\Sigma}_h v.
\end{align*}
By estimates \eqref{3223}, \eqref{3224} and \eqref{3225} the first, second and last terms of the 
right hand side may be written $h^{5/4}R(V)$ with $R(V)$ in $\mR_\infty^b$.

To estimate the remainder term $R_0^h(V)$, 
we estimate its $L^\infty$ norm using \e{3217b} with $d=b+\mez$. 
The factors 
in the right hand side of \e{3217b} are estimated in the following way:
\begin{align*}
\blA \langle h\OD\rangle^{\alpha+b+\mez}V\brA_{L^\infty}
&\le C \blA \oplow V \brA_{L^\infty}\\
&\quad +C \sum_{j\in J(h,C)}\blA 2^{j_{+}(\alpha+b+\mez)}\Djh V\brA_{L^\infty}\\
&\quad + C \sum_{j\ge j_1(h,C)}2^{j(\alpha+b+\mez)}\left(\frac{2^j}{h}\right)^\mez \blA \Djh V\brA_{L^2},
\end{align*}
where we used the Sobolev injection for the last term. Using assumptions \eqref{3220} and \eqref{3219} for the right hand side, 
together with the fact that $2^j\le C h^{-2\beta}$ on the the first sum, $2^j>ch^{-2\beta}$ on the last one, 
and $a>\alpha+b+1+\frac{1}{\beta}$, we bound this quantity by say 
$Ch^{-\frac{1}{24}}$ (if $\delta_k,\delta_k',\beta$ are small enough). It follows that
$$
\bigl( D_t-\opx\bigr)w=\sqrt{h}\QV+h\Bigl[-\frac{i}{2}v+\CV\Bigr]+h^{\frac{5}{4}}R(V)
$$
with $R(V)$ in $\mR_\infty^b$. Using \eqref{3225}, \eqref{3226}, \eqref{3227} we may 
replace the right-hand side of this equation by the right hand side of \eqref{3222} up to a modification of $R(V)$. 
If we make act the $\mZ$-family of vector fields on \eqref{3222}, and use the commutation relations
\be\label{3228}
\begin{aligned}
&\bigl[ t\partial_t +x\px ,D_t-\opx\bigr] =-\bigl(D_t-\opx\bigr),\\
&\bigl[ h\px , D_t -\opx\bigr]=0,
\end{aligned}
\ee
we obtain in the same way the estimates involving $\mZ^k$ derivatives.
\end{proof}

\section{\texorpdfstring{Weak $L^\infty$ estimates}{Weak estimates}}\label{S:33}

The goal of this subsection is to show that if $v$ is a solution 
to equation \eqref{326}, and if we are given an $L^2$-control 
of $Z^{k+1}v$ of type $\blA Z^{k+1}v\brA_{L^2}=O\bigl(h^{-\delta_{k+1}}\bigr)$ for some 
small 
$\delta_{k+1}>0$, we can deduce from it and the equation an $L^\infty$-bound of the form 
$\blA Z^{k}v\brA_{L^\infty}=O\bigl(h^{-\delta_{k}'}\bigr)$ for some small $\delta_k'\gg \delta_{k+1}$. 
Actually, we shall get as well bounds 
for $\japon^b v$ instead of $v$ for some given $b>0$. 

These bounds are not good enough,  
but they will be the starting point of the more elaborated reasoning that will be pursued 
in Sections \ref{S:34} and 
\ref{S:35}. Before stating the main result, we fix some notation. 

Assume given integers $s\gg N_1\gg N_0\gg 1$ and an increasing sequence of positive numbers $(\delta_k)_{0\le k\le s/2+N_1+1}$. We consider another 
increasing sequence $(\delta_k')_{0\le k\le \sd +N_1}$ 
satisfying the inequalities
\be\label{330}
\begin{aligned}
&\delta_k'>\sum_{j=0}^{\ell'}\delta_{k_j}'
+\sum_{j=\ell'+1}^\ell \delta_{k_\ell+1} 
\quad &&\text{if }\left\{
\begin{aligned}
&0\le \ell'\le \ell\le 4,~\Sigma_{j=0}^{\ell}k_j\le k, \\
& k_j< k \text{ when } 0\le j\le \ell',\end{aligned}\right.\\
&\delta_k'>\delta_{k+1}+2\delta_0+4\delta_0'\quad&&\text{if }k\ge 1,
\end{aligned}
\ee
for $k=0,\ldots, \frac{s}{2}+N_1$. Clearly such a sequence $(\delta_k')_k$ may always 
be constructed by induction, and if $\delta_s$ is small enough, we may assume moreover that
\be\label{333}
\delta_k<\frac{1}{32}\quad k=0,\ldots,\frac{s}{2}+N_1+1,~\delta_k'<\frac{\sigma}{8}<\frac{1}{32}~
k=0,\ldots,\frac{s}{2}+N_1.
\ee
We assume that the positive number $\beta$ introduced in \eqref{3112} is small enough so that 
$2\beta(\alpha+\mez)<\frac{1}{8}$, where $\alpha>2$ is the fixed large enough number introduced in \eqref{324} and \eqref{3216}, and that $\beta<\sigma/2$. 
We fix positive numbers $a>b>b'>b''$ such that
\be\label{334}
\begin{aligned}
& a>b+\tdm+\frac{1}{\beta}+\alpha,\quad b>\mez,\\
& (b-b')\beta>2,\quad (b'-b'')\beta>2.
\end{aligned}
\ee
In that way, the assumptions of Proposition~\ref{ref:3.2.2} will be fulfilled. 

For $k$ a nonnegative integer, we define 
\be\label{335}
\Er_k(v)=\sum_{k'=0}^k \max \Bigl( \blA \oplow \mZ^{k'}v\brA_{L^\infty},\sup_{j\ge j_0(h,C)}
2^{j_+b}\blA \Djh \mZ^{k'}v\brA_{L^\infty}\Bigr)\index{Norms!$\Er_k(v)$}
\ee
and
\be\label{336}
\Fr_k(v)=\sum_{k'=0}^k \max \Bigl( \blA \oplow \mZ^{k'}v\brA_{L^2},\sup_{j\ge j_0(h,C)}
2^{j_+a}\blA \Djh \mZ^{k'}v\brA_{L^2}\Bigr).\index{Norms!$\Fr_k(v)$}
\ee

Let $k\in\xN^*$. We denote by $\Tr_k^\infty$ \index{Function spaces!$\Tr_k^\infty$}the set of functions $v\mapsto P_k(v)$ satisfying 
for any $v=(v_h)_h$ with $\E{k-1}(v)\le h^{-1/4}$ a bound of type
\be\label{336a}
\ba
\la P_k(v)\ra\le C\Big[ &\E{k}(v)+\sum_{k_1+k_2\le k}\E{k_1}(v)\E{k_2}(v)\\
&+\sum_{k_1+k_2+k_3\le k}\E{k_1}(v)\E{k_2}(v)\E{k_3}(v)\Big].
\ea
\ee
In the same way, we define $\Tr_k$ \index{Function spaces!$\Tr_k$}as the set of functions 
$v\mapsto P_k(v)$ admitting 
for any $v=(v_h)_h$ with $\E{k-1}(v)\le h^{-1/4}$ a bound of type
\be\label{336b}
\ba
\la P_k(v)\ra\le C\sum_{1\le \ell\le 4}\sum_{k_1+\cdots+k_\ell\le k}
\sum_{\ell'=0}^\ell \prod_{j=1}^{\ell'} \E{k_{j}}(v)\prod_{j=\ell'+1}^\ell \F{k_{j+1}}(v).
\ea
\ee
The main result of this section is the following one. 
\begin{prop}\label{ref:3.3.1}
Let $k$ be a positive integer. Assume that we are given constants $\tilde{A}_0$, $A_0$, $A_1$,\ldots, $A_{k+1}$ 
and a solution $v$ of \eqref{326} such that for $h$ in some interval $]h',1]$ 
\be\label{337}
\E{0}(v)\le \tilde{A}_0 h^{-\delta_0'},\quad 
\F{k'}(v)\le \eps A_{k'}h^{-\delta_{k'}},\quad 0\le k'\le k+1.
\ee
Then, there are $h_0>0$, $A'_{k'}>0$, $k'=1,\ldots,k$, depending only on 
$\eps\tilde{A}_0,A_1,\ldots, A_{k+1}$ such that for any $h$ in $]h',1]$
\be\label{339}
\E{k'}(v)\le \eps A_{k'}' h^{-\delta'_{k'}},\quad k'=1,\ldots,k.
\ee
\end{prop}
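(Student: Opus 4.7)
I would proceed by induction on $k' \in \{1, \ldots, k\}$, assuming $\E{k''}(v) \leq \eps A'_{k''} h^{-\delta'_{k''}}$ for all $k'' < k'$, the base case $k'' = 0$ being supplied by the first part of \eqref{337}. Decompose $v = v_L + w + v_H$ as in \eqref{3218}. The low-frequency piece $v_L$ is spectrally supported in $|\xi| \lesssim h^{1-2\sigma}$, so Bernstein combined with \eqref{337} gives $\|\Oph(\varphi_0(h^{-2(1-\sigma)}\xi)) \mZ^{k'} v\|_{L^\infty} \lesssim \eps h^{(1-2\sigma)/2 - \delta_{k'}}$. Similarly, for $j \geq j_1(h,C)$ Bernstein on $\Delta_j^h \mZ^{k'} v$ combined with the decay $2^{-j_+ a}$ in \eqref{337} and the constraint $a > b + 3/2 + 1/\beta + \alpha$ in \eqref{334} yields a bound much stronger than the target $\eps h^{-\delta'_{k'}} 2^{-j_+ b}$. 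It thus suffices to control $\Delta_j^h \mZ^{k'} w$ in $L^\infty$ for $j \in J(h,C)$.

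Starting from \eqref{3222a}, apply $Z^{k'}$, using the commutation $[Z, \Oph(2x\xi + |\xi|^{1/2})] = -\Oph(2x\xi + |\xi|^{1/2})$ derived from \eqref{3228} to move the operator past the vector fields (the commutator errors involve $Z^{k''} w$ for $k'' < k'$, controlled by the inductive hypothesis in $L^\infty$ and by \eqref{337} in $L^2$). Bound each nonlinear term through Lemma \ref{ref:3.2.3}, using the inductive $L^\infty$ control of $\mZ^{k''} v$ on two of the factors in each quadratic, cubic, or remainder expression and the $L^2$ assumption \eqref{337} on the remaining one. This yields, for every $j \in J(h,C)$,
$$\|\Delta_j^h \Oph(2x\xi + |\xi|^{1/2}) Z^{k'} w\|_{L^2} \leq C\eps h^{1/2 - \tau_{k'}} 2^{-j_+ b},$$
with $\tau_{k'}$ governed by the right-hand side of the first inequality in \eqref{330} (sum over distributions $k_1 + \cdots + k_\ell \leq k'$).

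The symbol $2x\xi + |\xi|^{1/2}$ vanishes exactly on $\Lambda = \{(x, d\omega(x)) : x \in \xR^*\}$, $\omega(x) = 1/(4|x|)$. After the rescaling $\Theta_j^*$, which converts the problem into a semi-classical one at scale $h_j = h 2^{-j/2}$, introduce a cutoff $\gamma_\Lambda$ supported in a small neighborhood of $\Lambda$. On the support of $\gamma_\Lambda^c = 1 - \gamma_\Lambda$ the symbol is elliptic, so Theorem~\ref{ref:A.2.2} gives $\|Z^{k'} \Oph(\gamma_\Lambda^c) \Delta_j^h w\|_{L^2} = O(\eps h^{1/2 - \tau_{k'}} 2^{-j_+ b})$, and Bernstein produces the target $L^\infty$ bound on that piece. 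For the $\Lambda$-localized part $\Oph(\gamma_\Lambda) \Delta_j^h w$, the cancellation of the introduction applies: writing $W = W_\Lambda + W_{\Lambda^c}$, the quadratic form $Q_0(W_\Lambda)$ is microlocally supported near $\pm 2\Lambda$ or $0\Lambda$, all disjoint from the support of $\gamma_\Lambda$, while cross terms involving $W_{\Lambda^c}$ inherit the improved $L^2$ bound from the ellipticity step. This produces an extra $\sqrt{h}$:
$$\|\Oph(2x\xi + |\xi|^{1/2}) Z^{k'} \Oph(\gamma_\Lambda) \Delta_j^h w\|_{L^2} = O(\eps h^{1 - \tau_{k'}} 2^{-j_+ b}),$$
and since $2x\xi + |\xi|^{1/2}$ and $\xi - d\omega(x)$ have the same zero set $\Lambda$, symbolic calculus factors the latter through the former to give $\|(hD_x - d\omega)\, Z^{k'} \Oph(\gamma_\Lambda) \Delta_j^h w\|_{L^2} = O(\eps h^{1 - \tau_{k'}} 2^{-j_+ b})$.

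Apply $\|f\|_{L^\infty}^2 \leq 2\|f\|_{L^2} \|\partial_x f\|_{L^2}$ to $e^{-i\omega/h} Z^{k'} \Oph(\gamma_\Lambda) \Delta_j^h w$, together with $D_x(e^{-i\omega/h} u) = h^{-1} e^{-i\omega/h}(hD_x - d\omega) u$: the first factor is $O(\eps h^{-\delta_{k'}} 2^{-j_+ a})$ by \eqref{337}, the second is $O(\eps h^{-\tau_{k'}} 2^{-j_+ b})$ by the previous display, so $\|Z^{k'} \Oph(\gamma_\Lambda) \Delta_j^h w\|_{L^\infty} \leq C\eps h^{-(\delta_{k'} + \tau_{k'})/2} 2^{-j_+ b}$. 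The second condition in \eqref{330} then forces $\delta'_{k'} \geq (\delta_{k'} + \tau_{k'})/2$, closing the induction with $A'_{k'}$ dependent on the previously fixed $A'_{k''}$ and $A_{k''}$. The main obstacle is the bookkeeping: tracking commutators $[Z^{k'}, \Oph(\gamma_\Lambda)]$ and $[Z^{k'}, \Oph(b)]$ (for the symbolic factorization) across the rescaling $\Theta_j^*$ at each dyadic scale, and verifying that the cancellation $\Oph(\gamma_\Lambda) Q_0(W_\Lambda) = O(h^\infty)$ survives the distribution of $Z^{k'}$ over the many cross terms of \eqref{3222a}. These are precisely what dictate the algebraic constraints \eqref{330}.
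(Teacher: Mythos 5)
Your plan reproduces the paper's own argument: the same three-step scheme (low and high frequencies handled by Sobolev injection from the $L^2$ bounds; ellipticity of $2x\xi+\abs{\xi}^{1/2}$ outside a neighborhood of $\Lambda$ applied to the equation \eqref{3222a} with $Z^{k'}$ commuted through; microlocal cancellation of $Q_0(W_\Lambda)$ near $\Lambda$ followed by the interpolation $\norm{f}_{L^\infty}\leq C\norm{f}_{L^2}^{1/2}\norm{D_xf}_{L^2}^{1/2}$ applied to $e^{-i\omega/h_j}Z^{k'}w_{\La,j}$), closed by induction on the number of vector fields using \eqref{330}. The only divergences are bookkeeping choices — the paper runs the off-$\Lambda$ elliptic estimate directly in $L^\infty$ and $L^3$ rather than via $L^2$ plus Bernstein, and uses the interpolated quantities $\F{k_\ell}(v)^{1/3}\E{k_\ell}(v)^{2/3}$ together with a Young-inequality absorption of the resulting top-order $\E{k}(v)^{2/3}$ term, which your $L^\infty\times L^2$ splitting of the cross terms avoids — so this is the same route, not a different one.
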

\begin{rema}
$\bullet$ The result of the preceding proposition may be thought of as a ``Klainerman-Sobolev'' 
estimate, that allows one to get $L^\infty$-decay from $L^2$-bounds (There is no decay involved 
in \eqref{339} since the negative power of time $t^{-1/2}=\sqrt{h}$ has been factored out when we defined $v$ from $u$).
\end{rema}

The proof of the proposition will be made in three steps. 

First, we treat the case of small or large frequencies, for which we deduce \eqref{339} 
from the $L^2$-estimate in \eqref{337} and Sobolev injection. 

Next, we are reduced to intermediate 
frequencies i.e.\ to $\Djh v$ with $j$ belonging to $J(h,C)$. We write the equation 
for $\Djh v$ coming from \eqref{3211}. The operator 
of symbol $2x\xi+\la \xi\ra^\mez$ is elliptic 
outside the Lagrangian $\Lambda$ 
defined in \eqref{3212}. Since the right 
hand side of \eqref{3211} is $O(h^{1/2-0})$, one 
will get for the $L^\infty$-norm of $\Djh \mZ^k v$ 
cut-off outside a neighborhood of $\Lambda$ 
some $O(h^{1/2-0})$ estimates, that are better than 
what we want. 

In the last step, we decompose in the quadratic part $\QV$ of the right hand side of \eqref{3211}, 
$v$ as the sum of the contribution microlocalized outside $\Lambda$, which by the preceding 
step will give an $O(h^{1-0})$ contribution to \eqref{3211}, and a contribution 
microlocalized close to $\Lambda$. The quadratic interactions between the latter will be 
microlocally supported close to $2\cdot \Lambda$, $0\cdot \Lambda$, $-2\cdot \Lambda$ 
where
$$
\lambda\cdot \Lambda =\bigl\{ ( x,\lambda\xi)\,;\, (x,\xi)\in \Lambda\bigr\}.
$$
Consequently, if we microlocalize \eqref{3211}Ê
close to $\Lambda$, which does not meet 
$\pm 2\cdot \Lambda$, $0\cdot \Lambda$, the $\sqrt{h}$-terms of the right hand side disappear, and we 
get an $O(h^{1-0})$ estimate for the $L^2$-norm of the left hand side. 
This allows to deduce the wanted $L^\infty$-estimate from a Sobolev embedding, after reduction of $\Lambda$ 
to the zero section, through a canonical transformation. 

\underline{First step:} Low and large frequencies

We decompose $v=v_L+w+v_H$ according to \eqref{3218}. By assumption 
\eqref{337}, estimates \eqref{3219} hold. Then $ii)$ of Lemma~\ref{ref:3.2.3} implies that 
$v_L,v_H$ satisfy the first two inequalities~\eqref{3225}. 

\underline{Second step:} Elliptic estimates for $w$ outside a neighborhood of $\Lambda$. 

We define, for $j\in J(h,C)$ with $C$ large enough
\be\label{3311}
\ba
w_j &=\Tmj \Djh w,\\ \index{Unknowns in Chapter~\ref{chap:3}!$w_j$}
\zj & =\Tmj \mZ \Tj = (Z,2^{j/2}h\px),\quad \zj^k=(Z^{k_1}(2^{j/2}h\px)^{k_2})_{k_1+k_2\le k},
\ea
\ee
so that $w=\sum_{j\in j(h,C)}\Tj w_j$.

Let $\Phi\in C^\infty_0(\Rs)$ be equal to one on a domain $C^{-1}\le |\xi|\le C$ 
for a large constant $C$ and let $\Gamma$ be in $C^\infty_0(\xR)$, with small enough support, equal 
to one close to zero. We define
\be\label{3312}
\ba
\gL(x,\xi)&=\Phi(\xi)\Gamma \bigl(2x\xi+\la \xi\ra^\mez\bigr),\\
\nongL(x,\xi)&=\Phi(\xi)\bigl(1-\Gamma \bigl(2x\xi+\la \xi\ra^\mez\bigr)\bigr).
\ea
\ee
We obtain two symbols of $S(1,F)$ where $F=\{ \xi\,;\, C'^{-1}\le \la\xi\ra\le C'\}$ 
for a large enough $C'$. Moreover, since $2x\xi+\la \xi\ra^{1/2}=0$ is an equation of $\Lambda$, 
we see that on the domain where $\Phi\equiv 1$, $\gL$ (resp.\ $\nongL$) 
cuts-off close to $\Lambda$ (resp.\ outside a neighborhood of $\Lambda$). We shall prove 
the following estimates. 

\begin{prop}\label{ref:3.3.2}
Let $k\ge 1$, $N\in\xN$. We denote by $\kappa$ some fixed small enough positive number (say $\kappa=1/24$). There is a constant $C_k>0$, an element $P_k$ 
of $\Tr_k$ such that for any $v$ satisfying $\E{k-1}(v)\le h^{-1/16}$, one has for any 
$j$ in $j(h,C)$,
\be\label{3313}
\ba
&\blA \Ophj(\nongL)\zj^k w_j\brA_{L^3}\\
&\qquad \le C_k\bigg[ \sqrt{h} 2^{2j/3-j_+(b+\frac{1}{3}(a-b)}
\sum_{k_1+k_2\le k}\prod_{\ell=1}^2 \F{k_\ell}(v)^{\frac{1}{3}}\E{k_\ell}(v)^{\frac{2}{3}}\\
&\qquad\quad +h 2^{j/6-j_{+}b}h^{-4\beta-0}
\sum_{k_1+k_2+k_3\le k}[\F{k_1}(v)\F{k_2}(v)\E{k_3}(v) + \E{k_1}(v)\E{k_2}(v)\E{k_3}(v)]\\
&\qquad\quad+2^{j/4-j_+ a} h_j^{5/6}\F{k+1}(v)+2^{j/6-j_+b}h^{1+\kappa}P_k(v)\\
&\qquad\quad+2^{j/6-j_+(b+\frac{2}{3}(a-b))}h_j^N\F{k}(v)^{2/3}\E{k}(v)^{1/3}\bigg]
\ea
\ee
where $h^{-4\beta-0}$ means a bound in $C_\theta h^{-4\beta-\theta}$ for any $\theta>0$.
\end{prop}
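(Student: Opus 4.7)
The plan is to derive $\eqref{3313}$ by elliptic inversion of $\opdeux$ on the support of $\nongL$, applied to the dyadically localized and rescaled version of equation $\eqref{3222a}$.

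First, I would apply $\Djh$ to $\eqref{3222a}$ for $j\in J(h,C)$ and conjugate by $\Tmj$. By $\eqref{3113}$, $\opdeux$ rescales to $2^{j/2}\Ophj(2x\xi+|\xi|^{1/2})$, the vector fields $\mZ=(Z,h\px)$ become $\zj=(Z,h_j\px)$, and $\Djh$ becomes the frequency-one cut-off $\Ophj(\varphi)$; the commutators between these operations are lower order in $h_j$ by Theorem~$\ref{ref:A.2.2}$ and will be absorbed. This gives, for $w_j=\Tmj\Djh w$, an equation
\begin{equation*}
\Ophj\bigl(2x\xi+|\xi|^{1/2}\bigr)w_j = 2^{-j/2}\,\Tmj\Djh\Bigl[-\sqrt h\,\QW + h\bigl(\tfrac i2 w - iZw - \CW\bigr) - h^{5/4}R_0(V)\Bigr]
\end{equation*}
modulo absorbable remainders.

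Next, on $\supp\nongL$ the symbol $p=2x\xi+|\xi|^{1/2}$ is bounded below (since $\nongL=\Phi\cdot(1-\Gamma(p))$ with $\Phi$ supported in a compact $\xi$-annulus), so $r=\nongL/p\in S(1,F)$ and Theorem~$\ref{ref:A.2.2}$ gives $\Ophj(\nongL)=\Ophj(r)\Ophj(p)+h_j\Ophj(s_j^{(1)})$; iterating this decomposition $N$ times turns the error into $h_j^N\Ophj(s_j^{(N)})$. The action of this error on $\zj^k w_j$ in $L^3$ is handled through the interpolation $\|\zj^k w_j\|_{L^3}\le\|\zj^k w_j\|_{L^2}^{2/3}\|\zj^k w_j\|_{L^\infty}^{1/3}$ together with the rescaling identity $\|\Tmj\Djh w\|_{L^p}=2^{j/(2p)}\|\Djh w\|_{L^p}$: one gets $\|\zj^k w_j\|_{L^2}\le 2^{j/4-j_+a}\Fr_k(v)$ and $\|\zj^k w_j\|_{L^\infty}\le 2^{-j_+b}\Er_k(v)$ from the definitions $\eqref{335}$--$\eqref{336}$, producing the last term of $\eqref{3313}$. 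After commuting $\zj^k$ past $\Ophj(r)$ via the calculus of Propositions~$\ref{ref:3.1.6}$ and~$\ref{ref:3.1.9}$, the remaining contributions reduce to the $L^3$ norm of $2^{-j/2}\zj^k\Tmj\Djh[\text{RHS}]$.

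It then remains to estimate each piece of the right-hand side in $L^3$. For the quadratic contribution $\sqrt h\,\QW$, I would dyadically expand $\Djh\Zr^k B_0(V,V)$ via $\eqref{3213}$, bound in $L^3$ through H\"older $\|fg\|_{L^3}\le\|f\|_{L^6}\|g\|_{L^6}$ and the interpolation $\|g\|_{L^6}\le\|g\|_{L^2}^{1/3}\|g\|_{L^\infty}^{2/3}$ applied dyadically, and sum over $(j_1,j_2)$ with the spectral constraint $\max(j_1,j_2)\ge j-C$; convergence uses $a>b$ from $\eqref{334}$, and the interpolation produces exactly the $\Fr^{1/3}\Er^{2/3}$ structure with exponent $2j/3-j_+(b+(a-b)/3)$. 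The $-iZw$ summand is handled by Bernstein $\|\Djh f\|_{L^3}\le C(2^j/h)^{1/6}\|\Djh f\|_{L^2}$ and the bound $\|\Zr^{k+1}w\|_{L^2}\lesssim\Fr_{k+1}(v)2^{-j_+a}$, yielding the third term after tracking $h_j=h2^{-j/2}$; the $\tfrac{i}{2}w$ summand gives the same form with $\Fr_k$ in place of $\Fr_{k+1}$ and is absorbed. The cubic term $h\CW$ is treated in the same interpolation scheme via $\eqref{3215}$, the logarithmic losses from summing over $J(h,C)$ producing the $h^{-4\beta-0}$ factor. Finally, the quartic remainder $h^{5/4}R_0(V)$ is controlled through $\eqref{3217a}$--$\eqref{3217b}$ and Proposition~$\ref{ref:3.2.2}$, producing the $h^{1+\kappa}P_k(v)$ contribution with $P_k\in\Tr_k$.

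The main obstacle will be bookkeeping: tracking the exact powers of $2^j$, $h$, and $h_j$ through the rescaling and iterated elliptic inversion, and verifying that the dyadic sums in the bilinear and trilinear estimates are absolutely convergent under the precise inequalities of $\eqref{334}$ while producing the asymmetric interpolation exponents $\Fr^{1/3}\Er^{2/3}$ for the quadratic/cubic contributions versus $\Fr^{2/3}\Er^{1/3}$ for the $h_j^N$ remainder, as stated in $\eqref{3313}$.
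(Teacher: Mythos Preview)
Your proposal is correct and follows essentially the same approach as the paper. The paper organizes the argument by first isolating the $L^3$ bound for $\ophjdeux\zj^k w_j$ as a separate lemma (this is inequality \eqref{3317} of Lemma~\ref{ref:333}, derived exactly as you describe: rescale \eqref{3222a} to the form \eqref{3319a}, commute $\zj^k$ through to obtain \eqref{3320}, then bound the quadratic piece via \eqref{3213} with $p_{k_1}=p_{k_2}=6$, the cubic piece via \eqref{3215} with $p_{k_1}=p_{k_2}=6$, $p_{k_3}=\infty$, the remainder via \eqref{3217a}, and the linear piece by Sobolev/Bernstein). The proposition itself is then the two-line consequence \eqref{3320a} of the parametrix $\nongL=q\#(2x\xi+|\xi|^{1/2})+h_j^N r$ from Corollary~\ref{ref:A.2.3}, together with the interpolation $\|\zj^k w_j\|_{L^3}\le\|\zj^k w_j\|_{L^2}^{2/3}\|\zj^k w_j\|_{L^\infty}^{1/3}$ for the $h_j^N$ tail; you have all of these ingredients, merely merged into a single pass. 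One small precision: the $h^{-4\beta}$ in the cubic term arises from the factor $2^{2\max(j_1,j_2,j_3)}$ in \eqref{3215} together with $2^j\le Ch^{-2\beta}$, not from the logarithmic summation (the latter accounts only for the $-0$).
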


To prove the proposition, we need to estimate the action of $\ophjdeux$ 
on $\zj^k w$ in various spaces. 

\begin{lemm}\label{ref:333}
$i)$ Let $k\ge 1$. There are an element $P_k$ of $\Tr_k$, 
a matrix $A(h_j)$ with uniformly bounded coefficients, a constant $C_k>0$ 
such that, for any $v$ satisfying $\E{k-1}(v)\le h^{-1/16}$ when $h$ stays in some interval $]h',1]$, 
one gets for any $h$ in that interval, any $j$ in $J(h,C)$,
\be\label{3316}
\ba
&\blA \ophjdeux \zj^k w_j -2^{-j/2} \sqrt{h}A(h_j)\zj^k\Tmj \Djh 
\QW\brA_{L^2}\\
&\qquad \le C_k 2^{j/4-j_{+}(b+\mez)}\bigg[ \sum _{k_1+k_2+k_3\le k}
\prod _{\ell=1}^3 \F{k_\ell}(v)^{1/3}
\E{k_\ell}(v)^{2/3}h^{1-0}\\
&\qquad\quad +h^{9/8} P_k(v)+h^{1-0}2^{-j/2} \F{k+1}(v)\bigg].
\ea
\ee

$ii)$ Under the preceding assumptions, we get as well
\be\label{3317}
\ba
&\blA \ophjdeux\zj^k w_j\brA_{L^3}\\
&\qquad \le C_k\bigg[ \sqrt{h} 2^{2j/3-j_+(b+\frac{1}{3}(a-b))}
\sum_{k_1+k_2\le k}\prod_{\ell=1}^2 \F{k_\ell}(v)^{\frac{1}{3}}\E{k_\ell}(v)^{\frac{2}{3}}\\
&\qquad\qquad\quad +h 2^{j/6-j_{+}b}h^{-4\beta-0}
\sum_{k_1+k_2+k_3\le k}[\F{k_1}(v)\F{k_2}(v)\E{k_3}(v)+ \E{k_1}(v)\E{k_2}(v)\E{k_3}(v)]\\
&\qquad\qquad\quad+2^{j/4-j_+ a} h_j^{5/6}\F{k+1}(v)+2^{j/6-j_+b}h^{1+\kappa}P_k(v)\bigg].
\ea
\ee

$iii)$ Under the preceding assumptions
\be\label{3318}
\ba
&\blA \ophjdeux \zj^k w_j -2^{-j/2} \sqrt{h}A(h_j)\zj^k\Tmj \Djh 
\QW\brA_{L^\infty}\\
&\qquad \le C_k h^{1-0}2^{-j_+(b-2)}\sum_{k_1+k_2+k_3\le k} 
\E{k_1}(v)\E{k_2}(v)\E{k_3}(v)\\
&\qquad \quad + C_k h_j2^{-j_+ b}\E{k+1}(v)
\ea
\ee
and
\be\label{3318a}
\blA 2^{-j/2} \sqrt{h}A(h_j)\zj^k\Tmj \Djh \QW\brA_{L^\infty}
\le \sqrt{h}2^{-j_+(b-\mez)}\sum_{k_1+k_2\le k}\E{k_1}(v)\E{k_2}(v).
\ee
\end{lemm}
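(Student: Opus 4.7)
The plan is to start from equation \eqref{3222a} for $w$, apply $\mZ^k$, truncate with $\Djh$, and rescale by $\Tmj$. Using the identity \eqref{3113} one has $\Tmj\,\opdeux\,\Tj = 2^{j/2}\,\ophjdeux$, and since $\zj^k = \Tmj\,\mZ^k\,\Tj$ and $\Djh w = \Tj w_j$, the master identity reads (modulo commutator errors)
\[
\ophjdeux \zj^k w_j = 2^{-j/2}\,\Tmj \Djh \mZ^k\Big[\!-\!\sqrt{h}\,\QW+h\Big(\tfrac{i}{2}w-iZw-\CW\Big)-h^{5/4}R_0(V)\Big].
\]
The commutators $[\mZ^k,\opdeux]$ are computed from $[Z,\opx]=-\opx$ (so $[Z,2\opx+\Oph(|\xi|^{1/2})]$ is, up to $O(h)$, of the same form as $\opdeux$) and $[h\px,\opdeux]=O(h)\Oph(\xi)$; they produce terms of the same type as the main one, with coefficients collected in the uniformly bounded matrix $A(h_j)$ of the statement. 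The commutators $[\mZ^k,\Djh]$ are bounded semi-classical multipliers which can be absorbed by relabelling the adjacent dyadic block.

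\textbf{Part (i).} The first right-hand side term yields precisely $2^{-j/2}\sqrt{h}\,A(h_j)\,\zj^k \Tmj \Djh \QW$. For the cubic term I would apply estimate \eqref{3216} of Lemma~\ref{ref:3.2.3} to $\CW$, bounding two factors in $L^\infty$ by $\E{k_\ell}(v)$ and one in $L^2$ by $\F{k_3}(v)$, then interpolate one factor via $\|\cdot\|_{L^2}\!\le\!\|\cdot\|_{L^2}^{1/3}\|\cdot\|_{L^\infty\!}^{2/3}\cdot\|\cdot\|_{L^2}^{2/3}$ to obtain the symmetric product $\prod_\ell \F{k_\ell}^{1/3}\E{k_\ell}^{2/3}$ weighted by $2^{j/4-j_+(b+1/2)}$, matching the stated bound up to the extra $h^{1-0}$ factor which comes from $h$ times an $h^{-0}$ loss absorbed by $\delta_k'\ll 1$. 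The $hZw$ contribution is bounded directly using $\F{k+1}(v)$ (here $Z$ acting on $w$ raises the number of vector fields by one), giving the $h^{1-0}\,2^{-j/2}\F{k+1}(v)$ term. The remainder $h^{5/4}R_0(V)$ is handled via \eqref{3217a} and the definition \eqref{336b} of $\Tr_k$, contributing $h^{9/8}P_k(v)$ after the rescaling factor $2^{-j/2}$ is balanced against the spectral weight.

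\textbf{Parts (ii) and (iii).} For the $L^3$ estimate \eqref{3317}, I would use $\|fg\|_{L^3}\le \|f\|_{L^6}\|g\|_{L^6}$ together with $\|f\|_{L^6}\le\|f\|_{L^2}^{1/3}\|f\|_{L^\infty}^{2/3}$, applied to the bilinear form $B_0$ underlying $\QW$ (via \eqref{3213}), producing the exponent pattern $\prod_\ell \F{k_\ell}^{1/3}\E{k_\ell}^{2/3}$; the spectral weights $2^{-j_+ a}$ (from $\F$) and $2^{-j_+ b}$ (from $\E$) recombine at ratio $1/3:2/3$ into the stated $2^{2j/3-j_+(b+\frac{1}{3}(a-b))}$. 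The cubic contribution is treated analogously using \eqref{3216}, the $h^{-4\beta-0}$ loss reflecting the bounds $2^{j_1+j_2+j_3}\le Ch^{-6\beta}$ arising from the high-frequency cutoff. The linear $Zw$-term is interpolated between its $L^2$-bound (size $\F{k+1}(v)$) and the $L^\infty$-bound already available from Sobolev embedding on $\F{k+1}$, yielding the $2^{j/4-j_+ a}h_j^{5/6}\F{k+1}(v)$ term via $h_j=h2^{-j/2}$. For \eqref{3318}, I would repeat the argument of Part (i) but entirely in $L^\infty$, using \eqref{3215} (with $p=p_{k_\ell}=\infty$) to get three $\E{k_\ell}$ factors and a weight $2^{j/2-j_+ b}$, multiplied by $2^{-j/2}h$. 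Inequality \eqref{3318a} follows from \eqref{3213} with $p_{k_1}=p_{k_2}=\infty$: the double sum over $j_1,j_2\ge j_0-C$ with $\max(j_1,j_2)\ge j-C$ is controlled because $\E{}$ gives summable $2^{-j_{\ell+}b}$ weights.

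\textbf{Main obstacle.} The principal difficulty is the careful bookkeeping: tracking commutator errors from $[\mZ^k,\opdeux]$ and $[\mZ^k,\Djh]$ so they remain subordinate to the $\QW$-term or are absorbed into the $\F{k+1}$-term; keeping the $R_0(V)$ remainder inside the class $\Tr_k$ so that its contribution appears as $h^{9/8}P_k(v)$ and no worse; and, most delicately, ensuring that the interpolation exponents in the $L^6$ and $L^3$ steps combine the $a$-weights (attached to $L^2$) and $b$-weights (attached to $L^\infty$) into the precise exponents $b+\frac{1}{3}(a-b)$ and $b+\frac{2}{3}(a-b)$ of the statement, without losing factors that would spoil the target $h^{1-0}$ or $h^{1+\kappa}$ powers.
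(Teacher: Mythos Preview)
Your overall architecture is correct and matches the paper: derive the master identity \eqref{3320} by applying $\Djh$ then $\Tmj$ to \eqref{3222a} and commuting $\zj^k$ through, then estimate the quadratic, cubic, remainder, and linear pieces separately in the three norms. The commutator bookkeeping you describe is essentially what the paper does (the paper records it as the matrices $A(h_j),B(h_j),\widetilde{B}(h_j),C(h_j),D(h_j)$ in \eqref{3320}).

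There is, however, a genuine gap in your treatment of the cubic term in part~(i). You propose to use \eqref{3216}, which yields a bound of the form $\E{k_1}\E{k_2}\F{k_3}$, and then ``interpolate one factor via $\|\cdot\|_{L^2}\le\|\cdot\|_{L^2}^{1/3}\|\cdot\|_{L^\infty}^{2/3}\cdot\|\cdot\|_{L^2}^{2/3}$''. That inequality is false (the exponents sum to $5/3$), and there is no way to pass from $\E{k_1}\E{k_2}\F{k_3}$ to the symmetric product $\prod_\ell \F{k_\ell}^{1/3}\E{k_\ell}^{2/3}$ that the statement requires; these quantities are in general incomparable. The paper instead uses the sharper dyadic estimate \eqref{3215} with $p=2$ and $p_{k_1}=p_{k_2}=p_{k_3}=6$, then bounds each $L^6$ norm by $\|\cdot\|_{L^2}^{1/3}\|\cdot\|_{L^\infty}^{2/3}$, which produces the symmetric product directly with the weight $2^{-(j_{1+}+j_{2+}+j_{3+})(b+\frac{1}{3}(a-b))}$. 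Summing over $j_1,j_2,j_3\in J(h,C)$ and using $a-b\gg 1$ from \eqref{334} then gives the stated $2^{j/4-j_+(b+\frac{1}{2})}$ weight. The same correction applies to the cubic term in part~(ii): the paper uses \eqref{3215} with $p=3$, $p_{k_1}=p_{k_2}=6$, $p_{k_3}=\infty$, not \eqref{3216}; and the $h^{-4\beta}$ loss comes from $2^{2\max(j_\ell)}\le Ch^{-4\beta}$, not from $2^{j_1+j_2+j_3}\le Ch^{-6\beta}$ as you wrote.

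A smaller point: your ``Main obstacle'' mentions the exponent $b+\tfrac{2}{3}(a-b)$, but that exponent appears only in Proposition~\ref{ref:3.3.2}, not in this lemma. Otherwise the linear, remainder, and $L^\infty$ parts of your outline are correct.
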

\begin{proof}
We apply $\Djh$ to equation \eqref{3222a}. Denoting 
$\widetilde{\Delta}_j^h=\widetilde{\varphi}(2^{-j} h\OD)$ 
for a new smooth function $\widetilde{\varphi}$ satisfying 
$\supp \widetilde{\varphi}\subset \supp \va$, we get
\begin{align*}
\opdeux\Djh w&=-\sqrt{h}\Djh \QW\\
&\quad +h\Bigl[ \frac{i}{2}\Djh w-i \widetilde{\Delta}_j^h w-iZ \Djh w-\Djh \CW \Bigr]\\
&\quad -h^{5/4} \Djh R(V).
\end{align*}
Applying $\Tmj$ and using \eqref{3113}, we get
\be\label{3319a}
\ba
\ophjdeux w_j&=-\sqrt{h}2^{-j/2}\Tmj\Djh \QW\\
&\quad +h2^{-j/2}\Bigl[ \frac{i}{2}\widetilde{w}_j-i Zw_j-\Tmj\Djh \CW \Bigr]\\
&\quad -2^{-j/2} h^{5/4}\Tmj  \Djh R(V)
\ea
\ee
where $\widetilde{w}_j=w_j-2\Tmj \widetilde{\Delta}_j^h w$ satisfies the same estimates as $w_j$. 
We commute $\zj^k$ to the equation, using that
\begin{align*}
&\big[ Z,\ophjdeux\bigr]=-\ophjdeux,\\
&\bigl[ 2^{j/2}h\px, \ophjdeux\bigr]=-2ih_j \bigl( 2^{j/2} h\px\bigr).
\end{align*}
We get
\be\label{3320}
\ba
\ophjdeux \bigl( \zj^k w_j\bigr)
&=2^{-j/2}\sqrt{h} A(h_j)\zj^k\Tmj \Djh \QW\\
&\quad +h_j \Bigl[ \widetilde{B}(h_j)\zj^k \widetilde{w}_j
+B(h_j)\zj^{k+1}w_j+C(h_j)\zj^k\Tmj\Djh \CW\Bigr]\\
&\quad -h^{1/4} h_j D(h_j)\zj^k\Tmj \Djh \RV
\ea
\ee
where $A(h_j),B(h_j),\widetilde{B}(h_j),C(h_j),D(h_j)$ are matrices with uniformly bounded coefficients.

Let us control the cubic terms in \eqref{3320}. We write $\CW$ as 
$T_0(W,W,W)$ as in $ii)$ of Lemma~\ref{ref:3.2.1}. We express 
$\zj^k\Tmj \Djh \CW=\Tmj\mZ^k\Djh \CW$ from $\Tmj \Djh \mZ^{k'}T_0(W,W,W)$ 
for $k'\le k$ (changing eventually the definition of the spectral 
cut-off $\Djh$) and decompose each argument $W$ as $\sum_{j_\ell} \Delta_{j_\ell}^hW$. 
Applying estimate \eqref{3215} with $p_{k_1}=p_{k_2}=p_{k_3}=6$ and 
writing $\lA \cdot\rA_{L^6}\le \lA \cdot\rA_{L^2}^{1/3}\lA \cdot \rA_{L^\infty}^{2/3}$, we obtain 
\be\label{3321}
\ba
&\blA \Djh \mZ^{k'}T_0(W,W,W)\brA_{L^2}\\
&\qquad \le C 2^{j/2} \sum_{j_1,j_2,j_3}\sum_{k_1+k_2+k_3\le k'} 
2^{2\max (j_1,j_2,j_3)}\indicator{\max(j_1,j_2,j_3)\ge j-C}\\
&\qquad \qquad\qquad \times \prod_{\ell=1}^3 \blA \mZ^{k_\ell}\Delta_{j_\ell}^h W\brA_{L^2}^{1/3}
\blA \mZ^{k_\ell}\Delta_{j_\ell}^h W\brA_{L^\infty}^{2/3}.
\ea
\ee
Using \eqref{335}, \eqref{336}, we bound the last factor by
$$
2^{-(j_{1+}+j_{2+}+j_{3+})[b+\frac{1}{3}(a-b)]}\prod_{\ell=1}^3 \F{k_\ell}(W)^{1/3}\E{k_\ell}(W)^{2/3}.
$$
Summing in $j_1,j_2,j_3$ in $J(h,C)$, we obtain
$$
\blA \Djh \mZ^{k'}\CW \brA_{L^2}\le C|\log h|^{2} 2^{j/2-j_+(b-2+\frac{1}{3}(a-b))}
\times \sum_{k_1+k_2+k_3\le k'} \prod_\ell \F{k_\ell}^{1/3}\E{k_\ell}^{2/3}.
$$
Remembering that $\blA \Tmj\brA_{\Fl{L^2}{L^2}}=O(2^{j/4})$ we conclude that the 
$L^2$-norm of the cubic term in the right hand side of \eqref{3320} is bounded from above 
by the first term in right hand side of \eqref{3316} 
(since $a-b$ is large enough for $\beta\ll 1$, according to \e{334}).

To estimate the $R_0$-term in \eqref{3320}, we use \eqref{3217a}. We notice first that the right 
hand side of this inequality may be controlled from $\E{k_\ell}(v)$, $\F{k_\ell}(v)$: 
actually
\begin{align*}
\blA \mZ^{k_\ell}\japon^{\alpha+d}V\brA_{L^\infty}
\le C\sum_{k_\ell'\le k_\ell} \bigg[ &\blA \oplow \mZ^{k_\ell'} v\brA_{L^\infty}\\
&+\sum_{j\ge j_0(h,C)}2^{j_+(\alpha+d)}\blA \Djh \mZ^{k_\ell'}v\brA_{L^\infty}\bigg],
\end{align*}
where the constant $C$ depends only on 
$h^{\frac{1}{16}}\blA \langle h\OD\rangle^{\alpha+d}\Zr^{(k-1)_+ v\brA_{L^\infty}}$.

We shall take $d=b-\alpha-0$, so that the bounds \eqref{335} imply that the $j$-series converges and gives a bound 
\be\label{3321a}
\blA \mZ^{k_\ell}\japon^{\alpha+d}V\brA_{L^\infty}\le C \E{k_\ell}(v)|\log h|.
\ee
Similarly, we get
\be\label{3321b}
\blA \mZ^{k_\ell}\japon^{\alpha+d}V\brA_{L^2}\le C \F{k_\ell}(v)|\log h|.
\ee
Plugging this in \eqref{3217a}, we obtain, using again that $\blA \Tmj\brA_{\Fl{L^2}{L^2}}=O(2^{j/4})$,
$$
h^{1/4}h_j\blA \zj^k\Tmj \Djh \RV\brA_{L^2}
\le C h^{3/4-0}h_j^{1/2} 2^{j/2-j_+(b-\alpha-0)}
\sum_{k_1+k_2+k_3\le k}\E{k_1}(v)\E{k_2}(v)\F{k_3}(v).
$$
We notice that since $2^j=O(h^{-2\beta})$, we may bound $2^{-j_+(b-\alpha-0)}$ by 
$2^{-j_+(b+\mez)}h^{-2(\alpha+\mez+0)\beta}$. For $\beta$ small enough, this negative power of $h$ will be compensated consuming a $O(h^{1/8})$-factor, so that we end up with a bound
of the remainder in \eqref{3320} by 
$$
C h^{9/8} 2^{j/4-j_+(b+\mez)}\sum_{k_1+k_2+k_3\le k} \E{k_1}(v)\E{k_2}(v)\F{k_3}(v)
$$
so by the $P_k$-term in the right hand side of \eqref{3316}. 

Finally, the linear terms in the right hand side of \eqref{3320} are bounded by the last contribution in \eqref{3316}, remembering 
that $w_j$ may be expressed from $\Djh w$ by \eqref{3311} 
and that $\blA \Tmj\brA_{\Fl{L^2}{L^2}}=O(2^{j/4})$. This concludes the proof of $i)$ of the lemma.

$ii)$ To prove \eqref{3317}, let us bound the $L^3$-norm of the right hand side of \eqref{3320}. We express first $\zj^k\Tmj\Djh \QW$ 
from $\Tmj \Djh \mZ^{k'}\QW$, $k'\le k$ write $\QW=B_0(W,W)$, decompose $W=\sum_{j_\ell}\Delta_{j_\ell}^h W$ in each factor 
and apply \eqref{3213} with $p=3$, $p_{k_1}=p_{k_2}=6$. We get
\begin{align*}
\blA \Djh \mZ^{k'}\QW\brA_{L^3}
&\le C 2^{j} \sum_{j_1,j_2\in J(h,C)}\sum_{k_1+k_2\le k'} 
2^{\mez\min(j_1,j_2)}\indicator{\max(j_1,j_2)\ge j-C}\\
&\qquad \qquad\times \prod_{\ell=1}^2 \blA \mZ^{k_\ell}\Delta_{j_\ell}^h W\brA_{L^2}^{1/3}
\blA \mZ^{k_\ell}\Delta_{j_\ell}^h W\brA_{L^\infty}^{2/3}.
\end{align*}
Using \eqref{335}, \eqref{336}, we see that this quantity is smaller than
$$
2^j \sum_{k_1+k_2\le k}\bigg( \prod_{\ell=1}^2 \F{k_\ell}(v)^{\frac{1}{3}}\E{k_\ell}(v)^{\frac{2}{3}}\bigg)
2^{-j_+(b+\frac{1}{3}(a-b))}.
$$
We thus get the first term in the right hand side of \eqref{3317}, using that $\blA \Tmj\brA_{\Fl{L^3}{L^3}}=O(2^{j/6})$.

Let us study next the $L^3$-norm of the cubic term in \eqref{3320}. We proceed as in the proof of \eqref{3321}, applying \eqref{3215} with 
$p=3$, $p_{k_1}=p_{k_2}=6$, $p_{k_3}=\infty$. We obtain
\begin{align*}
\blA \Djh \mZ^{k'}\CW\brA_{L^3}
&\le C 2^{j/2} \sum_{j_1,j_2,j_3}\sum_{k_1+k_2+k_3\le k'} 
2^{2\max (j_1,j_2,j_3)}\indicator{\max(j_1,j_2,j_3)\ge j-C}\\
& \qquad\qquad \times 
\bigg(\prod_{\ell=1}^2 \blA \mZ^{k_\ell}\Delta_{j_\ell}^h W\brA_{L^2}^{1/3}
\blA \mZ^{k_\ell}\Delta_{j_\ell}^h W\brA_{L^\infty}^{2/3}\bigg)\blA \mZ^{k_3}\Delta_{j_\ell}^h W\brA_{L^\infty}.
\end{align*}
We bound the general term of this sum by
$$
C2^{2\max (j_1,j_2,j_3)}\indicator{\max(j_1,j_2,j_3)\ge j-C} 2^{-(j_{1+}+j_{2+}+j_{3+})b}\\
\times \Bigl(\F{k_1}(v)\F{k_2}(v)\Bigr)^{1/3}\Bigl(\E{k_1}(v)\E{k_2}(v)\Bigr)^{2/3}\E{k_3}(v).
$$
As $2^j\le Ch^{-2\beta}$, we conclude, using the convexity inequality $a^{1/3}b^{2/3}\leq (a+2b)/3$
$$
\blA \Djh \mZ^{k'}\CW\brA_{L^3} \le Ch^{-4\beta-0}2^{j/2-j_+b}
\sum_{k_1+k_2+k_3\le k} [\F{k_1}(v)\F{k_2}(v) \E{k_3}(v) + \E{k_1}(v)\E{k_2}(v) \E{k_3}(v)].
$$
This gives in \eqref{3317} a contribution to the second term in the right hand side, 
using again that $\blA \Tmj\brA_{\Fl{L^3}{L^3}}$ is $O(2^{j/6})$.

Consider next the remainder. We estimate $\blA \zj^k\Tmj \Djh\RV\brA_{L^3}$ 
from
$$
h_j^{-1/6} \blA \zj^k\Tmj \Djh\RV\brA_{L^2}
$$
using that the expression 
to be bounded is spectrally supported in a ball of radius 
$O(h_j^{-1})$. We apply next estimate \eqref{3217a} together with 
\eqref{3321a}, \eqref{3321b}. We obtain, using that 
$\blA \Tmj\brA_{\Fl{L^3}{L^3}}=O(2^{j/6})$, that the $L^3$-norm 
of the last term in \eqref{3320} is bounded from above by 
$$
Ch^{1/4-0}h_j^{5/6}2^{2j/3-j_+d}\sum_{k_1+k_2+k_3\le k} \E{k_1}(v)\E{k_2}(v) \F{k_3}(v)
$$
with $d=b-\alpha-0$. We get finally as a coefficient 
$2^{j/6-j_+b}h^{13/12-0-2\beta(\alpha+1/12)}$ if we use again that 
$2^j=O(h^{-2\beta})$. If $\beta$ is small enough, we see that the remainder 
in \eqref{3320} contributes to the last term in the right hand side of \eqref{3317}. 

Finally, the contribution of the linear terms in \eqref{3320} is bounded from above by
\begin{align*}
h_j\blA \zj^k \widetilde{w}_j\brA_{L^3}+h_j\blA \zj^{k+1} w_j\brA_{L^3}
&\le Ch_j^{5/6}\Big( \blA \zj^k \widetilde{w}_j\brA_{L^2}+\blA \zj^{k+1} w_j\brA_{L^2}\Big)\\
&\le C2^{j/4-j_+a}h_j^{\frac{5}{6}} \F{k+1}(v)
\end{align*}
where we have used Sobolev injection and the fact that $\zj^k\widetilde{w}_j$, $\zj^{k+1} w_j$ is spectrally supported 
for $h_j\la\xi\ra\sim 1$, and where the gain $2^{j/4}$ comes from 
$\blA \Tmj\brA_{\Fl{L^2}{L^2}}$ when expressing $w_j$ from 
$\Djh w$ in \eqref{3311}. 

$iii)$ Let us prove \eqref{3318} and \eqref{3318a}. 
Applying \eqref{3213} with $p=p_{k_1}=p_{k_2}=\infty$, 
we get for $k'\le k$,
\begin{align*}
\blA \Djh \mZ^{k'}\QW\brA_{L^\infty}
&\le C 2^{j} \sum_{j_1,j_2\in J(h,C)}\sum_{k_1+k_2\le k'} 
2^{\mez\min(j_1,j_2)}\indicator{\max(j_1,j_2)\ge j-C}\\
& \qquad\qquad \times  \blA \mZ^{k_1}\Delta_{j_1}^h W\brA_{L^\infty}
\blA \mZ^{k_2}\Delta_{j_2}^h W\brA_{L^\infty}
\end{align*}
which gives \eqref{3318a}. To get \eqref{3318}, we use again \eqref{3320}. The cubic term 
in the right hand side of this expression is bounded using \eqref{3215} with 
$p=p_{k_1}=p_{k_2}=p_{k_3}=\infty$ and gives the first term in the right hand side of \eqref{3318}. To estimate 
the $L^\infty$-norm of the $R_0$-term in \eqref{3320} we use \eqref{3217b} with 
$d=b-\alpha-0$ and \e{3321a}. The loss $2^{j_+(\alpha+0)}
\le C h^{-2\beta(\alpha+0)}$ may be absorbed by the extra $h^{1/4}$ factor 
in front of the remainder in \eqref{3320}, so that we get again a contribution bounded by 
the first term in the right hand side of \eqref{3318}. Finally, the linear term in \eqref{3320} is controlled by 
$h_j 2^{-j_+b}\E{k+1}(v)$. This concludes the proof.
\end{proof}

\begin{proof}[Proof of Proposition~\ref{ref:3.3.2}] 
We apply corollary \ref{ref:A.2.3} with the weight $m(x,\xi)=\langle x\rangle$. By the definition \eqref{3312} 
of $\nongL$, $2x\xi+\la \xi\ra^{\mezl} \ge c \langle x\rangle$ on the support of $\nongL$. Consequently, 
for any $N$ in $\xN$, we may find symbols $q$ in $S(\langle x\rangle^{-1}),r$ in $S(1)$ such that 
$\nongL=q\# (2x\xi+\la \xi\ra^{\mezl})+h_j^N r$. It follows that for any $p\ge 1$, 
\be\label{3320a}
\blA \Ophj(\nongL)\zj^k w_j\brA_{L^p}\le C \blA \ophjdeux \zj^k w_j\brA_{L^p}
+h_j ^N\blA \zj^k w_j\brA_{L^p}.
\ee
Applying this with $p=3$, we may bound by \eqref{3317} the first term in the right hand side in terms of the \rhs of \eqref{3313}. 
The last contribution is smaller than 
$$
h_j^N\blA \zj^kw_j\brA_{L^3}\le h_j^N\blA \zj^kw_j\brA_{L^2}^{2/3}\blA \zj^kw_j\brA_{L^\infty}^{1/3}
$$
going back to the estimates of $w_j=\Tmj \Djh w$ from 
$\E{k}(v)$, $\F{k}(v)$, we obtain the last term in the \rhs of \eqref{3313}. This concludes the proof of the proposition.
\end{proof}

The $L^3$-estimate we obtained in Proposition~\ref{ref:3.3.2} outside a microlocal 
neighborhood of $\Lambda$ will be useful as 
auxiliary bounds in the third step of our proof of Proposition~\ref{ref:3.3.1}. We also need $L^\infty$-estimates for $w$ cut-off 
outside $\Lambda$. They are given by the following
\begin{prop}\label{ref:3.3.4}
Let $k\ge 1$. There is an element $P_k$ in $\Tr_k^\infty$ such that for any $v$ satisfying $\E{k-1}(v)\le h^{-1/16}$
\be\label{3324}
\blA \zj^k \Ophj(\nongL)w_j\brA_{L^\infty}
\le C\Big[ h^{1/4} P_k(v)+h^{1/2}\F{k+1}(v)\Big]2^{-j_+b}.
\ee
\end{prop}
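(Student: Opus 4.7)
The proof follows the same pattern as Proposition~\ref{ref:3.3.2}, but replaces the $L^3$-estimate obtained there by an $L^\infty$-estimate, and uses Bernstein (instead of direct $L^\infty$-control) to absorb the linear contribution.

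First I would reduce $\blA \zj^k\Ophj(\nongL)w_j\brA_{L^\infty}$ to $\blA \Ophj(\nongL)\zj^k w_j\brA_{L^\infty}$ up to lower-order commutator terms. Since $[\zj,\Ophj(\nongL)]$ is an operator whose symbol lies in the same class $S(1,F)$ and is supported in the same region, these terms are handled by induction on $k$. Next, by Corollary~\ref{ref:A.2.3} applied with weight $m(x,\xi)=\langle x\rangle$, the ellipticity of $2x\xi+\langle\xi\rangle^{1/2}$ on $\supp\nongL$ gives $\nongL = q\#(2x\xi+\langle\xi\rangle^{1/2}) + h_j^N r$ with $q\in S(\langle x\rangle^{-1})$, $r\in S(1)$. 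Since both symbols have compact $\xi$-support, their quantizations are bounded on $L^\infty$ (uniformly in $h_j$), so
\[\blA \Ophj(\nongL)\zj^k w_j\brA_{L^\infty}\le C\blA \ophjdeux \zj^k w_j\brA_{L^\infty} + h_j^N\blA \zj^k w_j\brA_{L^\infty},\]
exactly as in \eqref{3320a}. The trailing $h_j^N$ term is absorbed easily into $h^{1/4}P_k(v)2^{-j_+b}$ using the definition of $\E{k}(v)$.

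Second, I would apply equation \eqref{3320} to decompose $\ophjdeux(\zj^k w_j)$ into the quadratic piece $2^{-j/2}\sqrt{h}A(h_j)\zj^k\Tmj\Djh\QW$, the linear pieces involving $\zj^{k+1}w_j$ and $\widetilde w_j$, the cubic piece in $\CW$, and the remainder in $\RV$. For the quadratic piece, \eqref{3318a} gives a bound by $\sqrt h\,2^{-j_+(b-1/2)}\sum_{k_1+k_2\le k}\E{k_1}(v)\E{k_2}(v)$. Since $2^{j}\le Ch^{-2\beta}$ for $j\in J(h,C)$, one has $\sqrt h\,2^{j_+/2}\le h^{1/2-\beta}\le h^{1/4}$ when $\beta<1/4$, so this contributes $h^{1/4}2^{-j_+b}\cdot(\text{quadratic in }\E)$, fitting inside $h^{1/4}P_k(v)2^{-j_+b}$ with $P_k\in\Tr_k^\infty$. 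For the cubic piece, \eqref{3318} gives an $h^{1-0}2^{-j_+(b-2)}\sum\E\E\E$ bound; using again $2^{j_+}\le Ch^{-2\beta}$ this is $h^{1-0-4\beta}2^{-j_+b}\sum\E\E\E$, again absorbed in $h^{1/4}P_k(v)2^{-j_+b}$ for $\beta$ small. The $\RV$-term is bounded in $L^\infty$ using \eqref{3217b} with $d = b-\alpha-0$, combined with \eqref{3321a}, giving a factor $h^{1/4}\cdot h\cdot h^{-2\beta(\alpha+0)}\cdot 2^{-j_+b}$, again inside $h^{1/4}P_k(v)2^{-j_+b}$.

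The main point where some care is needed is the linear term $h_j B(h_j)\zj^{k+1}w_j$: the direct $L^\infty$-estimate \eqref{3318} controls it only by $h_j\,2^{-j_+b}\E{k+1}(v)$, which involves $\E{k+1}$, not $\F{k+1}$, and is thus not of the form allowed by $\Tr_k^\infty$. To get the advertised $h^{1/2}\F{k+1}(v)2^{-j_+b}$ term, I would instead invoke Bernstein's inequality at the $h_j$-semi-classical scale: since $\zj^{k+1}w_j$ is spectrally supported in $|\xi|\sim 1/h_j$, one has
\[\blA \zj^{k+1}w_j\brA_{L^\infty}\le C h_j^{-1/2}\blA \zj^{k+1}w_j\brA_{L^2}\le C h_j^{-1/2}\cdot 2^{j/4}\cdot 2^{-j_+a}\F{k+1}(v),\]
using $\blA \Tmj^*\brA_{L^2\to L^2}=2^{j/4}$ and the definition of $\F{k+1}(v)$. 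Since $h_j = h\cdot 2^{-j/2}$, we get $h_j\cdot h_j^{-1/2}\cdot 2^{j/4} = h^{1/2}$, so
\[h_j\blA \zj^{k+1}w_j\brA_{L^\infty}\le C h^{1/2}\,2^{-j_+a}\F{k+1}(v)\le C h^{1/2}\,2^{-j_+b}\F{k+1}(v).\]

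Combining all the bounds yields \eqref{3324}. The only obstacle worth flagging is the conversion of the linear term from $\E{k+1}$ to $\F{k+1}$ via Bernstein and the careful bookkeeping of the $2^{j/4}$ factors produced by $\Tmj^*$, which is why the $\F{k+1}$ control on one extra derivative is needed even though the target controls only up to $k$ vector fields on the left.
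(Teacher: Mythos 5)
Your proof is correct and follows essentially the same route as the paper's: reduction to $\Ophj(\nongL)\zj^k w_j$ by commutation, ellipticity of $2x\xi+\la\xi\ra^{1/2}$ on $\supp\nongL$ via Corollary~\ref{ref:A.2.3} and \eqref{3320a}, $L^\infty$ control of the quadratic, cubic and remainder pieces of \eqref{3320} with the $2^{j_+}$ losses absorbed by $2^j\le Ch^{-2\beta}$, and crucially the semi-classical Bernstein step converting the linear term from $h_j\E{k+1}(v)$ to $h^{1/2}\F{k+1}(v)$ exactly as in the paper. The only cosmetic difference is that you absorb the trailing $h_j^N\blA\zj^k w_j\brA_{L^\infty}$ term directly through $\E{k}(v)$, whereas the paper passes through Sobolev and $\F{k}(v)$; both are fine.
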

\begin{proof}
We notice first the commutation relations
\be\label{3324a}
\ba
&\bigl[ tD_t+x\OD , \Ophj(\nongL)\bigr] =\Ophj\bigl(\gamma_{\Lambda,1}^c\bigr),\\
&\bigl[h_j\OD , \Ophj(\nongL)\bigr] =h_j\Ophj\bigl(\gamma_{\Lambda,2}^c\bigr),
\ea
\ee
where $\gamma_{\Lambda,j}^c$ is in $S(1)$ with support contained in $\supp(\nongL)$. This 
shows that, up to a modification of the definition of $\nongL$, it is enough to control 
$\blA \Ophj(\nongL)\zj^kw_j\brA_{L^\infty}$. Let us show 
\be\label{3325}
\blA \ophjdeux \zj^k w_j\brA_{L^\infty}
\le C\Big[ h^{1/4} P_k(v)+h^{1/2}\F{k+1}(v)\Big]2^{-j_+b}
\ee
for some $P_k$ in $\Tr_k^\infty$. This will imply \eqref{3324} using \eqref{3320a} 
with $p=\infty$ and $N$ large enough, since $h_j=O(h^\sigma)$ 
and we may estimate $\blA \zj^k w_j\brA_{L^\infty}\les h_j^{-\mez}\Fr_k(w)2^{-j_+ b}$ by Sobolev. We prove \eqref{3325} estimating 
the $L^\infty$-norm of \eqref{3320}. We bound $\blA Z^k\Djh \QW\brA_{L^\infty}$ using \eqref{3213} with 
$p=p_{k_1}=p_{k_2}=\infty$ and 
$\blA Z^{k_\ell}\Delta_{j_\ell}^h w\brA_{L^\infty}=O(\E{k_\ell}(v)2^{-j_{\ell+}b})$. We obtain a bound 
in $2^{j-j_{+}b}P_k(v)$ for some $P_k$ in $\Tr_k^\infty$, which gives a contribution to the first term in the \rhs 
of \eqref{3325}, writing $2^{j/2}\sqrt{h}=O(h^{1/4})$ as $2^{j/2}\le Ch^{-\beta}$. 
To bound the cubic term in \eqref{3320}, we apply \eqref{3216} with $p=\infty$, $d=b-\alpha-0$, and \e{3321a}, and control 
the loss $2^{j_{+}(\alpha+0)}$ by a small negative power of $h$ using again $2^j\le Ch^{-2\beta}$. 
We obtain that the cubic term in \eqref{3320} is $O(h^{1/4}P_{k}(v))$. The $R_0$ term of \eqref{3320} 
is estimated in the same way, using \eqref{3217b}, \eqref{3321a}. 

Finally, we must bound the linear contributions in \eqref{3320}. Their $L^2$-norms are 
$$
O(h_j 2^{j/4-j_+b}\F{k+1}(v))
$$
according to the definition of $\F{k+1}(v)$ and the expression 
$w_j=\Tmj\Djh w$. Moreover, they are spectrally localized at $h_j|\xi|\sim 1$, so 
that by Sobolev injection, the $L^\infty$-norms are bounded by the $L^2$-norms multiplied by $Ch_j^{-1/2}$. 
This gives a contribution to the last term in \eqref{3325}.
\end{proof}

\underline{Third step:} Estimates on a microlocal neighborhood of $\Lambda$.

We have obtained in Proposition~\ref{ref:335} $L^\infty$-estimates for $\Zr_j^k w_j$ truncated outside a neighborhood of $\La$. We 
want here to prove similar $L^\infty$-estimates for $\Zr_j^k w_j$ truncated close to $\La$. They will be deduced from $L^2$-estimates 
for $\ophjdeux\Ophj(\gL)\Zr_j^k w_j$ that follow from \e{3320} truncated close to $\La$. Let us introduce the following decomposition 
of the function $w=\sumj \Tj w_j$ introduced in \e{3218}, \e{3211}: we define, using notation \e{3312}
\be\label{3326}
w_\La=\sumj \Tj\Ophj(\gL)w_j \index{Unknowns in Chapter~\ref{chap:3}!$w_\La$, $W_\La$}
\ee
and denote $W_\La=(w_\La,\overline{w}_{\La})$. We shall prove

\begin{prop}\label{ref:335}
Let $k\ge 1$. There are $C>0$, an element $P_k$ in $\Tr_k$ such that for any $v$ satisfying 
$\Fr_k(v)=O(\eps h^{-\uq})$, $\Er_{k-1}(v)\le h^{-1/16}$, for any $j$ in $J(h,C)$
\be\label{3327}
\ba
\blA \Ophj(\gL)\zjk w_j\brA_{L^\infty}\le C h^{-0}\bigg[ &\sum_{k_1+k_2+k_3\le k}
\prod _{\ell=1}^{3}\Fr_{k_\ell}(v)^{\frac 1 3}\Er_{k_\ell}(v)^{\frac 2 3}\\
&\quad +h^{\frac 1 8} P_k(v)+\Fr_{k+1}(v)\bigg]2^{-j_+b}.
\ea
\ee
\end{prop}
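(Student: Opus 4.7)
The proof proceeds from equation \eqref{3320} by applying the microlocal cutoff $\Ophj(\gL)$. The key algebraic observation is that on $\supp \gL$ the symbol $2x\xi + \la\xi\ra^\mez$ vanishes exactly on $\Lambda$ and to first order: since $\partial_\xi(2x\xi + \la\xi\ra^\mez)\big|_\Lambda = x$ does not vanish on $\pi_1(\supp \gL)$, the implicit function theorem produces an elliptic symbol $\alpha \in S(1,F)$ with $2x\xi + \la\xi\ra^\mez = \alpha(x,\xi)(\xi - d\omega(x))$ on $\supp \gL$. Semi-classical symbolic calculus at scale $h_j$ then yields, modulo $O_{L^2}(h_j^\infty)$,
\[
\Ophj(\gL)\,\ophjdeux = \Ophj(\alpha)\,\Ophj(\xi - d\omega(x))\,\Ophj(\gL) + h_j\,\Ophj(r),
\]
for some $r \in S(1,F)$. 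The target bound \eqref{3327} is then deduced from an $L^2$-estimate on $\Ophj(\xi - d\omega)\Ophj(\gL)\zj^k w_j$ and the one-dimensional Gagliardo-Nirenberg inequality conjugated by $e^{-i\omega/h_j}$, which gives $\|f\|_{L^\infty}^2 \le 2\,h_j^{-1}\|f\|_{L^2}\,\|(h_j D_x - d\omega)f\|_{L^2}$ for $f = \Ophj(\gL)\zj^k w_j$; the first factor is controlled by $C\,2^{j/4 - j_+a}\,\F{k}(v)$ via $w_j = \Tmj\Djh w$.

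The essential smallness is of microlocal-geometric origin. Decomposing $W = W_\Lambda + (W - W_\Lambda)$ with $W_\Lambda$ given by \eqref{3326}, bilinearity of $Q$ yields
\[
Q(W,W) = Q(W_\Lambda, W_\Lambda) + 2\,B_0(W_\Lambda, W - W_\Lambda) + Q(W - W_\Lambda, W - W_\Lambda).
\]
Since $w_\Lambda$ (resp.\ $\overline{w}_\Lambda$) is microlocally supported close to $\Lambda$ (resp.\ $-\Lambda$), Proposition~\ref{ref:3110} shows that the bilinear expressions entering $Q(W_\Lambda, W_\Lambda)$ are microlocally supported close to $2\Lambda \cup 0\Lambda \cup (-2\Lambda)$; these three sets are disjoint from $\Lambda$ because $d\omega(x) \neq 0$ on $\pi_1(\supp \gL)$, so $\Ophj(\gL) \cdot Q(W_\Lambda, W_\Lambda)$ is $O(h_j^N)$ in every $L^p$, for every $N$. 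The cross terms $B_0(W_\Lambda, W - W_\Lambda)$ and $Q(W - W_\Lambda, W - W_\Lambda)$ are controlled by Lemma~\ref{ref:3.2.1}(i) (distributing one $L^\infty$ and one $L^2$ factor), combined with the $L^\infty$-bound $O(h^{1/4}P_k(v) + h^{1/2}\F{k+1}(v))$ on $W - W_\Lambda$ from Proposition~\ref{ref:3.3.4}; after multiplication by the $\sqrt{h}\,2^{-j/2}$ prefactor and the Gagliardo-Nirenberg step, these feed into the $h^{1/8}P_k(v)$- and $\F{k+1}(v)$-terms of \eqref{3327}.

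The cubic contribution $h_j\,\Ophj(\gL)\zj^k\Tmj\Djh\CW$ is estimated in $L^2$ via \eqref{3215} with $p = 2$ and $p_{k_1} = p_{k_2} = p_{k_3} = 6$, using the interpolation $\|\cdot\|_{L^6} \le \|\cdot\|_{L^2}^{1/3}\|\cdot\|_{L^\infty}^{2/3}$ on each factor; after the Gagliardo-Nirenberg step this is precisely the origin of the $\sum_{k_1+k_2+k_3\le k}\prod_{\ell=1}^{3}\F{k_\ell}^{1/3}\E{k_\ell}^{2/3}$ structure in the first term of \eqref{3327}. The remainder $h^{1/4}h_j\zj^k\Tmj\Djh\RV$ is handled by \eqref{3217a} together with the interpolations \eqref{3321a}-\eqref{3321b}, the loss $2^{j_+(\alpha+0)} \le C\,h^{-2\beta(\alpha+0)}$ being absorbed by the $h^{1/4}$ prefactor and $\beta \ll 1$; it contributes to $P_k(v)$. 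The linear terms $h_j[\zj^k\widetilde{w}_j + \zj^{k+1}w_j]$ produce the $\F{k+1}(v)$-term, and the commutator contribution $h_j\Ophj(r)\zj^k w_j$ from the symbolic calculus above, of size $O(h_j\,2^{j/4 - j_+a}\F{k}(v))$, is harmless because $h_j \le h^\sigma$.

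The main difficulty lies in the quantitative accounting of the various dyadic and semi-classical losses summed over $j \in J(h,C)$: the factor $2^{j/4}$ from $\|\Tmj\|_{L^2 \to L^2}$, the $h_j^{-1/2} \le C\,h^{-\frac{1}{2}-\beta/2}$ from the Gagliardo-Nirenberg step, and the $2^{j_+\alpha} \le C\,h^{-2\beta\alpha}$ losses from passing from Sobolev to $L^\infty$ norms in the nonlinear estimates. All these must be absorbed by the $\sqrt{h}$ prefactor of the quadratic term, combined with the $O(h_j^N)$ geometric smallness of $\Ophj(\gL)\cdot Q(W_\Lambda, W_\Lambda)$ and the $O(h^{1/4})$ gain on $W - W_\Lambda$ from Proposition~\ref{ref:3.3.4}; moreover, one must ensure that $\F{k+1}(v)$ appears \emph{linearly} in \eqref{3327}, rather than as the geometric mean $\F{k}^{1/2}\F{k+1}^{1/2}$ that a naive application of Gagliardo-Nirenberg would produce, and matching this asymmetric form requires carefully distributing factors between $L^2$ and $L^\infty$ norms in each nonlinear bound.
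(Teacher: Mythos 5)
Your overall architecture coincides with the paper's: factor $2x\xi+\la\xi\ra^{\mez}$ as an elliptic symbol times $\xi-\diff\omega(x)$ near $\supp\gL$, deduce the $L^\infty$ bound from an $L^2$ bound on $\ophjdeux\Ophj(\gL)\zjk w_j$ through the interpolation inequality conjugated by $e^{-i\omega/h_j}$ (with the first factor controlled by $2^{j/4-j_+a}\Fr_k(v)$), kill $\Ophj(\gL)Q_0(W_\La)$ by the microlocal support argument near $\pm2\La$ and $0\cdot\La$, and treat the cubic term by $L^6\times L^6\times L^6\to L^2$. This is exactly the content of Corollary~\ref{ref:337}, of the estimate \eqref{3330} and of Lemma~\ref{ref:338}.

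The gap is in your treatment of the cross terms $B_0(W_\La,W-W_\La)$ and $Q_0(W-W_\La,W-W_\La)$. To run the interpolation step one needs $\lA \ophjdeux\Ophj(\gL)\zjk w_j\rA_{L^2}\le Ch^{\mez-0}h_j^{\mez}2^{-j_+b}M$ with $M$ the bracket of \eqref{3327}; since the quadratic term of \eqref{3320} carries the prefactor $2^{-j/2}\sqrt{h}\,\blA\Tmj\brA_{\Fl{L^2}{L^2}}=O(h_j^{\mez})$, this forces $\lA \Djh\Zr^k(Q_0(W)-Q_0(W_\La))\rA_{L^2}\lesssim h^{\mez-0}2^{-j_+b}M$, which is Lemma~\ref{ref:336}. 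Your pairing --- $W_\La$ in $L^2$, $W-W_\La$ in $L^\infty$ via Proposition~\ref{ref:3.3.4} --- gives through \eqref{3213} at best $C\,2^{\,j-j_+(\cdot)}\,\Fr_{k_1}(v)\bigl[h^{\uq}P_{k_2}(v)+h^{\mez}\Fr_{k_2+1}(v)\bigr]$; with $\Fr_{k_1}(v)=O(\eps h^{-\uq})$ (which is all the hypothesis allows) this is $O\bigl(2^{j}\bigl[\eps P_{k_2}(v)+\eps h^{\uq}\Fr_{k_2+1}(v)\bigr]\bigr)$: there is no residual factor $h^{\mez}$, and the $2^{j}$ is uncompensated (it can reach $h^{-2\beta}$, but the count already fails at $j=0$). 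The paper closes the count differently: in Lemma~\ref{ref:336} the smooth factor is put in $L^6$, bounded by $\Fr^{1/3}\Er^{2/3}2^{-j_+(b+\frac13(a-b))}$ as in \eqref{3328a}, while $W-W_\La$ is put in $L^3$, where the elliptic estimate \eqref{3313} of Proposition~\ref{ref:3.3.2} gives a bound that is itself \emph{quadratic} and carries an explicit $\sqrt{h}$ together with the decay $2^{-j_+(b+\frac13(a-b))}$; the product is then cubic of size $\sqrt{h}\sum\prod\Fr_{k_\ell}^{1/3}\Er_{k_\ell}^{2/3}$, and this cross term --- not only $C_0(W)$ --- is a source of the first term of \eqref{3327}. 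Without the $L^3$ estimate on $w-w_\La$ (or some substitute providing an extra $\sqrt{h}$ on that factor in a norm that pairs with an \emph{interpolated} norm of $W_\La$), the step as you describe it does not close.
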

To prove the proposition, we shall use \e{3320} with $\QW$ replaced by $Q_0(w_\La)$ 
in the right hand side. Let us estimate the error that is done.

\begin{lemm}\label{ref:336}
For any $k\in\xN$, there are $C>0$, an element $P_k$ in $\Tr_k$ such that for any $v$ with $\Er_{k-1}(v)\le h^{-1/16}$
\be\label{3328}
\ba
\blA \Zr^k \Djh \bigl( \QW-Q_0(W_\La)\bigr) \brA_{L^2}&\le 
C \sqrt{h}2^{-j_+b}\sum_{k_1+k_2+k_3\le k}
\prod _{\ell=1}^{3}\Fr_{k_\ell}(v)^{\frac 1 3}\Er_{k_\ell}(v)^{\frac 2 3}\\
&\quad +h^{\tq} 2^{-j_+b}P_k(v).
\ea
\ee
\end{lemm}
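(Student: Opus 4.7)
The plan is to write, using the polar form $B_0$ of $Q_0$,
$$\QW - Q_0(W_\La) = B_0(W - W_\La, W) + B_0(W_\La, W - W_\La).$$
The two summands play symmetric roles, so I concentrate on the first. Distributing $\Zr^k$ by Leibniz, decomposing each factor dyadically on the $h$-scale, and applying \eqref{3213} of Lemma~\ref{ref:3.2.1} with $p = 2$, $p_{k_1} = 3$, $p_{k_2} = 6$, I place $W - W_\La$ in $L^3$ (to exploit Proposition~\ref{ref:3.3.2}) and $W$ in $L^6$ (to exploit interpolation of the $\Fr$ and $\Er$ norms).

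For the $L^3$-factor, note that modulo $O(h^\infty)$, $w - w_\La = \sum_{j_1 \in J(h,C)} \Theta_{j_1}^* \Op_{h_{j_1}}(\nongL) w_{j_1}$, so that $\Delta_{j_1}^h \Zr^{k_1}(w - w_\La)$ coincides, up to $O(h^\infty)$, with $\Theta_{j_1}^* \zj^{k_1} \Op_{h_{j_1}}(\nongL) w_{j_1}$, by the commutation relations \eqref{3324a} between $\zj$ and $\Op_{h_{j_1}}(\nongL)$. Combining $\|\Theta_{j_1}^* g\|_{L^3} = 2^{-j_1/6}\|g\|_{L^3}$ with the leading term of \eqref{3313} then yields
$$\|\Delta_{j_1}^h \Zr^{k_1}(W - W_\La)\|_{L^3} \le C\sqrt{h}\, 2^{j_1/2 - j_{1+}(b + \frac{a-b}{3})} \sum \prod_{\ell} \Fr_{k_\ell'}(v)^{1/3}\Er_{k_\ell'}(v)^{2/3},$$
with lower-order contributions governed by the $h\cdot h^{-4\beta - 0}$ cubic, the $h^{1+\kappa}P_k$ remainder and the $\Fr_{k+1}$ term in \eqref{3313}. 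For the $L^6$-factor, direct interpolation from \eqref{335}--\eqref{336} gives
$$\|\Delta_{j_2}^h \Zr^{k_2} W\|_{L^6} \le \|\Delta_{j_2}^h \Zr^{k_2} W\|_{L^2}^{1/3}\|\Delta_{j_2}^h \Zr^{k_2} W\|_{L^\infty}^{2/3} \le \Fr_{k_2}(v)^{1/3}\Er_{k_2}(v)^{2/3}\, 2^{-j_{2+}\frac{a + 2b}{3}}.$$

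Plugging these into \eqref{3213} produces a double sum over $j_1, j_2 \in J(h,C)$ subject to $\max(j_1,j_2) \ge j - C$, weighted by $2^{j + \min(j_1,j_2)/2}$ times the above dyadic factors. Both sums converge (thanks to $a + 2b > 3/2$) and, thanks to the large separation $a - b \gg 1$ guaranteed by \eqref{334}, the resulting overall $j$-weight $2^{j(3/2 - b - (a-b)/3)}$ is dominated by $2^{-j_+ b}$. The leading $\sqrt h$ part therefore produces the first term of \eqref{3328}. The subleading parts, after multiplication by $2^j \le C h^{-2\beta}$ and summation, retain an $h^{1 - O(\beta)}$ gain; since $1 - O(\beta) > 3/4$ when $\beta$ is small, they contribute to the $h^{3/4}2^{-j_+ b}P_k(v)$ term of \eqref{3328} for some $P_k \in \Tr_k$.

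The main obstacle lies in this exponent-bookkeeping: one has to verify that all $h^{-O(\beta)}$ losses, arising either from $2^j \le C h^{-2\beta}$, from the $h^{-4\beta - 0}$ loss in the $h$-term of \eqref{3313}, or from the $\Theta^*$-rescalings at the endpoints of $J(h,C)$, fit inside the $h^{1/4}$ gap between the $h$-factor in \eqref{3313} and the target $h^{3/4}$ in \eqref{3328}. The second summand $B_0(W_\La, W - W_\La)$ introduces no new difficulty: using $\|\Delta_{j_2}^h \Zr^{k_2} W_\La\|_{L^p} \le C\|\Delta_{j_2}^h \Zr^{k_2} W\|_{L^p}$ for $p \in \{2,\infty\}$ (by uniform $L^p$-boundedness of $\Op_{h_{j_2}}(\gL)$), the previous argument applies verbatim after exchanging the roles of $L^3$ and $L^6$.
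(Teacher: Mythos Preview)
Your proof is correct and follows essentially the same route as the paper: decompose via the polar form $B_0$, place $W-W_\La$ in $L^3$ using Proposition~\ref{ref:3.3.2}, place $W$ (or $W_\La$) in $L^6$ by interpolating $\Fr_k$ and $\Er_k$, feed both into \eqref{3213}, and close using $2^j\le Ch^{-2\beta}$ together with $a-b\gg 1$. One small inaccuracy: $\Delta_{j_1}^h\Zr^{k_1}(w-w_\La)$ is not literally a single rescaled block $\Theta_{j_1}^*\Zr_{j_1}^{k_1}\Op_{h_{j_1}}(\nongL)w_{j_1}$ but a finite sum over $|j_1'-j_1|\le N_0$ of such blocks (this is how the paper states it); since the sum is finite this does not change your estimate.
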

\begin{proof}
We have to bound for $j$ in $J(h,C)$,
$$
\blA \Zr^k \Djh B_0(W,W-W_\La)\brA_{L^2}+\blA \Zr^k \Djh B_0(W_\La,W-W_\La)\brA_{L^2}.
$$
Consider for instance the first term. We decompose each argument using 
$$
w=\sumjell \Djellh w=\sumjell \Tjell w_{\jell}.
$$
By using \e{3326}, 
$$
w-w_\La =\sumjell \Tjell \Ophjell(\nongL)w_{\jell}.
$$
We write for $j_1\in J(h,C)$, 
\be\label{3328a}
\ba
\blA \Zr^{k_1}\Delta_{j_1}^h w\brA_{L^6}&\le \blA \Zr^{k_1}\Delta_{j_1}^h w\brd^{\frac 1 3}  \blA \Zr^{k_1}\Delta_{j_1}^h w\bri^{\frac 2 3}\\
&\le 2^{-j_{1+}(b+\frac{1}{3}(a-b))}\Fr_{k_1}(v)^{\frac 1 3}\Er_{k_1}(v)^{\frac 2 3}.
\ea
\ee
Moreover $\Zr^{k_2}\Delta_{j_2}^h (w-w_\La)$ may be written as
$$
\sum_{j_2'\, ;\, |j_2'-j_2|\le N_0}\Zr^{k_2}\Delta_{j_2}^h \Theta_{j_2'}^* \Op_{h_{j_2'}}\big(\nongL\big)w_{j_2'}
$$
for some large enough $N_0$, up to a remainder whose $L^3$ norm is 
$$
O\Big( h^\infty 2^{-j_{2+}(b+\frac{2}{3}(a-b))}\Fr_{k_2}(v)^{\frac 2 3}\Er_{k_2}(v)^{\frac 1 3}\Big).
$$
This follows from the fact that
$$
\Delta_{j_2}^h \Theta_{j_2'}^* \Op_{h_{j_2'}}\big(\nongL\big)w_{j_2'}
=\Theta_{j_2'}^* \Op_{h_{j_2'}}\Bigl(\varphi\big(2^{j_2'-j_2}\xi\big)\Bigr)
\Op_{h_{j_2'}}\big(\nongL\big)w_{j_2'}
$$
and that $\nongL$ is supported 
for $|\xi|\sim 1$. 

If we apply \e{3313}, we conclude that, since $\blA \Theta_{j_2}^*\brA_{\Fl{L^3}{L^3}}=O\bigl(2^{-j_2/6}\big)$, 
\be\label{3328b}
\ba
\blA \Zr^{k_2}\Delta_{j_2}^h (w-w_\La)\brA_{L^3}\le 
C_0\bigg[ \sqrt{h}2^{j_2/2-j_{2+}(b+\frac{1}{3}(a-b))}\sum_{k_1+k_2'\le k_2}\prod_{\ell=1}^2
\Fr_{k_\ell}(v)^{\frac 1 3}\Er_{k_\ell}(v)^{\frac 2 3}\\
\quad +h 2^{-j_{2+}b } h^{-4\beta-0}\sum_{k_1'+k_2'+k_3'\le k_2}
[\Fr_{k_1'}(v)\Fr_{k_2'}(v)\Er_{k_3'}(v) + \Er_{k_1'}(v)\Er_{k_2'}(v)\Er_{k_3'}(v)]\\
\quad +2^{j_2/12-j_{2+}a}h_{j_2}^{5/6} \Fr_{k_2+1}(v)+2^{-j_{2+}b}h^{1+\kappa}
P_{k_2}(v)\\
\quad +2^{-j_{2+}(b+\frac{2}{3}(a-b))}\Big(h_{j_2}^N +h^\infty\Big) 
\Fr_{k_2}(v)^{\frac 2 3}\Er_{k_2}(v)^{\frac 1 3}\bigg].
\ea
\ee
We plug \e{3328a}, \e{3328b} in \e{3213} with $p_{k_1}=6$, 
$p_{k_2}=3$ and we sum for $k_1+k_2\le k$, $j_1,j_2$ in $J(h,C)$. We obtain that, for some $P$ in $\Tr^k$,
\begin{align*}
&\blA \Zr^k \Djh B_0(W,W-W_\La)\brA_{L^2}\\
&\qquad\qquad\le C_1\sqrt{h}2^{j-j_+(b+\frac{1}{3}(a-b)-\mez)}\sum_{k_1+k_2+k_3\le k}\prod_{\ell=1}^3
\Fr_{k_\ell}(v)^{\frac 1 3}\Er_{k_\ell}(v)^{\frac 2 3}\\
&\qquad\qquad \quad +C_1 h^{1-4\beta-0}2^{j-j_+ b}P(v)\\
&\qquad\qquad\quad +C_1 2^{j-j_+(b+\frac{1}{3}(a-b))}h^{\frac 5 6} P(v).
\end{align*}
Since $2^j\le C h^{-2\beta}$, for $\beta$ small enough and $a-b\gg 1$, we get a quantity bounded from above by \e{3328}. This concludes the proof.
\end{proof}

Let us deduce from Lemma~\ref{ref:336} a sharp version of \e{3316}.

\begin{coro}\label{ref:337}
Let $k\ge 1$. There are $C>0$, and an element $P_k$ of $\Tr_k$ such that for any $j$ in $J(h,C)$, any $v$ with $\Er_{k-1}(v)\le h^{-1/16}$
\be\label{3329}
\ba
&\blA \ophjdeux \zjk w_j -2^{-j/2} \sqrt{h} A(h_j)\zjk \Tmj \Djh Q_0(W_\La)\brd\\
&\qquad \qquad \le C h^{\mez-0}h_j^\mez 2^{-j_+b}
\bigg[ \sum_{k_1+k_2+k_3\le k}\prod_{\ell=1}^3\Fr_{k_\ell}(v)^{\frac 1 3}\Er_{k_\ell}(v)^{\frac 2 3}
+h^{\frac 1 8}P_{k}(v)+\Fr_{k+1}(v)\bigg].
\ea
\ee
\end{coro}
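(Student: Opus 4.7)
The corollary is essentially a combination of Lemma~\ref{ref:333}~$i)$, which bounds the analogous quantity with $\QW$ in place of $Q_0(W_\La)$, together with Lemma~\ref{ref:336}, which provides the microlocal approximation $\QW \simeq Q_0(W_\La)$. Concretely, I would decompose
\[
\ophjdeux\zjk w_j - 2^{-j/2}\sqrt{h}\,A(h_j)\zjk\Tmj\Djh Q_0(W_\La) = T_1 + T_2,
\]
where $T_1$ is the expression sitting inside the norm on the left hand side of \eqref{3316} and
\[
T_2 = 2^{-j/2}\sqrt{h}\,A(h_j)\,\zjk\Tmj\Djh\bigl(\QW - Q_0(W_\La)\bigr),
\]
then bound the two pieces independently in $L^2$.

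For $T_1$, I apply \eqref{3316} directly. The only bookkeeping required is to verify that its prefactor $2^{j/4-j_+(b+\mez)}$ is dominated by the target prefactor $h^{\mez-0}h_j^{\mez}\,2^{-j_+b}$ of \eqref{3329}. Using the identity $h_j^{\mez} = h^{\mez}\,2^{-j/4}$, one checks case by case ($j\ge 0$ and $j<0$) that $2^{j/4-j_+(b+\mez)} \le h^{-\mez}h_j^{\mez}\,2^{-j_+b}$. Multiplying by the $h^{1-0}$, $h^{9/8}$, and $h^{1-0}2^{-j/2}$ factors appearing respectively in the three contributions of \eqref{3316} then yields, respectively, the cubic term, the $h^{1/8}P_k(v)$ term, and the $\Fr_{k+1}(v)$ term of \eqref{3329}.

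For $T_2$, I would use the conjugation identity $\zjk\Tmj = \Tmj\Zr^k$ together with the norm bound $\blA\Tmj\brA_{\Fl{L^2}{L^2}} = O(2^{j/4})$ coming from \eqref{3113}, yielding
\[
\blA T_2\brA_{L^2} \le C\, 2^{-j/4}\sqrt{h}\;\blA\Zr^k\Djh\bigl(\QW - Q_0(W_\La)\bigr)\brA_{L^2}.
\]
Plugging in the estimate of Lemma~\ref{ref:336} produces a contribution of order $Ch\cdot 2^{-j/4-j_+b}$ in front of the cubic sum and a contribution of order $Ch^{5/4}\cdot 2^{-j/4-j_+b}$ in front of $P_k(v)$. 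Converting once more $2^{-j/4} = h^{-\mez}h_j^{\mez}$ transforms these into $Ch^{\mez}h_j^{\mez}2^{-j_+b}\sum$ and $Ch^{3/4}h_j^{\mez}2^{-j_+b}P_k(v)$; both fit under the target prefactor of \eqref{3329}, the second absorbing into the $h^{1/8}P_k(v)$ slot since $h^{3/4} \le h^{\mez-0}\cdot h^{1/8}$ for $h$ small. No genuinely new microlocal input is needed beyond Lemmas~\ref{ref:333} and~\ref{ref:336}; the only mildly delicate point is the case distinction $j\gtrless 0$ required to compare the prefactor $2^{j/4-j_+(b+\mez)}$ with $h_j^{\mez}2^{-j_+b}$.
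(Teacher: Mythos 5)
Your proposal is correct and follows essentially the same route as the paper: split off the error $2^{-j/2}\sqrt{h}A(h_j)\zjk\Tmj\Djh\bigl(\QW-Q_0(W_\La)\bigr)$, control it by the product of \eqref{3328} with $\sqrt{h}\,2^{-j/2}\blA\Tmj\brA_{\Fl{L^2}{L^2}}=O\big(h_j^{1/2}\big)$, and bound the remaining piece by the right-hand side of \eqref{3316}, which reduces to the prefactor identity $2^{j/4}=h^{1/2}h_j^{-1/2}$ (the paper phrases this as $2^{j/4}h^{9/8}\le 2^{j/2}h_j^{1/2}h^{5/8}$). The arithmetic in your comparison of prefactors checks out, including the $\Fr_{k+1}$ term once the factors $2^{j/4}$ and $2^{-j/2}$ are combined before invoking $j_+\ge 0$.
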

\begin{proof}
We start from estimate \e{3316}. If in the left hand side, we replace $\QW$ by $Q_0(W_\La)$, the resulting 
error is bounded from above by the product of \e{3328} and of
$$
\sqrt{h}2^{-j/2}\blA \Tmj \brA_{\Fl{L^2}{L^2}}=O\big(h_j^{1/2}\big).
$$
We obtain 
a contribution to the \rhs of \e{3329}. On the other hand, the \rhs of \e{3316} is bounded from above by the \rhs of \e{3329} if 
we write that $2^{j/4}h^{9/8}\le 2^{j/2}h_j^{1/2} h^{5/8}$. This concludes the proof.
\end{proof}

\begin{proof}[Proof of Proposition~\ref{ref:335}] 
Let us prove that for any $j$ in $J(h,C)$
\be\label{3330}
\ba
&\blA \ophjdeux \Ophj(\gL)\zjk w_j\brd \\
&\qquad \qquad \le C h^{\mez-0}h_j^\mez 2^{-j_+b}
\left[ \sum_{k_1+k_2+k_3\le k}\prod_{\ell=1}^3\Fr_{k_\ell}(v)^{\frac 1 3}\Er_{k_\ell}(v)^{\frac 2 3}
+h^{\frac 1 8}P_{k}(v)+\Fr_{k+1}(v)\right]
\ea
\ee
for some element $P_k$ in $\Tr_k$. 

We notice first that since $w_j=\Tmj \Djh w$, the definition of $\Fr_{k}$ shows that
\be\label{3331}
\blA \zjk w_j\brd \le \Fr_{k}(v)2^{j/4-j_+a}.
\ee
Consequently, by the gain of one power of $h_j$ coming from symbolic calculus, we see that
$$
\blA \bigl[ \ophjdeux , \Ophj(\gL)\bigr] \zjk w_j\brd 
$$
is estimated by the last term in the \rhs of \e{3330}. We are reduced to estimating 
$\blA \Ophj(\gL)\ophjdeux \zjk w_j\brd$ which, according to \e{3329} is smaller than the \rhs 
of \e{3330} modulo the quantity
\be\label{3332}
2^{-j/2} \sqrt{h} \blA \Ophj(\gL)\zjk \Tmj \Djh Q_0(W_\La)\brd.
\ee
Since $w_\La$ is given by \e{3326}, we may write for $k'\le k$
$$
\Zr^{k'}w_\La=\sumj \Tj \big( \Zr_j^{k'}\Ophj(\gL)w_j\big)
$$
and by definition of $\Fr_k(v)$, and the fact that $w_j=\Tmj \Djh w$, we have 
$$
\blA \Zr_j^{k'}\Ophj(\gL)w_j\brd \le C2^{j/4-j_+a}\Fr_{k'}(v).
$$
Since $w_j$ is microlocally supported for $h_j|\xi|\sim 1$, we deduce from that 
$$
\blA \Zr_j^{k'}\Ophj(\gL)w_j\bri \le C h^{-\mez}2^{j/2-j_+a}\Fr_{k'}(v).
$$
By Definition~\ref{ref:3.1.5}, this shows that the family $\big(\Zr_j^{k'}\Ophj(\gL)w_j\big)_j$ is a family 
of elements of $\big( h^{-\mez}\mB_\infty^{1,a}[K]\big)\cap \big( \mB_2^{0,a}[K]\big)$ 
for some compact $K$ of $\cotangent\setminus 0$, contained in a small neighborhood of $\La$, and that 
$$
\blA \Zr_j^{k'}\Ophj(\gL)w_j\brA_{h^{-\mez}\mB_\infty^{1,a}[K]},
$$
which is by definition the best constant in \e{318}, is smaller than $C\Fr_{k'}(v)$. A similar estimate holds 
for $\blA \Zr_j^{k'}\Ophj(\gL)w_j\brA_{\mB_2^{0,a}[K]}$. We shall now prove that 
$Q_0(W_\La)$ is microlocally supported outside a neighborhood of $\La$.

Let us express $\Zr^k Q_0(W_\La)$ as a combination of terms $B_0\big(\Zr^{k_1}W_\La,\Zr^{k_2}W_\La\big)$, $k_1+k_2\le k$, so as 
a combination of expressions deduced from \e{327}.
\be\label{3332a}
\ba
&\Oph \big(a_0(\xi)\big)\Big[ 
\big(\Oph\big(a_1(\xi)\big)\Zr^{k_1}w_\La\big)
\big(\Oph\big(a_2(\xi)\big)\Zr^{k_2}w_\La\big)\Big],\\
&\Oph \big(a_0(\xi)\big)\Big[ 
\big(\Oph\big(a_1(\xi)\big)\Zr^{k_1}w_\La\big)
\big(\Oph\big(a_2(\xi)\big)\Zr^{k_2}\overline{w}_\La\big)\Big],\\
&\Oph \big(a_0(\xi)\big)\Big[ 
\big(\Oph\big(a_1(\xi)\big)\Zr^{k_1}\overline{w}_\La\big)
\big(\Oph\big(a_2(\xi)\big)\Zr^{k_2}\overline{w}_\La\big)\Big],
\ea
\ee
where $a_0,a_1,a_2$ are homogeneous of non negative order. We have just seen that $\Zr^{k'}w_\La$ is in 
$\big( h^{-\mez}\widetilde{\mB}_\infty^{1,a}[K]\big)\cap \big( \widetilde{\mB}_2^{0,a}[K]\big)$ with 
norm in that space bounded from above by $C\Fr_{k'}(v)$. It follows from Proposition~\ref{ref:3110} and the fact that $a$ is large enough relatively to $b$ 
that the first (resp.\ second, resp.\ third) expression \e{3332a} belongs to $h^{-\mez}\widetilde{\mB}_2^{1,b}[K_2]$ 
(resp.\ $h^{-\mez}\widetilde{\mB}_2^{1,b}[K_0]$, resp.\ $h^{-\mez}\widetilde{\mB}_2^{1,a}[K_{-2}]$) where 
$K_2$ (resp.\ $K_0$, resp.\ $K_{-2}$) is a compact subset of $\cotangent$ contained in a small neighborhood of $2\cdot\La$ (resp.\ $0\cdot \La$, 
resp.\ $-2\cdot \La$), and that the norm of these functions in those spaces is $O\big( \Fr_{k_1}(v)\Fr_{k_2}(v)\big)$. Consequently 
$\zjk \Tmj \Djh Q_0(W_\La)$ is microlocally supported far away from $\La$. When we apply a $\Ophj(\gL)$ cut-off as in \e{3332}, we gain a 
$O(h_j^\infty)=O(h^\infty)$ factor. We conclude that \e{3332} is bounded from above $h^N\sum_{k_1+k_2\le k}\Fr_{k_1}(v)\Fr_{k_2}(v)$ 
so that \e{3332} is controlled by the $h^{1/8}$--term in \e{3330}. 

To finish the proof of Proposition~\ref{ref:335}, we are left with showing 

\begin{lemm}\label{ref:338}
Assume that \e{3330} holds. Then estimate \e{3327} holds as well.
\end{lemm}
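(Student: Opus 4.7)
The plan is to combine ellipticity of the symbol $2x\xi+|\xi|^{\mez}$ near $\La$ (used in reverse) with a one‑dimensional Sobolev embedding conjugated by the oscillating factor $e^{i\omega(x)/h_j}$, exactly the scheme outlined in the introduction to derive \eqref{eq:10}. Concretely, I first convert the $L^2$-bound \eqref{3330} into an $L^2$-bound on $(h_jD_x-d\omega(x))\Ophj(\gL)\zj^k w_j$, and then pass from these two $L^2$-bounds to the desired $L^\infty$-bound.

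For the first step, on a neighborhood of $\supp\gL$ the two symbols $2x\xi+|\xi|^{\mez}$ and $\xi-d\omega(x)$ vanish precisely on $\La$, both with non-vanishing $\xi$-derivative (this is transversality of $\La$, which holds since $x\neq 0$ there). Consequently one can factor $\xi-d\omega(x)=b(x,\xi)\bigl(2x\xi+|\xi|^{\mez}\bigr)$ for some $b\in S(1)$ with compact support in $T^*(\xR^*)$. Theorem~\ref{ref:A.2.2} then yields
\[
\bigl(h_jD_x-d\omega(x)\bigr)\Ophj(\gL)=\Ophj(b)\,\Ophj\bigl(2x\xi+|\xi|^{\mez}\bigr)\Ophj(\gL)+h_j\Ophj(r)
\]
for some $r\in S(1)$ supported close to $\La$. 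Setting $f=\Ophj(\gL)\zj^k w_j$ and applying this identity to $\zj^k w_j$, the main term is controlled in $L^2$ by \eqref{3330}, while the remainder $h_j\Ophj(r)\zj^k w_j$ is bounded in $L^2$ by $Ch_j\Fr_k(v)2^{j/4-j_+a}$ via \eqref{3331}. Since $h_j^{\mez}2^{j/4}=h^{\mez}$ and $a-b\gg 1$ by \eqref{334}, this remainder is absorbed into the $\Fr_{k+1}(v)$‑piece on the right-hand side of \eqref{3330}, giving
\[
\bigl\|(h_jD_x-d\omega(x))f\bigr\|_{L^2}\le Ch^{\mez-0}h_j^{\mez}2^{-j_+b}Q,
\]
where $Q$ denotes the bracketed quantity on the right-hand side of \eqref{3327}.

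For the second step, $g=e^{-i\omega/h_j}f$ satisfies $\|g\|_{L^p}=\|f\|_{L^p}$ and $h_jD_xg=e^{-i\omega/h_j}(h_jD_x-d\omega(x))f$, so the one-dimensional Sobolev inequality $\|g\|_{L^\infty}\le C\|g\|_{L^2}^{\mez}\|\partial_xg\|_{L^2}^{\mez}$ gives
\[
\|f\|_{L^\infty}\le Ch_j^{-\mez}\|f\|_{L^2}^{\mez}\|(h_jD_x-d\omega(x))f\|_{L^2}^{\mez}.
\]
The crude bound $\|f\|_{L^2}\le C\Fr_k(v)2^{j/4-j_+a}$ from \eqref{3331}, combined with the previous display and the relation $h_j=h\,2^{-j/2}$, yields $\|f\|_{L^\infty}\le Ch^{-0}\,2^{j/4-j_+(a+b)/2}\Fr_k(v)^{\mez}Q^{\mez}$. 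The condition \eqref{334} (which in particular implies $a\ge\mez+b$) ensures $2^{j/4-j_+(a+b)/2}\le C\,2^{-j_+b}$ for every $j\in J(h,C)$, and a final AM--GM inequality $\Fr_k(v)^{\mez}Q^{\mez}\le\mez(\Fr_k(v)+Q)\le Q$, valid since $\Fr_k(v)\le\Fr_{k+1}(v)\le Q$, delivers \eqref{3327}.

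The only genuinely delicate point in this plan is the factorization of the first step, which requires the transversality of $\La$ with respect to both symbols in order to produce a bounded $b$ on a full neighborhood of $\supp\gL$. Once that is in place, the remainder of the argument reduces to routine semi-classical symbolic calculus together with bookkeeping in the two scales $h$ and $h_j=h\,2^{-j/2}$.
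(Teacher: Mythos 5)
Your proof is correct and follows essentially the same route as the paper's: both arguments convert \e{3330} into an $L^2$ bound on $\Ophj(\xi-\diff\omega(x))\Ophj(\gL)\zjk w_j$ via the factorization of $\xi-\diff\omega$ against the elliptic multiple $2x\xi+\la\xi\ra^{\mez}$ near $\supp\gL$ (absorbing the $h_j\Ophj(r)$ remainder using \e{3331} and $h_j^{\mez}2^{j/4}=h^{\mez}$), then conjugate by $e^{-i\omega/h_j}$ and apply $\lA f\rA_{L^\infty}\les \lA f\rA_{L^2}^{\mez}\lA D_x f\rA_{L^2}^{\mez}$, closing with $a-b\ge\mez$ and $\Fr_k\le\Fr_{k+1}$. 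No gaps.
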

\begin{proof}
The definition of $\Fr_k(v)$ and the fact that $w_j=\Tmj\Djh v$ implies that
\be\label{3333}
\blA \Ophj(\gL)\zjk w_j\brd \le C2^{\frac{j}{4}-j_+a}\Fr_{k}(v).
\ee
Since $\bigl(2x\xi+\la\xi\ra^\mez\bigr)=(\xi-\dom(x))g(x,\xi)$ for some elliptic symbol $g$, on a neighborhood 
of the support of $\gamma_\La$, we deduce from \e{3330}, \e{3333} and symbolic calculus that
\be\label{3334}
\blA \Ophj(\xi-\dom(x))\Ophj(\gL)\zjk w_j\brd \le h_j^\mez h^{\mez-0}M2^{-j_+b}
\ee
where
$$
M=C\left[ \sum_{k_1+k_2+k_3\le k}\prod_{\ell=1}^3\Fr_{k_\ell}(v)^{\frac 1 3}\Er_{k_\ell}(v)^{\frac 2 3}
+h^{\frac 1 8}P_{k}(v)+\Fr_{k+1}(v)\right]
$$
for some $P_k$ in $\Tr_k$. We may rewrite \e{3333} and \e{3334} as 
\begin{align*}
&\lA e^{-i\omega(x)/h_j}\Ophj(\gL)\zjk w_j\rA_{L^2}
\le C 2^{\frac{j}{4}-j_+a}\Fr_k(v),\\
&\lA \bigl( h_j \OD\bigr) \Bigl(e^{-i\omega(x)/h_j}\Ophj(\gL)\zjk w_j\Bigr)\rA_{L^2}
\le C h_j^\mez h^{\mez-0}M 2^{-j_+b}.
\end{align*}
Using that $\lA f\rA_{L^\infty}=O\Bigl( \lA f\rA_{L^2}^\mez \lA \OD f\rA_{L^2}^\mez\Bigr)$, we get
\begin{align*}
\blA \Ophj(\gL)\zjk w_j\bri &\le C h^{-0}2^{\frac{j}{4}-j_+\frac{a+b}{2}}(\Fr_k(v)M)^\mez\\
&\le Ch^{-0}2^{-j_+b}M.
\end{align*}
This implies \e{3327}.
\end{proof}

{\em Proof of Proposition~\ref{ref:3.3.1}:} We combine estimates \e{3324} and \e{3327}. We obtain 
\be\label{3335}
\ba
\blA \zjk w_j\bri \le C h^{-0}\bigg[ &\sum_{k_1+k_2+k_3\le k}
\prod _{\ell=1}^{3}\Fr_{k_\ell}(v)^{\frac 1 3}\Er_{k_\ell}(v)^{\frac 2 3}\\
&\quad +h^{\frac 1 8} P_k(v)+\Fr_{k+1}(v)\bigg]2^{-j_+b}
\ea
\ee
for any $j$ in $J(h,C)$ and some $P_k$ in $\Tr_k$, assuming an a priori bound $\Er_{k-1}(v)\le h^{-1/16}$. 

We assume that \e{337} holds and that \e{339} has been proved up to order $k-1$. Consequently, by $(ii)$ of Lemma~\ref{ref:3.2.3}, we know that
\begin{align*}
\sup_j \Bigl( 2^{j_+ b} \blA \Djh \ophigh \Zr^k v \brA_{L^\infty}\Big)&\le C_k \eps h^{\frac 7 8},\\
\blA \Oph\bigl(\varphi_0\bigl( h^{-2(1-\sigma)}\xi\bigr)\Zr^k v\brA_{L^\infty}&\le C_k\eps h^{\frac{7}{16}-\sigma},
\end{align*}
where $C_K$ depends only on $A_0,\ldots,A_k$. 

On the other hand, \e{3335}Ê
gives a control of $\blA \Djh \Zr^k v\brA_{L^\infty}$ for 
$j\in J(h,C)$. Going back to the definition \e{335} of $\Er_k(v)$ we obtain
\be\label{3336}
\ba
\Er_k(v)\le C h^{-0}\bigg[ &\sum_{k_1+k_2+k_3\le k}
\prod _{\ell=1}^{3}\Fr_{k_\ell}(v)^{\frac 1 3}\Er_{k_\ell}(v)^{\frac 2 3}\bigg]\\
&\quad +h^{\frac{1}{16}} \big[ P_k(v)+C_k\eps \big]+h^{-0}\Fr_{k+1}(v).
\ea
\ee
Let us deduce from that that \e{339} holds at rank $k$. By the assumption \e{337} and the fact that by \e{330} 
$\delta_k'>\delta_{k+1}$, we may bound $h^{-0}\Fr_{k+1}(v)$ by 
$\eps A_k' h^{-\delta_k'}$ for some $A_k'>0$ depending only on $A_{k+1}$. 
The same is true for $h^{\frac{1}{16}}C_k\eps$, with $A_k'$ depending only on $A_0,\ldots,A_{k+1}$. 
Consider the $h^{\frac{1}{16}}P_k(v)$ contribution. By definition of the class $\Tr_k$ and \e{336b}, this term has modulus 
bounded from above by quantities of the form
\be\label{3337}
h^{\frac{1}{16}}\Er_{k_1}(v)\cdots \Er_{k_{\ell'}}(v)\Fr_{k_{\ell'+1}}(v)\cdots \Fr_{k_{\ell+1}}(v)
\ee
where $\ell'\le \ell \le 4$, $k_1+\cdots+k_\ell\le k$. Assume first that one of the $k_j$, $1\le j\le \ell'$, is equal to $k$, 
so that the other ones equal $0$. We obtain, according to assumption \e{337} a bound in 
$Ch^{\frac{1}{16}-(\ell'-1)\delta_0'-(\ell-\ell'+1)\delta_1}\times \Er_{k}(v)$, 
with a constant $C$ depending only on $\widetilde{A}_0,A_1$. 

By \e{330} and \e{333} this is smaller than $C\Er_k(v)h^{\frac{1}{32}}$ with 
a constant $C$ depending only on $\widetilde{A}_0,A_1$. On the other hand, if all $k_j$, $0\le j\le\ell'$, are strictly 
smaller than $k$, we may apply the induction hypothesis to estimate $\Er_{k_j}(v)$ and \e{337} to control 
$\Fr_{k_{j+1}}(v)$. We obtain for \e{3337} a bound in $C\eps h^{\frac{1}{16}-\delta_k'}$, according to the 
first inequality \e{330}, where the constant depends only on $\widetilde{A}_0,A_0,\ldots, A_k$. 

Let us study now the first term in the \rhs of \e{3336}. When $k_1<k$, $k_2<k$, $k_3<k$, we write 
\be\label{3338}
\ba
h^{-0}\prod _{\ell=1}^{3}\Fr_{k_\ell}(v)^{\frac 1 3}\Er_{k_\ell}(v)^{\frac 2 3}\le \frac{h^{-0}}{3}
\Big(& \Fr_{k_1}(v)\Er_{k_2}(v)\Er_{k_3}(v)\\
&+\Er_{k_1}(v)\Fr_{k_2}(v)\Er_{k_3}(v)+
\Er_{k_1}(v)\Er_{k_2}(v)\Fr_{k_3}(v)\Big).
\ea
\ee
By \e{337}, the fact that \e{339} is assumed to hold for $k_\ell<k$ and the first inequality \e{330}, we get that 
\e{3338} is $O\big(\eps h^{-\delta_k'}\big)$ with a constant depending only on $\widetilde{A}_0$, $A_1,\ldots,A_{k+1}$. 
Finally, we are left with studying the first term in the \rhs of \e{3336} when one the $k_j$ is equal to $k$, i.e.\ 
$$
h^{-0}\Er_{k}(v)^{\frac 2 3}\Fr_{k}(v)^{\frac 1 3}\Fr_{0}(v)^{\frac 2 3}\Er_{0}(v)^{\frac 4 3}
\le \frac{2}{3} \delta \Er_k(v)+\frac{1}{3} \delta^{-2}h^{-0}
\Fr_{k}(v)\Fr_{0}(v)^2\Er_{0}(v)^4
$$
for any $\delta>0$ (where in the right-hand side, $h^{-0}$ denotes $h^{-3\theta}$ if in the left hand side $h^{-0}$ stands for $h^{-\theta}$ with 
$\theta>0$ small). The last term in the above inequality is $O\big(\eps h^{-\delta_k'}\big)$ according to assumption \e{337} and the second inequality 
\e{330}, with a constant depending only on $\widetilde{A}_0,A_1,\ldots,A_{k+1}$. Summing up, we have obtained
$$
\Er_k(v)\le \Big[ \frac{2}{3}\delta +Ch^{\frac{1}{32}}\Big]\Er_{k}(v)+\eps A_k' h^{-\delta_k'}
$$
from which \e{339} at rank $k$ follows if $h$ and $\delta$ are taken small enough.
\end{proof}

\section{Decomposition of the solution in oscillating terms}\label{S:34}

The goal of this subsection is to give a description of 
the component $w$ in the decomposition~\e{3218} of $v$ in terms 
of oscillating contributions. More precisely, we expect $w$ to be a sum 
of a main term, oscillating along the phase $\omega$ 
(i.e.\ a term which is a lagrangian distribution along $\Lambda$), of $O(\sqrt{h})$ 
terms, 
coming from the quadratic part of the nonlinearity, 
that will oscillate along the phases $\pm 2\omega$ (so, which are associated to the 
lagrangians $\pm 2\La$), of $O(h)$ terms, coming from the cubic 
part of the nonlinearity, oscillating along the phases $\pm 3\omega$, $\pm \omega$, 
and a remainder. 
Moreover, we shall need, in preparation 
for next subsection, to get an explicit expression for contributions oscillating 
on $\pm 2\Lambda$. 

We consider a solution $v$ of $\e{326}$ satisfying for $h$ in some interval $]h',h_0]$ 
the a priori estimate \e{339} for 
$k'\le \frac{s}{2}+N_1$ for some fixed $N_1\ll s$. In particular, 
for $k\le \frac{s}{2}+N_1$, 
\be\label{341}
\blA \Djh \Zr^k v\brA_{L^\infty} \le \eps A_k'h^{-\delta_{k}'}2^{-j_+b}
\ee
for $j\in J(h,C)$. In this section, we shall denote by $K$ compact subsets of 
$\cotangent$ contained in a small neighborhood of one of the lagrangians $\ell\cdot\La$, 
$\ell\neq 0$, by $L$ compact subsets of $\cotangent$ and by $F$ closed subsets of $T^*\xR$ whose second projection 
is compact in $\Rs$.

We first obtain a rough decomposition of $v$.

\begin{lemm}\label{ref:341}
One may write $v=v_L+w_\Lambda+w_{\Lambda^c}+v_H$, where 
$v_L,v_H$ are defined in \e{3218} and, for some compact subset $K$ of $\cotangent$, lying in a small enough neighborhood of $\Lambda$ 
and intersecting $\Lambda$, some closed set $F$ as above, $\Zr^k w_\La$, $0\le k\le \frac{s}{2}+N_1$ is an $O(\eps)$ element of 
$h^{-\delta_{k+1}'}L^\infty\tilde{I}_\La^{0,b-2}[K]$ and $\Zr^k w_{\La^c}$, $0\le k\le \frac{s}{2}+N_1$ is an $O(\eps)$ element 
of $h^{\mez-\delta_{k+1}'}\tilde{\mB}_\infty^{0,b-2}[F]+h^{1-\dek}\tilde{\mB}_\infty^{-1,b-2}[F]$. 
Moreover, $\Oph(x\xi)\Zr^k w_{\La^c}$ is in 
$h^{\mez-\dek}\ti{\mB}_\infty^{1,b-2}[F]+h^{1-\dek}\ti{\mB}_\infty^{0,b-2}[F]$. 
\end{lemm}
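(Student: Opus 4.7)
The plan is to define the rough decomposition $w = w_\Lambda + w_{\Lambda^c}$ microlocally at each dyadic scale, following the scheme already used in the proofs of Propositions~\ref{ref:335} and \ref{ref:3.3.4}. Concretely, for each $j \in J(h,C)$ I would set $w_{\Lambda,j} = \Ophj(\gL) w_j$ and $w_{\Lambda^c,j} = \Ophj(\nongL) w_j$, with $w_j = \Tmj \Djh w$ as in \e{3311} and $\gL, \nongL$ as in \e{3312}, and then let $w_\Lambda = \sum_{j \in J(h,C)} \Tj w_{\Lambda,j}$, $w_{\Lambda^c} = \sum_{j \in J(h,C)} \Tj w_{\Lambda^c,j}$. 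Since each $w_j$ is microlocally supported in $|\xi|\sim 1$, where $\gL + \nongL = \Phi \equiv 1$, one has $w_\Lambda + w_{\Lambda^c} = w$ up to $O(h^\infty)$ errors in $L^\infty$, which are harmless for any of the target classes.

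To verify $\Zr^k w_\Lambda \in h^{-\delta'_{k+1}} L^\infty \tilde I_\Lambda^{0,b-2}[K]$, Definitions~\ref{ref:3.1.5} and \ref{ref:3.1.8} require uniform-in-$j$ estimates on the blocks $\zj^k w_{\Lambda,j}$ of two types: an $L^\infty$-bound, and an elliptic-type bound on $\Ophj(e)\zj^k w_{\Lambda,j}$ where $e$ is an equation of $\Lambda$. The first is exactly \e{3327} of Proposition~\ref{ref:335}, after substituting the a priori bounds \e{337} and \e{339} and noting that $\delta'_{k+1}$ dominates all $h^{-0}$ losses. For the second, I would write $2x\xi + |\xi|^{1/2} = e(x,\xi) g(x,\xi)$ with $g$ elliptic on a neighborhood of $\supp \gL$, apply the symbolic calculus of Theorem~\ref{ref:A.2.2} at scale $h_j$, and use the $L^\infty$ estimates \e{3318} and \e{3318a} of Lemma~\ref{ref:333} to control the $\Ophj(2x\xi+|\xi|^{1/2})$-contribution by $O(\sqrt h + h_j)$ times the expected weight. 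Membership of $\Zr^k w_{\Lambda^c}$ in $h^{1/2-\delta'_{k+1}}\tilde \mB_\infty^{0,b-2}[F] + h^{1-\delta'_k}\tilde \mB_\infty^{-1,b-2}[F]$ is read off directly from \e{3324}: the $h^{1/2}\Fr_{k+1}(v)$ contribution yields the first summand, while the $h^{1/4}P_k(v)$ piece fits into the second because the $\mu = -1$ class has an extra block weight $(h/h_j)^{-1} = 2^{-j/2}$, which for $j \ge j_0(h,C) \sim 2(1-\sigma)\log_2 h$ is at most $h^{-(1-\sigma)}$, so that $h^{1/4-\delta'_k} 2^{-j_+b}$ fits into $h^{1-\delta'_k} 2^{-j/2 - j_+(b-2)}$ provided $\sigma$ is small.

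For the last claim on $\Oph(x\xi)\Zr^k w_{\Lambda^c}$, I would use the decomposition
\begin{equation*}
\Oph(x\xi) = \tfrac{1}{2}\Oph(2x\xi + |\xi|^{1/2}) - \tfrac{1}{2}\Oph(|\xi|^{1/2}).
\end{equation*}
The first piece is controlled through equation~\e{3222a}, whose right-hand side is $O_{L^\infty}(\sqrt h)$ after the a priori bounds; conjugation by $\Tj$ converts the $h$-quantization to $h_j$-quantization with an extra factor $2^{j/2}$, which realizes the required $\mu$-shift of $+1$, turning $h^{1/2-\delta'_{k+1}}\tilde \mB_\infty^{0,b-2}$ into $h^{1/2-\delta'_k}\tilde \mB_\infty^{1,b-2}$. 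The second piece becomes, on each block at scale $h_j$, $2^{j/2}\Ophj(|\xi|^{1/2})$, which is bounded on $L^\infty$ because $|\xi|^{1/2}$ is smooth on $\supp\nongL$; again this shifts $\mu$ by $+1$, and applied to the $\mu = -1$ component of $w_{\Lambda^c}$ it produces the $\mu = 0$ second summand.

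The main obstacle will be a careful bookkeeping of symbolic-calculus errors and of the various $h^{-0}$ losses inherited from the a priori $L^\infty$-bounds $\Er_k(v) \leq \eps h^{-\delta'_k}$, so that all $h$-exponents combine consistently with the inductive gaps \e{330} between the $\delta_k$ and the $\delta'_k$. In particular, one needs to verify that the commutators of $\Zr^k$ with the cutoff symbols $\gL, \nongL$ stay in the same symbol classes (which is immediate from the commutation relations \e{3324a}) and that the block-by-block estimates sum correctly, absorbing the cardinality $O(|\log h|)$ of $J(h,C)$ into the $\delta'$-exponents.
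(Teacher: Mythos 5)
Your overall architecture coincides with the paper's: same dyadic microlocal splitting $w_{j,\Lambda}=\Ophj(\gL)w_j$, $w_{j,\Lambda^c}=\Ophj(\nongL)w_j$, the same elliptic factorization $2x\xi+\la\xi\ra^{1/2}=e\,g$ for the Lagrangian estimate on $w_\Lambda$, and the same splitting $x\xi=\tfrac12(2x\xi+\la\xi\ra^{1/2})-\tfrac12\la\xi\ra^{1/2}$ for the last claim. The $w_\Lambda$ part is fine (though invoking \e{3327} is a detour: the block $L^\infty$ bound is just the a priori estimate \e{339}, and the Lagrangian bound \e{341a} follows from \e{3318}--\e{3318a}).

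There is, however, a genuine gap in your treatment of $w_{\Lambda^c}$. You read the membership off the crude form \e{3324}, and you try to absorb the $h^{1/4}P_k(v)\,2^{-j_+b}$ term into the second summand $h^{1-\delta_{k+1}'}\tilde{\mB}_\infty^{-1,b-2}[F]$. That summand requires a block bound of size $h^{1-\delta_{k+1}'}\,2^{-j/2}\,2^{-j_+(b-2)}$, so your absorption amounts to the inequality $h^{1/4}\,2^{-j_+b}\lesssim h\,2^{-j/2}\,2^{-j_+(b-2)}$, i.e.\ $h^{-3/4}\lesssim 2^{-j/2}\,2^{2j_+}$, \emph{uniformly in} $j\in J(h,C)$. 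This fails badly: at $j=0$ the right-hand side equals $1$, and for $j\geq 0$ it is at most $2^{3j/2}\leq Ch^{-3\beta}\ll h^{-3/4}$. Your remark that $2^{-j/2}$ ``is at most $h^{-(1-\sigma)}$'' bounds the weight from above, whereas the absorption needs it bounded from \emph{below} by $h^{-3/4}$ for every $j$; the inequality goes the wrong way. (Nor can this term go into the first summand, since $h^{1/4}$ falls short of $h^{1/2}$ by a fixed power.) The issue is that \e{3324} is stated with the lossy simplification $\sqrt{h}\,2^{j/2}\le h^{1/4}$ of the quadratic contribution. The correct route -- the one the paper takes -- is to combine the ellipticity relation \e{3320a} (with $p=\infty$) directly with the sharper block estimates \e{3318} and \e{3318a}, which, after inserting \e{339} and the convexity inequalities \e{330}, give
\begin{equation*}
\blA \Ophj(\nongL)\zj^k w_j\brA_{L^\infty}\le C\eps\,2^{-j_+(b-2)}h^{-\delta_{k+1}'}\bigl(h^{1/2}+h_j\bigr)+C_N\eps\,h_j^N h^{-\delta_k'}2^{-j_+b},
\end{equation*}
whose $h^{1/2}$ and $h_j=h2^{-j/2}$ terms produce exactly the two summands of the statement. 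The same correction is needed in your last step, since your bound for $\Oph(x\xi)\Zr^kw_{\Lambda^c}$ again feeds in the estimate for $\ophjdeux\zj^kw_j$; using the $(h^{1/2}+h_j)$ form there as well, the $2^{j/2}$ prefactor then realizes the $\mu$-shift by $+1$ exactly as you describe.
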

\begin{proof}
We have written in \e{3218} $v=v_L+w+v_H$ and using notations \e{3311}, we may decompose $w=\sum_{j\in J(h,C)}\Tj w_j$. 
Recall definition \e{3312} of symbol $\gL$ and set \index{Unknowns in Chapter~\ref{chap:3}!$w_{j,\La}$}
$$
w_{j,\La}=\Ophj(\gL)w_j, \quad j\in J(h,C)
$$
so that $\Zr^k w_\La=\sum_{j\in J(h,C)}\Tj\bigl(\Zr_j^k w_{j,\La}\bigr)$. Since, 
when commuting $\Zr_j^k $ to $\Ophj(\gL)$, we get 
expressions of the form 
$\Ophj(\tilde{\gamma}_\La^{k'})\Zr_j^{k'}$ with $k'\le k$ and $\tilde{\gamma}_\La^{k'}$ a new symbol, we deduce from 
estimates \e{339} that $\Zr^k w_{\La}$ belongs to $h^{-\delta_k'}\tilde{\mB}_\infty^{0,b}[K]$ 
for a compact set $K$ satisfying the conditions of the statement if $\gL$ is supported in a small enough neighborhood of $\Lambda$. Moreover, 
$\Zr^k w_\La$ is $O(\eps)$ in the preceding space. 
If we use symbolic calculus, estimates \e{3318}, \e{3318a} and the assumed a priori estimates 
\e{339} together with \e{330}, we get
\be\label{341a}
\blA \ophjdeux \Zr_j^k w_{j,\La}\brA_{L^\infty}
\le C \eps 2^{-j_+ (b-2)}h^{-\delta_{k+1}'}\bigl[ h^\mez+h_j\bigr]
\ee
i.e.\ $\bigl(\Zr_j^k w_{j,\La}\bigr)_j$ belongs to $h^{-\delta_{k+1}'}L^\infty I_\La ^{0,b-2}$ and is 
of size $O(\eps)$ in that space. This gives the statement concerning $w_\La$ of the Lemma.

Set $w_{j,{\La^c}}=\Ophj(\gamma_\La^c)w_j$ \index{Unknowns in Chapter~\ref{chap:3}!$w_{j,\La^c}$}so that 
$w_{\La^c}=\sumj\Tj w_{j,\La^c}$. We use \e{3320a} with $p=\infty$. 
We estimate the first term in the right hand side of this inequality using \e{3318}, \e{3318a}, the bounds 
\e{339} together with inequalities \e{330}. We get
\be\label{a341b}
\blA \Ophj(\gamma_\La^c)\Zr_j^k w_j\brA_{L^\infty}\le C 
\eps 2^{-j_+(b-2)}h^{-\de_{k+1}'}\bigl(h^\mez +h_j\bigr)
+C_N \eps h_j^N h^{-\delta_{k}'}2^{-j_+b},
\ee
where the last term has been estimated from \e{341}. 

If $N$ is large enough, since $h_j\le Ch^\sigma$ we get that $\Zr^k w_{\La^c}$ is in 
$$
h^{\mez-\delta_{k+1}'}\tilde{\mB}_\infty^{0,b-2}[F]+h^{1-\dek}\tilde{\mB}_\infty^{-1,b-2}[F]
$$ 
and is $O(\eps)$ in that space (where $F$ is a closed set as described before the statement of Lemma~\ref{ref:341}). 

To study $\Oph(x\xi)w_{\La^c}$, we write
$$
\Oph(x\xi)w_{\La^c}=\sumj 2^{\jd}\Tj \Op_{h_j}(x\xi)w_{j,\La^c}.
$$
By symbolic calculus, we may write $\Op_{h_j}(x\xi)w_{j,\La^c}$ from 
$$
\Op_{h_j}\big( \ti{\gamma}_{\La^c}\big)w_j,\quad \Op_{h_j}\big(\gamma_{\La^c}\big)\big( \Op_{h_j}(x\xi)w_j\big),
$$
where $\ti{\gamma}_{\La^c}$ is a cut-off with support contained in the one of $\gamma_{\La^c}$. The $L^\infty$-norm of the action of 
$\Zr_j^k$ on the first of these expressions is bounded like \e{a341b}. The second expression may be written from
$$
\Op_{h_j}\big(\gamma_{\La^c}\big)\ophjdeux w_j,\quad \Op_{h_j}\big( \gamma_{\La^c}\big)\Op_{h_j}\big(\la \xi\ra^\mez\big)w_j.
$$
The $L^\infty$-norm of the action of $\Zr_j$ on the last term is bounded using \e{a341b} by the right hand side of this inequality. 
For the first term, we use again \e{3318}, \e{3318a} as in the proof of \e{341a} to get a similar upper bound. This concludes the proof.
\end{proof}

The decomposition $w=w_{\La}+w_{\La^c}$, in terms of a contribution $w_\La$ localized close to $\La$ and another one $w_{\La^c}$ 
supported outside a neighborhood of $\La$ is not precise enough for our purposes. We need to refine it, writing $w_{\La^c}$ as a sum of terms 
oscillating on the lagrangians $\pm 2 \Lambda$, of size of order $\sqrt{h}$, and of a remainder that is $O(h)$. Moreover, we need also to check 
that $w_\La$ is in $h^{-2\de_{k+1}'}L^\infty\tilde{J}_\La^{0,b'}[K]$. This is the goal of next proposition, that will be proved plugging 
the decomposition of Lemma~\ref{ref:341} in the equation \e{3222} satisfied by $w$, written under the form
\be\label{342}
\opdeux w=-\sqrt{h}\QW+h\Big[ \frac{i}{2}w-iZw-\CW\Big] -h^{\frac{5}{4}}R(V).
\ee

\begin{prop}\label{ref:342}Let $b'< b-5$ 
and $N_0<N_1$ such that $(N_1-N_0-1)\sigma\ge 1$. We may write the first decomposition 
of $w$
\be\label{343}
w=w_\La+\sqrt{h}\bigl(w_{2\La}+w_{-2\La}\bigr)+hg \index{Unknowns in Chapter~\ref{chap:3}!$w_{\pm 2\La}$}
\ee
where, for any $k\le \frac{s}{2}+N_0$, $\Zr^k w_{\pm 2\La}$ is a $O(\eps)$ element of 
$h^{-2\delta_{k+1}'}L^\infty \tilde{I}_{\pm 2\La}^{2,b'+\frac{3}{2}}[K_{\pm 2}]$, $\Zr^k w_\La$ is an 
$O(\eps)$ element of $h^{-\delta_{k+1}'}L^\infty \tilde{J}_{\La}^{0,b'}[K]$, 
$\Zr^k g$ is a $O(\eps)$ element 
of $h^{-3\delta_{k+1+N_1-N_0}'}\tilde{\mB}_\infty^{0,b'}[F]$ and 
$\Zr^k\Oph(x\xi)g$ is an $O(\eps)$ element of $h^{-3\delta_{k+1+N_1-N_0}'}\ti{\mB}_\infty^{1,b'}[F]$,  
for 
some compact subsets $K_{\pm 2}$ of $\cotangent\setminus 0$ 
contained in small neighborhoods of $\pm 2\La$, some 
closed subset $F$ of $T^*\R$ whose second projection is compact in $\Rs$. 
Moreover, $w_{\pm 2\La}$ are given by 
\be\label{344}
\ba
w_{2\La}&=-i(1-\chi)(xh^{-\beta})\frac{1+\sqrt{2}}{4}\la \diff\omega(x)\ra w_\La^2,\\
w_{-2\La}&=-i(1-\chi)(xh^{-\beta})\frac{1-\sqrt{2}}{4}\la \diff\omega(x)\ra \overline{w}_\La^2
\ea
\ee
where $\chi\in C^\infty_0(\xR)$, $\chi\equiv 1$ close to zero has small enough support.
\end{prop}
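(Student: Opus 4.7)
The plan is to plug the rough splitting $v = v_L + w_\Lambda + w_{\Lambda^c} + v_H$ of Lemma~\ref{ref:341} into equation \e{342} (rewritten as \e{3222a}), and to exploit the fact that the symbol $p(x,\xi) = 2x\xi+|\xi|^{1/2}$ vanishes exactly on $\Lambda$, while it is elliptic in a neighborhood of $\pm 2\Lambda$ and of $0\Lambda$ (one checks that $p(x,\pm 2\diff\omega(x))$ is a nonzero multiple of $|\diff\omega(x)|^{1/2}$, computed from $\omega(x)=1/(4|x|)$). The three regions $\Lambda, \pm 2\Lambda, 0\Lambda$ are mutually disjoint in $T^*(\R^*)$, and this is the geometric input that makes the decomposition \e{343} possible.

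First, to extract $w_{\pm 2\Lambda}$: I would apply an $\Op_h$-cutoff $\gamma_{\pm 2\Lambda}$ supported near $\pm 2\Lambda$ to \e{3222a}. Since $W-W_\Lambda$ is $O(\sqrt{h})$ in $L^\infty$ by Lemma~\ref{ref:341} and the a priori bounds \e{339}, Proposition~\ref{ref:3110} lets me replace $Q_0(W)$ by $Q_0(W_\Lambda)$ modulo an $O(h)$-term in a convenient $\tilde{\mathcal{B}}_\infty$-class. By that same product proposition, $w_\Lambda \cdot w_\Lambda$ is lagrangian along $2\Lambda$ and $\bar{w}_\Lambda^2$ along $-2\Lambda$, while $|w_\Lambda|^2$ is lagrangian along $0\Lambda$, so cutting off near $\pm 2\Lambda$ selects only the pure square terms. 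On the support of $\gamma_{\pm 2\Lambda}$ the operator $\Oph(p)$ is elliptic, and using Proposition~\ref{ref:3.1.9} each Fourier multiplier appearing inside $Q_0$ can be replaced by multiplication by its value at $\xi=\pm 2\diff\omega(x)$ modulo an additional factor of $h$ (which contributes to $hg$); the cutoff $(1-\chi)(xh^{-\beta})$ appears precisely because of the exclusion-of-zero cutoff in Proposition~\ref{ref:3.1.9}. Dividing by the principal symbol $p(x,\pm 2\diff\omega)$ and matching coefficients from \e{327} yields the explicit formulas \e{344}, and the $\tilde{I}_{\pm 2\Lambda}^{2,b'+3/2}$ bounds for $\mathcal{Z}^k w_{\pm 2\Lambda}$ follow from Proposition~\ref{ref:3110} applied to the square $w_\Lambda^2$ (resp. $\bar{w}_\Lambda^2$) of the lagrangian distribution $w_\Lambda$ along $\Lambda$.

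Second, to upgrade $w_\Lambda$ from the class $\tilde{I}_\Lambda^{0,b-2}$ (given by Lemma~\ref{ref:341}) to $\tilde{J}_\Lambda^{0,b'}$: I would localize \e{3222a} microlocally near $\Lambda$ via $\Ophj(\gamma_\Lambda)$. The key observation is that every oscillatory component of the right-hand side lives on a lagrangian disjoint from $\Lambda$: $Q_0(W_\Lambda)$ is supported near $\pm 2\Lambda\cup 0\Lambda$, $Q_0(W)-Q_0(W_\Lambda)$ is either $O(h^{1/2})$ or microlocally away from $\Lambda$, and $C_0$ yields a contribution supported near $\pm\Lambda,\pm 3\Lambda$. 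Thus $\Ophj(\gamma_\Lambda)\bigl[\text{rhs of \e{3222a}}\bigr] = O(h^{1-0})$ in the appropriate $L^\infty$-sense, and since $p$ factors as $(\xi-\diff\omega(x))g$ with $g$ elliptic near the support of $\gamma_\Lambda$, symbolic calculus produces the bound \e{3110} defining $\tilde{J}_\Lambda$ (with the loss $h^{-2\delta'_{k+1}}$ absorbing the commutator terms arising when $\mathcal{Z}^k$ is commuted to the cutoffs).

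Third, the remainder $hg = w - w_\Lambda - \sqrt{h}(w_{2\Lambda}+w_{-2\Lambda})$ is controlled by subtracting from \e{3222a} the contributions that have already been identified and inverting $\Oph(p)$ away from its zero set. This gives a $\tilde{\mathcal{B}}_\infty^{0,b'}[F]$ estimate on $\mathcal{Z}^k g$ with loss $h^{-3\delta'_{k+1+N_1-N_0}}$ (the extra factor $3$ comes from the cubic structure: one uses the $L^\infty$ estimates \e{3224} on $\mathcal{C}(W)$, the bound on $R(V)$ from \e{329}, and re-applies the $L^\infty$-bootstrap of Proposition~\ref{ref:3.3.1} at a higher order to absorb the derivatives of $\mathcal{Z}^k w_{\pm 2\Lambda}$ that fall on the square of $w_\Lambda$). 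The bound for $\Oph(x\xi)g$ follows by writing $\Oph(x\xi) = -\mez \Oph(|\xi|^{1/2}) + \mez\Oph(p)$ and treating each piece separately using the last claim of Lemma~\ref{ref:341} for the elliptic part.

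The main obstacle will be step two, the upgrade of $w_\Lambda$ from $\tilde{I}_\Lambda$ to $\tilde{J}_\Lambda$. This requires a genuinely quadratic improvement: one must show that after microlocalization near $\Lambda$, the effective source term gains an extra full power of $h$ rather than the $\sqrt{h}$ that the naive estimate on $Q_0(W)$ produces. This cancellation is not automatic from symbolic calculus but relies on the precise disjointness $\Lambda\cap (\pm 2\Lambda\cup 0\Lambda)=\emptyset$ together with the $L^\infty$-bound of Proposition~\ref{ref:3.3.4} on $w_{\Lambda^c}$, which is what allows the bilinear remainders involving one factor of $w_{\Lambda^c}$ to be absorbed.
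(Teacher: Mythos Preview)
Your overall strategy matches the paper's: localize \e{3222a} away from $\Lambda$ and invert $\Oph(2x\xi+|\xi|^{1/2})$ elliptically to produce $w_{\pm 2\Lambda}$ (paper's Lemma~\ref{ref:347}~$i)$ via Lemma~\ref{ref:343}~$i)$), then localize near $\Lambda$ to upgrade $w_\Lambda$ from $\tilde{I}_\Lambda$ to $\tilde{J}_\Lambda$ (paper's proof of \e{3433}). Your step~2 is essentially correct; note that $C_0$ need not be decomposed lagrangianly at this stage, since it already carries a factor $h$ and is absorbed directly via \e{3318}.

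There is, however, a genuine gap in your step~3, and it concerns precisely the hypothesis $(N_1-N_0-1)\sigma\ge 1$ which you never use. When you ``invert $\Oph(p)$ away from its zero set'' on the right-hand side of \e{3319a}, the nonlinear terms are fine, but the \emph{linear} contribution $h_j\Ophj(a)\bigl(\tfrac{i}{2}\tilde{w}_j-iZw_j\bigr)$ is not: the a~priori bound \e{339} only gives $\lA\Ophj(a)Zw_j\rA_{L^\infty}=O(\eps h^{-\delta'_{k+1}})$, so this term is $O(h_j h^{-\delta'_{k+1}})$, which for $j$ near $j_0(h,C)$ is only $O(h^{\sigma})$, not $O(h)$. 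Your attribution of the index shift $k+1\to k+1+N_1-N_0$ to ``the cubic structure'' and ``re-applying the bootstrap of Proposition~\ref{ref:3.3.1}'' is wrong: the factor~$3$ in $h^{-3\delta'}$ does come from cubic products, but the shift by $N_1-N_0$ comes from an entirely different mechanism.

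The paper handles this via Lemma~\ref{ref:346}: for a symbol $a$ supported away from $\Lambda$, one writes $\Ophj(a)=\Ophj(q)\ophjdeux+h_j^N\Ophj(\rho)$ and substitutes \e{3319a} again for $\ophjdeux w_j$. The nonlinear terms gain $h^{1/2}$; the linear terms gain $h_j$ and produce a new expression $\Ophj(q')Zw_j$ of the same form. Iterating $N_1-N_0$ times yields a gain $h_j^{N_1-N_0}\le h^{\sigma(N_1-N_0)}\le h$, at the cost of raising the $\mathcal{Z}$-order by $N_1-N_0$; this is exactly estimate \e{3419}, and plugging it into \e{3427a} gives \e{3427b}, which is $O(h^{1+\sigma})$. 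Without this iteration the remainder $g$ cannot be placed in $h^{-3\delta'_{k+1+N_1-N_0}}\tilde{\mB}_\infty^{0,b'}$, and your proof is incomplete.
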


In order to prove the proposition, we shall compute the main contribution to $\QW$ obtained when plugging inside \e{327} 
the decomposition $w=w_\La+w_{\La^c}$ obtained in Lemma~\ref{ref:341}. We make at the same time a similar 
(and more precise) computation when one knows that an expansion of the form \e{343} holds.

\begin{lemm}\label{ref:343}
$i)$ Assume that $w=w_\La+w_{\La^c}$, where for all $k\le \frac{s}{2}+N_1$, $\Zr^k w_\La$ (resp.\ 
$\Zr^k w_{\La^c}$) is an $O(\eps)$ element of $h^{-\delta_{k+1}'}L^\infty \tilde{I}_\La ^{0,b-2}[K]$ 
(resp.\ of $h^{\mez-\delta_{k+1}'}\tilde{\mB}_\infty^{0,b-2}[F]+h^{1-\dek}\tilde{\mB}_\infty^{-1,b-2}[F]$ such that $\Zr^k\Oph(x\xi)w_{\La^c}$ 
is in $h^{\mez-\dek}\ti{\mB}_\infty^{1,b-2}[F]+h^{1-\dek}\ti{\mB}_\infty^{0,b-2}[F]$). 
Denote by $b'$ any number $b'<b-5$. Then, 
there are functions $\widetilde{w}_{\pm 2\La}$ such that for $k\le \frac{s}{2}+N_1$
$$
\Zr^k \widetilde{w}_{\pm 2\Lambda}\text{ is }O(\eps)\text{ in }h^{-2\delta_{k+1}'}L^\infty \tilde{I}_{\pm 2\La} ^{3,b'+\tdm}[K_{\pm 2}]
$$
so that
\be\label{345}
\ba
\sumj \Djh \QW&=\widetilde{w}_{2\La}+\widetilde{w}_{-2\La}+\sqrt{h}\tilde{g}_2,\\
\sumj \Djh \sqrt{h}\CW&=\sqrt{h} \tilde{g_3}
\ea
\ee
where for any $k\le \frac{s}{2}+N_1$,
$$
\Zr^k\ti{g}_2,\Zr^k \Oph(x\xi)\ti{g}_2\in h^{-2\de_{k+1}'}\ti{\mB}_\infty^{1,b'+\mez}[F],\quad 
\Zr^k \ti{g}_3,\Zr^k \Oph(x\xi)\ti{g}_3 \in h^{-3\de_{k+1}'-0}\ti{\mB}_\infty^{1,b'+\mez}[F]
$$
for some new closed subset $F$ 
of $T^*\xR$. 
Moreover one may write
\be\label{346}
\ba
\widetilde{w}_{2\La}&=-i(1-\chi)(x h^{-\beta})\la \diff \omega\ra^{\tdm} \frac{\sqrt{2}}{4}w_\La^2,\\
\widetilde{w}_{-2\La}&=-i(1-\chi)(x h^{-\beta})\la \diff \omega\ra^{\tdm} \frac{\sqrt{2}}{4}\overline{w}_\La^2,
\ea
\ee
for some $\chi\in C^\infty_0(\xR)$, $\chi\equiv 1$ close to zero, with small enough support. 

$ii)$ Assume one is given a decomposition of $w$ of the form \e{343} and denote by 
$b'$ any number $b'<b-8$. 
Then there are elements 
$\widetilde{w}_{\pm 2\La}$ such that for $0\le k\le \frac{s}{2}+N_0$, $\Zr^k\widetilde{w}_{\pm 2\La}$ 
is $O(\eps)$ in 
$h^{-2\delta_{k+1}'}L^\infty \tilde{J}_{\pm 2\La}^{3,b'+3/2}\bigl[K_{\pm 2}\bigr]$, and for 
$\ell \in \{\pm 1,\pm 3\}$ elements $\widetilde{w}_{\ell \La}$, such that, for any $0\le k\le \frac{s}{2}+N_0$, 
$\Zr^k \widetilde{w}_{\ell \La}$ is $O(\eps)$ in $h^{-3\de_{k+1}'}L^\infty \tilde{I}_{\ell\La}^{3,b'+3/2}\bigl[K_{\ell}\bigr]$ so that
\be\label{346a}
\sumj\Djh \big[\QW+\sqrt{h}\CW\big]=\widetilde{w}_{2\La}+\widetilde{w}_{-2\La}+\sqrt{h}\bigl( \widetilde{w}_{3\La}+\widetilde{w}_\La
+\widetilde{w}_{-\La}+\widetilde{w}_{-3\La}\bigr)+h\tilde{g},
\ee
where $\Zr^k\tilde{g}$ is in $h^{-4\de_{k+P}'}\tilde{\mB}^{1,b'+\mez}[F]$ and 
$\Zr^k\Oph(x\xi)\ti{g}$ is in $h^{-4\delta_{k+P}'}\ti{\mB}_\infty^{1,b'}[F]$ 
with $P=N_1-N_0+1$.

Moreover, $\widetilde{w}_\La$ is given in terms of $w_\La$ and of the functions in \e{344} by 
\be\label{347}
\ba
\widetilde{w}_\La&=\frac{i}{2}(1-\chi)(x h^{-\beta})\la \diff \omega\ra^{\tdm}\overline{w}_\La\bigl(w_{2\La}-\overline{w}_{-2\La}\bigr)\\
&\quad+\frac{1}{4}\unchi |\dom(x)|^{\frac{5}{2}}\big|w_\La \big|^2w_\La,
\ea
\ee
for some $\chi\in C^\infty_0(\xR)$, $\chi\equiv 1$ close to zero, with small enough support, and 
$\widetilde{w}_{3\La}$, $\widetilde{w}_{-\La}$, $\widetilde{w}_{-3\La}$ have similar expressions
\be\label{347a}
\ba
\widetilde{w}_{3\La}&=\frac{i}{2}(1-\chi)(x h^{-\beta})\la \diff \omega\ra^{\tdm}w_\La\bigl(\mu_3'w_{2\La}+\mu_3''\overline{w}_{-2\La}\bigr)\\
&\quad+\unchi |\dom(x)|^{\frac{5}{2}}\mu_3''' w_\La^3,\\
\widetilde{w}_{-\La}&=\frac{i}{2}(1-\chi)(x h^{-\beta})\la \diff \omega\ra^{\tdm}w_\La\bigl(\mu_{-1}'w_{-2\La}+\mu_{-1}''\overline{w}_{2\La}\bigr)\\
&\quad+\unchi|\dom(x)|^{\frac{5}{2}}\mu_{-1}''' |w_\La|^2\overline{w}_\La,\\
\widetilde{w}_{-3\La}&=\frac{i}{2}(1-\chi)(x h^{-\beta})\la \diff \omega\ra^{\tdm}\overline{w}_\La\bigl(\mu_{-3}'w_{-2\La}+\mu_{-3}''\overline{w}_{2\La}\bigr)\\
&\quad +\unchi |\dom(x)|^{\frac{5}{2}}\mu_{-3}''' \overline{w}_\La^3,
\ea
\ee
for some real coefficients $\mu_\ell'$, $\mu_\ell''$, $\mu_\ell'''$, $\ell\in \{-3,-1,3\}$.
\end{lemm}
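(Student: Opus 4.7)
The plan is to substitute the given decomposition of $w$ into the explicit formulas \e{327}--\e{328} for $Q_0$ and $C_0$, and then apply the Lagrangian calculus of Propositions~\ref{ref:3.1.6}, \ref{ref:3.1.9}, \ref{ref:3110} to separate the resulting expressions into oscillating contributions on each lagrangian $\ell\La$, $\ell\in\{0,\pm1,\pm2,\pm3\}$, plus controlled remainders in $\tilde{\mB}$-classes. The algebra of $\tilde I$, $\tilde J$ and $\tilde{\mB}$ classes handles the microlocalization, while Proposition~\ref{ref:3.1.9} reduces every pseudo-differential operator acting on a Lagrangian distribution on $\ell\La$ to multiplication by its principal symbol evaluated at $\xi=\ell\,\diff\omega(x)$, modulo an error in $h^{1/2}\tilde{\mB}+h\tilde{\mB}$.

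For part $(i)$, expand $B_0(W,W)=B_0(W_\La,W_\La)+2B_0(W_\La,W_{\La^c})+B_0(W_{\La^c},W_{\La^c})$. The last two terms contain a factor of $w_{\La^c}\in h^{1/2-\delta'_{k+1}}\tilde{\mB}+h^{1-\delta'_k}\tilde{\mB}$, hence belong to $\sqrt h\,\tilde g_2$ by Proposition~\ref{ref:3110}; the analogous conclusion for $\Oph(x\xi)$ of these terms uses the companion hypothesis on $\Oph(x\xi)w_{\La^c}$. The principal term $B_0(W_\La,W_\La)$ splits by the $\pm$ structure into $w_\La^2$ (microlocalized on $2\La$), $\bar w_\La^2$ (on $-2\La$), and $w_\La\bar w_\La$ (on $0\La$). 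For the $\pm 2\La$ pieces, Proposition~\ref{ref:3.1.9} reduces every $\Oph(a)$ appearing in \e{327} to multiplication by $a(x,\ell\,\diff\omega(x))$, with $\ell=\pm 1$ for inner operators and $\ell=\pm 2$ for the outer one, up to a remainder in $h^{1/2}\tilde{\mB}+h\tilde{\mB}$ that feeds $\tilde g_2$; the cut-off $(1-\chi)(xh^{-\beta})$ arises from \e{3116} and keeps us away from the singularity of $\diff\omega$ at $x=0$. Summing the three summands of \e{327} and using $\diff\omega(x)=-\sign(x)/(4x^2)$ yields \e{346}. For the $0\La$ piece, each outer symbol in \e{327} (namely $|\xi|^{1/2}$, $|\xi|$ or $\xi$) vanishes at $\xi=0$; combined with the fact that $w_\La\bar w_\La$ is microlocally concentrated near the zero section with bounded frequency support, this forces an $O(\sqrt h)$ contribution absorbed in $\tilde g_2$. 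Finally, $\sqrt h\,C_0(W)$ is a trilinear expression already weighted by $\sqrt h$, so Proposition~\ref{ref:3110} places it directly in $\sqrt h\,\tilde g_3$.

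For part $(ii)$, plug $W=W_\La+\sqrt h(W_{2\La}+W_{-2\La})+hG$ into $B_0$ to obtain
\[B_0(W,W)=B_0(W_\La,W_\La)+2\sqrt h\sum_{\pm}B_0(W_\La,W_{\pm 2\La})+h\,\tilde g.\]
The first term is treated as in $(i)$, now with the improved conclusion $\tilde w_{\pm 2\La}\in L^\infty\tilde J^{3,b'+3/2}_{\pm 2\La}$ thanks to the stronger hypothesis $w_\La\in L^\infty\tilde J^{0,b'}_\La$ and the fact that the product rule of Proposition~\ref{ref:3110} respects the $\tilde J$ classes. The cross terms $\sqrt h\,B_0(W_\La,W_{\pm 2\La})$ live on $\La+(\pm 2\La)\in\{3\La,\La,-\La,-3\La\}$ according to the sign choices, and after symbolic reduction at $\xi=\ell\,\diff\omega(x)$ furnish the quadratic (in $w_\La,w_{\pm 2\La}$) summands of \e{347} and \e{347a}. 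The leading cubic contribution $\sqrt h\,T_0(W_\La,W_\La,W_\La)$ produces the remaining cubic summands: $w_\La^3$ is lagrangian on $3\La$, $|w_\La|^2 w_\La$ on $\La$, and their conjugates on $-\La$ and $-3\La$; evaluating the outer and inner symbols in \e{328} at $\xi=\ell\,\diff\omega(x)$ and factoring out $|\diff\omega(x)|^{5/2}=(2|x|)^{-5}$ yields the explicit coefficients $\mu'_\ell,\mu''_\ell,\mu'''_\ell$. The residual $O(h)$ terms involving at least one factor of $G$, together with the errors from the symbolic reductions, assemble into $h\tilde g$ with the claimed control on $\Oph(x\xi)\tilde g$, using the hypothesis on $\Oph(x\xi)g$.

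The main technical obstacle will be the precise bookkeeping of the prefactors in \e{346}, \e{347}, \e{347a}: evaluating homogeneous symbols such as $\xi|\xi|^{1/2}=\sign(\xi)|\xi|^{3/2}$ at $\xi=\ell\,\diff\omega(x)$ requires keeping track of the sign $\sign(\diff\omega(x))=-\sign(x)$, and several summands in \e{327}--\e{328} partially cancel on each $\ell\La$, so that the final coefficients reflect non-trivial algebraic combinations rather than a single term. A subtler point is the non-smoothness of $|\xi|^{1/2}$ and $|\xi|$ at $\xi=0$, which prevents a direct application of Proposition~\ref{ref:3.1.9} to the $0\La$ contribution appearing in $(i)$; one instead relies on the explicit vanishing of the outer symbols at the zero section together with the bounded frequency support of $w_\La\bar w_\La$ to obtain the needed $\sqrt h$ gain by direct estimation.
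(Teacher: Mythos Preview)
Your overall plan is the paper's plan: split $Q_0(W)$ by bilinearity, reduce the pseudo-differential operators on each Lagrangian piece to multiplication by their symbols at $\xi=\ell\,\diff\omega$ via Proposition~\ref{ref:3.1.9}, and collect the errors into $\tilde g$. Two points, however, deserve correction.

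\textbf{The cross terms and the cubic term.} For $B_0(W_\La,W_{\La^c})$, $B_0(W_{\La^c},W_{\La^c})$, and for $C_0(W)$ in part~(i), the paper does \emph{not} go through Proposition~\ref{ref:3110}; it uses the concrete dyadic bilinear and trilinear bounds \e{3213} and \e{3216} directly. This matters because $B_0$ is not a bare product but has an outer $\Oph(|\xi|^{1/2})$ or $\Oph(|\xi|)$, and Proposition~\ref{ref:3110} only treats products. The estimates \e{3213}, \e{3216} already encode the outer operators and give the required $2^{j/2-j_+b'}$ bounds without any microlocal subtlety. Your route via Proposition~\ref{ref:3110} would still need a separate argument for the outer operator, exactly in the regime (near the zero section) where the symbolic calculus of Proposition~\ref{ref:3.1.9} fails.

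\textbf{The $0\La$ piece.} This is the genuine gap. You propose to kill the $w_\La\bar w_\La$ contribution by saying the outer symbols $|\xi|^{1/2}$, $|\xi|$, $\xi$ vanish at $\xi=0$, and you flag the non-smoothness obstruction yourself without resolving it. The paper's mechanism is different and avoids the issue entirely: writing out the four mixed bilinear terms \e{3417}, one sees that in the first line the inner symbols $(\xi|\xi|^{-1/2})\big|_{\pm\diff\omega}$ have opposite signs, so their product is $-|\diff\omega|$, which cancels against the $+|\diff\omega|$ from the second line; the third line has an antisymmetric bracket with the \emph{even} inner symbol $|\xi|^{1/2}$, hence vanishes at the principal level before the outer $\Oph(|\xi|)$ ever acts. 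Only the fourth line survives the inner reduction, and there the outer symbol is the honest polynomial $\xi$, so Lemma~\ref{ref:345} (case $p_1+p_2=0$, $a_1$ polynomial) applies and gives $a_1(0)=0$. In short, the cancellation happens at the \emph{inner} symbols for three of the four terms, and the polynomial nature of the outer symbol is invoked only once; your argument reverses this and thereby runs into the non-smoothness of $|\xi|^{1/2}$ at the origin, which the paper never has to face.
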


Before starting the proof, we make the following remark that will be used several times below. 
\begin{rema}\label{ref:Remark1}
Let $\kappa$ be a smooth function on $\xR^*$, such that for some real numbers $\ell,\ell'$ and 
for any integer $k$, $\px^k \kappa=O\bigl(|x|^{-\ell-k}\langle x\rangle^{-\ell'}\bigr)$. Let 
$\chi$ be in $C^\infty_0(\xR)$, equal to one close to zero and let $r$ be an element 
of $\ti{\mB}_\infty^{\mu,\gamma}[K]$ for some compact subset of $\cotangent$. 
Then $(1-\chi)(x h^{-\beta})\kappa(x)r$ belongs to $\ti{\mB}_\infty^{\mu+(\ell+\ell'),\gamma+\ell'/2}[K]$.
\end{rema}
\begin{proof}
We decompose $r=\sumj \Tj r_j$ where $(r_j)_j$ is a bounded family in $\mB_\infty^{\mu,\gamma}[K]$. Then 
$$
(1-\chi)(x h^{-\beta})\kappa r=\sumj \Tj \ti{r}_j
$$
with $\ti{r}_j=(1-\chi)(x 2^{-j/2}h^{-\beta})\kappa(2^{-j/2}x)r_j$. 
Since $r_j$ is microlocally supported in $K$ we may, modulo a $O(h_j^\infty)=O(h^\infty)$ remainder, 
replace $r_j$ by $\theta(x)r_j$ for some $\theta\in C^\infty_0(\xR)$, equal to one 
on a large enough compact subset of $\xR^*$. Since 
$$
x\rightarrow (1-\chi)(x 2^{-j/2}h^{-\beta})2^{-\frac{j}{2}(\ell+\ell')+j_+\frac{\ell'}{2}}\kappa\bigl(2^{-\frac{j}{2}}x\bigr)\theta(x)
$$
is in $C^\infty_0(\xR^*)$ and has derivatives uniformly estimated in $j,h$, we see that $(\ti{r}_j)_j$ 
is microlocally supported on $K$ and satisfies uniform bounds in 
$\mB_\infty^{\mu+(\ell+\ell'),\gamma+\frac{\ell'}{2}}[K]$.
\end{proof}

\begin{rema}\label{ref:Remark2}
Let $\chi_1,\chi_2$ be two $C^\infty_0(\xR)$ functions equal to one close to zero and $r$ be in 
$\ti{\mB}_\infty^{\mu,\gamma}[K]$ for some compact set $K$ of $\cotangent$. Then, if 
$\supp \chi_1$ and $\supp\chi_2$ are small enough, $(1-\chi_1)(xh^{-\beta})r$ and 
$(1-\chi_2)(xh^{-\beta})r$ coincide modulo $O(h^\infty)$ 
(so that they are identified).
\end{rema}
\begin{proof}
We write again
$$
\Bigl[(1-\chi_1)(xh^{-\beta})-(1-\chi_2)(xh^{-\beta})\Bigr]r=\sumj \Tj 
\Bigl[(\chi_2-\chi_1)(x 2^{-j/2}h^{-\beta})r_j\Bigr].
$$
As above, modulo $O(h^\infty)$, 
we may insert some cut-off $\theta$ against $r_j$. 
We may then notice that $(\chi_2-\chi_1)(x 2^{-j/2}h^{-\beta})\theta(x)\equiv 0$ if 
$\supp \chi_\ell$ is small enough, as $2^{-j/2}h^{-\beta}>c$ for some $c>0$ since 
$j$ is in $J(h,C)$.
\end{proof}

To prove lemma \ref{ref:343}, it will be necessary to compute explicitly the action of some multilinear operators on functions of the type 
$w=w_\La+w_{\La^c}$.

Let us fix some notation. If $p_1,p_2$ are in $\xZ^*$, $K_{p_1}$, $K_{p_2}$ are compact 
subsets contained in small neighborhoods of 
$p_1\cdot\La$, $p_2\cdot\La$ and if $w_{p_\ell\cdot \La}^\ell$ is an 
element of $L^\infty\ti{I}_{p_\ell\cdot\La}^{\mu_\ell,\gamma_\ell}\bigl[ K_{p_\ell}\bigr]$, 
Proposition~\ref{ref:3110} shows that the product 
$w_{p_1\cdot\La}^1 \cdot w_{p_2\cdot\La}^2$ belongs to 
$L^\infty\ti{I}_{(p_1+p_2)\cdot\La}^{\mu_1+\mu_2,\gamma_1+\gamma_2}\bigl[ K_{p_1+p_2}\bigr]$ 
for some compact subset $K_{p_1+p_2}$ of $\cotangent$ contained in a small 
neighborhood of 
$(p_1+p_2)\cdot\La$, if $K_{p_1}$ and $K_{p_2}$ were contained 
in small enough neighborhoods of $p_1\cdot\La$, $p_2\cdot\La$ respectively. 
In the sequel, to avoid heavy notations, we shall eventually denote by $K_{p_\ell}$ different 
compact subsets of $\cotangent$ contained in a small 
enough neighborhood of $p_\ell\cdot\La$. All of them 
will be constructed from a compact subset $K$ of $\cotangent$ 
contained in a small enough neighborhood of $\La$. 
To simplify some notations, $p\Lambda$ will sometimes stand for $p\cdot \Lambda$. 
We shall also denote by $L$ some compact subset of $\cotangent$ which may vary from line to line.

\begin{lemm}\label{ref:345}
Let $b_\ell\colon \xR^*\rightarrow \xC$, $\ell=1,2,3$ be smooth functions positively homogeneous 
of degree $d_\ell$ and $a_0,a_1\colon \xR^*\rightarrow \xC$ be smooth, 
positively homogeneous of degree $m_0,m_1$. 

Let $p_\ell$ be in $\xZ^*$, $|p_\ell|\le 3$, $\ell=1,2,3$. If 
$p_1+p_2=0$ (resp.\ $p_1+p_2+p_3=0$), assume 
moreover that $a_1$ (resp. $a_0$) 
is an homogeneous polynomial of order $m_1\in \xN^*$ (resp.\ $m_0\in \xN^*$). 
Let $b'$ be a large enough positive number. Let $\chi$ be in $C^\infty_0(\xR)$, $\chi\equiv 1$ 
close to zero, with small enough support. 

$i)$ Assume given for $\ell=1,2,3$ functions $w_{p_\ell\La}^\ell$ such that for some 
$N$ and any $k\le \frac{s}{2}+N$, $\Zr^k w_{p_\ell \La}^\ell$ is in 
$h^{-\delta_{k+1}'}L^\infty\ti{I}_{p_\ell\cdot\La}^{0,b'}\bigl[K_{p_\ell}\bigr]$, for compact 
subsets $K_{p_\ell}$ satisfying the above conditions. 
Denote 
$\mu_2=2(m_1+d_1+d_2)$, $\mu_3=2(m_0+m_1+d_1+d_2+d_3)$. 
Then,
\be\label{3411}
\Oph(a_1)\Bigl[ \bigl( \Oph (b_1)w_{p_1\La}^1\bigr)\bigl( \Oph(b_2)w_{p_2\La}^2\bigr)\Bigr]
\ee
may be written as the sum of 
\be\label{3411a}
(1-\chi)(xh^{-\beta})a_1\bigl((p_1+p_2)\diff \omega\bigr)
b_1(p_1\diff \omega)b_2(p_2\diff \omega)
w_{p_1\La}^1w_{p_2\La}^2,
\ee
which is an element of $h^{-2\de_1'}L^\infty\ti{I}_{(p_1+p_2)\cdot\La}^{\mu_2,2b'}\bigl[ K_{p_1+p_2}\bigr]$ 
such that the action of $\Zr^k$ on it gives an element of 
$h^{-2\de_{k+1}'}L^\infty\ti{I}_{(p_1+p_2)\cdot\La}^{\mu_2,2b'}\bigl[ K_{p_1+p_2}\bigr]$ 
for $k\le \frac{s}{2}+N$, and of a remainder $R$ such that, for those $k's$, $\Zr^k R$ is in 
\be\label{3412a}
h^{\mez-2\de_{k+1}'}\ti{\mB}_\infty^{\mu_2-1,2b'-\mez}[L]
+h^{1-2\de_{k+1}'}\ti{\mB}^{\mu_2-2,2b'-1}[L].
\ee

In the same way, a cubic term 
\be\label{3411b}
\Oph(a_0)\Bigl[\Oph(a_1)\Bigl\{ \bigl( \Oph (b_1)w_{p_1\La}^1\bigr)
\bigl( \Oph(b_2)w_{p_2\La}^2\bigr)\Bigr\}\bigl( \Oph(b_3)w_{p_3\La}^3\bigr)\Bigr]
\ee
may be written as the sum of 
\be\label{3411c}
(1-\chi)(xh^{-\beta})
a_0\big((p_1+p_2+p_3)\dom \big)a_1\bigl((p_1+p_2)\diff \omega\bigr)
\prod_{\ell=1}^3 b_\ell(p_\ell\diff \omega) w_{p_\ell\La}^\ell,
\ee
which is a function such that the action of $\Zr^k$ on it, 
$k\le \frac{s}{2}+N$, gives an element of 
$$
h^{-3\de_{k+1}'}L^\infty\ti{I}_{(p_1+p_2+p_3)\cdot\La}^{\mu_3,3b'}\bigl[ K_{p_1+p_2+p_3}\bigr]
$$ 
and of a remainder $R$ such that $\Zr^kR$ is in 
\be\label{3411d}
\sum_{j=1}^3 h^{\frac{j}{2}-3\de_{k+1}'}\ti{\mB}_\infty^{\mu_3-j,3b'-j/2}[L].
\ee

$ii)$ Assume that we are given a function
\be\label{3412}
w^\ell=w_{p_\ell\cdot\La}^\ell +\sqrt{h}\Bigl[ w_{2p_\ell \cdot\La}^\ell +w_{-2p_\ell\cdot \La}^\ell\Bigr]
\ee
where for $k\le \sd+N$,
\begin{alignat*}{3}
&\Zr^k w_{\pm 2p_\ell\cdot\La}^\ell &&\text{ is in }
&&h^{-2\dek}L^\infty\ti{I}_{\pm 2 p_\ell\cdot\La}^{0,b'}\bigl[ K_{\pm 2p_\ell}\bigr],\\
&\Zr^kw_{p_\ell\cdot\La}^\ell&&\text{ is in }&&h^{-\dek}L^\infty\ti{J}_{p_\ell\cdot\La}^{0,b'}\bigl[K_{p_\ell}\bigr].
\end{alignat*}
Assume also that $p_1±\pm  2p_2\neq 0$, 
$p_2\pm 2p_1\neq 0$. Then \e{3411} may be written as the sum of a quadratic term, given by \e{3411a}, which is 
such that the action of $\Zr^k$ on it gives an element of $h^{-2\dek}L^\infty\ti{J}_{(p_1+p_2)\cdot\La}^{\mu_2,2b'}\bigl[K_{p_1+p_2}\bigr]$, 
of a cubic term, which may be written as the product of $\sqrt{h}$ and
\be\label{3412b}
\ba
(1-\chi)(x h^{-\beta})\Big[ &a_1\bigl( (p_1+2p_2)\diff \omega\bigr) 
b_1(p_1\diff \omega) b_2(2p_2\diff \omega) w_{p_1\La}^1 w_{2p_2\La}^2\\
&+a_1\bigl( (p_1-2p_2)\diff \omega\bigr) 
b_1(p_1\diff \omega)b_2(-2p_2\diff \omega) w_{p_1\La}^1 w_{-2p_2\La}^2\\
&+a_1\bigl( (2p_1+p_2)\diff \omega\bigr) 
b_1(2p_1\diff \omega)b_2(p_2\diff \omega) w_{2p_1\La}^1 w_{p_2\La}^2\\
&+a_1\bigl( (-2p_1+p_2)\diff \omega\bigr) 
b_1(-2p_1\diff \omega)b_2(p_2\diff \omega) w_{-2p_1\La}^1 w_{p_2\La}^2\Big]
\ea
\ee
and of a remainder term $R$. Moreover the action of $\Zr^k$ on 
\e{3412b}, $0\le k\le \sd+N$, gives an element of 
\begin{align*}
&\sum_{+,-}h^{-3\dek}L^\infty\ti{I}_{(p_1\pm 2p_2)\cdot\La}^{\mu_2,2b'}\bigl[ K_{p_1\pm 2 p_2}\bigr]\\
&\qquad +\sum_{+,-}h^{-3\dek}L^\infty\ti{I}_{(p_2\pm 2p_1)\cdot\La}^{\mu_2,2b'}\bigl[ K_{p_2\pm 2 p_1}\bigr]
\end{align*}
and the action of $\Zr^k$ on $R$ gives an element of 
\be\label{3412bb}
h^{1-4\delta_{k+1}'}\ti{\mB}_\infty^{\mu_2-1,2b'-\mez}[L].
\ee
\end{lemm}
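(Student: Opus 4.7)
The proof proceeds by systematic application of Proposition~\ref{ref:3.1.9}. For a Lagrangian distribution $v_{p\Lambda}$ microlocally supported near $p\Lambda$ with $p\neq 0$, the associated phase $p\omega$ is homogeneous of degree $-1$ and nonvanishing, so Proposition~\ref{ref:3.1.9} yields
\[
\Oph(a)\,v_{p\Lambda}=(1-\chi)(xh^{-\beta})\,a\bigl(x,p\,\diff\omega(x)\bigr)\,v_{p\Lambda}+R,
\]
with $Z^k R$ in $h^{1/2}\tilde{\mathcal{B}}_\infty^{\tilde\mu,\tilde\gamma}+h\tilde{\mathcal{B}}_\infty^{\tilde\mu-1,\tilde\gamma}$, and in the finer class $h\tilde{\mathcal{B}}_\infty^{\tilde\mu-1,\tilde\gamma}$ when $v_{p\Lambda}$ lies in $\tilde J$. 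When $p=0$ the Lagrangian degenerates to the zero section and the phase vanishes identically; Proposition~\ref{ref:3.1.9} then requires the polynomial condition \eqref{3115a}, which is exactly the hypothesis imposed on $a_1$ when $p_1+p_2=0$ and on $a_0$ when $p_1+p_2+p_3=0$.

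For the quadratic statement in part $(i)$, I would first apply Proposition~\ref{ref:3.1.9} to each of the inner factors $\Oph(b_\ell)\,w^\ell_{p_\ell\Lambda}$, extracting the leading contribution $(1-\chi)(xh^{-\beta})\,b_\ell(p_\ell\diff\omega)\,w^\ell_{p_\ell\Lambda}$. By Proposition~\ref{ref:3110}, the product of the two leading pieces is a Lagrangian distribution on $(p_1+p_2)\Lambda$. Applying Proposition~\ref{ref:3.1.9} a third time to pull $\Oph(a_1)$ inside this Lagrangian product produces the main term \eqref{3411a}. The order $\mu_2=2(m_1+d_1+d_2)$ comes from the fact that $\diff\omega$ is homogeneous of degree $-2$, so evaluating $a_1, b_1, b_2$ at integer multiples of $\diff\omega$ introduces dyadic scaling factors $(h/h_j)^{2(m_j+d_\ell)}$ in the $\tilde I$-class norms. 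The cubic statement is obtained by one more iteration: multiply by $\Oph(b_3)\,w^3_{p_3\Lambda}$ (decomposed as above) and then apply $\Oph(a_0)$ to the Lagrangian distribution on $(p_1+p_2+p_3)\Lambda$ produced by Proposition~\ref{ref:3110}; a final invocation of Proposition~\ref{ref:3.1.9} yields \eqref{3411c} with a remainder of the form \eqref{3411d}, summed over the three stages at which that proposition is applied.

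For part $(ii)$, each factor $w^\ell$ decomposes according to \eqref{3412} into a principal $\tilde J$-piece on $p_\ell\Lambda$ plus two $O(\sqrt h)$ pieces in $\tilde I$ on $\pm 2p_\ell\Lambda$, so the product $(\Oph(b_1)w^1)(\Oph(b_2)w^2)$ splits into nine terms. The leading piece $w^1_{p_1\Lambda}\,w^2_{p_2\Lambda}$ produces the same formula \eqref{3411a} as in $(i)$, but the output now inherits the finer $\tilde J$-structure on $(p_1+p_2)\Lambda$ since Proposition~\ref{ref:3.1.9} preserves the $\tilde J$ class when acting on $\tilde J$-inputs. The four cross terms of order $\sqrt h$ give exactly \eqref{3412b}, supported on the Lagrangians $(p_1\pm 2p_2)\Lambda$ and $(\pm 2p_1+p_2)\Lambda$, which by the nondegeneracy hypotheses $p_1\pm 2p_2\neq 0$, $p_2\pm 2p_1\neq 0$ are never the zero section, so no polynomial condition on $a_1$ is needed for the outer $\Oph(a_1)$. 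The four remaining $O(h)$ products $w^1_{\pm 2p_1\Lambda}\,w^2_{\pm 2p_2\Lambda}$ are absorbed into the remainder, which lies in \eqref{3412bb}.

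The main difficulty is the bookkeeping of function-space indices across all these steps: the Lagrangian order $\mu$ (fixed by the homogeneity degrees of the $a_j,b_\ell$ and the $-2$-homogeneity of $\diff\omega$), the regularity index $\gamma$ (which adds across products by Proposition~\ref{ref:3110}), the loss $h^{-\delta'_{k+1}}$ generated at each application of the iterated vector field $Z^k$, and the gain $h^{1/2}$ or $h$ at each use of Proposition~\ref{ref:3.1.9}. The cut-off factors $(1-\chi)(xh^{-\beta})$ produced at every step can be combined freely using Remarks~\ref{ref:Remark1} and~\ref{ref:Remark2}. The most delicate point is verifying that remainder products of the form $R_1\cdot v^2_{p_2\Lambda}$, where $R_1$ is the $\tilde{\mathcal{B}}_\infty$-remainder from one factor, still land in the claimed $\tilde{\mathcal{B}}_\infty$-class on the relevant Lagrangian without loss in the regularity index; this is precisely the content of Proposition~\ref{ref:3.1.12ii} on products of microlocally supported distributions.
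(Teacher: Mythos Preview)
Your approach is essentially the same as the paper's: apply Proposition~\ref{ref:3.1.9} to each factor $\Oph(b_\ell)w^\ell_{p_\ell\Lambda}$ to replace the pseudo-differential operator by evaluation at $p_\ell\,\diff\omega$ modulo a remainder, use Proposition~\ref{ref:3110} to multiply, then apply Proposition~\ref{ref:3.1.9} once more for the outer $\Oph(a_1)$ (and again for $\Oph(a_0)$ in the cubic case). The decomposition into nine pieces in part $(ii)$, the role of the polynomial hypothesis when the target Lagrangian degenerates to the zero section, and the use of Remarks~\ref{ref:Remark1}--\ref{ref:Remark2} to absorb the cut-off powers are all exactly as in the paper.

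One correction: for the cross-remainder products $R_1\cdot\bigl(\Oph(b_2)w^2_{p_2\Lambda}\bigr)$ you should invoke Proposition~\ref{ref:3110}, not Proposition~\ref{ref:3.1.12ii}. Proposition~\ref{ref:3.1.12ii} requires both factors to carry the \emph{same} regularity index $\gamma$, returns only that same $\gamma$ on the product, and costs an $h^{-\theta}$ loss. The target space \eqref{3412a} has regularity index $2b'-\tfrac12$, which is the \emph{sum} of the input regularities $b'$ and $b'-\tfrac12$; this additive behavior is precisely what Proposition~\ref{ref:3110} delivers. Since every piece here (including the remainders $\tilde r^\ell$ produced by Proposition~\ref{ref:3.1.9}) is microlocally supported on a compact subset of $T^*(\mathbb{R}^*)$, Proposition~\ref{ref:3110} applies directly and gives the correct summed index with no loss.
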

\begin{proof}
$i)$ We use Proposition~\ref{ref:3.1.9}. By \e{3116} applied with a symbol $a\equiv b_\ell(\xi)$, $\ell=1,2$, we may write
\be\label{3412c}
\Oph (b_\ell)w_{p_\ell\La}^\ell=\unchi b_\ell(p_\ell \diff\omega)w_{\pl\La}^\ell+\tilde{r}^\ell
\ee
where the action of $\Zr^k$ on $\tilde{r}^\ell$ (resp.\ on the left hand side, resp.\ on the first term in the right hand side) 
of \e{3412c} is in $h^{\mez-\dek} \ti{\mB}_\infty^{2d_\ell-1,b'-\mez}[L]$ 
(resp.\ in $h^{-\dek} L^\infty\ti{I}_{p_\ell \La}^{2d_\ell,b'}[K_{p_\ell}]$). 

By Proposition~\ref{ref:3110}, 
\be\label{3412d}
Z^k\Bigl[ \bigl(\Oph(b_1)w_{p_1\La}^1\bigr)\bigl(\Oph(b_2)w_{p_2\La}^2\bigr)-
\unchi^2 b_1(p_1\diff\omega)b_2(p_2\diff\omega)w_{p_1\La}^1w_{p_2\La}^2\Bigr]
\ee
is in 
$$
h^{\mez-2\dek} \ti{\mB}_\infty^{2(d_1+d_2)-1,2b'-\mez}[L]
+h^{1-2\dek} \ti{\mB}_\infty^{2(d_1+d_2)-2,2b'-1}[L] 
$$
and the second term in \e{3412d} belongs to 
$$
h^{-2\dek} L^\infty\ti{I}_{(p_1+p_2)\cdot \La}^{2(d_1+d_2),2b'}\bigl[K_{p_1+p_2}\bigr].
$$
We make act $\Oph(a_1)$ on the bracket in \e{3412d}. By Proposition~\ref{ref:3.1.9}, this gives 
a remainder $R$ satisfying the conclusions of the statement. Moreover, the action of 
$\Oph(a_1)$ on
$$
\unchi ^2 b_1(p_1\diff\omega)b_2(p_2\diff \omega)w_{p_1\La}^1w_{p_2\La}^2
$$ 
may be written as \e{3411a} modulo similar remainders. Notice that the second remark after the statement of 
Lemma~\ref{ref:343} allows one to replace any power $(1-\chi)^p\bigl(x h^{-\beta}\bigr)$ by 
$\unchi$ if $\supp \chi$ is small enough.

One studies the cubic expressions \e{3411b} in the same way. 

$ii)$ We start from the stronger assumption \e{3412}. By \e{3116} 
and the lines following that formula 
we may write $\Oph(b_\ell)w^\ell$ as 
\be\label{3413}
\ba
&\unchi b_\ell(\pl\dom)w_{\pl\La}^\ell\\
&\quad +\sqrt{h}\unchi \Bigl[ b_\ell (2\pl\dom)w_{2\pl\La}^\ell
+b_\ell(-2\pl\dom)w_{-2\pl\La}^\ell\Bigr]\\
&\quad +\ti{r}^\ell
\ea
\ee
where the action of $\Zr^k$ on $\ti{r}^\ell$ is in $h^{1-2\de_{k+1}'} \ti{\mB}_\infty^{2d_\ell-1,b'-\mez}[L]$. 

Moreover 
$\Zr^k \bigl[ \unchi b_\ell(\pl\dom)w_{\pl\La}^\ell\bigr]$ (resp.\ 
$\Zr^k \bigl[ \unchi b_\ell(\pm 2\pl\dom)w_{\pm 2\pl\La}^\ell\bigr]$) belongs to 
$$
h^{-\dek}L^\infty\ti{J}_{\pl\cdot\La}^{2d_\ell,b'}\bigl[ K_{p_\ell}\bigr] 
\quad(\text{resp.\ }h^{-2\dek}L^\infty\ti{I}_{\pm 2\pl\cdot\La}^{2d_\ell,b'}\bigl[ K_{\pm 2p_\ell}\bigr]). 
$$
Applying Proposition~\ref{ref:3110}, we obtain that 
$\bigl(\Oph(b_1)w^1\bigr)\bigl(\Oph(b_2)w^2\bigr)$ may be written as the sum of quadratic terms 
\be\label{3414}  
\unchi ^2 b_1(p_1\dom)b_2(p_2\dom)\bigl(w_{p_1\La}^1\bigr)\bigl(w_{p_2\La}^2\bigr),
\ee
of cubic terms
\be\label{3415}
\ba
\sqrt{h}\unchi ^2\Bigl[ &b_1(p_1\dom)b_2(2p_2\dom)\bigl(w_{p_1\La}^1\bigr)\bigl(w_{2p_2\La}^2\bigr)\\
&+b_1(p_1\dom)b_2(-2p_2\dom)\bigl(w_{p_1\La}^1\bigr)\bigl(w_{-2p_2\La}^2\bigr)\\
&+b_1(2p_1\dom)b_2(p_2\dom)\bigl(w_{2p_1\La}^1\bigr)\bigl(w_{p_2\La}^2\bigr)\\
&+b_1(-2p_1\dom)b_2(p_2\dom)\bigl(w_{-2p_1\La}^1\bigr)\bigl(w_{p_2\La}^2\bigr)\Bigr]
\ea
\ee
and of a remainder $\ti{R}^\ell$ such that $\Zr^k \ti{R}^\ell$ is in 
$h^{1-4\de_{k+1}'} \ti{\mB}_\infty^{2(d_1+d_2)-1,2b'-\mez}[L]$. 

To study \e{3411}, we 
make act $\Oph(a_0)$ on \e{3414}, \e{3415} and on the remainder. We know from Proposition~\ref{ref:3110} that 
$\unchi^2 w_{p_1\La}^1 w_{p_2\La}^2$ is in 
$h^{-2\dek}L^\infty \ti{J}_{(p_1+p_2)\cdot\La}^{0,2b'}\bigl[ K_{p_1+p_2}\bigr]$ 
and 
that $\unchi^2 w_{p_1\La}^1w_{\pm 2p_2\La}^2$ (resp.\ $\unchi ^2 w_{\pm 2p_1\La}^1w_{p_2\La}^2$) belongs to 
$$
h^{-3\dek}L^\infty\ti{I}_{(p_1\pm 2p_2)\cdot\La}^{0,2b'}\bigl[ K_{p_1 \pm 2p_2}\bigr]
\quad (\text{resp.\ }h^{-3\dek}L^\infty\ti{I}_{(\pm 2p_1+p_2)\cdot\La}^{0,2b'}\bigl[ K_{\pm 2p_1 +p_2}\bigr]).
$$ 
To study the action of $\Oph(a_0)$ on \e{3414}, \e{3415}, we may use \e{3116a}, noticing that, since $\dom$ 
is homogeneous of degree $-2$,
\begin{align*}
&a_1(\xi)b_1(p_1\dom(x))b_2(p_2\dom(x)),\\
&a_1(\xi)b_1(p_1\dom(x))b_2(\pm 2p_2\dom(x)),\\
&a_1(\xi)b_1(\pm 2p_1\dom(x))b_2(p_2\dom(x))
\end{align*}
satisfy the assumptions 
\e{3115} with $(\ell,\ell',d,d')$ replaced by $(-2(d_1+d_2),0,m_1,0)$. We conclude that 
\e{3411} is given by the sum of \e{3411a}, \e{3412b} and remainders $R$ such that the action of 
$\Zr^k$ on $R$ gives elements of $h^{1-4\de_{k+1}'} \ti{\mB}_\infty^{\mu_2-1,2b'-\mez}[L]$. 
Moreover, \e{3411a} and \e{3412b} belong to the spaces indicated in the statement of the lemma. This concludes the proof.
\end{proof}

\begin{proof}[Proof of Lemma~\ref{ref:343}] $i)$ Let us prove the first equality \e{345}. Recall that we denote 
$W=(w,\bar{w})$. In the same way, set $W_\La=(w_\La,\bar{w}_\La)$, $W_{\La^c}=(w_{\La^c},\bar{w}_{\La^c})$. 
If $B_0$ denotes the polar form of $Q_0$, we have
$$
Q_0(W)=Q_0\big(W_\La\big)+2B_0\big(W_\La,W_{\La^c}\big)+Q_0\big(W_{\La^c}\big).
$$
For $j$ in $J(h,C)$, we set
$$
\ti{g}_{2,j}=h^{-\mez}\Tmj \Djh\big[ 2B_0\big(W_\La,W_{\La^c}\big)+Q_0\big(W_{\La^c}\big)\big].
$$
Let us show that $\ti{g}_2=\sumj \Tj \ti{g}_{2,j}$ satisfies the conclusions of the lemma. 
By assumption $\Zr^k w_\La$ is in $h^{-\dek}\ti{\mB}_\infty^{0,b-2}[K]$ and 
$\Zr^k w_{\La^c}$ is in $h^{\mez-\dek}\ti{\mB}_\infty^{0,b-2}[F]+h^{1-\dek}\ti{\mB}_\infty^{-1,b-2}[F]$. We plug these informations inside \e{3213}. We get
\begin{align*}
\blA \Djh \Zr^k B_0\big(W_\La,W_{\La^c}\big)\brA_{L^\infty}\le 
C 2^j\sum_{\substack{\max (j_1,j_2)\ge j-C\\ j_1,j_2\in J(h,C)}}2^{\mez \min(j_1,j_2)}\Big[ &h^{\mez-2\dek}+h^{1-2\dek}2^{-\frac{j_2}{2}}\Big]\\
&\times 2^{-(j_{1+}+j_{2+})(b-2)}.
\end{align*}
Summing using the fact that the number of negative $j_\ell$'s in $J(h,C)$ 
is $O(|\log(h)|)$, we obtain a bound in $2^{j-j_+(b-2)}h^{\mez-2\dek}$ which is the bound 
characterizing elements of $h^{\mez-2\dek}\mB_\infty^{2,b-2}[F]$ 
(where $F$ is a closed set of the form $C^{-1}\le |\xi|\le C$). In the same way
\begin{align*}
\blA \Djh \Zr^k B_0\big(W_{\La^c},W_{\La^c}\big)\brA_{L^\infty}\le 
C 2^j\sum_{\substack{\max (j_1,j_2)\ge j-C\\ j_1,j_2\in J(h,C)}}2^{\mez \min(j_1,j_2)}&\Big[ h^{\mez-\dek}+h^{1-\dek}2^{-\frac{j_1}{2}}\Big]\\
&\times \Big[h^{\mez-\dek}+h^{1-\dek}2^{-\frac{j_2}{2}}\Big]\\
&\times 2^{-(j_{1+}+j_{2+})(b-2)}.
\end{align*}
Summing we get a bound in 
$C\Big[ 2^j h^{1-2\dek}+2^{\jd}h^{2-0-2\dek}\Big] 2^{-j_+(b-2)}$. 
This characterizes an element of $h^{1-2\dek}\mB_\infty^{2,b-2}[F]+h^{2-0-2\dek}\mB_\infty^{1,b-2}[F]$. 
Summarizing, we get finally that $\ti{g}_2$ is in $h^{-2\dek}\ti{\mB}_\infty^{1,b-\frac{5}{2}}[F]$ which is the wanted conclusion since we assume $b'<b-3$. 

To estimate $\Oph(x\xi)\ti{g}_2$, we have to perform similar estimates replacing 
$B_0\big(W_\La,W_{\La^c}\big)$ (resp.\ $B_0\big(W_{\La^c},W_{\La^c}\big)$) by 
$(xhD_x)B_0\big(W_\La,W_{\La^c}\big)$ (resp.\ $(xhD_x)B_0\big(W_{\La^c},W_{\La^c}\big)$). If $S(\xi)$ is a positively 
homogeneous function of order $\lambda>0$, smooth outside zero, 
$$
[\Oph(x\xi),\Oph(S(\xi))]=i\lambda h \Oph(S).
$$
Consequently, the expression \e{327} of $Q_0$ and Leibniz rule show that 
$\Oph(x\xi)Q_0(V)$ may be expressed from $B_0\big(\Oph(x\xi)V,V\big)$ and from 
$h\ti{B}_0(V,V)$, where $\ti{B}_0$ is a bilinear form satisfying the same estimates \e{3213} as $B_0$ 
(Actually, $\ti{B}_0$ is either a multiple of the polar form of the quadratic form in the first line of the right hand side of \e{327}, or a multiple of the polar forms of the sum of 
the second and third lines). The last property stated in Lemma~\ref{ref:341} implies that
$$
\Zr^k \Oph(x\xi)w_{\La^c}\in h^{\mez-\dek}\ti{\mB}_\infty^{0,b-\frac{5}{2}}[F]+h^{1-\dek}\ti{\mB}_\infty^{-1,b-\frac{5}{2}}[F].
$$
Moreover, still because of this lemma, $\Zr^k w_\La$ is in $h^{-\dek}\ti{\mB}_\infty^{0,b-2}[K]$ for a compact subset 
$K$ of $\cotangent$. It follows from \e{3113} and the fact that $x\xi$ restricted to such a compact set is in the class of symbols $S(1)$, that 
$\Zr^k\Oph(x\xi)w_\La$ is in $h^{-\dek}\ti{\mB}_\infty^{1,b-2}[K]\subset h^{-\dek}\ti{\mB}_\infty^{0,b-\frac{5}{2}}[F]$. This shows that to estimate 
$\Oph(x\xi)B_0(W_\La,W_{\La^c})$, $\Oph(x\xi)B_0(W_{\La^c},W_{\La^c})$, it suffices to use the bounds obtained above 
for $B_0(W_\La,W_{\La^c})$, $B_0(W_{\La^c},W_{\La^c})$ replacing $b$ by $b-\mez$. We conclude that 
$\Oph(x\xi)\ti{g}_2$ is in $h^{-2\dek}\ti{\mB}_\infty^{1,b-3}[F]\subset h^{-2\dek}\ti{\mB}_\infty^{1,b'+\mez}[F]$.

We compute next $Q_0(W_\La)$ from \e{327}. Let us examine 
first the contributions that are bilinear in $(w_\La,\bar{w}_\La)$ i.e.\
\be\label{3417}
\ba
&-\frac{i}{4}\Oph(|\xi|^{\mez}) \Bigl[ \bigl( \Oph(\xi|\xi|^{-\mez}) w_\La\bigr)
 \bigl( \Oph(\xi|\xi|^{-\mez}) \bar{w}_\La\bigr)\Bigr]\\
&-\frac{i}{4}\Oph(|\xi|^{\mez})
\Bigl[\bigl(\Oph(|\xi|^\mez) w_\La\bigr)\bigl(\Oph(|\xi|^\mez) \bar{w}_\La\bigr)\Bigr]\\
&+\frac{i}{4} \Oph(|\xi|)\Bigl[ w_\La\Oph(|\xi|^\mez)\bar{w}_\La
-\bar{w}_\La\Oph(|\xi|^\mez)w_\La\Bigr]\\
&-\frac{i}{4}\Oph(\xi)\Bigl[ w_\La\Oph(\xi|\xi|^{-\mez})\bar{w}_\La-\bar{w}_\La\Oph(\xi|\xi|^{-\mez})w_\La\Bigr].
\ea
\ee
We use that \e{3411} may be computed from \e{3411a}, up to 
a remainder given by \e{3412a} with $\mu_2=3$, $b'=b-2$, 
that contributes to $\sqrt{h}\ti{g}_2$ in \e{345} (since $b'<b-\frac{5}{2}$ and $b$ is large enough). 
Notice that the main contribution, computed from 
\e{3411a} vanishes. For the terms inside the first two brackets in \e{3417}, this follows 
from a two by two cancellation between the two contributions in each bracket. For the last term in \e{3417}, 
we remark that the symbol $\xi$ of the outside operator 
$\Oph(\xi)$ is an homogeneous polynomial, which allows us to make use of expansion \e{3411a} with $a_1\equiv\xi$, 
$p_1+p_2=0$, and implies as well the vanishing of that term. 

We are left with studying the quadratic terms in $w_\La$ and the quadratic terms in $\bar{w}_\La$ in \e{327}. We may apply to 
both of them $i)$ of Lemma~\ref{ref:345} with $(p_1,p_2)=(1,1)$ or $(p_1,p_2)=(-1,-1)$. We get the contribution to 
$Q_0(W_\La)$ given by the sum of the two expressions \e{346}. 

To study $C_0(W)$, we use that the assumptions imply that $\Zr^k w$ is in 
$h^{-\dek}\ti{\mB}_\infty^{0,b-2}[F]$ (This follows from the fact that $h\ti{\mB}_\infty^{-1,b-2}[F]\subset 
h^\sigma \ti{\mB}_\infty^{0,b-2}[F]$, as a consequence of the inequality $h_j=O(h^\sigma)$). To bound 
$\blA \Djh \Zr^k C_0(W)\brA_{L^\infty}$, we apply \e{3216} with $p=\infty$, $d=b-\alpha-2-0$, $V_1=V_2=V_3=W$. 
Our assumptions on $w$ and $d$ imply that
$$
\blA \Zr^k \japon^{\alpha+d}w\brA_{L^\infty}=O\big( h^{-\dek-0}\big)
$$
as is seen from the expansion $w=\sumj \Tj w_j$ and the bounds on the $w_j$'s. It follows from \e{3216} that 
\be\label{a3417b}
\blA \Djh \Zr^k C_0(W)\brA_{L^\infty}=O\Big( 2^{\jd-j_+(b-\alpha-2-0)}h^{-3\dek-0}\Big).
\ee
The conclusion $\Zr^k g_3\in h^{-3\dek-0}\ti{\mB}_\infty^{1,b'+\mez}[F]$ follows if we assume $b'<b-\frac{9}{2}$ 
(since $\alpha$ is any number strictly larger than $2$). 

To obtain that $\Zr^k\Oph(x\xi)\ti{g_3}$ is in $h^{-3\dek-0}\ti{\mB}_\infty^{1,b'+\mez}[F]$ we make act 
$\Oph(x\xi)$ on $C_0(W)$ and we argue as in the study of quadratic terms, distributing $xhD_x$ on the different factors using Leibniz rule. 
We have seen that $\Zr^k\Oph(x\xi)W$ is $h^{-\dek}\ti{\mB}_\infty^{0,b-\frac{5}{2}}[F]$. It follows as above that we get for 
$\blA \Djh \Zr^k \Oph(x\xi)C_0(W)\brA_{L^\infty}$ the same estimate as \e{a3417b}, with $b$ replaced by $b-1/2$. This gives the wanted bound as 
$b'<b-5$. This concludes the proof of $i)$ of the lemma.

$ii)$ Let us show first that we may replace in the quadratic (resp.\ cubic) part of the left hand side of \e{346a} $w$ by 
$w_p=w_\La+\sqrt{h}\big(w_{2\La}+w_{-2\La}\big)$ (resp.\ by $w_\La$) up to a contribution to the 
$h\ti{g}$ term in the right hand side. By assumption, $\Zr^k w_p$ is in $h^{-\dek}\ti{\mB}_\infty^{0,b'}[F]$ for some 
$b'<b-\frac{9}{2}$. Set $W_p=(w_p,\bar{w}_p)$, $G=(g,\bar{g})$ and let us show that the contributions 
of $\Zr^k B_0(W_p,G)$ and $h\Zr^k B_0(G,G)$ are in $h^{-4\delta_{k+P}'}\ti{\mB}_\infty^{2,b'}[F]$. 
We use \e{3213} and the assumption that $\Zr^k g$ is in $h^{-3\delta_{k+P}'}\ti{\mB}_\infty^{0,b'}[F]$ to bound using \e{3213}
\begin{align*}
\blA \Djh \Zr^k B_0\big(W_p,G\big)\brA_{L^\infty}\le 
C 2^j\sum_{\substack{\max (j_1,j_2)\ge j-C\\ j_1,j_2\in J(h,C)}}2^{\mez \min(j_1,j_2)} h^{-\dek-3\delta_{k+P}'}2^{-(j_{1+}+j_{2+})b'}.
\end{align*}
We get an estimate in $O\big( h^{-4\delta_{k+P}'}2^{j-j_+ b'}\big)$ which shows the wanted conclusion. One argues in the same way for 
$h B_0(G,G)$. 

Considering the cubic term, we write $w=w_\La+\sqrt{h}g'$, where 
$g'=(w_{2\La}+w_{-2\La})+\sqrt{h}g$ satisfies $\Zr^k g'\in h^{-2\dek}\ti{\mB}_\infty^{0,b'}[F]$ and where $\Zr^k w_\La$ is in $h^{-\dek}\ti{\mB}_\infty^{0,b'}[F]$. 
If we set $G'=(g',\bar{g}')$, we have to study $\blA \Djh \Zr^k \big[ C_0(W_\La+\sqrt{h}G')-C_0(W_\La)\big]\brA_{L^\infty}$ i.e.\ 
$\sqrt{h} \blA \Djh \Zr^k T_0(W_\La,W_\La,G')\brA_{L^\infty}$, $h\blA \Djh \Zr^k T_0(W_\La,G',G')\brA_{L^\infty}$ and 
$h^{\tdm}\blA \Djh \Zr^k T_0(G',G',G')\brA_{L^\infty}$. If $d=b'-\alpha-0$, we bound
$$
\blA \Zr^k\japon^{\alpha+d}W_\La\brA_{L^\infty}=O\big(h^{-0-\dek}\big),\quad 
\blA \Zr^k\japon^{\alpha+d} G'\brA_{L^\infty}=O\big(h^{-2\dek-0}\big).
$$
Plugging these estimates in \e{3216}, we get for the quantities under study a bound in terms of 
$2^{\jd-j_+(b'-\alpha-0)}h^{-4\dek+\mez}$. 
Let us study as well
$$
\OPHX B_0(W_\La,G),\quad  
\OPHX B_0(G,G),\quad 
\OPHX \big[ C_0\big(W_\La+\sqrt{h}G'\big)-C_0\big(W_\La\big)\big].
$$ 
As in the proof of $i)$, we may express these quantities from 
\begin{align*}
&B_0\big(\OPHX W_p,G\big),\quad 
B_0\big( W_p,\OPHX G\big), \\
&\sqrt{h}T_0\big(\OPHX W_\La,W_\La,G'\big),\quad 
\sqrt{h}T_0\big(W_\La,W_\La,\OPHX G'\big), \\
&hT_0\big(W_\La,\OPHX G',G'\big), \quad 
hT_0\big(W_\La,\OPHX G',G'\big), \quad h^{\tdm}T_0\big(\OPHX G',G',G'\big),
\end{align*}
and from quadratic and cubic 
quantities of the form of those already estimated. By assumption, the $g$-term in \e{343} satisfies 
$\Zr^k\OPHX g\in h^{-3\delta_{k+P}'}\timb{1,b'}[F]\subset 
h^{-3\delta_{k+P}'}\timb{0,b'-\mez}[F]$. Moreover, by definition of that quantity, 
$\Zr^k\OPHX w_p$ is in $h^{-\dek}\timb{0,b'}[F]$. The above estimate of $B_0$, with 
$b'$ replaced by $b'-\mez$, shows that
$$
\blA \Djh \Zr^kB_0\big(\OPHX W_p,G\big)\brA_{L^\infty}+
\blA \Djh \Zr^kB_0\big( W_p,\OPHX G\big)\brA_{L^\infty}=O\big( h^{-4\delta_{k+P}'}2^{j-j_+(b'-\mez)}\big).
$$
We obtain a $h^{-4\delta'_{k+P}}\timb{2,b'-\mez}[F]\subset h^{-4\delta'_{k+P}}\timb{1,b'-\tdm}[F]$ contribution to 
$\Zr^k\OPHX \ti{g}$ in the action of $\OPHX$ on \e{346a}. The definition of $g'$ implies that 
$\Zr^k\OPHX g'$ is in $h^{-2\dek}\timb{0,b'-\mez}[F]$. Bounding, with $d=b'-\mez-\alpha-0$
\begin{align*}
&\blA \Zr^k \japon^{\alpha+d}\OPHX W_\La\brA_{L^\infty}=O\big(h^{-0-\dek}\big),\\
&\blA \Zr^k \japon^{\alpha+d}\OPHX G'\brA_{L^\infty}=O\big(h^{-0-2\dek}\big),
\end{align*}
we get again from \e{3115} that
$$
\blA \Djh \Zr^k\OPHX \big[ C_0\big(W_\La+\sqrt{h}G'\big)-C_0\big(W_\La\big)\big]\brA_{L^\infty}
=O\big( 2^{\jd -j_+(b'-\mez-\alpha-0)}h^{-4\dek+\mez}\big).
$$
If we replace above $b'$ by $b'+3$ (since $\alpha=2+0$), which corresponds to decreasing by $3$ units the assumption made on 
$b'-b$ in Proposition~\ref{ref:342} (i.e.\ imposing $b'<b-8$), we obtain finally that the contributions of 
$Q_0(W)-Q_0(W_p)$ and $C_0(W)-C_0(W_\La)$ to \e{346a} 
may be incorporated into the $h\ti{g}$ term of the right hand side. We are reduced to the study of 
$$
Q_0\big(W_\La +\sqrt{h}(W_{2\La}+W_{-2\La})\big), \quad C_0(W_\La).
$$
To treat the first expression, we use $ii)$ of Lemma~\ref{ref:345}, which allows us to compute expressions \e{327} using \e{3212}. The remainders satisfy 
bounds of the form \e{3412bb} with $\mu_2=3$, so may be incorporated to the $h\ti{g}$ term in \e{346a}. We have 
already seen in the proof of $i)$ that the $O(1)$ term in \e{346a} is given by \e{346}.

The $O(\sqrt{h})$ term is computed from \e{3412b} applied to the different contributions to $Q_0$ 
given by \e{327}. 
We need to compute explicitly only the 
$\La$-oscillating term i.e.\ the contributions to \e{3412b} corresponding to $p_1\pm 2p_2=1$ and 
$p_2\pm 2p_1=1$ ($p_1,p_2\in \{-1,1\}$). From the expression \e{327} of $Q_0$ and 
\e{3412b}, we get a contribution 
\be\label{3418}
\unchi \frac{i}{2} \la \dom\ra^\tdm \overline{w}_\La\bigl(w_{2\La}-\overline{w}_{-2\La}\bigr).
\ee
In the same way, using \e{3411c}, we compute the $\La$-oscillating 
cubic term coming from the expression \e{328} of $C_0(W_\La)$. 
We obtain a contribution $\frac{1}{4}\unchi |\dom(x)|^{\frac{5}{2}}|w_\La |^2 w_\La$. 
Summing up, we get
\begin{align*}
Q_0(w,\bar{w})+\sqrt{h}C_0(w,\bar{w})&=\widetilde{w}_{2\La}+\widetilde{w}_{-2\La}\\
&\quad +\sqrt{h}\bigl( \widetilde{w}_{3\La}+\widetilde{w}_{\La}+\widetilde{w}_{-\La}+\widetilde{w}_{-3\La}\bigr)\\
&\quad +h\ti{g}_1
\end{align*}
where according to \e{3411d} and \e{3412bb}, 
$$
\Zr^k \ti{g}_1\in 
h^{-4\de_{k+1}'} \ti{\mB}_\infty^{2,b'+1}[F]\subset h^{-4\delta_{k+1}'}\ti{\mB}_\infty^{1,b'+\mez}[F]
$$ 
(if $b'$ is large enough), 
and where 
$\widetilde{w}_\La$ is given by \e{347}. The contributions $\widetilde{w}_{3\La}$, $\widetilde{w}_{-\La}$, 
$\widetilde{w}_{-3\La}$ have expressions \e{347a}, for which we do not need to compute 
explicitly the coefficients $\mu_\ell'$, $\mu_\ell''$. This concludes the proof of the lemma.
\end{proof}

The next step of the proof of Proposition~\ref{ref:342} will be to deduce from equation~\e{3211}, and from the description provided by 
Lemma~\ref{ref:343} of the \rhs of this equation, an expansion of $\Oph(\nongL )w$, exploiting that $2x\xi+\la\xi\ra^\mez$ is an elliptic symbol on the support 
of $\nongL$. In a first step, we establish some a priori bounds for the components $w_j$ of $w$ cut-off outside a neighborhood of $\La$. 

\begin{lemm}\label{ref:346}
Assume that \e{339} holds for $k\le \sd +N_1$ for some integer $N_1$ 
satisfying $(N_1-N_0)\sigma\ge 1$. Then for any 
symbol $a$ in $S(1)$, microlocally supported outside a neighborhood of $\La$, the following estimate holds for $k\le \sd+N_0$, 
and any $j$ in $J(h,C)$,
\be\label{3419}
\blA \Ophj (a) \Zr_j^k w_j\brA_{L^\infty}
\le C_k \eps h^{\mez-\de_{k+N_1-N_0}'}
\bigl(2^{j/2}+h^{1/2}\bigr)2^{-j_+ b'}
\ee
for any $b'<b-\alpha<b-2$.
\end{lemm}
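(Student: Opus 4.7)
The strategy combines three ingredients: ellipticity of the symbol $p(x,\xi) = 2x\xi + |\xi|^{1/2}$ on the support of $a$; the equation \eqref{3320} derived in the proof of Lemma \ref{ref:333}; and an iteration argument exploiting the gap between $N_0$ and $N_1$.

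The first step is an elliptic reduction. Since $a\in S(1)$ is microlocally supported outside a neighborhood of $\Lambda$, and since $w_j$ is microlocally supported in the region $|\xi|\sim 1$ (after the $\Theta_j$-rescaling), the symbol $p$ satisfies $|p(x,\xi)|\ge c\langle x\rangle$ on $\supp a$ intersected with that frequency region. Applying Corollary \ref{ref:A.2.3} with weight $m(x,\xi) = \langle x\rangle$, we obtain a decomposition $a = q\#p + h_j^N r$ with $q\in S(\langle x\rangle^{-1})\subset S(1)$ and $r\in S(1)$, for any fixed $N$. Since $\Op_{h_j}(q)$ and $\Op_{h_j}(r)$ are uniformly $L^\infty$-bounded, this yields
$$\|\Op_{h_j}(a)\zj^k w_j\|_{L^\infty} \le C\|\Op_{h_j}(p)\zj^k w_j\|_{L^\infty} + C h_j^N\|\zj^k w_j\|_{L^\infty},$$
where the last term, controlled directly by $\mathcal{E}_k(v)$ via \eqref{339}, is absorbed by taking $N$ large.

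The second step applies equation \eqref{3320} together with the $L^\infty$-bounds \eqref{3318}--\eqref{3318a}. Using \eqref{339} to bound the $\mathcal{E}$-norms that appear, the quadratic contribution is $O(\sqrt{h}\,\epsilon^2 h^{-2\delta'_k}\,2^{-j_+b'})$; the cubic and remainder terms are even smaller (by an extra factor $\sqrt{h}$ and $h^{1/4}$ respectively); and the linear contribution at level $k$ contributes $O(h_j\,\epsilon\, h^{-\delta'_k}\,2^{-j_+b'})$. These nonlinear contributions fit inside the target bound with room to spare, using the ordering \eqref{330} (so that $\delta'_{k+N_1-N_0}$ dominates all sums of lower $\delta'$ that appear) and the fact that $h^{1/2}\le 2^{j/2}+h^{1/2}$.

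The main obstacle is the linear term $h_j B(h_j)\zj^{k+1}w_j$ in \eqref{3320}: plugging in the a priori bound at level $k+1$ gives a factor $h_j h^{-\delta'_{k+1}}$ which, for $j$ near $j_0(h,C)$ where $h_j\sim h^\sigma$, fails to fit the target. To overcome this, we iterate. The key observation is that after multiplying by the parametrix $\Op_{h_j}(q)$, the quantity $\Op_{h_j}(q)\zj^{k+1}w_j$ is again microlocally supported outside a neighborhood of $\Lambda$, so the lemma's conclusion can be used at level $k+1$ (by downward induction on $k$, starting from $k = s/2+N_0$). Each iteration multiplies the linear term by an extra $h_j$ and raises the vector field index by one; after $P = N_1-N_0$ iterations we reach level $k+P\le s/2+N_1$, where the a priori bound \eqref{339} may finally be invoked. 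The total accumulated factor $h_j^P$ is bounded by $h^{P\sigma}\le h$ by the hypothesis $(N_1-N_0)\sigma\ge 1$, yielding an $O(\epsilon\, h^{1-\delta'_{k+N_1-N_0}}\,2^{-j_+b'})$ contribution, which is $\le \epsilon\, h^{1/2-\delta'_{k+N_1-N_0}}\cdot h^{1/2}\cdot 2^{-j_+b'}$ and therefore fits into the $h^{1/2}$-part of the factor $(2^{j/2}+h^{1/2})$. The nonlinear contributions picked up along the iteration form a geometric series in $h_j$ and are dominated by the first-step estimate treated in paragraph two. The main technical care lies in checking that the exponents $\delta'_{k+\ell}$ produced along the iteration all remain below $\delta'_{k+N_1-N_0}$, which is exactly what the inequalities \eqref{330} and \eqref{333} were designed to secure.
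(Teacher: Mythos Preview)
Your approach is essentially the paper's: elliptic parametrix away from $\Lambda$, substitute equation \eqref{3319a}/\eqref{3320}, and iterate $N_1-N_0$ times on the linear term to convert $h_j^{N_1-N_0}$ into a full power of $h$. The nonlinear estimates you sketch are correct.

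There is, however, an internal inconsistency in your write-up that you should fix. In your first step you pass to the inequality
\[
\|\Op_{h_j}(a)\zj^k w_j\|_{L^\infty} \le C\|\Op_{h_j}(p)\zj^k w_j\|_{L^\infty} + C h_j^N\|\zj^k w_j\|_{L^\infty},
\]
dropping the parametrix $\Op_{h_j}(q)$. But then the linear piece of \eqref{3320} contributing to $\Op_{h_j}(p)\zj^k w_j$ is $h_j B(h_j)\zj^{k+1}w_j$, with \emph{no} microlocal cutoff away from $\Lambda$; so your later sentence ``after multiplying by the parametrix $\Op_{h_j}(q)$, the quantity $\Op_{h_j}(q)\zj^{k+1}w_j$ is again microlocally supported outside a neighborhood of $\Lambda$'' no longer applies---you already threw $q$ away. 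The fix is not to take norms in the first step: keep the \emph{equality} $\Op_{h_j}(a)\zj^k w_j = \Op_{h_j}(q)\Op_{h_j}(p)\zj^k w_j + h_j^N\Op_{h_j}(r)\zj^k w_j$, substitute \eqref{3319a}, and observe that the linear term now reads $h_j\,\zj^k\bigl[\Op_{h_j}(q)(\tfrac{i}{2}\tilde w_j - iZw_j)\bigr]$, with $q$ supported away from $\Lambda$. This is exactly what allows iteration, and is how the paper organizes it: one builds inductively a representation
\[
\Zr_j^k\Op_{h_j}(a)w_j = h_j^\ell\sum_{\ell'=0}^{\ell}\Zr_j^{k+\ell'}\Op_{h_j}(b^\ell_{\ell'})w_j^{(\ell,\ell')} + r_j^{\ell,k},
\]
with symbols $b^\ell_{\ell'}$ vanishing near $\Lambda$ and remainders $r_j^{\ell,k}$ already satisfying \eqref{3419}. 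Taking $\ell=N_1-N_0$ and using $h_j^{N_1-N_0}\le h^{(N_1-N_0)\sigma}\le h$ together with \eqref{339} at level $k+\ell'\le s/2+N_1$ finishes the proof. Your ``downward induction on $k$'' is not needed; the iteration in $\ell$ works directly for every $k\le s/2+N_0$.
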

\begin{proof}
Let us construct for any $1\le \ell\le N_1-N_0$, any $k\le \sd+N_1-\ell$, a family of symbols 
in $S(1)$, $(b_{\ell'}^\ell)_{0\le \ell'\le \ell}$, vanishing close to $\La$ and a sequence $(r_j^{\ell,k})_{j\in J(h,C)}$ with 
\be\label{3420}
\blA r_j^{\ell,k}\brA_{L^\infty}\le C\eps h^{\mez-\de_{k+\ell}'}\bigl( 2^{j/2}+h^{1/2}\bigr)2^{-j_+b'}
\ee
such that
\be\label{3421}
\Zr_j^k\Ophj(a)w_j=h_j^\ell \left[ \sum_{\ell'=0}^\ell 
\Zr_j^{k+\ell'}\Ophj\bigl(b_{\ell'}^\ell\bigr)
w_j^{(\ell,\ell')}\right]+r_j^{\ell,k}
\ee
where $w_j^{(\ell,\ell')}$ denotes a function defined like $w_j=\Tmj\Djh w$ but with 
$\Djh$ replaced by another cut-off of the same type. 
Then \e{3421} with $\ell=N_1-N_0$ implies \e{3419} since 
$h_j^{N_1-N_0}\le h^{\sigma(N_1-N_0)}\le h$ and since $w_j^{(\ell,\ell')}$ satisfies the same 
$L^\infty$ estimates \e{339} as $w_j$.

We remark that to prove \e{3421}, we just need to treat the case $\ell=1$ and iterate the formula. 
Finally, to obtain \e{3421} with $\ell=1$, we use that, by the symbolic calculus of appendix, and since $a$ vanishes 
close to $\La$, we may find a symbol $q$ in $S\bigl(\langle x\rangle^{-1}\bigr)\subset S(1)$, vanishing close to $\La$, 
a symbol $\rho$ in $S(1)$ such that
$$
\Ophj (a)=\Ophj(q)\ophjdeux+h_j^N \Ophj(\rho)
$$
for an arbitrary integer $N$. If $N$ is large enough, the fact that \e{339} holds implies that $\Zr_j^k\bigl(h_j^N \Ophj(\rho)w_j\bigr)$ 
satisfies estimate \e{3420} with $\ell=1$ for all $j\in J(h,C)$. We are thus reduced to showing that
\be\label{3422}
\Zr_j^k\Ophj(q)\ophjdeux w_j
\ee
may be written as the \rhs of \e{3421} with $\ell=1$. We use \e{3319a}. On the one hand, we get a contribution to 
\e{3422} of the form
$$
\Zr_j^k\Bigl[ h_j\Ophj(q)\Bigl( \frac{i}{2}\widetilde{w}_j-iZw_j\Bigr)\Bigr]
$$
that forms part of the sum in \e{3421} with $\ell=1$. On the other hand, the nonlinear terms in \e{3319a} bring an expression 
$$
\Zr_j^k \Ophj(q)\Bigl[ -\sqrt{h}2^{-\frac{j}{2}}\Tmj \Djh \QW
-h 2^{-\jd}\Tmj \Djh \CW-2^{-\jd}h^{\frac{5}{4}}\Tmj \Djh R(V)\Bigr].
$$
Let us check that these terms satisfy estimates \e{3420} with $\ell=1$. For the quadratic terms, this follows from \e{3213} 
(with $p=p_{k_1}=p_{k_2}=\infty$) and from the assumption $\delta_{k_1}'+\de_{k_2}'\le \dek$ 
if $k_1+k_2\le k$, that follows from \e{330}. For the cubic term (resp.\ the remainder) we use \e{3216} (resp.\ \e{3217b}) 
and the estimates of $\blA \Zr^k \japon^{\alpha+d}V\brA_{L^\infty}$ deduced from \e{339} 
with 
$d=b-\alpha-0$, $b'<b-\alpha-0$. This concludes the proof.
\end{proof}

We shall use the preceding lemma to give an asymptotic expansion of 
$\Ophj(\nongL)w_j$, assuming that we know a priori that $Q_0(w,\bar{w})$ admits the expansion given by equality \e{345} in 
Lemma~\ref{ref:343}, or that $Q_0(w,\bar{w})+\sqrt{h}C_0(w,\bar{w})$ obeys the equality \e{346a} of the same lemma.

\begin{lemm}\label{ref:347}
$i)$ Assume $b'<b-5$ and that $\QW$ satisfies \e{345}. Then there are functions 
$w_{\pm 2\La}=\sumj \Tj w_{\pm 2\La,j}$ 
such that for any $k\le \sd+N_0$, $\Zr^k w_{\pm 2\La}$ is an $O(\eps)$ element of 
$h^{-2\dek}L^\infty\ti{I}_{\pm 2\La}^{2,b'+\tdm}\bigl[ K_{\pm 2}\bigr]$ and a function 
$g=\sumj \Tj g_j$ such that $\Zr^k g$ is an $O(\eps)$ element of 
$h^{-3\de_{k+1+N_1-N_0}'}\ti{\mB}_\infty^{0,b'}[F]$ for some closed subset $F$ of 
$T^*\xR$ whose second projection is compact in $\xR^*$ 
and $\Zr^k\OPHX g$ is an $O(\eps)$ element of $h^{-3\delta_{k+1+N_1-N_0}'}\timb{1,b'}[F]$, so that
\be\label{3423}
\ba
w_{2\La}&=-i\unchi \frac{1+\sqrt{2}}{4}\la \dom(x)\ra w_\La^2\\
w_{-2\La}&=-i\unchi \frac{1-\sqrt{2}}{4}\la \dom(x)\ra \bar{w}_\La^2
\ea
\ee
and for any $j$ in $J(h,C)$
\be\label{3424}
\Ophj (\nongL)w_j=\sqrt{h}\bigl(w_{2\La,j}+w_{-2\La,j}\bigr)+h g_j.
\ee

$ii)$ Assume that $b'<b-8$ and $Q_0(w,\bar w)+\sqrt{h}C_0(w,\bar w)$ obeys \e{346a}. 
Then there are functions $w_{\pm 2\La}$ such that for any 
$k\le \sd +N_0$, 
$\Zr^k w_{\pm 2\La}$ is an $O(\eps)$ element of 
$h^{-2\dek}L^\infty\ti{J}_{\pm 2\La}^{2,b'+\tdm}\bigl[ K_{\pm 2}\bigr]$, there are 
functions $w_{\ell\La}=\sumj \Tj w_{\ell\La,j}$ for $\ell=-3,-1,3$, such that 
for $k\le \sd+N_0$, $\Zr^k w_{\ell\La}$ is an $O(\eps)$ element in 
$h^{-3\dek}L^\infty\ti{I}_{\ell\La}^{2,b'+\tdm}\bigl[ K_{\ell}\bigr]$, 
a function $g=\sumj \Tj g_j$ such that $\Zr^k g$ is $O(\eps)$ in 
$h^{-4\de_{k+1+N_1-N_0}'}\ti{\mB}_\infty^{0,b'}[L]$ and 
$\Zr^k\OPHX g$ is an $O(\eps)$ element of $h^{-4\delta_{k+1+N_1-N_0}'}\timb{0,b'-\mez}[F]$ so that
\be\label{3425}
\ba
\Ophj(\nongL)w_j&=\sqrt{h}\bigl(w_{2\La,j}+w_{-2\La,j}\bigr)\\
&\quad +h \bigl(w_{3\La,j}+w_{-\La,j}+w_{-3\La,j}\bigr)\\
&\quad +h^{1+\sigma}g_j.
\ea
\ee
Moreover, $w_{\pm 2\La}$ is still given by \e{3423} and 
\be\label{3426}
\ba
w_{3\La}&=\unchi \lambda_3 \la\dom(x)\ra^2 w_\La^3\\
w_{-\La}&=\unchi \lambda_{-1} \la\dom(x)\ra^2 \la w_\La\ra^2\bar{w}_\La\\
w_{-3\La}&=\unchi \lambda_{-3} \la\dom(x)\ra^2 \bar{w}_\La^3
\ea
\ee
for some real constants $\lambda_{3}$, $\lambda_{-1}$, $\lambda_{-3}$. 
\end{lemm}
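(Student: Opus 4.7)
The proof proceeds by elliptic inversion of the operator $\opdeux$ on the support of $\nongL$, combined with the expansion of $\QW$ (respectively $\QW + \sqrt h \CW$) provided by Lemma~\ref{ref:343}. The key observation is that $p(x,\xi) = 2x\xi + |\xi|^{1/2}$ vanishes precisely on $\La$, hence is elliptic on $\supp(\nongL)$. By the symbolic calculus of Theorem~\ref{ref:A.2.2}, there exist $q \in S(\langle x \rangle^{-1})$ supported close to $\supp(\nongL)$, and $\rho \in S(1)$, such that for any $N$,
\[\Ophj(\nongL) = \Ophj(q)\, \ophjdeux + h_j^N \Ophj(\rho).\]
Applied to $w_j$, the second term contributes to the $hg_j$ remainder provided $N$ is large, since $h_j \leq h^\sigma$ and $w_j$ satisfies the a priori bounds following from \e{339}, exactly as in the proof of Lemma~\ref{ref:346}.

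\textbf{Key steps.} First, I substitute the equation \e{3319a} into $\Ophj(q)\ophjdeux w_j$. The linear contributions on the right hand side of \e{3319a} come with prefactor $h$, and the $\RV$-term with prefactor $h^{5/4}$; applying $\Ophj(q)$ and using \e{339} together with the symbolic calculus estimates already used in Lemma~\ref{ref:346}, all these terms give contributions to $hg_j$ (respectively to $h^{1+\sigma} g_j$ in part $ii)$ after gaining one extra factor $h_j \leq h^\sigma$). In part $i)$, the $\CW$ contribution appears at level $h \cdot 2^{-j/2} \Tmj\Djh\CW$; thanks to the expansion of Lemma~\ref{ref:343}~$i)$, this is an $h$ times something controlled, hence absorbed in $hg_j$. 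In part $ii)$, this cubic contribution is no longer discarded but treated jointly with the quadratic part via \e{346a}.

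Second, I plug the expansion \e{345} (resp.\ \e{346a}) for $\Tmj\Djh \QW$ (resp.\ $\Tmj\Djh[\QW + \sqrt h \CW]$) into the main term. The remainder $\sqrt h \tilde g_2$ (resp.\ $h\tilde g$) from Lemma~\ref{ref:343}, multiplied by the prefactor $\sqrt h \cdot 2^{-j/2}$ in front and the bounded operator $\Ophj(q)$, contributes to $hg_j$ (resp.\ $h^{1+\sigma}g_j$). The leading terms $\widetilde w_{\pm 2\La}$ (resp.\ additionally $\widetilde w_{\ell\La}$ for $\ell = \pm 1, \pm 3$) are lagrangian distributions along $\pm 2\La$ (resp.\ $\ell\La$), with compact microlocal support contained in $K_{\pm 2}$ (resp.\ $K_\ell$). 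By $i)$ of Proposition~\ref{ref:3.1.6} (combined with its global version in Proposition~\ref{ref:3.1.9}), we may replace $\Ophj(q)$ acting on such a lagrangian distribution by multiplication by $q(x, \ell\, d\omega(x))$, modulo an error of size $h_j^{1/2}$ (resp.\ $h_j$, if the input is in the $\tilde J$-class) that is absorbed in the announced remainder. This yields
\[\Ophj(\nongL)w_j = -\sqrt h\, 2^{-j/2} \sum_{\ell \in \{-2, 2\}} q(x, \ell\, d\omega(x))\, \Tmj\Djh \widetilde w_{\ell\La} + h g_j\]
in part $i)$, with the analogous expression including the $\ell = \pm 1, \pm 3$ terms in part $ii)$. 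Undoing $\Tmj$, plugging in the explicit formulas \e{346} for $\widetilde w_{\pm 2\La}$ (resp.\ \e{347}, \e{347a}), and computing $q(x, \ell\, d\omega) = 1/p(x, \ell\, d\omega)$ by direct evaluation using $d\omega(x) = -\mathrm{sgn}(x)/(4x^2)$ and the homogeneity of $\omega$, one recovers the explicit formulas \e{3423} (resp.\ \e{3426}) with real coefficients, the $(1+\sqrt 2)/4$ and $(1-\sqrt 2)/4$ arising as $q(x, \pm 2 d\omega) \cdot \sqrt 2/4 \cdot \langle d\omega\rangle^{1/2}$ after the algebraic simplification.

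\textbf{Main obstacle.} The principal technical difficulty is the bookkeeping of the different spaces: tracking the decrease of indices $b \to b'$ (with $b' < b-5$ and $b' < b-8$ respectively), the powers of $h^{-\delta'_k}$ through each product and composition, and the distinction between the $\tilde I$- and $\tilde J$-classes in part $ii)$. In particular, for part $ii)$, the claim that $\Zr^k w_{\pm 2\La}$ lies in the stronger $\tilde J$-class (not just $\tilde I$-class) requires that the $\sqrt h$-remainder $\tilde g_2$ from the quadratic expansion be absent at the relevant order: this is precisely why we must use \e{346a} rather than \e{345}, absorbing the cubic $\sqrt h \CW$ into the principal expansion so that the residual $h\tilde g$ contributes only to $h^{1+\sigma}g_j$. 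The explicit identification of the coefficients in \e{3423} and \e{3426} will follow from a direct algebraic computation, but requires care in handling the $\unchi$ cutoff (using the remarks following Lemma~\ref{ref:343} to collapse different cutoffs up to $O(h^\infty)$ errors).
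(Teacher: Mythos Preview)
Your proposal is correct and follows essentially the same route as the paper: construct a parametrix $q$ for $p=2x\xi+|\xi|^{1/2}$ on $\supp(\nongL)$ via Corollary~\ref{ref:A.2.3}, substitute \e{3319a}, control the linear and remainder pieces using Lemma~\ref{ref:346}, feed in the expansion \e{345} (resp.\ \e{346a}), and replace $\Ophj(q)$ by its restriction to $\ell\La$ on each lagrangian piece.

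One point needs correction. In part $ii)$, expansion \e{346a} contains a term $\widetilde w_\La$ (i.e.\ $\ell=+1$), and your formula $q(x,\ell\,d\omega)=1/p(x,\ell\,d\omega)$ is singular there since $p|_\La=0$. The paper handles this not by evaluation but by microlocal disjointness: $\widetilde w_\La$ is supported near $\La$ while $q$ (and $\nongL$) are supported away from it, so $\Ophj(q)\Tmj\Djh\widetilde w_\La=O(h_j^\infty)=O(h^\infty)$ and goes into the remainder. This is precisely why no $w_{+\La}$ term appears in \e{3425}. Equivalently, $q=p^{-1}\nongL+h_ja_1$ with $\nongL|_\La=0$, so $q(x,d\omega)=0$; but your identification $q=1/p$ on $\ell\La$ only holds for $\ell\neq 1$, where $\nongL\equiv 1$.

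Two further minor slips: the linear terms in \e{3319a} carry $h_j=h2^{-j/2}$, not $h$, and it is the combination of this $h_j$ with the improved bound \e{3419} (gain $h^{1/2}(2^{j/2}+h^{1/2})$) that yields $h^{1+\sigma}$; and the error from Proposition~\ref{ref:3.1.9} on $\tilde I$-class inputs is $h^{1/2}+h_j$, not $h_j^{1/2}$.
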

\begin{proof}
$i)$ By Corollary \ref{ref:A.2.3} of the appendix, we may find symbols $a$ in $S(\langle x\rangle^{-1})$, $c$ in $S(1)$, supported in a domain 
$C_0^{-1}\le \la \xi\ra\le C_0$ and outside a neighborhood of $\La$ such that 
$\nongL=a\# (2x\xi+\la \xi\ra^\mez)+h_j^N c$. Moreover, we may write 
\be\label{3426a}
a=(2x\xi+\la \xi\ra^\mez)^{-1}\nongL+h_j a_1
\ee
for some symbol $a_1$ in $S(\langle x\rangle^{-1})$. We get
\be\label{3427}
\Ophj(\nongL)w_j=\Ophj(a)\ophjdeux w_j +h_j^N\Ophj(c)w_j.
\ee
Since $h_j\le h^\sigma$, taking $N$ large enough, we see that assumption \e{339} implies that 
the last term in \e{3427} may be written as $h^\tdm g_j$ with 
$\lA \Zr_j^k g_j\rA_{L^\infty}=O(h^{-\de_k'} 2^{-j_+ b'})$ for $k\le \sd +N_0$. 
By construction, $g_j$ is microlocally supported in a closed set of the form 
$C_0^{-1}\le |\xi|\le C_0$.

Moreover, since by Lemma~\ref{ref:341} $\Zr^kw_\La$ is in $h^{-\dek}\timb{0,b-2}[K]$, 
we get that $\Zr^k \OPHX w_\La$ belongs to $h^{-\dek}\timb{1,b-2}[K]$. 
Since $\Zr^k \OPHX w_{\La^c}$ is by the same lemma in 
$h^{\mez-\dek}\timb{1,b-2}[F]+h^{1-\dek}\timb{0,b-2}[F]$, we get that 
$\Zr^k\OPHX w$ belongs to $h^{-\dek}\timb{1,b-2}[F]+h^{1-\dek}\timb{0,b-2}[F]$. 
Since $h2^{-\jd}=h_j=O(h^\sigma)=O(1)$, this implies for 
$\blA \Zr_j^k \Op_{h_j}(x\xi)w_j\brA_{L^\infty}$ a bound in 
$$
2^{-\jd}h^{-\dek}\Big[ 2^{\jd-j_+(b-2)}+h2^{-j_+(b-2)}\Big]\le C h^{-\dek-j_+(b-2)}
$$
This implies that 
$\blA \Zr_j^k\Op_{h_j}(x\xi)g_j\brA_{L^\infty}$ is $O\big(h^{-\dek}2^{-j_+b'}\big)$ so that 
$\sumj \Tj g_j$ brings a contribution to the $g$ function in the statement of the lemma.

We use expression \e{3319a} to study the first term in the \rhs of \e{3427}. The contribution 
of 
\be\label{3427a}
\Zr_j^k\Bigl[ h_j\Ophj(a)\Bigl(\frac{i}{2}\widetilde{w}_j-iZw_j\Bigr)\Bigr]
\ee
has according to \e{3419} a bound of the form 
\be\label{3427b}
C_k\eps h^{\mez-\de_{k+1+N_1-N_0}'}\bigl( h+h_j h^\mez\bigr)2^{-j_+b'}
\le C_k \eps
h^{1+\sigma-\de_{k+1+N_1-N_0}'}2^{-j_+b'}
\ee
using $h_j=O(h^\sigma)$. This will give a contribution to $g_j$ in \e{3424} since the action of 
$\Op_{h_j}(x\xi)$ on \e{3427a} admits similar bounds as $a$ is in $S(\langle x\rangle^{-1})$. 

Let us examine the contribution of
\be\label{3428}
\ba
\Zr_j^k \Bigl[ &-\sqrt{h}2^{-j/2}\Ophj (a)\Tmj \Djh \QW
-h2^{-j/2}\Ophj(a)\Tmj \Djh C_{0}(W)\\
&-2^{-\jd}h^{\frac{5}{4}}\Op_{h_j}(a)\Tmj\Djh R(V)
\Bigr]
\ea
\ee
to \e{3427}. 
We use expressions \e{345}. The contributions of $\ti{g}_2$, $\ti{g}_3$ to \e{3428} induce in \e{3424} an expression contributing to 
$hg_j$. Actually, they give terms whose $L^\infty$-norm is $O\big(h^{-2\dek+1}2^{-j_+b'}\big)$. 
Moreover, the action of $\Op_{h_j}(x\xi)$ on these terms admit similar bounds, again because \e{3428} 
contains a $\Op_{h_j}(a)$ operator in factor, with $a$ in $S(\langle x\rangle^{-1})$. In the same way, the 
$L^\infty$-norm of the last term in \e{3428} (and of the action of $\Op_{h_j}(x\xi)$ on it) may be estimated using 
\e{3217b} with $d = b'-\alpha-0$ and the fact that $\blA \Zr^k\japon^{\alpha+d}V\brA_{L^\infty}$ is bounded using \e{339}
(The loss in $2^{j_+(\alpha+0)} = O(h^{-2\beta(\alpha+0)})$ coming from the right hand side of \e{3217b} is absorbed by part of
the $h^{1/4}$-extra factor in the last term in \e{3428}). This brings another contribution to 
$hg_j$. 
Consequently the only contribution to \e{3428} that we are left with is 
\be\label{3428a}
-\Zr_j^k \Bigl[ 2^{-j/2}\sqrt{h}\Ophj(a)\Tmj \Djh \bigl( \widetilde{w}_{2\La}
+\widetilde{w}_{-2\La}\bigr)\Bigr].
\ee
We shall study this expression in part $ii)$ of the proof below. 

$ii)$ We assume that $Q_0+\sqrt{h}C_0$ obeys \e{346a} 
and write again \e{3427}, expressing the \rhs from 
\e{3319a}. The contribution 
\e{3427a} brings, according to \e{3427b}, 
part of the term $h^{1+\sigma}g$ in \e{3425}. The same holds for the remainder term in \e{3319a}. 
We are thus reduced to the study of the quadratic and cubic terms in \e{3428}. By \e{346a}, we have an expression for $\QW+\sqrt{h}\CW$. 
The term $\ti{g}$ in that expansion will bring 
part of the $g_j$ term in \e{3425}. Consequently, we are reduced to studying
\be\label{3428b}
\ba
&\Zr_j^k \Bigl[ \sqrt{h} 2^{-j/2}\Ophj(a)\Tmj \Djh \bigl( \widetilde{w}_{2\La}
+\widetilde{w}_{-2\La}\bigr)\Bigr],\\
&\Zr_j^k \Bigl[ h 2^{-j/2}\Ophj(a)\Tmj \Djh \bigl( \widetilde{w}_{3\La}
+\widetilde{w}_{\La}+\widetilde{w}_{-\La}+\widetilde{w}_{-3\La}\bigr)\Bigr].
\ea
\ee
We notice first that $\widetilde{w}_\La$ is microlocally supported close to $\La$ while $\Ophj(a)$ cut-offs outside a neighborhood 
of that set. Consequently, the $\widetilde{w}_\La$ term in the second formula \e{3428b} gives rise to a remainder. 
For $\ell\in \{-3,-2,-1,2,3\}$ and $j$ in $J(h,C)$, set 
\be\label{3429}
w_{\ell\La,j}^{(1)}=-2^{-j/2}\Ophj(a)\Tmj \Djh \widetilde{w}_{\ell\La}.
\ee
Expressions \e{3428}, \e{3428a}, \e{3428b} show that \e{3425} holds with $w_{\ell\La,j}$ replaced by $w_{\ell\La,j}^{(1)}$. 
We have to show that, up to a modification of $g_j$ in \e{3425}, $w_{\ell\La,j}^{(1)}$ may be replaced by a function 
$w_{\ell\La,j}$ such that $\sumj\Tj w_{\ell\La,j}=w_{\ell\La}$ satisfies the conclusions of the lemma. 

We write $\widetilde{w}_{\ell\La}=\sum_{j'\in J(h,C)}\Theta_{j'}^* \widetilde{w}_{\ell\La,j'}$ and set
$$
\widetilde{w}_{\ell\La,j}^{(1)}\defn \Tmj\Djh \widetilde{w}_{\ell\La}=\sum_{j'\in J(h,C)}\Theta_{-j+j'}^* \Op_{h_{j'}}\bigl(\varphi\bigl(2^{-j+j'}\xi\bigr)\bigr)
\widetilde{w}_{\ell\La,j'}.
$$
Since $\widetilde{w}_{\ell\La,j'}=\Op_{h_{j'}}\bigl(\widetilde{\varphi}(\xi)\bigr)\widetilde{w}_{\ell\La,j'}$ for some $\widetilde{\varphi}$ in 
$C^\infty_0(\xR^*)$, we may limit the sum above to those $j'$ satisfying $|j-j'|\le M$ for some 
$M$. This shows that $\bigl(\Zr_j^k\widetilde{w}_{\ell\La,j}^{(1)}\bigr)_j$ is a bounded family in 
$$
h^{-2\dek}L^\infty J_{\pm 2\La}^{3,b'+\tdm}\bigl[ K_{\pm 2}\bigr]\subset 
h^{-2\dek}L^\infty J_{\pm 2\La}^{2,b'+1}\big[K_{\pm 2}\big]
$$
when $\ell=\pm 2$ and in 
$h^{-3\dek}L^\infty I_{\ell\La}^{3,b'+\tdm}\bigl[ K_{\ell}\bigr]$ if $\ell\in \{-3,-1,3\}$, according to the assumptions made on 
$\widetilde{w}_{\ell\La}$. In the expression \e{3429} of
$$
w_{\ell\La,j}^{(1)}=-2^{-j/2}\Ophj(a)\widetilde{w}_{\ell\La,j}^{(1)},
$$
we insert the decomposition \e{3426a} of $a$. Since $\nongL$ may be assumed to be equal to one close to $\ell\La$, $\ell\neq 1$, if the support of $\gamma_\La$ is close enough to $\La$, we may write
$$
a\arrowvert_{\ell\La}=\bigl(|\ell|^\mez-\ell)^{-1}\la \dom (x)\ra^{-\mez}+h_j a_1\arrowvert_{\ell\La}
$$
for $\ell\in \{-3,-2,-1,2,3\}$. Consequently, \e{3429} may be written as the sum of 
\be\label{3431}
w_{\lL,j}=-2^{-\jd}\big(|\ell|^\mez-\ell\big)^{-1}\la \dom (x)\ra^{-\mez}\Tmj \Djh \tw_{\lL}
\ee
and of
\be\label{3432}
-2^{-\jd}\Ophj\big( c^\ell +h_jd^\ell\big)\tw_{\lL,j}^{(1)}
\ee
where $c^\ell,d^\ell$ are symbols, with $c^\ell$ vanishing on $\ell\cdot\La$. 

Let us show first that \e{3432} multiplied by $\sqrt{h}$ 
when $\ell=\pm 2$ and by $h$ when $\ell$ belongs to $\{-3,-1,3\}$ provides 
a contribution to $h^{1+\sigma}g_j$ in \e{3425}. 
Since $\big(\zjk \tw_{\pm 2\La,j}^{(1)}\big)_j$ is a $O(\eps)$ 
family in $h^{-2\dek}L^\infty J_{\pm 2\La}^{2,b'+1}\big[K_{\pm 2}\big]$ 
and $c^{\pm 2}$ vanishes on $\pm 2\La$, we see that the $L^\infty$-norm 
of the action of $\zjk$ on \e{3432} with $\ell=\pm 2$ is bounded from above by 
$$
C\eps 2^{-\jd}h^{-2\dek}2^{j-j_+(b'+1)}h_j\le 
C\eps h^{1-2\dek }2^{-j_+(b'+1)}.
$$
Consequently, when $\ell=\pm 2$, if we make act $\zjk$ on \e{3432} multiplied by $\sqrt{h}$, we obtain an element of 
$h^{\tdm-2\dek }\mB_\infty^{0,b'}[L]$ i.e.\ a contribution to $h^{1+\sigma}g_j$ in \e{3425}. In the same way, when 
$\ell\in \{-3,-1,3\}$, using that $\big(\zjk \tw_{\lL,j}^{(1)}\big)_j$ is in 
$h^{-3\dek}L^\infty I_{\lL}^{2,b'+1}\big[K_{\ell}\big]$, we may estimate the $L^\infty$-norm of \e{3432} 
on which acts $\zjk$ by 
$$
C\eps 2^{-\jd}h^{-3\dek}2^{j-j_+(b'+1)}\big[ h^\mez+h_j\big]\le 
C\eps h^{\mez-3\dek }2^{-j_+(b'+\mez)}.
$$ 
Again, after multiplication by $h$, this gives a contribution to $h^{1+\sigma}g_j$ in \e{3425}. 
Notice that the fact that $\OPHX g=\sumj 2^{\jd}\Tj \Op_{h_j}(x\xi)g_j$ satisfies the same estimates as $g$, 
with $b'$ replaced by $b'-\mez$, follows from the above bounds since $\widetilde{w}_{\ell\La,j}^{(1)}$ is microlocally supported in a compact set 
of $\cotangent$.

We have thus shown that $\Ophj(\nongL)w_j$ is given by the \rhs of \e{3425}, with $w_{\lL,j}$ given by \e{3431}. In particular 
since $\la \dom\ra$ is positively homogeneous of degree $-2$, we get
$$
w_{\lL}=\sumj \Tj w_{\lL,j}=-\big(|\ell|^\mez-\ell)^{-1}\la \dom(x)\ra^{-\mez}\tw_{\lL}.
$$
Combining this with \e{346}, \e{347a}, we obtain \e{3423} and \e{3426}. Moreover, expressions \e{3431} and the properties of $\tw_{\lL}$ 
obtained in $ii)$ if Lemma~\ref{ref:343} show that $\Zr^k w_{\pm 2\La}$ is in the space $h^{-2\dek}L^\infty \tilde{J}_{\pm 2\La}^{2,b'+\tdm}\big[K_{\pm 2}\big]$ 
and that $w_{\lL}$, $\ell\in \{-3,-1,3\}$ belongs 
to $L^\infty \tilde{I}_{\ell\La}^{2,b'+\tdm}\big[K_{\ell}\big]$, and are $O(\eps)$ in these spaces.
\end{proof}

\begin{proof}[Proof of Proposition~\ref{ref:342}] 
Let $b'$ satisfying the assumption of the proposition. 

By Lemma~\ref{ref:341}, $\Zr^k w_\La$ is an $O(\eps)$ element 
of $h^{-\dek}L^\infty\ti{I}_\La^{0,b-2}[K]$ and $\Zr^k w_{\La^c}$ is an $O(\eps)$ 
element of $h^{\mez-\dek}\ti{\mB}_\infty^{0,b-2}[F]+h^{1-\dek}\ti{\mB}_\infty^{-1,b-2}[F]$, 
for $0\le k\le \sd+N_1$. We may therefore apply $i)$ of Lemma~\ref{ref:343} which shows that \e{345} holds. 
This allows us to use $i)$ of Lemma~\ref{ref:347}. In that way, we obtain functions $w_{\pm 2\La}$, in the spaces indicated in the 
statement of that lemma, such that \e{344} holds. Writing
$$
w=w_\La+\sumj \Tj \Ophj(\nongL)w_j
$$
and using \e{3424}, we obtain equality \e{343}. 

We still have to check that $\Zr^k w_\La$ is an $O(\eps)$ element in $h^{-2\dek}L^\infty\ti{J}_\La^{0,b'}[K]$ 
since Lemma~\ref{ref:341} was only ensuring that this function is 
$O(\eps)$ in the space $h^{-\dek}L^\infty\ti{I}_\La^{0,b'}[K]$. To do so, we must show that
\be\label{3433}
\blA \zjk \ophjdeux w_{\La,j}\brA_{L^\infty}\le C\eps h^{-2\dek}h_j 2^{-j_+b'}.
\ee
We notice that, by symbolic calculus and assumption \e{339}
$$
\blA \zjk \big[ \ophjdeux,\Ophj(\gL)\big]w_j\brA_{L^\infty}
$$
satisfies the wanted bound, since the commutators between the vector fields and $\Ophj(e)$, for a symbol $e$, are of the form 
$\Ophj(\ti{e})$ for another symbol $\ti{e}$. We may therefore study
$$
\blA \zjk \Ophj(\gL)\ophjdeux w_j\brA_{L^\infty}.
$$
Using the commutation relation 
$$
\big[ tD_t+xD_x,\ophjdeux\big]=i\ophjdeux
$$
we see that the above quantity may be estimated from
$$
\blA \Ophj\big(\widetilde{\gamma}_\La\big)\ophjdeux \Zr^{k'}_jw_j\brA_{L^\infty}
$$
for $k'\le k$ and $\widetilde{\gamma}_\La$ a symbol with $\supp \widetilde{\gamma}_\La\subset \supp \gL$. 
We use now \e{3318}, which provides the wanted bound of type \e{3433}, up to a similar estimate for
$$
2^{-\jd}\sqrt{h}\blA \Ophj\big( \widetilde{\gamma}_\La\big)\Zr_j^{k'}\Tmj \Djh \QW\brA_{L^\infty}.
$$
To study this quantity, we need to exploit the structure of $\QW$ given by \e{345}. The remainder 
in the first equation \e{345} gives a contribution bounded by the \rhs of \e{3433}. On the other hand,
$$
\blA \Ophj\big(\widetilde{\gamma}_\La\big)\Zr_j^{k'}\Tmj \Djh \big( \tw_{2\La}+\tw_{-2\La}\big)\brA_{L^\infty}
$$
is $O\big(\eps h^\infty \big)$ 
since $\widetilde{\gamma}_\La$ cuts-off on a neighborhood of $\La$ while 
$\tw_{\pm 2\La}$ are supported close to $\pm 2 \La$, so outside such a neighborhood. 
This concludes the proof 
of \e{3433}, whence the proposition.
\end{proof}

Let us deduce from expansion \e{343} of $w$ a second refined decomposition of $w$ in oscillating factors.

\begin{coro}\label{ref:348}
Under the assumptions of Proposition~\ref{ref:342} with moreover $b'<b-8$, 
we may write
\be\label{3435}
w=w_\La+\sqrt{h}\big(w_{2\La}+w_{-2\La}\big)+h\big( w_{3\La}+w_{-\La}+w_{-3\La}\big)+h^{1+\sigma}g
\ee
where $w_{\pm 2\La}$, $w_{\pm 3\La}$, $w_{-\La}$ satisfy the conclusions of 
$ii)$ of Lemma~\ref{ref:347} and are given in terms of 
$w_\La$ by \e{3423}, \e{3426}, and where for 
$k\le \sd +N_0$, $\Zr^k g$ is $O(\eps)$ in $h^{-4\delta_{k+1+N_1-N_0}'}\ti{\mB}_\infty^{0,b'}[F]$ 
and $\Zr^k\OPHX g$ is $O(\eps)$ in $h^{-4\delta_{k+1+N_1-N_0}'}\timb{0,b'-\mez}[F]$. 
\end{coro}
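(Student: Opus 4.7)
The plan is a straightforward bootstrap: the decomposition produced by Proposition~\ref{ref:342} is precisely the input that unlocks the sharper parts $ii)$ of Lemmas~\ref{ref:343} and~\ref{ref:347}, which together yield the refined expansion \eqref{3435}. The main work is bookkeeping of regularity classes, keeping track of the loss $b\mapsto b'$ at each step; the hypothesis $b'<b-8$ is tailored exactly so that all of these losses fit.

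First, I would apply Proposition~\ref{ref:342} with some intermediate number $b''$ chosen to satisfy $b'<b''-5$ and $b''<b-5$ (for instance $b''=\frac{b+b'}{2}+3$ or similar, so that each of the two subsequent applications of Lemma~\ref{ref:343} loses at most five units of decay). This furnishes the decomposition $w=w_\La+\sqrt{h}(w_{2\La}+w_{-2\La})+hg_0$ in which crucially $\Zr^k w_\La$ is $O(\eps)$ in $h^{-\delta_{k+1}'}L^\infty\tilde J_\La^{0,b''}[K]$ (note: in the sharper space with $\tilde J$, not merely $\tilde I$), $\Zr^k w_{\pm 2\La}$ is $O(\eps)$ in $h^{-2\delta_{k+1}'}L^\infty\tilde I_{\pm 2\La}^{2,b''+3/2}[K_{\pm 2}]$, and the remainder $g_0$ (together with $\Oph(x\xi)g_0$) lies in the appropriate $\tilde{\mathcal B}_\infty$-class.

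Second, with this input in hand, the assumptions of part $ii)$ of Lemma~\ref{ref:343} are met on the nonlinearities, and they give precisely \eqref{346a}: the sum $\sum_j\Delta_j^h\big[\QW+\sqrt h\,\CW\big]$ is equal to $\widetilde w_{2\La}+\widetilde w_{-2\La}+\sqrt h(\widetilde w_{3\La}+\widetilde w_\La+\widetilde w_{-\La}+\widetilde w_{-3\La})+h\widetilde g$, with the explicit formulas \eqref{346}, \eqref{347}, \eqref{347a} for the coefficients and with the $\widetilde w_{\pm 2\La}$ now in the sharper $\tilde J$-class.

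Third, I feed this structure of the nonlinearities into part $ii)$ of Lemma~\ref{ref:347}, which yields \eqref{3425}:
\[\Oph(\gamma_\La^c)w_j=\sqrt h(w_{2\La,j}+w_{-2\La,j})+h(w_{3\La,j}+w_{-\La,j}+w_{-3\La,j})+h^{1+\sigma}g_j,\]
with $w_{\pm 2\La}$ still given by \eqref{3423}, the new oscillating terms $w_{\pm 3\La}$, $w_{-\La}$ given by \eqref{3426}, and $g=\sum_j\Theta_j^*g_j$ an $O(\eps)$ element of $h^{-4\delta_{k+1+N_1-N_0}'}\tilde{\mathcal B}_\infty^{0,b'}[F]$ (with $\Oph(x\xi)g$ in the companion space). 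Adding $w_\La$ to both sides, using $w=w_\La+\sum_j\Theta_j^*\Oph(\gamma_\La^c)w_j$, produces exactly \eqref{3435}.

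The only mildly delicate point is checking consistency of the two expressions for $w_{\pm 2\La}$: Proposition~\ref{ref:342} defines them through \eqref{344} while Lemma~\ref{ref:347} re-extracts them through \eqref{3423} from the expansion of $\QW$; but both formulas agree on the nose, so there is no compatibility issue. Similarly, the uniqueness up to $O(h^\infty)$ tolerance of the cutoff $(1-\chi)(xh^{-\beta})$ (Remark following Lemma~\ref{ref:343}) absorbs the mild discrepancies between the cutoffs introduced at the different stages. I do not expect any real obstacle beyond verifying that the exponents $b,b',b''$ and the losses $\delta_{k+1}'\to 2\delta_{k+1}'\to 3\delta_{k+1}'\to 4\delta_{k+1+N_1-N_0}'$ propagate correctly, which is exactly what inequalities \eqref{330} were designed to accommodate.
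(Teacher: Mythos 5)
Your proposal follows exactly the same chain as the paper's proof: Proposition~\ref{ref:342} gives \e{343}, which validates the hypotheses of part $ii)$ of Lemma~\ref{ref:343}, whose conclusion \e{346a} in turn validates part $ii)$ of Lemma~\ref{ref:347}, and summing \e{3425} over $j$ and adding $w_\La$ gives \e{3435}. The intermediate exponent $b''$ you introduce is unnecessary (the paper simply runs everything with the single $b'<b-8$, which already implies the weaker $b'<b-5$ needed for Proposition~\ref{ref:342}), but this is harmless; the argument is correct.
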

\begin{proof}
By Proposition~\ref{ref:342}, $w$ may be written as \e{343}. Consequently, the assumptions of 
$ii)$ of Lemma~\ref{ref:343} hold and this lemma implies a decomposition \e{346a} for
$$
\sumj\Djh\big[\QW+\sqrt{h}\CW\big].
$$ 
This shows that the assumptions of $ii)$ of Lemma~\ref{ref:347} hold. According to this lemma, 
$\Ophj\big(\nongL\big)w_j$ is given by \e{3425}. We define $w_{\lL}=\sumj \Tj w_{\lL,j}$ and get 
\e{3435}, remembering that we defined $w_\La=\sumj \Tj \Ophj(\gL)w_j$ if $w=\sumj \Tj w_j$. The expansion 
in terms of $w_\La,\overline{w}_\La$ follow from \e{3423}, \e{3426}.
\end{proof}

We have seen in Proposition~\ref{ref:342}, that $\Zr^kw_\La$ is an $O(\eps)$ element in 
$h^{-\dek}L^\infty \ti{J}_\La^{0,b'}[K]$. We need a more precise description of this quantity. 

\begin{prop}\label{ref:349}
Let $w_\La=\sumj \Tj w_{\La,j}$ be the function introduced in Proposition~\ref{ref:342}. There are elements 
$f=\sumj \Tj f_j$ where $\Zr^k f$ is $O(\eps)$ in $h^{-3\delta_{k+2}'}L^\infty \ti{I}_\La^{0,b'}[K]$ for 
$k\le \sd +N_0$ and $r=\sumj \Tj r_j$, with $\Zr^k r$ of size $O(\eps)$ in 
$h^{-4\delta_{k+N_1-N_0+1}'}\ti{\mB}_\infty^{0,b'}[F]$ such that
\be\label{3436}
\ophjdeux w_{\La,j}=h_j \big[ f_j+h^\uq r_j\big].
\ee
\end{prop}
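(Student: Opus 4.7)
}

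The plan is to apply the microlocal cutoff $\Ophj(\gL)$ to equation \eqref{3319a} and exploit the decomposition \eqref{346a} of $\sqrt{h}\QW+h\CW$ established in Lemma~\ref{ref:343} ii), which applies because Proposition~\ref{ref:342} has already furnished the decomposition \eqref{343} of $w$. First I would write
\[
\ophjdeux w_{\La,j} = \Ophj(\gL)\ophjdeux w_j + [\ophjdeux,\Ophj(\gL)]w_j,
\]
the commutator being, by symbolic calculus, of the form $h_j\Ophj(\tilde\gamma_\La)$ with $\tilde\gamma_\La\in S(1,K)$ supported close to $\La$. Substituting \eqref{3319a} and grouping the quadratic and cubic terms gives
\[
\Ophj(\gL)\ophjdeux w_j = -2^{-j/2}\Ophj(\gL)\Tmj\Djh[\sqrt h\QW+h\CW] + h_j\Ophj(\gL)\bigl[\tfrac{i}{2}\tilde w_j - iZw_j\bigr] - h_j h^{1/4}\Ophj(\gL)\Tmj\Djh R(V).
\]

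Next I would insert \eqref{346a}. The crucial observation is that each $\tw_{\ell\La}$ with $\ell\in\{\pm 2,\pm 3,-1\}$ is microlocally supported close to $\ell\La$, and the dilation $(x,\xi)\mapsto(2^{-j/2}x,2^j\xi)$ relating $\Oph$ and $\Op_{h_j}$ preserves $\ell\La$ (by the homogeneity of $\La$). Consequently $\Tmj\Djh\tw_{\ell\La}$ is microlocally supported close to $\ell\La$ for the $h_j$-calculus, disjoint from the support of $\gL$, so $\Ophj(\gL)\Tmj\Djh\tw_{\ell\La}=O(h_j^\infty)=O(h^\infty)$ for $\ell\neq 1$. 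Only the $\tw_\La$ component and the $h^{3/2}\tilde g$ remainder survive, giving (using $h\cdot 2^{-j/2}=h_j$, $h^{3/2}\cdot 2^{-j/2}=h_j h^{1/2}$):
\[
\ophjdeux w_{\La,j} = h_j\Bigl[-\Ophj(\gL)\Tmj\Djh\tw_\La + \Ophj(\gL)\bigl(\tfrac i 2\tilde w_j-iZw_j\bigr)+\Ophj(\tilde\gamma_\La)w_j\Bigr] - h_j h^{1/4}\Bigl[h^{1/4}\Ophj(\gL)\Tmj\Djh\tilde g + \Ophj(\gL)\Tmj\Djh R(V)\Bigr] + O(h^\infty).
\]
Setting $f_j$ to be the first bracket and $r_j$ to be the negative of the second one (and absorbing the $O(h^\infty)$ term into either summand) produces the desired identity $\ophjdeux w_{\La,j}=h_j[f_j+h^{1/4}r_j]$.

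It remains to verify the functional-space bounds. For $f$: the main term $\Ophj(\gL)\Tmj\Djh\tw_\La$ lives in the stated space because Lemma~\ref{ref:343} ii) places $\Zr^k\tw_\La$ in $h^{-3\delta_{k+1}'}L^\infty\ti I_\La^{3,b'+3/2}[K]\subset h^{-3\delta_{k+1}'}L^\infty\ti I_\La^{0,b'}[K]$; the linear contributions $\tilde w_j, Zw_j$ reduce to a $\Zr^{k+1}$-derivative of $w_\La$, whose bounds come from Proposition~\ref{ref:342} in the smaller class $\ti J_\La\subset\ti I_\La$, and the commutator $\Ophj(\tilde\gamma_\La)w_j=\Ophj(\tilde\gamma_\La)w_{\La,j}+O(h^\infty)$ by the support argument applied to the $w_{\La^c,j}$-part of Lemma~\ref{ref:341}. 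The shift $\delta_{k+1}'\mapsto\delta_{k+2}'$ absorbs the extra derivative coming from $Zw_j$. For $r$: one invokes the $\tilde g$-bound from Lemma~\ref{ref:343} ii) and the remainder estimates \eqref{3217a}--\eqref{3217b} combined with \eqref{339}, with the $\delta$-index $k+N_1-N_0+1$ accommodating the loss in passing from the a priori control to the $R(V)$-estimate. The principal obstacle is this last piece of bookkeeping---in particular, ensuring that the commutator $\Ophj(\tilde\gamma_\La)w_j$ genuinely belongs to $\ti I_\La^{0,b'}[K]$ despite $w_j$ itself only being known in the rough class $\ti\mB_\infty$, and tracking the rescaling from $\zj^k$ to $\Zr^k$ through the $\Tj$-sums; the conceptual heart of the argument, that $\Ophj(\gL)$ annihilates every non-$\La$ Lagrangian component of the nonlinearity modulo $O(h^\infty)$, is essentially automatic from Lemma~\ref{ref:343}.
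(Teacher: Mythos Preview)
Your proposal follows the same strategy as the paper: apply $\Ophj(\gL)$ to \eqref{3319a}, feed in the expansion \eqref{346a}, and kill the $\ell\neq 1$ Lagrangian pieces by the microlocal disjointness of $\gL$ from $\ell\La$. The decomposition into $f_j$ and $r_j$ is identical. There is one small but genuine slip and one organisational difference worth noting.

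The slip is in your handling of the commutator $[\ophjdeux,\Ophj(\gL)]w_j=h_j\Ophj(\tilde\gamma_\La)w_j$. You claim $\Ophj(\tilde\gamma_\La)w_{\La^c,j}=O(h^\infty)$ by a support argument. This is not correct as stated: the symbol $\tilde\gamma_\La$ produced by the commutator has support in $\supp\partial\gL$, which lies in the transition region where $\Gamma$ is neither $0$ nor $1$, and this region does intersect $\supp\nongL$. The paper's fix is simply to observe that $\Ophj(\tilde\gamma_\La)w_j$, being a cutoff of $w_j$ near $\La$, satisfies the same $I_\La$ estimates as $w_{\La,j}$ by re-running the argument of Lemma~\ref{ref:341} (equation~\eqref{341a}) with $\gL$ replaced by any symbol supported in $\supp\gL$. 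Equivalently, you could insert a slightly larger cutoff $\gL'$ equal to $1$ on $\supp\gL$ and write $\Ophj(\tilde\gamma_\La)w_j=\Ophj(\tilde\gamma_\La)\Ophj(\gL')w_j+O(h_j^\infty)$; then the $J_\La$ bounds of Proposition~\ref{ref:342} apply to $\Ophj(\gL')w_j$.

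The organisational difference is in the linear term $\Ophj(\gL)(\tfrac{i}{2}\tilde w_j-iZw_j)$. You reduce it by commutators to $Zw_{\La,j}$ and invoke the $\tilde J_\La\subset\tilde I_\La$ bound from Proposition~\ref{ref:342} at level $k+1$. The paper instead verifies the $I_\La$ property directly by applying $\zjk\ophjdeux$ and using the nonlinear estimates \eqref{3318}, \eqref{3318a} of Lemma~\ref{ref:333}~iii). Both routes work; yours is a clean reduction to an already-proved statement, the paper's re-derives from the equation, and both land on the exponent $\delta_{k+2}'$ for the same reason (one extra $Z$).
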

\begin{proof}
We use the definition of $w_{j,\La}=\Ophj(\gL)w_j$ and
\e{3319a} to write
\be\label{3436a}
\ba
\ophjdeux w_{j,\La}&=\big[ \ophjdeux , \Ophj(\gL)\big] w_j\\
&\quad -\sqrt{h} 2^{-\jd}\Ophj(\gL)\Tmj \Djh \QW\\
&\quad +h2^{-\jd}\Ophj(\gL)\Big[ \frac{i}{2}\tw_j-iZ w_j-\Tmj \Djh \CW\Big]\\
&\quad -2^{-\jd}h^{\frac 5 4}\Ophj(\gL)\Tmj \Djh R(V).
\ea
\ee
The commutator term may be written $h_j\Ophj(e)w_j$ for some symbol 
$e$ in $S(1)$, with support contained in $\supp \gL$ (up to 
a $O\big(h_j^\infty\big)=O(h^\infty)$ remainder). Consequently $\Ophj(e)w_j$ will 
satisfy the same type of properties as $w_{\La,j}$ i.e.\ by Lemma~\ref{ref:341}, 
$\big(\zjk \Ophj(e)w_j\big)_j$ will be a $O(\eps)$ family in $h^{-\dek}L^\infty I_\La^{0,b'}[K]$ so that the first term 
in the \rhs of \e{3436a} contributes to $h_j f_j$ in \e{3436}. 

To study the quadratic and cubic terms in \e{3436a}, we use expression \e{346a} for $\QW+\sqrt{h}\CW$. The 
remainder $\ti{g}$ in \e{346a} will bring a contribution to $h_j h^\mez r_j$ in \e{3436}. The contribution
$$
\sqrt{h}2^{-\jd}\Ophj(\gL)\Tmj \Djh \Big( \tw_{2\La}+\tw_{-2\La}+\sqrt{h}\big( \tw_{3\La}
+\tw_{-\La}+\tw_{-3\La}\big)\Big)
$$
and its $\Zr$-derivatives are $O(\eps h^\infty)$ since $\gL$ cuts-off close to $\La$, while the terms on which it acts are 
supported close to $\ell\cdot\La$, $|\ell|\le 3$, $\ell\neq 1$. Consequently, the only remaining term coming from \e{346a} is
$$
-h2^{-\jd}\Ophj(\gL)\Tmj \Djh \tw_\La.
$$
Since
$$
\Zr^k\tw_\La=\sum_{j'\in J(h,C)}\Zr^k \Theta_{j'}^* \tw_{\La,j'} \quad\text{ is }O(\eps) \text{ in } 
h^{-3\dek}L^\infty \ti{I}_\La^{0,b'}[K]
$$
we get a contribution to $h_j f_j$ in \e{3436}. 

The remainder term $2^{-\jd}h^{\frac 5 4} \Ophj(\gL)\Tmj \Djh R(V)$ will contribute to the last term 
in \e{3426} 
as it has been seen in the estimate of the last term in \e{3428}. 

Finally, we are left with studying 
\be\label{3437}
h2^{-\jd}\Ophj(\gL)\Big[ \frac{i}{2}\tw_j-iZ w_j\Big]
\ee
We use $iii)$ of Lemma~\ref{ref:333} to bound the action of $\zjk \ophjdeux$ on \e{3437}. We obtain an $L^\infty$ bound 
of the form
\begin{align*}
Ch_j 2^{-j_+ b'}\bigg[ &h^{\mez}\sum_{k_1+k_2\le k+1}\Er_{k_1}(v)\Er_{k_2}(v)
+h^{\mez}\sum_{k_1+k_2+k_3\le k+1}\Er_{k_1}(v)\Er_{k_2}(v)\Er_{k_3}(v)+h_j \Er_{k+2}(v)\bigg].
\end{align*}
Using \e{330} we bound this by $C h_j \big(h^\mez+h_j\big)\Er_{k+2}(v)$ so, according 
to \e{339}, by
$$
Ch_j h^{-\delta_{k+2}'}\big( h^\mez +h_j\big)2^{-j_+b'}.
$$
We obtain in that way a contribution to 
$h_j f_j$ in \e{3436}.
\end{proof}

In the following section, we shall need estimates not only for $w$, but also for
$$
w^{(\ell)}=\Oph\big(\langle\xi\rangle^\ell \big)w,\quad 
w_\La^{(\ell)}=\Oph\big(\langle\xi\rangle^\ell \big)w_\La,
$$
where $\ell$ is an integer $0\le \ell \le \sd+N_0+b$. Let us deduce from 
Corollary \ref{ref:348} an expansion for $w^{(\ell)}$ in terms of $w_\La^{(\ell)}$, $\ell=0,\ldots,\sd+N_0+b'$.

\begin{coro}\label{ref:3410}
Under the assumptions of Proposition~$\ref{ref:342}$, for $\ell=0,\ldots,\sd+N_0+b'$ 
we may write 
\be\label{3441}
\wl=\wl_\La+\sqrt{h}\big(\wl_{2\La}+\wl_{-2\La}\big)+h\big( \wl_{3\La}+\wl_{-\La}+\wl_{-3\La}\big)+h^{1+\sigma}g^{(\ell)}
\ee
where for any $k\le \min\big( \sd +N_0,\sd+N_0+b'-\ell\big)$, $\Zr^k g^{(\ell)}$ is in 
$h^{-4\delta_{k+1+N_1-N_0}'}\ti{\mB}_\infty^{0,b'-\ell}[F]$, $\Zr^k\OPHX g^{(\ell)}$ is in $h^{-4\delta_{k+1+N_1-N_0}'}\timb{0,b'-\ell-\mez}[F]$ 
and of size $O(\eps)$ in that space and 
\be\label{3442}
\ba
\wl_{2\La}&= -i\unchi \langle 2\dom \rangle^{\ell}\langle \dom \rangle^{-2\ell}\la \dom \ra \frac{1+\sqrt{2}}{4}\big( \wl_\La \big)^2\\
\wl_{-2\La}&= -i\unchi \langle 2\dom \rangle^{\ell}\langle \dom \rangle^{-2\ell}\la \dom \ra \frac{1-\sqrt{2}}{4}\big( \owl_\La \big)^2\\
\wl_{3\La}&=\phantom{-i} \unchi \langle 3\dom \rangle^{\ell}\langle \dom \rangle^{-3\ell}\la \dom \ra^2 \lambda_3^\ell\big( \wl_\La \big)^3\\
\wl_{-\La}&=\phantom{-i} \unchi \langle \dom \rangle^{-2\ell}\la \dom \ra^2 \lambda_{-1}^\ell \big|\wl_\La\big|^2 \owl_\La\\
\wl_{-3\La}&= \phantom{-i}\unchi \langle 3\dom \rangle^{\ell}\langle \dom \rangle^{-3\ell}\la \dom \ra ^2 \lambda_{-3}^\ell\big( \owl_\La \big)^3
\ea
\ee
where $\lambda_{3}^\ell,\lambda_{-1}^\ell,\lambda_{-3}^\ell$ are real constants, $\chi \in C^\infty_0(\xR)$, $\chi\equiv 1$ close to zero, 
with small enough support. 

Finally, in the decomposition
$$
\wl_\La=\sumj \Tj \wl_{\La,j}
$$
of $\wl_\La$ deduced from the one of $w_\La$, we may write
\be\label{3443}
\ophjdeux \wl_{\La,j}=h_j\big( f^{(\ell)}_j+h^{\uq}r^{(\ell)}_j\big)
\ee
where
\begin{alignat*}{3}
&\big(\zjk f^{(\ell)}_j\big)_j &&\text{ is a $O(\eps)$ family in } &&h^{-3\delta_{k+2}'}L^\infty I_\La^{0,b'-\ell}[K]\\
&\big(\zjk r^{(\ell)}_j\big)_j &&\text{ is a $O(\eps)$ family in } &&h^{-4\delta_{k+1+N_1-N_0}'}\ti{\mB}_\infty^{0,b'-\ell}[L].
\end{alignat*}
\end{coro}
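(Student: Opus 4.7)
The plan is to derive \eqref{3441} by applying $\Oph(\langle\xi\rangle^\ell)$ to the decomposition \eqref{3435} provided by Corollary~\ref{ref:348}, and then systematically rewriting each resulting summand in terms of $w_\Lambda^{(\ell)}$ rather than $w_\Lambda$. For each $p\in\{1,\pm 2,\pm 3,-1\}$, Proposition~\ref{ref:3.1.9} applied to the symbol $a(\xi)=\langle\xi\rangle^\ell$ (which satisfies \eqref{3115} on $\xR^*\times\xR^*$ with parameters $L=L'=D=0$, $D'=\ell$) yields
$$
\Oph(\langle\xi\rangle^\ell)w_{p\Lambda}=(1-\chi)(xh^{-\beta})\langle p\,\diff\omega(x)\rangle^\ell\,w_{p\Lambda}+R_p,
$$
where $\Zr^k R_p$ belongs to the appropriate remainder class $h^{1/2}\tilde{\mathcal{B}}_\infty^{-1,\gamma_p-\ell}+h\tilde{\mathcal{B}}_\infty^{-2,\gamma_p-\ell}$. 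For $p=1$, because Proposition~\ref{ref:342} places $w_\Lambda$ in the finer class $\tilde{J}_\Lambda$ (not merely $\tilde{I}_\Lambda$), the $h^{1/2}$ term is absent; this gives $w_\Lambda^{(\ell)}=(1-\chi)(xh^{-\beta})\langle\diff\omega\rangle^\ell w_\Lambda$ modulo $h$ times an element of the remainder space, which after inversion on the support of the cut-off reads $w_\Lambda=(1-\chi)(xh^{-\beta})\langle\diff\omega\rangle^{-\ell}w_\Lambda^{(\ell)}$. This is the key identity used to reshape the explicit formulas.

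Plugging this into the explicit expressions \eqref{3423} and \eqref{3426} for $w_{p\Lambda}$ in terms of $w_\Lambda$, and collapsing any power $(1-\chi)^n(xh^{-\beta})$ down to a single factor $(1-\chi)(xh^{-\beta})$ via the second remark following Lemma~\ref{ref:343}, I obtain the formulas \eqref{3442}. For instance, for $p=2$, starting from $w_{2\Lambda}=-i(1-\chi)(xh^{-\beta})\tfrac{1+\sqrt{2}}{4}\langle\diff\omega\rangle w_\Lambda^2$, multiplication by $\langle 2\diff\omega\rangle^\ell$ and substitution of $w_\Lambda^2=\langle\diff\omega\rangle^{-2\ell}(w_\Lambda^{(\ell)})^2$ produces the first line of \eqref{3442}. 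The cases $p=\pm 3,-1$ are identical; the real constants $\lambda_p^\ell$ arise as $\lambda_p$ multiplied by appropriate powers of $\langle\diff\omega\rangle$ extracted from the principal symbols (reality is inherited from that of $\lambda_p$). The residual term $h^{1+\sigma}g$ in \eqref{3435} contributes $h^{1+\sigma}\Oph(\langle\xi\rangle^\ell)g$; since $g$ is microlocally supported on a closed set $F$ whose second projection is compact in $\xR$, one may insert a cut-off $\varphi\in C_0^\infty(\xR)$ equal to one near $\pi_2(F)$ and replace $\langle\xi\rangle^\ell$ by $\langle\xi\rangle^\ell\varphi(\xi)\in S(1)$ modulo $O(h^\infty)$, after which the standard symbolic calculus of Theorem~\ref{ref:A.2.2} keeps the result in the class $h^{-4\delta'_{k+1+N_1-N_0}}\tilde{\mathcal{B}}_\infty^{0,b'-\ell}[F']$. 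The same argument controls $\Oph(x\xi)g^{(\ell)}$, and summing these with the $R_p$ yields the full $g^{(\ell)}$ in \eqref{3441}.

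For \eqref{3443}, I would start from \eqref{3436}, namely $\ophjdeux w_{\Lambda,j}=h_j(f_j+h^{1/4}r_j)$. The scaling identity \eqref{3113} gives $w_{\Lambda,j}^{(\ell)}=\Op_{h_j}(\langle 2^j\xi\rangle^\ell)w_{\Lambda,j}$. Applying $\ophjdeux$ and commuting, symbolic calculus provides
$$
\bigl[\ophjdeux,\Op_{h_j}(\langle 2^j\xi\rangle^\ell)\bigr]=h_j\Op_{h_j}(e_j),
$$
where the symbol $e_j$, computed from the Poisson bracket $\{2x\xi+\langle\xi\rangle^{1/2},\langle 2^j\xi\rangle^\ell\}$, is of size $O(\langle 2^j\xi\rangle^\ell)$ on the compact microlocal support $K$ under consideration (and of the right structure in $S(1,K)$ after rescaling). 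Substituting \eqref{3436} yields
$$
\ophjdeux w_{\Lambda,j}^{(\ell)}=h_j\bigl[\Op_{h_j}(\langle 2^j\xi\rangle^\ell)f_j+\Op_{h_j}(e_j)w_{\Lambda,j}+h^{1/4}\Op_{h_j}(\langle 2^j\xi\rangle^\ell)r_j\bigr],
$$
and defining $f_j^{(\ell)}$ as the sum of the first two bracketed terms, $r_j^{(\ell)}$ as the last, the asserted class memberships follow from those of $f_j,r_j$ given in Proposition~\ref{ref:349} and those of $w_{\Lambda,j}$ from Proposition~\ref{ref:342}, with a uniform $\ell$-unit loss in the $\gamma$-index absorbed into the factors $\langle 2^j\xi\rangle^\ell$.

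The main technical point is the systematic bookkeeping of the $\ell$-shift in the second index of the lagrangian classes, and in particular checking compatibility with the combined range $k\le \min(\sd+N_0,\sd+N_0+b'-\ell)$ stated in the corollary; no genuinely new idea is required beyond what has already been deployed in Sections~\ref{S:3.1} and~\ref{S:34}. The content is essentially a careful conjugation of the structural description of Corollary~\ref{ref:348} and Proposition~\ref{ref:349} by the Fourier multiplier $\Oph(\langle\xi\rangle^\ell)$.
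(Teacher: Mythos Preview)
Your approach is essentially the paper's: apply $\Oph(\langle\xi\rangle^\ell)$ to \eqref{3435}, use Proposition~\ref{ref:3.1.9} on each piece to replace the operator by multiplication by $\langle p\,d\omega\rangle^\ell$ modulo remainders, invert $w_\La=(1-\chi)\langle d\omega\rangle^{-\ell}w_\La^{(\ell)}+hr_1$, and for \eqref{3443} commute $\Op_{h_j}(\langle 2^j\xi\rangle^\ell)$ through \eqref{3436}. The one step you elide that the paper spells out is the control of the cross-terms when the substitution for $w_\La$ is made inside the products $w_\La^2,\,|w_\La|^2w_\La,\,w_\La^3$ (this is display \eqref{3444}, handled via Proposition~\ref{ref:3110}); it is routine, and your $\mu$-indices for the $R_p$ remainder classes are slightly off (they inherit $\mu=2$ from $w_{p\La}$, not $\mu=-1$) but harmlessly so.
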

\begin{proof}
By definition, $w_\La=\Oph\big(\langle \xi\rangle^{-\ell}\big)\wl_\La$ and $\Zr^k \wl_\La$ belongs to 
$h^{-\dek}L^\infty \ti{J}_\La^{0,b'-\ell}[K]$. By Proposition~\ref{ref:3.1.9}, we may write 
$$
w_\La=\unchi \langle \dom \rangle^{-\ell}\wl_\La+h r_1
$$
where $\Zr^k r_1$ is in $h^{-\delta_{k+1}'}\ti{\mB}_\infty^{-1,b'}[K]$ and the action of $\Zr^k$ on 
$\unchi \langle \dom \rangle^{-\ell}\wl_\La$ is in $h^{-\dek}L^\infty \ti{J}_\La^{0,b'}[K]$. We apply Proposition~\ref{ref:3110} 
to compute powers of $w_\La$
\be\label{3444}
\ba
\big(w_{\La}\big)^2&= \unchi^2 \langle \dom \rangle^{-2\ell} \big(\wl_\La\big)^2+hr_2\\
\big(\overline{w}_{\La}\big)^2&= \unchi^2 \langle \dom \rangle^{-2\ell} \big(\owl_\La\big)^2+hr_{-2}\\
\big(w_{\La}\big)^3&= \unchi^3 \langle \dom \rangle^{-3\ell} \big(\wl_\La\big)^3+hr_3\\
\big\vert w_\La\big\vert^2 w_{\La}^2&= \unchi^3 \langle \dom \rangle^{-3\ell} \big\vert \wl_\La\big\vert^2 \wl_\La+hr_1\\
\big\vert w_\La\big\vert^2 \overline{w}_{\La}^2&= \unchi^3 \langle \dom \rangle^{-3\ell} \big\vert \wl_\La\big\vert^2 \owl_\La+hr_{-1}\\
\big(\overline{w}_{\La}\big)^3&= \unchi^3 \langle \dom \rangle^{-3\ell} \big(\owl_\La\big)^3+hr_{-3}
\ea
\ee
where the action of $\Zr^k$ on $r_q$ gives an element of $h^{-2\dek}\ti{\mB}_\infty^{-2,2b'-\mez}\big[K_q\big]$ 
if $q = \pm 2$ and of $h^{-3\dek}\ti{\mB}_\infty^{-3,3b'-1}\big[ K_q\big]$ if $q=3,1,-1,-3$.

On the other hand, consider the contributions $w_{q\La}$, $|q|\le 3$, 
$q\neq 1$, to the expansion \e{3435} and define $w^{(\ell),1}_{q\La}=\Oph\big(\langle \xi\rangle^{\ell}\big)w_{q\La}$ 
so that \e{3435} may be written
\be\label{3445}
\wl=\wl_\La+\sqrt{h}\big(w^{(\ell),1}_{2\La}+w^{(\ell),1}_{-2\La}\big)
+h\big( w^{(\ell),1}_{3\La}+w^{(\ell),1}_{-\La}+w^{(\ell),1}_{-3\La}\big)+h^{1+\sigma}g^{(\ell)}
\ee
where $g^{(\ell)}$ satisfies the bounds of the remainder in \e{3441}. We apply again Proposition~\ref{ref:3.1.9} to get an expansion 
of $w^{(\ell),1}_{q\La}$. Since by $ii)$ of Lemma~\ref{ref:347}, 
\begin{alignat*}{3}
&\Zr^k w_{\pm 2\La} &&\text{ is in } &&h^{-2\dek}L^\infty \ti{J}_\La^{2,b'+\tdm}\big[ K_{\pm 2}\big],\\ 
&\Zr^k w_{q\La} &&\text{ is in } &&h^{-3\dek}L^\infty \ti{I}_\La^{2,b'+\tdm}\big[ K_{q}\big], \text{ for }q=-3,-1,3,
\end{alignat*}
we obtain that 
\be\label{3446}
\ba
w^{(\ell),1}_{\pm 2\La}&=\unchi \langle 2\dom \rangle^\ell w_{\pm 2\La}+h r^{(\ell),1}_{\pm 2}\\
w^{(\ell),1}_{q\La}&=\unchi \langle q\dom \rangle^\ell w_{q\La}+h^\mez r^{(\ell),1}_{q},\quad q=-3,-1,3,
\ea
\ee
where $\Zr^k r^{(\ell),1}_{\pm 2}$ is in $h^{-2\dek}\ti{\mB}_\infty^{1,b'+\tdm-\ell}\big[ K_2\big]$ and for $q=-3,-1,3$, 
$$
\Zr^k r^{(\ell),1}_{q}\text{ is in }h^{-3\dek}\ti{\mB}_\infty^{1,b'+1-\ell}\big[ K_q\big].
$$ 
We deduce from \e{3445} that 
\be\label{3447}
\ba
\wl&=\wl_\La+\sqrt{h}\langle 2\dom \rangle^\ell \unchi \big(w_{2\La}+w_{-2\La}\big)\\
&\quad + h\unchi \big(\langle 3\dom \rangle^\ell w_{3\La} +\langle \dom \rangle^\ell w_{-\La}
+\langle 3\dom \rangle^\ell w_{-3\La}\big)\\
&\quad +h^{1+\sigma}g^{(\ell)}
\ea
\ee
with a new remainder $g^{(\ell)}$ as in \e{3441}. We use next \e{3423}, \e{3426} to express $w_{q\La}$ from $w_\La$, $\overline{w}_\La$ 
and \e{3444} to compute the resulting quantities from $\wl_\La$, $\owl_\La$. We get expressions 
\e{3442}, with $(1-\chi)$ replaced eventually by some of its powers. As already seen, these powers may be replaced by $(1-\chi)$, 
up to $O(h^\infty)$ remainders. 

The remainders coming from the ones in \e{3444} may be expressed as the product of $h^\tdm$ (resp.\ $h^2$) with 
$\unchi \la \dom \ra\langle 2 \dom \rangle^\ell r_{\pm 2}$ 
(resp.\ $\unchi \la \dom \ra^2 \langle q \dom \rangle^\ell r_{\ell}$, $\ell=-3,-1,3$). By Proposition~\ref{ref:3.1.9}, and since 
$\la \dom \ra\langle 2 \dom \rangle^\ell$ (resp.\ $\la \dom \ra^2 \langle q \dom \rangle^\ell$) satisfies \e{3115} with $(\ell,\ell',d,d')$ replaced by 
$(-2\ell-2,2\ell,0,0)$ (resp.\ $(-2\ell-4,2\ell,0,0)$), we obtain that the action of $\Zr^k$ on 
these functions gives elements belonging to \[h^{-2\dek}\ti{\mB}_\infty^{0,2b'-\frac{1}{2}-\ell}[K_\ell]\subset h^{-2\dek}\ti{\mB}_\infty^{0,b'-\ell}[K_\ell]\] (resp.\ 
$h^{-3\dek}\ti{\mB}_\infty^{1,3b'-1-\ell}[K_\ell]\subset h^{-3\dek}\ti{\mB}_\infty^{0,b'-\ell}[K_\ell]$) for $\ell\in \{-3,\ldots,3\}$ 
so that we obtain again a contribution 
to $g^\ell$. (Notice that the action of $\OPHX$ on these remainders give elements of the same spaces with $b$ replaced by 
$b-1/2$, since they are microlocally supported in a compact subset of $\cotangent$). This concludes the proof of \e{3441}. 

To prove \e{3443}, we first write, according to the definition of $\wl_\La$ and \e{3113}, that $\wl_{\La,j}=\Ophj\big(\bigl\langle 2^j \xi\bigr\rangle^\ell\big)w_{\La,j}$. 
Making act $\Ophj\big(\bigl\langle 2^j \xi\bigr\rangle^\ell\big)$ on \e{3436}, we get for the left hand side of \e{3443} an expression given by its right hand side, modulo a term
$$
\Bigl[ \ophjdeux , \Ophj\big(\bigl\langle 2^j \xi\bigr\rangle^\ell\big)\Bigr]w_{\La,j}. 
$$
Since $\big(\Zr^k w_{\La,j}\big)_j$ is a bounded family in $h^{-3\dek}L^\infty J_\La^{0,b'}[K]$, we see using 
symbolic calculus, that this expression contributes to the $h_j f_j^{(\ell)}$ term in \e{3443}.
\end{proof}

\section{\texorpdfstring{Ordinary differential equation for $w_\La$}{Ordinary differential equation}}\label{S:35}

We consider a solution $v$ of \e{326}, satisfying for $h$ in some interval $]h',1]$ the 
a priori estimate \e{337} for $k'\le k+1$, with $k\le s-a-1$. By Proposition~\ref{ref:3.3.1}, we know that $v$ 
satisfies then \e{339}, and by Corollary \ref{ref:348}, that $v=v_L+w+v_H$, where $w$ has an expansion \e{3435}. Our goal here is to deduce from that and from 
the equation satisfied by $w$, a uniform estimate for $\blA \opjl w(t,\cdot)\bri$, and estimates 
for $\blA \Zr^k \opjl w(t,\cdot)\bri$ which are not uniform, but which are better than \e{339} (i.e.\ that involve exponents closer to zero than the 
$\delta'_{k'}$). 

For $0\le \ell \le \sd+N_0$ and $0\le k\le \sd+N_0-\ell$ we define
\be\label{350}
\Wkl=\bigl( Z^{k'}\wl\big)_{0\le k'\le k}
\ee
where
$$
\wl=\opjl w,
$$
as in the preceding section. The estimates we are looking for will follow from an ordinary 
differential equation satisfied by $\Wkl$. 

\begin{prop}\label{ref:351}
Under the preceding assumptions, the function $\wl$ satisfies the equation
\be\label{351}
\ba
D_t \wl &=\mez \unchi \la \dom(x)\ra^\mez \wl\\
&\quad -i \frac{\sqrt{h}}{8}\unchi \la \dom\ra^{\tdm}\angledom^{-2\ell}\deuxangledom^{\ell}
\Big[ (1+\sqrt{2})\big(\wl\big)^2-3(1-\sqrt{2})\big(\owl\big)^2\Big]\\
&\quad +h\Big[ \Phi_3^{(\ell)}\big(\wl\big)^3+\Phi_1^{(\ell)}| \wl |^2 \wl +\Phi_{-1}^{(\ell)}|\wl|^2\owl
+\Phi_{-3}^{(\ell)}\big(\owl\big)^3\Big]\\
&\quad +h^{1+\kappa}\rl(t,x)
\ea
\ee
where $\chi$ is in $C^\infty_0(\xR)$, equal to one close to zero, with small enough support, where $\kappa$ is a small 
positive number, where 
$\Phi_j^{(\ell)}$, 
$-3\le j\le 3$ are given by
\be\label{353}
\ba
\Phi_1^{(\ell)}(x)&=\unchi |\dom|^{\frac 5 2}\angledom^{-2\ell}\left[ 
\frac{\deuxangledom^{2\ell}}{\angledom^{2\ell}}\frac{3(3-2\sqrt{2})}{16}+\mez\right]\\
\Phi_j^{(\ell)}(x)&=\unchi |\dom|^{\frac 5 2}\Gamma_j^{(\ell)}(\dom)\quad \ell\neq 1
\ea
\ee
for some real valued symbols of order $-2\ell$, $\Gamma_j^{(\ell)}$, 
and where $\blA (hD_x)^p \Zr^k \rl (t,x)\bri$ is $O(\eps)$ for any integers $k,p,\ell$ with 
$k\le \sd+N_0-\ell$, $0\le p\le b'-1$. Moreover, $\Wkl$ defined by \e{350} satisfies a system of the form
\be\label{353a}
\ba
D_t\Wkl &=\mez \unchi |\dom(x)|^\mez \Wkl \\
&\quad +\sh Q^{k,(\ell)}\big[ x,h;\Wkl,\oWkl\big]\\
&\quad +h\Cr^{\kl}\big[ x,h;\Wkl,\oWkl\big]
+h^{1+\kappa}R^\kl(t,x)
\ea
\ee
$\bullet$ where 
$\blA (hD_x)^pZ^{k'}R^\kl (t,\cdot)\bri=O(\eps)$ for $0\le p\le b''\le b'-2/\beta$ and $k'\le \sd +N_0-\ell-k$;

$\bullet$ where 
$Q^\kl$ is a vector valued quadratic map in $(\Wkl,\oWkl)$ whose components are linear combination of functions of the form
\be\label{353b}
\ba
&\theta\big(xh^{-\beta}\big)\Phi(x)\big(Z^{k_1}\wl\big)\big(Z^{k_2}\wl\big)\\
&\theta\big(xh^{-\beta}\big)\Phi(x)\big(Z^{k_1}\wl\big)\big(Z^{k_2}\owl\big)\\
&\theta\big(xh^{-\beta}\big)\Phi(x)\big(Z^{k_1}\owl\big)\big(Z^{k_2}\owl\big)
\ea
\ee
for $k_1+k_2\le k$, with smooth functions $\theta$ bounded as well as their derivatives, $\theta\equiv 0$ close to zero and 
$\Phi$ satisfying $\big\vert Z^k \Phi(x)\big\vert\le C|x|^{2\ell-3}\langle x\rangle^{-2\ell}$ for any $k$;

$\bullet$ where $\Cr^\kl$ is a vector valued cubic map, whose components are linear combination of quantities
\be\label{353c}
\ba
&\theta\big(xh^{-\beta}\big)\Phi(x)\big(Z^{k_1}\wl\big)\big(Z^{k_2}\wl\big)\big(Z^{k_3}\wl\big)\\
&\theta\big(xh^{-\beta}\big)\Phi(x)\big(Z^{k_1}\wl\big)\big(Z^{k_2}\wl\big)\big(Z^{k_3}\owl\big)\\
&\theta\big(xh^{-\beta}\big)\Phi(x)\big(Z^{k_1}\wl\big)\big(Z^{k_2}\owl\big)\big(Z^{k_3}\owl\big)\\
&\theta\big(xh^{-\beta}\big)\Phi(x)\big(Z^{k_1}\owl\big)\big(Z^{k_2}\owl\big)\big(Z^{k_3}\owl\big)
\ea
\ee
for $k_1+k_2+k_3\le k$, $\big\vert Z^k \Phi(x)\big\vert\le C|x|^{4\ell-5}\langle x\rangle^{-4\ell}$ for any $k$.
\end{prop}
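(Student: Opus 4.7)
The plan is to apply $\Oph(\langle\xi\rangle^\ell)$ to the evolution equation \eqref{3222} for $w$, and then to convert every pseudodifferential operator acting on the oscillating components of $w$ into a multiplication operator, using the decomposition of Corollary~\ref{ref:3410} together with Proposition~\ref{ref:3.1.9}. Since $\Oph(\langle\xi\rangle^\ell)$ commutes with $D_t$ and, by symbolic calculus, with $\Oph(x\xi+|\xi|^{1/2})$ modulo an operator whose symbol is $\ell\xi^2\langle\xi\rangle^{\ell-2}$ of weight $h$ (which, applied to $w$ spectrally localized at frequencies $\lesssim h^{-\beta}$, yields a term absorbable into the remainder $h^{1+\kappa}r^{(\ell)}$ for small enough $\beta$), one obtains to leading order
\[
D_t w^{(\ell)} - \Oph\bigl(x\xi+|\xi|^{1/2}\bigr)w^{(\ell)} = \sqrt{h}\,\Oph(\langle\xi\rangle^\ell)Q_0(W) + h\,\Oph(\langle\xi\rangle^\ell)\bigl[-\tfrac{i}{2}w+C_0(W)\bigr] + O(h^{1+\kappa}).
\]

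For the left-hand pseudodifferential term, I would substitute the expansion of Corollary~\ref{ref:3410},
\[
w^{(\ell)} = w^{(\ell)}_\Lambda + \sqrt{h}\bigl(w^{(\ell)}_{2\Lambda} + w^{(\ell)}_{-2\Lambda}\bigr) + h\sum_{q\in\{-3,-1,3\}}w^{(\ell)}_{q\Lambda} + h^{1+\sigma}g^{(\ell)},
\]
and apply Proposition~\ref{ref:3.1.9} to each component. On $q\Lambda = \{(x, q\,d\omega(x))\}$, using the identity $2xd\omega(x) + |d\omega(x)|^{1/2} = 0$,
\[
\bigl(x\xi+|\xi|^{1/2}\bigr)\big|_{q\Lambda} = \tfrac{1}{2}\bigl(2|q|^{1/2}-q\bigr)|d\omega(x)|^{1/2},
\]
so Proposition~\ref{ref:3.1.9} replaces $\Oph(x\xi+|\xi|^{1/2})w^{(\ell)}_{q\Lambda}$ by multiplication by this factor times $(1-\chi)(xh^{-\beta})$, modulo an $O(\sqrt{h})$ error for $|q|=1$ and an $O(h)$ error for $|q|\geq 2$. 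For the crucial $q=1$ case, the $O(\sqrt{h})$-error is refined to $O(h)$ using the sharper identity \eqref{3443} (which encodes that $\Op_{h_j}(2x\xi+\langle\xi\rangle^{1/2})$ annihilates $w^{(\ell)}_{\Lambda,j}$ to leading order). This produces the main linear term $\tfrac{1}{2}(1-\chi)(xh^{-\beta})\langle d\omega(x)\rangle^{1/2}w^{(\ell)}_\Lambda$ in \eqref{351}, after observing that $|d\omega|^{1/2}$ and $\langle d\omega\rangle^{1/2}$ differ by a smooth bounded factor on the support of $(1-\chi)(xh^{-\beta})$ (the difference being absorbable into the remainder).

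For the nonlinear right-hand side, I would substitute the same expansion of $w$ into $Q_0(W)$ and $C_0(W)$. By Lemma~\ref{ref:343} and the computational Lemma~\ref{ref:345}, the quadratic form $\sqrt{h}\,Q_0(W_\Lambda)$ decomposes into $\pm 2\Lambda$-oscillating contributions (explicit formulas \eqref{346}), the cross-terms $h\,B_0(W_\Lambda,W_{\pm 2\Lambda})$ and the cubic $h\,C_0(W_\Lambda)$ produce $\Lambda$, $\pm\Lambda$, $\pm 3\Lambda$-oscillating contributions (formulas \eqref{347}, \eqref{347a}), and the rest is a smoothing remainder. Combining the $\pm 2\Lambda$-, $\pm 3\Lambda$- and $-\Lambda$-oscillating contributions coming from the nonlinear side with the corresponding $\sqrt{h}\,w^{(\ell)}_{\pm 2\Lambda}$- and $h\,w^{(\ell)}_{q\Lambda}$-contributions coming from the linear analysis, one obtains a closed equation for $w^{(\ell)}_\Lambda$ in which all frequencies other than $\Lambda$ have been eliminated. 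Using the explicit expressions \eqref{3442} to re-express $w^{(\ell)}_{q\Lambda}$ as polynomials in $w^{(\ell)}_\Lambda$, and then re-expressing $w^{(\ell)}_\Lambda = w^{(\ell)} + O(\sqrt{h})$ throughout, yields the quadratic and cubic nonlinearities claimed in \eqref{351}, with the coefficients $\Phi_j^{(\ell)}$ inheriting their homogeneity structure from $d\omega(x) = -\sign(x)/(4x^2)$. The corrections introduced by the substitution $w^{(\ell)}_\Lambda \mapsto w^{(\ell)}$ are of total order $h^{3/2}$ or higher, absorbable into $h^{1+\kappa}r^{(\ell)}$ for any $\kappa<\sigma$. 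The pointwise bounds on $(hD_x)^p Z^{k'}r^{(\ell)}$ follow from the control of $g^{(\ell)}$ in Corollary~\ref{ref:3410} combined with the a priori bounds \eqref{339}.

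To obtain the system \eqref{353a}, I would apply $Z^{k'}$ to \eqref{351} for $k'\leq k$ and distribute using Leibniz's rule. The vector field $Z = tD_t + 2xD_x$ preserves the cutoff $(1-\chi)(xh^{-\beta})$ modulo a cutoff of the same structural type (the factor $xh^{-\beta}$ is annihilated by $tD_t$ and multiplied by $2$ by $xD_x$), and acts on the smooth coefficients $\langle d\omega(x)\rangle^{1/2}$, $|d\omega|^{5/2}\Gamma_j^{(\ell)}(d\omega)$ by the chain rule, preserving the homogeneity bounds $|Z^k\Phi(x)|\lesssim |x|^{2\ell-3}\langle x\rangle^{-2\ell}$ (quadratic) and $|x|^{4\ell-5}\langle x\rangle^{-4\ell}$ (cubic). \emph{The main obstacle} I anticipate is the precise numerical bookkeeping behind the coefficients $(1+\sqrt{2})$ and $-3(1-\sqrt{2})$ in the quadratic term of \eqref{351}, and the explicit form \eqref{353} of the cubic coefficients $\Phi_j^{(\ell)}$. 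These arise from the delicate combination of two contributions — the linear contribution from $\Oph(x\xi+|\xi|^{1/2})$ acting on $\sqrt{h}\,w^{(\ell)}_{\pm 2\Lambda}$ on the left-hand side (with restriction factors $(\sqrt{2}\mp 1)|d\omega|^{1/2}$), and the nonlinear contribution $\sqrt{h}\,\Oph(\langle\xi\rangle^\ell)Q_0(W_\Lambda)$ on the right-hand side — which must combine, after substitution via \eqref{3442}, into precisely the stated form. Verifying this requires the specific numerical values obtained in \eqref{3423}, \eqref{3426}, \eqref{346}, \eqref{347}, \eqref{347a}, and careful tracking of the multiplicative factors arising from $\langle\xi\rangle^\ell$ restricted to each Lagrangian.
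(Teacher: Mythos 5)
Your route is the paper's own: conjugate \eqref{3222} by $\Oph(\langle\xi\rangle^\ell)$, replace each pseudo-differential operator acting on the Lagrangian pieces of Corollary~\ref{ref:3410} by multiplication by the restriction of its symbol to the relevant $q\La$, and invert \eqref{3441} to return from $w^{(\ell)}_\La$ to $w^{(\ell)}$. But three of your steps, as stated, do not go through. First, $\Oph(\langle\xi\rangle^\ell)=\langle hD_x\rangle^\ell$ does \emph{not} commute with $D_t$, since $h=1/t$: one has $[D_t,\langle hD_x\rangle^\ell]=i\ell h\,\Oph\bigl(\xi^2\langle\xi\rangle^{\ell-2}\bigr)$, a genuine $O(h)$ linear term. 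Symmetrically, the commutator of $\langle hD_x\rangle^\ell$ with $\Oph(x\xi)$ equals $-i\ell h\,\Oph\bigl(\xi^2\langle\xi\rangle^{\ell-2}\bigr)$ and is of size $O(\eps h)$ — spectral truncation at $|\xi|\lesssim h^{-\beta}$ gains nothing, since the symbol does not vanish at moderate frequencies — so it is not absorbable into $h^{1+\kappa}r^{(\ell)}$. These two $O(h)$ terms cancel exactly (this is identity \eqref{3514a}); as written, each of your two claims is false, and implementing your prescription literally (keeping the first term, which obviously cannot be dropped, while discarding the second) would leave a spurious $O(h)$ linear term that survives into the normal form analysis of Proposition~\ref{ref:353}.

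Second, and this is the substantive gap: computing $\Oph(x\xi+|\xi|^{1/2})w^{(\ell)}_\La$ ``modulo $O(h)$'' is not enough, because every $O(h)$ term in \eqref{351} must be explicit. The $O(h)$ part of precisely this error equals $\tfrac{i}{2}h(1-\chi)(xh^{-\beta})w^{(\ell)}_\La$, and it is what cancels the $-\tfrac{i}{2}hw$ term of \eqref{3222} — a term your proposal never accounts for, and which indeed is absent from \eqref{351}. Moreover \eqref{3443} alone does not even give $O(h^{1+\kappa})$: a first-order factorization $x\xi+|\xi|^{1/2}-\tfrac12|\dom(x)|^{1/2}=q\cdot(2x\xi+\langle\xi\rangle^{1/2})$ combined with one application of \eqref{3443} leaves, after the rescaling factor $2^{j/2}$ of \eqref{357b}, an unidentified error of size $2^{j/2}h_j=h$ per dyadic block (recall $h_j$ can be as large as $h^\sigma$), which can neither be computed nor discarded. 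The paper's Lemma~\ref{ref:352a} uses that $x\xi+|\xi|^{1/2}-\tfrac12|\dom(x)|^{1/2}$ vanishes to \emph{second} order on $\La$ (its $\xi$-derivative also vanishes there), factors out $(2x\xi+\langle\xi\rangle^{1/2})^2$ as in \eqref{3510}--\eqref{3512}, and applies the $J_\La$/$I_\La$ estimates twice; the subprincipal term of that factorization is exactly what produces the $\tfrac{i}{2}h$ contribution. Finally, in passing from \eqref{351} to \eqref{353a}, $[Z,(1-\chi)(xh^{-\beta})|\dom|^{1/2}]$ is not ``a cutoff of the same structural type'': it produces $(xh^{-\beta})\chi'(xh^{-\beta})|\dom(x)|^{1/2}$, supported where $|x|\sim h^\beta$, where $|\dom|^{1/2}\sim h^{-\beta}$ blows up. Showing that the resulting term is a remainder requires trading this loss against the decay $2^{-j_+b'}$ of the blocks with $2^j\sim h^{-2\beta}$ under the hypothesis $(b'-b'')\beta>2$ (estimate \eqref{3516a}); this step is missing from your argument.
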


We shall prove first \e{351}, deducing it from \e{3222} on which we make act $\opjl$. Let us study first the action of this operator 
on the nonlinearity. 

As in the preceding section, we shall call $K$ or $K_\ell$ compact subsets of $\cotangent$ 
contained in a small neighborhood of $\ell\La$ for 
$\ell\in \{\pm 3,\pm 2,\pm 1\}$, by $L$ compact subsets of $\cotangent$ and by $F$ closed subsets of $T^*\xR$ 
whose second projection is compact in $\Rs$.

\begin{lemm}\label{ref:351a}
Under the assumptions of the proposition, we may write for $\ell\le \sd+N_0$, 
\be\label{354}
\ba
&\sumj\Djh \Big[\opjl \big[ \sh \QW+h\CW\big]\Big]
\\
&\qquad \qquad=-i\sh \unchi |\dom|^\tdm \frac{\sqrt{2}}{4}\deuxangledom^\ell \angledom^{-2\ell} \Big[ \big(\wl_\La\big)^2+\big(\owl_\La\big)^2\Big]\\
&\qquad \qquad \quad +\frac{h}{2}\unchi |\dom|^{\frac{5}{2}}\angledom^{-3\ell}
\troisangledom^\ell\Big[\lambda_3^{(\ell)}\big(\wl_\La\bigr)^3+\lambda_{-3}^{(\ell)}\big(\owl_\La\bigr)^3\Big]\\
&\qquad \qquad \quad +\frac{h}{2}\unchi |\dom|^{\frac{5}{2}}\angledom^{-2\ell} \Big[\big\vert\wlla\big\vert^2\wlla+\lambda_{-1}^{(\ell)}\big\vert\wlla\big\vert^2\owlla\Big]\\
&\qquad \qquad\quad +h^\tdm \rl
\ea
\ee
where for $k\le \sd+N_0-\ell$ $Z^k\rl $ belongs to $h^{-3\delta_{k+1+\ell}'}\ti{\mB}_\infty^{0,b'}[F]$ and 
$\lambda_{\pm 3}^{(\ell)}$, $\lambda_{-1}^{(\ell)}$ are real constants.
\end{lemm}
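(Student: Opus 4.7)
The strategy is to combine the expansion from Lemma~\ref{ref:343}(ii) for $\sum_j\Delta_j^h[Q_0(W)+\sqrt{h}C_0(W)]$ with the action of $\Oph(\langle\xi\rangle^\ell)$ on semi-classical Lagrangian distributions given by Proposition~\ref{ref:3.1.9}, and then to rewrite the resulting expressions in terms of $w^{(\ell)}_\Lambda,\bar w^{(\ell)}_\Lambda$ using the conversion mechanism already employed in the proof of Corollary~\ref{ref:3410}.

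First I would multiply \e{346a} by $\sqrt{h}$ to obtain
\[
\sum_j \Delta_j^h\big[\sqrt{h}Q_0(W)+hC_0(W)\big] = \sqrt{h}\big(\widetilde w_{2\Lambda}+\widetilde w_{-2\Lambda}\big) + h\big(\widetilde w_{3\Lambda}+\widetilde w_\Lambda+\widetilde w_{-\Lambda}+\widetilde w_{-3\Lambda}\big) + h^{3/2}\widetilde g,
\]
and apply $\Oph(\langle\xi\rangle^\ell)$ term by term. Since $\langle\xi\rangle^\ell$ satisfies the polynomial symbol estimates \e{3115a}, Proposition~\ref{ref:3.1.9} gives, for each $q$ with $1\le|q|\le 3$,
\[
\Oph(\langle\xi\rangle^\ell)\widetilde w_{q\Lambda} = (1-\chi)(xh^{-\beta})\,\langle q\, d\omega(x)\rangle^\ell\,\widetilde w_{q\Lambda} + R_q,
\]
where $Z^k R_q$ lies in $h^{1/2}\widetilde{\mathcal{B}}_\infty^{\bullet,\bullet}[F] + h\widetilde{\mathcal{B}}_\infty^{\bullet,\bullet}[F]$; once multiplied by the respective $\sqrt{h}$ or $h$ prefactor, these remainders feed into the $h^{3/2}r^{(\ell)}$ term of \e{354}.

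The third step is to substitute the explicit formulas \e{346}, \e{347}, \e{347a} for $\widetilde w_{q\Lambda}$ in terms of $w_\Lambda$, further eliminating the auxiliary factors $w_{\pm 2\Lambda}$ via \e{344}, and then converting $w_\Lambda\to w^{(\ell)}_\Lambda$ through
\[
w_\Lambda = (1-\chi)(xh^{-\beta})\,\langle d\omega(x)\rangle^{-\ell}\,w^{(\ell)}_\Lambda + h\cdot\text{rem},
\]
whose needed products up to third order are tabulated in \e{3444}. For the quadratic piece this directly yields the first line of \e{354} with coefficient $-i\tfrac{\sqrt{2}}{4}$. For the $\widetilde w_\Lambda$ cubic piece, a direct computation using \e{344} gives $\bar w_\Lambda(w_{2\Lambda}-\bar w_{-2\Lambda}) = -\tfrac{i}{2}(1-\chi)\langle d\omega\rangle\,|w_\Lambda|^2 w_\Lambda$, so that the first line of \e{347} becomes $\tfrac{1}{4}(1-\chi)\langle d\omega\rangle^{5/2}|w_\Lambda|^2 w_\Lambda$; combined with the second line of \e{347} and using that on the microlocal support of $w_\Lambda$ the weight $\langle d\omega\rangle$ is a smooth bounded multiple of $|d\omega|$, this produces the coefficient $1$ in front of $|w^{(\ell)}_\Lambda|^2 w^{(\ell)}_\Lambda$ in the third line of \e{354}. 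The analogous treatment of $\widetilde w_{\pm 3\Lambda},\widetilde w_{-\Lambda}$ coming from \e{347a} produces the real constants $\lambda^{(\ell)}_{\pm 3},\lambda^{(\ell)}_{-1}$.

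The main technical obstacle is the bookkeeping of remainder classes. Each application of Proposition~\ref{ref:3.1.9} or Proposition~\ref{ref:3110} produces an error whose loss $h^{-\alpha\delta'_j}$ depends on the combination of indices of the factors involved; using the convexity relations \e{330} and the a priori bounds \e{339}, one has to verify that the total loss after action of $Z^k$ is bounded by $h^{-3\delta'_{k+1+\ell}}$. One must also absorb the negative powers arising from $\langle d\omega(x)\rangle$-weights: on the support of $(1-\chi)(xh^{-\beta})$ one has $|d\omega(x)|\le Ch^{-2\beta}$, and a finite power of $h^{-2\beta}$ is absorbed by the extra $h^{1/2}$ gap between $h$ and $h^{3/2}$, provided $\beta$ is small enough relative to $\sigma$. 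This is essentially the same index-tracking scheme as in the proof of Corollary~\ref{ref:3410}, adapted to accommodate the additional $\ell$-dependent weights produced by the symbol $\langle\xi\rangle^\ell$.
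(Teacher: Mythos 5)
Your proposal follows essentially the same route as the paper: apply $\Oph\big(\langle\xi\rangle^\ell\big)$ to \eqref{346a}, use Proposition~\ref{ref:3.1.9} to replace the multiplier by its restriction $\langle q\,\diff\omega\rangle^\ell$ on each $q\Lambda$, then substitute \eqref{346}, \eqref{347}, \eqref{347a}, \eqref{344} and the conversion \eqref{3444} to express everything in $w^{(\ell)}_\Lambda,\overline{w}^{(\ell)}_\Lambda$; your coefficient computations ($-i\sqrt{2}/4$ and $\lambda_1^{(\ell)}=\tfrac14+\tfrac14=\tfrac12$) agree with the paper's. The one point to state precisely is that for the $\pm2\Lambda$ terms the remainder from Proposition~\ref{ref:3.1.9} must be $O(h)$ (not merely $O(h^{1/2})$), which holds because Lemma~\ref{ref:343}$\,ii)$ places $\widetilde{w}_{\pm2\Lambda}$ in the $\widetilde{J}$ (not just $\widetilde{I}$) class — and, as in the paper, one trades control of $Z^{k+\ell}$ for the $\gamma$-index loss caused by the weight $\langle\xi\rangle^\ell$, which is why the remainder exponent is $\delta'_{k+1+\ell}$.
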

\begin{proof}
We apply $\opjl$ to \e{346a} and write the resulting right hand side as in \e{354}. By $ii)$ if Lemma~\ref{ref:343}, we know 
that $\Zr^{k+\ell}\ti{w}_{\pm 2 \La}$ is $O(\eps)$ in $h^{-2\delta_{k+\ell+1}'}L^\infty \ti{J}_{\pm 2\La}^{3,b'+\tdm}\big[K_{\pm 2}\big]$, 
so $Z^k\ti{w}_{\pm 2\La}$ is $O(\eps)$ in 
$h^{-2\delta_{k+\ell+1}'}L^\infty \ti{J}_{\pm 2\La}^{3,b'+\tdm+\ell}\big[K_{\pm 2}\big]$. 

In the same way $Z^k\ti{w}_{q\La}$ is $O(\eps)$ in $h^{-3\delta_{k+\ell+1}'}L^\infty \ti{I}_{q\La}^{3,b'+\tdm+\ell}\big[K_q\big]$, for 
$q\in \{\pm 1,\pm 3\}$. Consequently, Proposition~\ref{ref:3.1.9} shows that
\begin{align*}
\opjl \ti{w}_{\pm 2 \La}&=\unchi \deuxangledom^\ell \ti{w}_{\pm 2 \La}+h\ti{r}^{(\ell)}_{\pm 2},\\
\opjl \ti{w}_{q \La}&=\unchi \langle q\dom \rangle^{\ell} \ti{w}_{q \La}+h^\mez\ti{r}^{(\ell)}_{q},
\end{align*}
where $Z^k \ti{r}^{\el}_{\pm 2}$ (resp.\ $Z^k \ti{r}^{\el}_q$) is $O(\eps)$ in 
$h^{-2\delta'_{k+\ell+1}}\ti{\mB}_\infty^{2,b'+\tdm}\big[K_{\pm 2}\big]$ (resp.\ $h^{-3\delta_{k+1+\ell}}\ti{\mB}_\infty^{2,b'+1}\big[K_{q}\big]$). We 
combine this with the expressions \e{346}, \e{347}, \e{347a} of $\ti{w}_{q\La}$ in terms of $w_\La$, $w_{\pm 2\La}$, and with 
the formulas \e{3423} expressing $w_{\pm 2\La}$ in terms of $w_\La$. If moreover we compute the powers of $w_\La$, $\overline{w}_\La$ 
from $\wlla$, $\owlla$ using \e{3444}, we get 
\begin{align*}
\opjl \ti{w}_{2\La}&=-i\unchi  |\dom|^\tdm \frac{\sqrt{2}}{4}\deuxangledom^\ell\angledom^{-2\ell}\big(\wlla\big)^2+h \rl_{2},\\
\opjl \ti{w}_{-2\La}&=-i\unchi  |\dom|^\tdm \frac{\sqrt{2}}{4}\deuxangledom^\ell\angledom^{-2\ell} \big(\owlla\big)^2+h \rl_{-2},
\end{align*}
with remainders $\rl_{\pm 2}$ satisfying again, because of Proposition \ref{ref:3.1.9}, that 
$Z^k\rl_{\pm 2}$ is $O(\eps)$ in $h^{-2\delta_{k+1+\ell}'}\ti{\mB}_\infty^{0,b'} \big[ K_{\pm 2}\big]$. 
In the same way
\begin{align*}
\opjl \ti{w}_{q\La}&=\unchi  |\dom|^{\frac 5 2} \lambda_q^{(\ell)}\langle q\dom\rangle^\ell\angledom^{-3\ell} P_q \big(\wlla,\owlla\big)+h^{\mez} \rl_{q},
\end{align*}
where $P_q \big(\wlla,\owlla\big)$ is equal to $\big(\wlla\big)^3$ (resp.\ $\big\vert \wlla\big\vert^2\wlla$, 
resp.\ $\big\vert \wlla\big\vert^2\owlla$, resp.\ $\big(\owlla\big)^3$) if $q=3$ (resp.\ $q=1$, resp.\ $q=-1$, resp.\ $q=-3$), 
where $\lambda_q^{(\ell)}$ are real constants with $\lambda_1^{(\ell)}=\mez$, and where 
$Z^k\rl_q$ is $O(\eps)$ in 
$h^{-3\delta_{k+1+\ell}'}\ti{\mB}_\infty^{0,b'}\big[K_q\big]$. (We used again remark~\ref{ref:Remark2} to replace different powers 
of $\unchi$ by $1$.)

This concludes the proof of the lemma.
\end{proof}

Let us study next the action of $\opjl$ on the linear term $\opx w$ of \e{3222}, writing $w=\Oph\big(\lxi^{-\ell}\big) \wl$. 

\begin{lemm}\label{ref:352}
One may write, for $\ell\le \sd+N_0$
\be\label{357}
\ba
&\opjl \opx \Oph\big(\langle\xi\rangle^{-\ell}\big) \wl+i\ell h\Oph\Big(\frac{\xi^2}{\langle\xi\rangle^2}\Big)\wl\\
&\qquad \qquad =\unchi\bigg[ \Big(\mez |\dom|^\mez+\frac{i}{2} h\Big)\wl_\La \\
&\qquad \qquad \phantom{=\unchi\Big[ }-i\frac{\sh}{4} |\dom|^\tdm \deuxangledom^{\ell}
\angledom^{-2\ell}\Big[\big(\wlla\big)^2-\big(\owlla\big)^2\Big]\\
&\qquad \qquad \phantom{=\unchi\Big[ }+h|\dom |^{\frac{5}{2}}\angledom^{-3\ell}\Big[ \troisangledom^\ell \Big(\mu_3^\el \big(\wlla\big)^3
+\mu_{-3}^\el \big(\owlla\big)^3\Big)\\
&\qquad \qquad \phantom{=\unchi\Big[ +h|\dom |^{\frac{5}{2}}\Big[ }+\angledom^\ell \mu_{-1}^\el \big\vert \wlla\big\vert^2\owlla\Big]\bigg]\\
&\qquad\qquad\quad +h^{1+\sigma}r^\el
\ea
\ee
for some real constants $\mu_3^\el$, $\mu_{-1}^\el$, $\mu_{-3}^\el$ and where 
for $k\le \sd+N_0-\ell$, $Z^kr^\el$ belongs to $h^{-4\delta_{k+\ell+1+N_1-N_0}'}\ti{\mB}_\infty^{0,b'-\mez}[F]$. 
\end{lemm}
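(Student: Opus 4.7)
The plan is to reduce the left-hand side of \e{357} to $\opx\wl$ modulo the stated remainder by means of symbolic calculus, and then to evaluate $\opx\wl$ piece-by-piece using the decomposition of Corollary~\ref{ref:3410} together with Proposition~\ref{ref:3.1.9}.

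First I would compose the three operators $\opjl=\Oph(\la\xi\ra^\ell)$, $\opx=\Oph(x\xi+|\xi|^\mez)$ and $\Oph(\la\xi\ra^{-\ell})$ using Appendix~\ref{S:A.2}. Since $\la\xi\ra^{\pm\ell}$ has no $x$-dependence, $\opjl\circ\Oph(\la\xi\ra^{-\ell})=\Id$ exactly; the only $O(h)$ correction comes from $[\opjl,\opx]$, whose leading symbol is the Poisson bracket $-ih\{\la\xi\ra^\ell,x\xi\}=-i\ell h\,\xi^2\la\xi\ra^{\ell-2}$. Composing with $\Oph(\la\xi\ra^{-\ell})$ on the right yields
$$\opjl\opx\Oph(\la\xi\ra^{-\ell})=\opx-i\ell h\,\Oph\bigl(\xi^2/\la\xi\ra^2\bigr)+h^2\Oph(r),$$
with $r\in S(1)$. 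Adding $i\ell h\Oph(\xi^2/\la\xi\ra^2)\wl$ to both sides reduces the task to evaluating $\opx\wl$ modulo an $h^2$-remainder, which is absorbed in $h^{1+\sigma}r^\el$ in view of the a priori bounds \e{339}.

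Next I would substitute \e{3441}: $\wl=\wl_\La+\sh(\wl_{2\La}+\wl_{-2\La})+h(\wl_{3\La}+\wl_{-\La}+\wl_{-3\La})+h^{1+\sigma}g^\el$. The $h^{1+\sigma}g^\el$ piece contributes to $h^{1+\sigma}r^\el$ using the bound on $\Oph(x\xi)g^\el$ in Corollary~\ref{ref:3410}. For each $q\in\{-3,-2,-1,1,2,3\}$, Proposition~\ref{ref:3.1.9} gives
$$\opx\wl_{q\La}=\unchi\,\bigl(x\xi+|\xi|^\mez\bigr)\big|_{\xi=q\dom(x)}\,\wl_{q\La}+\text{remainder},$$
and the elementary identity $x\dom(x)=-1/(4|x|)=-|\dom|^\mez/2$ yields
$$\bigl(x\xi+|\xi|^\mez\bigr)\big|_{\xi=q\dom(x)}=\tfrac{1}{2}(2\sqrt{|q|}-q)|\dom|^\mez.$$
In particular on $\La$ (i.e.\ $q=1$) this equals $\mez|\dom|^\mez$, not zero (crucially, since the symbol of $\opx$ is $x\xi+|\xi|^\mez$, not $2x\xi+|\xi|^\mez$), which produces the linear term $\mez\unchi|\dom|^\mez\wl_\La$ of \e{357}.

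Substituting the explicit formulas \e{3442} for $\wl_{q\La}$ identifies all remaining coefficients. For example, the $(\wlla)^2$ term arises from $\sh\opx\wl_{2\La}$: using the identity $(\sqrt{2}-1)(1+\sqrt{2})=1$ one obtains the coefficient $-i\sh/4$ in front of $\unchi\deuxangledom^\ell\angledom^{-2\ell}|\dom|^\tdm(\wlla)^2$, and the companion identity $(\sqrt{2}+1)(1-\sqrt{2})=-1$ yields the $+i\sh/4$-coefficient for $(\owlla)^2$. The cubic $h$-terms supported on $\pm 3\La$ and $-\La$ arise analogously from $h\opx\wl_{q\La}$, $q\in\{-3,-1,3\}$, producing the real constants $\mu_3^\el,\mu_{-1}^\el,\mu_{-3}^\el$. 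Finally, the $\frac{i}{2}h\wl_\La$ correction and the cubic term supported on $\La$ are obtained by pushing Proposition~\ref{ref:3.1.9} to the next order on $\La$: writing $\opx=\opdeux-\Oph(x\xi)$, the refined identity \e{3443} controls $\opdeux\wl_\La$ at order $h$, and combining this with the sub-principal action of $\Oph(x\xi)$ on the Lagrangian $\La$ (computed exactly as in the proof of Proposition~\ref{ref:349}) yields both the $\frac{i}{2}h\wl_\La$ term and the cubic contribution on $\La$. The main technical hurdle will be the bookkeeping of these three simultaneous $O(h)$ contributions on $\La$, whose algebraic combination must precisely match the coefficient $\frac{i}{2}$ and the real cubic constants; remainders are tracked uniformly for $k\le s/2+N_0-\ell$ using the losses $h^{-\delta'_{k+\ell+1+N_1-N_0}}$ already absorbed in \e{3441}--\e{3443}.
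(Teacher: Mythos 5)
Your overall architecture matches the paper's: the operator identity $\opjl \opx \Oph\big(\langle\xi\rangle^{-\ell}\big)+i\ell h\Oph\big(\xi^2\langle\xi\rangle^{-2}\big)=\opx$ (which is in fact exact, with no $h^2$ correction, since $\langle\xi\rangle^{\pm\ell}$ are Fourier multipliers --- this is \e{3514a}), the substitution of expansion \e{3441}, and the evaluation of $\opx\wl_{q\La}$ for $q\neq 1$ by restricting the symbol to $q\La$ via Proposition~\ref{ref:3.1.9}. Your arithmetic $(x\xi+|\xi|^{1/2})\arrowvert_{\xi=q\dom}=\mez(2\sqrt{|q|}-q)|\dom|^{1/2}$ and the identities $(\sqrt2\mp1)(1\pm\sqrt2)=\pm1$ are exactly what produce \e{3514b} and \e{3514c}.

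The gap is in your treatment of the $\La$-component, i.e.\ the term $\big(\mez|\dom|^{1/2}+\frac i2 h\big)\wl_\La$. Writing $\opx=\opdeux-\Oph(x\xi)$ and invoking \e{3443} for $\opdeux\wl_\La$ together with ``the sub-principal action of $\Oph(x\xi)$'' cannot produce the explicit coefficient $\frac i2h$: \e{3443} only asserts $\ophjdeux\wl_{\La,j}=h_j\big(f_j^{(\ell)}+h^{1/4}r_j^{(\ell)}\big)$ with $f_j^{(\ell)}$ an \emph{unidentified} $O(\eps)$ family, and Proposition~\ref{ref:3.1.9} identifies only the leading term $a(x,\diff\omega)v$, leaving the next correction as an anonymous element of a $\ti{\mB}$-space. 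Both ingredients therefore leave unidentified terms of exactly the size $h\wl_\La$ that you are trying to compute; the classes $\ti{I}_\La$, $\ti{J}_\La$ do not carry enough information for a two-term expansion of $\Oph(a)$ on a Lagrangian distribution. (Also, there is no cubic contribution ``supported on $\La$'' to extract in this lemma: the $\mu_{-1}^{(\ell)}\big\vert\wlla\big\vert^2\owlla$ term of \e{357} comes from $\wl_{-\La}$ at leading order, exactly as in your preceding sentence; the $|\wlla|^2\wlla$ term only appears later, from $\widetilde{w}_\La$ in Lemma~\ref{ref:351a}.)

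The paper's mechanism (Lemma~\ref{ref:352a}) is different and is the key point you are missing. One checks that $a(x,\xi)=x\xi+|\xi|^{1/2}$ satisfies $\partial_\xi a\arrowvert_\La=0$, so $a(x,\xi)-\mez|\dom(x)|^{1/2}$ vanishes to \emph{second} order on $\La$ and factors as $e_1(x,\xi)\big(2x\xi+|\xi|^{1/2}\big)^2$ with $e_1\arrowvert_\La=-\mez|\dom|^{-1/2}$. The symbolic identity $e^2=e\# e+2ih_je-ih_j|\xi|^{1/2}-h_j^2\tilde e$ for $e=2x\xi+|\xi|^{1/2}$ then yields \e{3510}: every term containing a factor $\ophjdeux$ carries an extra explicit $h_j$ or acts twice, hence is $O(h_j^2)$ by \e{3443} (this is precisely where the unidentified $f_j^{(\ell)}$ is harmless, entering only remainders), while the single surviving $O(h_j)$ term $-ih_j\Ophj(e_1)\Ophj\big(|\xi|^{1/2}\big)$ contains no factor of $\ophjdeux$ and can be evaluated by restriction to $\La$, giving $\frac i2 h\wlla$ after rescaling by $2^{j/2}$. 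You would need to reproduce this second-order factorization (or supply an equivalent two-term symbol expansion along $\La$) to close your argument.
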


The proof of the above lemma will use

\begin{lemm}\label{ref:352a}
We may write for $\ell\le \sd+N_0$
\be\label{357a}
\opx \wlla =\mez\unchi \left[ |\dom|^\mez \wlla+ih \wlla\right] +h^{1+\sigma}r^\el_1
\ee
where for $k\le \sd+N_0-\ell$, $Z^k r^\el_1$ is in $h^{-4\delta_{k+2+\ell}'}\ti{\mB}_\infty^{0,b'}[L]$.
\end{lemm}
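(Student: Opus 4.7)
The lemma is a refined two-term semiclassical expansion of $\opx$ acting on the Lagrangian distribution $\wlla$. A one-term version follows from Proposition~\ref{ref:3.1.9}: writing $a(x,\xi)=x\xi+|\xi|^{1/2}$, the identities $x\dom=-\omega(x)$ and $|\dom|^{1/2}=2\omega(x)$ (immediate from $\dom=-\sign(x)/(4x^2)$, $|\dom|=1/(4x^2)$) give $a(x,\dom)=\omega(x)=\mez|\dom|^{1/2}$, so that $\opx\wlla=\unchi\mez|\dom|^{\mez}\wlla+O(h)$. The task is to pull the subprincipal term $\frac{ih}{2}\unchi\wlla$ out of this $O(h)$ remainder and show the rest lies in $h^{1+\sigma}\ti{\mB}_\infty^{0,b'}[L]$ with the appropriate loss in $\delta'$.

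My plan is to decompose $\wlla=\sum_{j\in J(h,C)}\Tj\wlla_j$ via Corollary~\ref{ref:3410}, so that each $\wlla_j$ lies in $L^\infty\ti{J}_\La^{0,b'-\ell}[K]$ at semiclassical scale $h_j$. Since $a(2^{-j/2}x,2^j\xi)=2^{j/2}a(x,\xi)$, one has $\opx\Tj=2^{j/2}\Tj\Op_{h_j}(a)$, reducing the problem to a second-order expansion of $\Op_{h_j}(a)\wlla_j$ on each scale. Factor $a(x,\xi)-a(x,\dom)=b(x,\xi)(\xi-\dom)$ with $b\in S(1,K)$; the Moyal composition formula gives
\[
\Op_{h_j}(b(\xi-\dom))=\Op_{h_j}(b)\Op_{h_j}(\xi-\dom)+\frac{h_j}{i}\Op_{h_j}(\omega''\partial_\xi b)+O_{S}(h_j^2),
\]
whence
\[
\Op_{h_j}(a)\wlla_j=a(x,\dom)\wlla_j+\Op_{h_j}(b)\Op_{h_j}(\xi-\dom)\wlla_j+\frac{h_j}{i}\Op_{h_j}(\omega''\partial_\xi b)\wlla_j+O(h_j^2).
\]
Because $\partial_\xi a(x,\dom)=x-\sign(x)|x|=0$ one has $b(x,\dom)=0$ and $\partial_\xi b(x,\dom)=\mez\partial_\xi^2 a(x,\dom)=-|x|^3$; combined with $\omega''(x)|x|^3=\mez$, this evaluates the first-order correction on $\La$ to $\frac{h_j}{i}(-\mez)\wlla_j=\frac{ih_j}{2}\wlla_j$, producing the announced subprincipal.

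The main obstacle is controlling $\Op_{h_j}(b)\Op_{h_j}(\xi-\dom)\wlla_j$ and the $O(h_j^2)$ calculus remainder in the abstract $\ti{J}_\La$-class setting, which only encodes one-order vanishing along $\La$. Using $b(x,\dom)=0$ I would factor $b=\tilde b(\xi-\dom)$ with $\tilde b\in S(1,K)$, reducing the first term to $(\Op_{h_j}(\xi-\dom))^2\wlla_j$. The key input is equation~\eqref{3443} of Corollary~\ref{ref:3410}, namely $\ophjdeux\wlla_j=h_j(f_j^{(\ell)}+h^{1/4}r_j^{(\ell)})$: since $2x\xi+|\xi|^{1/2}=(\xi-\dom)g(x,\xi)$ with $g(x,\dom)=x$ elliptic away from the origin, symbolic inversion of $g$ (justified by the $\unchi(xh^{-\beta})$ cutoff) shows that $\Op_{h_j}(\xi-\dom)\wlla_j$ itself equals $h_j$ times a Lagrangian distribution on $\La$ modulo a genuine $O(h_j^2)$ remainder; iterating once more furnishes the missing $O(h_j^2)$ bound on $(\Op_{h_j}(\xi-\dom))^2\wlla_j$. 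The cubic Taylor term in the symbol expansion is absorbed by another application of Proposition~\ref{ref:3.1.9}.

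Summing over $j\in J(h,C)$ finishes the proof. The $-1$-homogeneity of $\omega$ gives $\sum_j 2^{j/2}\Tj[\omega\wlla_j]=\omega\wlla=\mez|\dom|^{\mez}\wlla$, and $\sum_j 2^{j/2}\Tj[\frac{ih_j}{2}\wlla_j]=\frac{ih}{2}\wlla$ via $2^{j/2}h_j=h$. The per-scale $O(h_j^2)$ remainders aggregate to $\sum_j 2^{j/2}h_j^2=h\sum_j h_j\lesssim h\cdot h^\sigma$, matching the claimed $h^{1+\sigma}$ decay, while the loss $h^{-4\delta'_{k+2+\ell}}$ reflects the a priori bounds on $Z^k\wlla$ and $Z^kf_j^{(\ell)}$ provided by Corollary~\ref{ref:3410} and Proposition~\ref{ref:3.3.1}. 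The spatial cutoff $\unchi(xh^{-\beta})$ is the standard one from Proposition~\ref{ref:3.1.9}, and commutation with $Z^k$ uses the relations \eqref{3228} together with the $Z$-covariance of the rescaled symbolic calculus of Propositions~\ref{ref:3.1.6} and~\ref{ref:3.1.9}.
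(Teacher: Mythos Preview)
Your proof is correct and follows essentially the paper's approach: decompose $\wlla$ into $j$-blocks, exploit the second-order vanishing of $x\xi+|\xi|^{1/2}-\tfrac12|\dom|^{1/2}$ on $\La$ to extract the subprincipal term $\tfrac{ih}{2}\wlla$, and invoke~\eqref{3443} from Corollary~\ref{ref:3410} to control the quadratic-in-(equation of $\La$) remainder by $h_j\cdot(h^{1/2}+h_j+h^{1/4})$, which after summation gives $h^{1+\sigma}$. The only difference is organizational: the paper packages the expansion via the explicit operator identity~\eqref{3510} written directly in terms of $e=2x\xi+|\xi|^{1/2}$ (so that~\eqref{3443} applies without conversion and the subprincipal emerges from the term $-ih_j\Ophj(e_1)\Ophj(|\xi|^{1/2})$ with $e_1|_\La=-\tfrac12|\dom|^{-1/2}$), whereas you Taylor-factor through $\xi-\dom$, read off the subprincipal from the Moyal correction $\tfrac{h_j}{i}\omega''\partial_\xi b|_\La$, and then convert to $e$ via the elliptic factor $g$---the two routes are equivalent.
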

\begin{proof}
We write remembering \e{3113}
\be\label{357b}
\opx \wlla =\sumj 2^{\jd}\Tj \ophjx \wl_{\La,j}.
\ee
Let us show that we may write
\be\label{3510}
\ba
\ophjx &=\mez |\dom(x)|^\mez +\Ophj(e_1)\Big( \ophjdeux\Big)^2\\
&\quad +ih_j \Ophj(e_2)\ophjdeux\\
&\quad -ih_j \Ophj(e_1)\Ophj\big(\la\xi\ra^\mez\big)+h_j^2\Ophj(e_3)
\ea
\ee
where $e_j$ are symbols in $S(1,K)$, for some large enough compact subset $K$ of $\cotangent$, satisfying
\be\label{3511}
e_1\arrowvert_\La=-\mez |\dom(x)|^{-\mez}.
\ee
Denote $a(x,\xi)=x\xi+|\xi|^\mez$ and take
$$
e_1(x,\xi)=\frac{a(x,\xi)-a(x,\dom)}{\big(2x\xi+\la\xi\ra^\mez\big)^2}\cdot
$$
A direct computation shows that the numerator vanishes at second order on $\La$, so that the quotient is smooth, 
and that its restriction to $\La$ is given by \e{3511}. If we set $e(x,\xi)=2x\xi+\la\xi\ra^\mez$, we obtain by symbolic calculus
$$
e\# e=e^2-ih_j \partial_\xi e \px e+h_j^2 \tilde{e}
$$
for some symbol $\tilde{e}$, so that by an immediate computation
\be\label{3512}
e^2=e\# e+2ih_j e-ih_j\la\xi\ra^\mez-h_j^2\tilde{e}.
\ee
On the other hand, by symbolic calculus $e_1 e^2=e_1\# e^2+h_j e_2'\# e+h_j^2\tilde{e}'$ for some symbols 
$e_2',\tilde{e}'$ in $S(1,K)$, so that taking \e{3512} into account
$$
e_1 e^2=e_1\# e\# e-ih_j   e_1\# \la\xi\ra^\mez+ih_je_2\# e+h_j^2e_3
$$
for new symbols $e_2,e_3$. Since $e_1e^2=a(x,\xi)-a(x,\dom)=x\xi+\la \xi\ra^\mez-\mez \la \dom(x)\ra^\mez$, we obtain \e{3510} by quantification. 

Let us use \e{3510} to show that \e{357a}Ê
holds. Actually, the contribution of the first term in the \rhs of \e{3510} to \e{357a} gives the $\la\dom\ra^\mez$ term 
in the \rhs of \e{357a} (Again, we may insert a cut-off $\unchi$ as $w_{\La,j}$ is microlocally supported on a compact subset of 
$\cotangent$ and $j$ stays in $J(h,C)$, if we accept some $O(h^\infty)$ remainder). 
The contribution of the last but one term in \e{3510} to \e{357b} may be written
$$
-ih\sumj \Tj \Ophj(e_1)\Ophj\big(\la\xi\ra^\mez\big)\wl_{\La,j}=-ih\Oph(e_1)\Oph\big(\la\xi\ra^\mez\big)\wlla.
$$
By Proposition~\ref{ref:3.1.9} and \e{3511}, this is equal to 
$$
\frac{i}{2}h\unchi \wlla +h^2\rl_{1,1}
$$
where $Z^k \rl_{1,1}$ belongs to $h^{-\delta_{k+1+\ell}'}\ti{\mB}_\infty^{-1,b'}[K]$. Actually noticing that 
$e_1(x,\xi)=|x|e_1\big(\frac{x}{|x|}, |x|^2\xi)$, and that 
$(\px^\alpha\partial_\eta^\beta e_1)(\pm 1,\eta)=O\big(|\eta|^{-1-|\beta|}\big)$, $|\eta|\rightarrow 0$ and $|\eta|\rightarrow +\infty$, one checks 
that $e_1(x,\xi)$ satisfies \e{3115} with $(\ell,\ell',d,d')=(-1,0,-1,0)$ so that $e_1(x,\xi)|\xi|^\mez$ obeys these estimates for 
$(\ell,\ell',d,d')=(-1,0,-1/2,0)$. Since $Z^k\wlla$ is in $h^{-\delta_{k+1+\ell}'}L^\infty\ti{J}_\La^{0,b'}[K]$, the above 
statement holds. Since for $j\in J(h,C)$, $2^j\ge h^{2(1-\sigma)}$, the remainder may be rewritten as the product of $h^{1+\sigma}$ with 
an element whose $\Zr^k$-derivatives are in $h^{-\delta_{k+1+\ell}'}\ti{\mB}_\infty^{0,b'}[K]$ i.e. contributes to $h^{1+\sigma}\rl_1$ in \e{357a}. 

We are reduced to showing that the contributions of the second, third and last terms in the \rhs of \e{3510} provide remainders. This is evident for 
the last term as $2^{\jd}h_j^2=h h_j=O(h^{1+\sigma})$. Using \e{3443}, we may write the sum of the two remaining terms
\be\label{3514}
h_j \Ophj(e_1)\ophjdeux \Big( f_j^\el+h^{\uq} \rl_j\Big)
+ih_j^2\Ophj(e_2)\Big(f_j^\el+h^{\uq} \rl_j\Big).
\ee
Since $\big(Z^k f_j^\el\big)_j$ is $O(\eps)$ in $h^{-3\delta_{k+2+\ell}'}L^\infty I_\La^{0,b'}[K]$ and 
$\big(Z^k \rl_j\big)_j$ is $O(\eps)$ in $h^{-4\delta_{k+1+\ell}'}\ti{\mB}_\infty^{0,b'}[L]$, the last term 
as well as the $\rl_j$ contribution to the first one induce in \e{357b} a 
contribution that may be included in the $h^{1+\sigma}\rl_1$ remainder term of 
\e{357a}. On the other hand, the fact that $\big(Z^k f_j^\el\big)_j$ is $O(\eps)$ in 
$h^{-3\delta_{k+2+\ell}'}L^\infty I_\La^{0,b'}[K]$ implies that 
$Z^k \ophjdeux f_j^\el$ belongs to a $\eps$-neighborhood 
of zero in $h^{-3\delta_{k+2+\ell}'}\big(h^\mez+h_j\big)\ti{\mB}_\infty^{0,b'}[L]$. Consequently, the first term in \e{3514} induces also in \e{357b} 
a contribution forming part to the $h^{1+\sigma}\rl_1$ term in \e{357a}. This concludes the proof of the lemma.
\end{proof}

\begin{proof}[Proof of Lemma~$\ref{ref:352}$] We notice first that
\be\label{3514a}
\opjl \opx \Oph\big(\lxi^{-\ell}\big) +i\ell h\Oph\Big(\frac{\xi^2}{\langle\xi\rangle^2}\Big)=\opx.
\ee
We make act this operator on the expression of $\wl$ from $\wlla$ given in \e{3441}. The action of 
$\opx$ on $\wlla$ has been computed in Lemma~\ref{ref:352a}. Let us study 
$$
\opx\left(\sh \big(\wl_{2\La}+\wl_{-2\La}\big)\right).
$$
One may express $\wl_{\pm 2\La}$ from 
$\wlla$, $\owlla$ by \e{3442}. Since, according to Proposition~\ref{ref:342}, $Z^k \wlla$ is in 
$h^{-\delta_{k+1+\ell}'}L^\infty\ti{J}_\La^{0,b'}[K]$, it follows from Proposition \ref{ref:3110} that 
$Z^k\big(\wlla\big)^2$ (resp.\ $Z^k\big(\owlla\big)^2)$ belongs to 
$h^{-2\delta_{k+1+\ell}'}L^\infty \ti{J}_{2\La}^{0,2b'}[K_2]$ (resp.\ $h^{-2\delta_{k+1+\ell}'}L^\infty \ti{J}_{-2\La}^{0,2b'}[K_{-2}]$). 

We apply next Proposition~\ref{ref:3.1.9}, with $a$ replaced by $(x\xi+|\xi|^\mez)\deuxangledom^{\ell} \angledom^{-2\ell}|\dom|$. 
Since, because of the fact that $\wl_{\pm 2\La}$ is microlocally supported close to $\pm 2\La$, we may assume that 
$x\xi+|\xi|^\mez$ is cut-off close to this manifold, we see that the above symbol satisfies \e{3115} with 
$(\ell,\ell',d,d')=(-2+2\ell,-2\ell,1/2,0)$ or $(-1+2\ell,-2\ell,1,0)$. It follows from \e{3116} that
\be\label{3514b}
\ba
&\sh \opx \big(\wl_{2\La}+\wl_{-2\La}\big)\\
&\qquad\qquad=-\frac{i}{4}\sh \unchi \deuxangledom^{\ell}\angledom^{-2\ell}|\dom|^\tdm \Big[ \big(\wlla\big)^2-\big(\owlla\big)^2\Big]\\
&\qquad \qquad \quad +h^\tdm \rl
\ea
\ee
where $Z^k\rl$ is in $h^{-2\delta_{k+1+\ell}'}\ti{\mB}_\infty^{2,2b'}[L]\subset h^{-2\delta_{k+1+\ell}'}\ti{\mB}_\infty^{0,b'}[L]$. 

In the same way
\be\label{3514c}
\ba
&h \opx \big(\wl_{3\La}+\wl_{-\La}+\wl_{-3\La}\big)\\
&\qquad=h |\dom|^{\frac 5 2} \angledom^{-3\ell}\Big[ \troisangledom^\ell \Big(\mu_3^\el \big(\wlla\big)^3+\mu_{-3}^\el\big(\owlla\big)^3\Big) +\angledom^\ell\mu_{-1}^\el \big\vert\wlla\big\vert\owlla\Big]\\
&\qquad \quad+h^2\rl
\ea
\ee
with $Z^k\rl$ in $h^{-3\delta_{k+1+\ell}'}\ti{\mB}_\infty^{4,3b'}[L]\subset h^{-3\delta_{k+1+\ell}'}\ti{\mB}_\infty^{0,b'}[L]$ and 
some real constants $\mu^\el_{\pm 3}$, $\mu^\el_{-1}$. 

Finally, since the action of $\opx$ on the remainder $g^\el$ of \e{3441} gives a function $\rl$ such that $Z^k\rl$ 
is in $h^{-4\delta_{k+N_1-N_0+1+\ell}'}\ti{\mB}_\infty^{0,b'-\mez}[F]$ we conclude, 
summing \e{357a}, \e{3514b}, \e{3514c} that \e{3514a} is given by formula \e{357}.
\end{proof}

We may now prove Proposition~\ref{ref:351}.

\begin{proof}[Proof of Proposition~\ref{ref:351}]
Let us compute
$$
D_t\wl =D_t \opjl w =i\ell h \Oph\Big(\frac{\xi^2}{\langle\xi\rangle^2}\Big)\wl +\opjl D_t w.
$$
According to \e{3222} this is the sum of $-\frac{i}{2}h\wl$, of \e{354}, of \e{357} and of a remainder $h^{\frac 5 4}R^\el(V)$ where 
$\Zr^k R^\el (V)$ is in $\mR_\infty^b$. (Notice that by definition of $w$, we may always insert on the left hand side of \e{3222} 
a cut-off $\sumj \Djh$ for some large enough $C$, so that the sum of quadratic and cubic contributions is really given by
\e{354}). 
Remembering the expression \e{3441} of $\wl$ in terms of $w_\La^{(\ell)}$, we may write 
$$
-\frac{i}{2}h\wl=-\frac{i}{2}h\unchi \wlla +h^\tdm \rl
$$
with $Z^k \rl$ in $h^{-4\delta_{k+N_1-N_0+1+\ell}'}\ti{\mB}_\infty^{0,b'}[F]$ (We used again that the microlocal support properties of $\wlla$ 
allow to multiply it by some cut-off $\unchi$ up to a $O(h^\infty)$-remainder). We obtain
\be\label{3515}
\ba
D_t \wl&=\mez \unchi |\dom|^\mez \wlla \\
&\quad -i\frac{\sh}{4}\unchi |\dom|^\tdm \deuxangledom^\ell \angledom^{-2\ell}
\Big[ (1+\sqrt{2})\bigl(\wlla\big)^2+(\sqrt{2}-1)\big(\owlla\big)^2\Big]\\
&\quad +\frac{h}{2}\unchi |\dom|^{\frac 5 2}\angledom^{-3\ell}\Big[ \troisangledom^\ell\Big( \mu_3^\el \bigl(\wlla\big)^3
+\mu_{-3}^\el \bigl(\owlla\big)^3\Big)\\
&\qquad\qquad\quad +\angledom^\ell \big\vert \wlla\big\vert^2\wlla +\angledom^\ell \mu_{-1}^\el \big\vert \wlla\big\vert\owlla\Big]\\
&\quad +h^{1+\sigma}R(V)
\ea
\ee
where $Z^k R(V)$ is $O(\eps)$ in $h^{-4\delta_{k+N_1-N_0+1+\ell}'}\mR_\infty^{b'-\mez}$, and where $\mu_q^\el$ are some new real constants. 

We express next $\wlla$ from $\wl$ inverting relation \e{3441} i.e.\ writing, taking \e{3442} into account, 
\be\label{3516}
\ba
\wlla&=\wl +i\frac{\sh}{4}\unchi \deuxangledom^\ell \angledom^{-2\ell}|\dom|\Big[ (1+\sqrt{2})\big(\wl\big)^2+(1-\sqrt{2})\big(\owl\big)^2\Big]\\
&\quad +h|\dom|^2\unchi \Big[ \Gamma_3^\el (\dom)\big(\wl)^3+\Gamma_1^\el (\dom)\big\vert \wl\big\vert^2\wl\\
&\quad\phantom{+h|\dom|^2\unchi \Big[}\quad +\Gamma_{-1}^\el (\dom)\big\vert \wl\big\vert^2\owl+\Gamma_{-3}^\el (\dom)\big(\owl)^3\Big]\\
&\quad +h^{1+\sigma}g^\el,
\ea
\ee
where $Z^k g^\el$ is in $h^{-4\delta_{k+N_1-N_0+1+\ell}'}\mR_\infty^{b'}$, and where $\Gamma_q^\el(\zeta)$ is a symbol 
of order $-2\ell$, with
$$
\Gamma_1^\el(\zeta)=\langle 2\zeta\rangle^{2\ell}\langle\zeta\rangle^{-4\ell}\left( \frac{3-2\sqrt{2}}{8}\right).
$$
We plug this expansion in \e{3515} to get \e{351}. The remainder satisfies the conditions of the statement of the proposition if we assume that 
$4\delta_{k+N_1-N_0+1+\ell}'<\frac{\sigma}{2}$, so that we may take $\kappa=\sigma/2$. 

We prove now \e{353a} by induction from \e{351}.

To deduce \e{353a} at order $k$ from the similar equality at order $k-1$, we notice first that the action of $Z$ on the quadratic 
(resp.\ cubic, resp.\ remainder) terms of \e{353a} at order $k-1$ gives contributions to $Q^\kl$ (resp.\ $\Cr^\kl$, resp.\ $\mR^\kl$). Moreover,
\begin{align*}
&\Big[ Z, D_t-\mez\unchi |\dom(x)|^\mez\Big]\\
&\qquad \qquad =-\Big(D_t-\mez \unchi |\dom(x)|^\mez\Big)\\
&\qquad \qquad \quad +\frac{1+\beta}{2}\big(xh^{-\beta}\big)\chi'\big(xh^{-\beta}\big)|\dom(x)|^\mez.
\end{align*}
The product of the last term with $W^{k-1,\el}$ may be computed from expressions of the form 
$h^{-\beta}\Gamma\big(xh^{-\beta}\big)Z^{k'}w^\el$ where 
$k'\le k-1$ and $\Gamma$ is in $C^\infty_0(\xR^*)$. We just have to check that 
such terms contribute to the remainder in \e{353a}. Because of the expression \e{3441} of $\wl$ in terms of $\wlla$, 
we see that we need to check that for $p\le b''$
\be\label{3516a}
\lA (hD_x)^p\left[ \Gamma\xb Z^{k'}\wlla\right]\rA_{L^\infty} =O\big(h^{1+\kappa+\beta}\big).
\ee
(The contribution coming from the remainder in \e{3441} satisfies the wanted bound as we assume after \e{333} that 
$\beta<\sigma/2$.) We remember that $\wlla =\sumj \Tj \wl_{\La,j}$, where $\wl_{\La,j}$ is 
microlocally supported for $x$ in a compact subset of $\xR^*$, so that
$$
\Gamma\xb Z^{k'}\wlla =\sumj \Gamma\xb Z^{k'}\left( \Gamma_1 \big(2^{\jd}x\big)\Tj \wl_{\La,j}\right)
$$
for some $\Gamma_1$ in $C^\infty_0(\xR)$. This shows that the sum is limited to those $j$ for which $2^j\sim h^{-2\beta}$. 

Since $Z^{k'}\wlla$ is in $h^{-\delta_{k'+\ell+1}'}L^\infty \ti{J}_\La^{0,b'}[K]$ according to Proposition~\ref{ref:342},
$$
\blA \Tj Z^{k'}\wllaj\bri =O\big(h^{-\delta_{k'+\ell+1}'}2^{-j_+b'}\big).
$$
Using that $2^j\sim h^{-2\beta}$, we bound $2^{-j_+b'}\le 2^{-j_+ b''}h^{\delta_{k'+\ell+1}'+\beta+\kappa+2}$ 
since the assumption on $b''$ relatively to $b'$ implies that 
$2\beta(b'-b'')>\delta_{k+\ell+1}'+\kappa+\beta+2$, as $\delta_{k+\ell+1}'$, $\kappa$, $\beta$ are small enough. Consequently, 
we get \e{3516a} for all integers $p\le b''$. 

This concludes the proof of the proposition.
\end{proof}

\begin{prop}\label{ref:353}
Let $T_0$ be a large enough positive number, $\kappa$ a small positive constant, $C_0>0$. 

Let $\ell$ be an integer, with $\ell\le \sd+N_0$. Assume given a function $(t,x)\rightarrow \rl(t,x)$ from a domain 
$[T_0,T[\times \xR$ to $\xC$, satisfying 
for $p\le b''$,
$$
\sup_x \la (hD_x)^p\rl (t,x)\ra\le C_0\eps
$$
for any $t\in [T_0,T[$. Assume given a solution $\wl\colon [T_0,T[\times\xR\rightarrow \xC$ of equation \e{351}, such that 
$\la \wl(T_0,x)\ra\le C_0\eps$ for any $x$. 

Then there are $\eps_0>0$, $C_1>0$, depending only on $C_0$, such that for any $\eps\in ]0,\eps_0[$,
\be\label{3517}
\sup_{[T_0,T[}\blA \wl(t,\cdot)\bri\le C_1\eps.
\ee
Moreover, if we assume that $r^{(\ell)}$ is defined and satisfies the above assumption on $[T_0,+\infty[\times\xR$, 
then $w^{(\ell)}$ is defined on $[T_0,+\infty[\times\xR$ and 
there are a continuous bounded function $\alpha\colon \xR\rightarrow \xC$, vanishing like $|x|^{2b''}$ when $x$ goes to zero, 
$(t,x)\rightarrow \rho(t,x)$ a bounded function on $[T_0,+\infty[\times \xR$ with values in $\xC$ and $\kappa>0$ such that
\be\label{3518}
w(t,x)=\eps \alpha (x)\exp\left[ \frac{i}{4|x|}\int_{T_0}^t (1-\chi)\big(\tau^\beta x\big)\, d\tau
+\frac{i}{64}\eps^2\frac{\la \alpha(x)\ra^2}{\la x\ra^{5}}\log t\right]+\eps t^{-\kappa}\rho(t,x)
\ee
where $\chi\in C^\infty_0(\xR)$, $\chi\equiv 1$ close to zero.
\end{prop}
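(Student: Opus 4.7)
The plan is to reduce the ODE (3.5.1) to a scalar equation of the type
$D_t f = h\Psi(x)|f|^2 f + O(h^{1+\kappa'})$ with $\Psi$ real-valued, via one linear phase change followed by a quadratic/cubic normal form, and then to integrate this reduced equation in closed form. I shall carry out the argument for a fixed $\ell$.

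\emph{Step 1 (linear phase).} Set
\[ \theta(t,x) = \frac{1}{2}\int_{T_0}^{t}(1-\chi)(x\tau^{\beta})|d\omega(x)|^{1/2}\,d\tau, \]
so that $\partial_t\theta = \frac{1}{2}(1-\chi)(xh^{-\beta})|d\omega(x)|^{1/2}$ since $h=1/t$. Write $w^{(\ell)}=e^{i\theta}g$. Because the coefficient of the linear term in (3.5.1) equals $\partial_t\theta$, the equation for $g$ loses that term and becomes, schematically,
\[ D_t g = \sqrt{h}\bigl[A_{+}(x)e^{i\theta}g^{2}+A_{-}(x)e^{-3i\theta}\bar g^{2}\bigr] + h\bigl[\Phi_{3}^{(\ell)}e^{2i\theta}g^{3}+\Phi_{1}^{(\ell)}|g|^{2}g+\Phi_{-1}^{(\ell)}e^{-2i\theta}|g|^{2}\bar g+\Phi_{-3}^{(\ell)}e^{-4i\theta}\bar g^{3}\bigr] + h^{1+\kappa}e^{-i\theta}r^{(\ell)}, \]
where $A_{\pm}$ are the coefficients read off (3.5.1). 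All non-resonant phases $e^{ip\theta}$, $p\in\{1,-3,\pm 2,\pm 4\}$, differ from zero by $\frac{p}{2}(1-\chi)|d\omega|^{1/2}$, which is bounded below on the support of $(1-\chi)(xh^{-\beta})$ by a positive multiple of $|d\omega|^{1/2}\ge c\,h^{\beta/2}/\langle x\rangle$.

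\emph{Step 2 (normal form).} Eliminate the quadratic terms by setting $g = f + \sqrt{h}\bigl[B_{+}(x)e^{i\theta}f^{2}+B_{-}(x)e^{-3i\theta}\bar f^{2}\bigr]$, with $B_{\pm}=-A_{\pm}/(\partial_t\theta \cdot (\pm 1\ \text{or}\ -3))\cdot$ (the appropriate factor so that the time derivative of the correction cancels the corresponding quadratic source). The division by $\partial_t\theta$ is licit because the cutoff $(1-\chi)(xh^{-\beta})$ confines everything to the elliptic region, at the price of a factor bounded by $C\langle x\rangle h^{-\beta/2}$ which the $\sqrt{h}$ prefactor amply absorbs since $\beta\ll 1$. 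The analogous cubic normal form removes $e^{2i\theta}f^{3}$, $e^{-2i\theta}|f|^{2}\bar f$, $e^{-4i\theta}\bar f^{3}$, generating $O(h^{1+\kappa'})$ remainders (the reality and explicit form of $\Phi_{1}^{(\ell)}$ from (3.5.3) is preserved). The resulting equation is
\[ D_t f = h\,\Phi_{1}^{(\ell)}(x)|f|^{2}f + h^{1+\kappa'}\tilde r(t,x), \]
with $\tilde r$ bounded uniformly by the hypothesis on $r^{(\ell)}$ and the bootstrap inputs.

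\emph{Step 3 (integration of the reduced ODE).} Since $\Phi_{1}^{(\ell)}$ is real, $(D_t f)\bar f + \overline{(D_t f)}f = 0$ modulo $O(h^{1+\kappa'}|f|)$, whence $\partial_t|f|^{2}=O(t^{-1-\kappa'}|f|)$. A Grönwall/bootstrap argument then gives $|f(t,x)|\le C_1\epsilon$ uniformly in $t\in[T_0,T[$ and $x\in\R$ provided $\epsilon$ is small enough (using $|f(T_0,x)|\le C\epsilon$, which transfers from the hypothesis on $w^{(\ell)}(T_0,\cdot)$ via the (invertible) normal form changes). This proves (3.5.17) since $|w^{(\ell)}|=|g|\le |f|+C\sqrt{h}|f|^{2}$. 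Furthermore $|f(t,x)|^{2}$ converges as $t\to\infty$ to some $\epsilon^{2}|\alpha(x)|^{2}$, and writing $f=|f|e^{i\psi}$, the equation for $\psi$ reads $\partial_t\psi = t^{-1}\Phi_{1}^{(\ell)}(x)|f|^{2}+O(t^{-1-\kappa'})$; integrating gives $\psi(t,x)=\Phi_{1}^{(\ell)}(x)\epsilon^{2}|\alpha(x)|^{2}\log t + \psi_{\infty}(x)+O(t^{-\kappa})$. Going back through $f\mapsto g\mapsto w^{(\ell)}=e^{i\theta}g$ and specializing to $\ell=0$, where $\Phi_{1}^{(0)}(x)$ equals a multiple of $(1-\chi)|d\omega|^{5/2}$, produces exactly (3.5.18), the factor $1/(64\langle x\rangle^{5})$ coming from $|d\omega(x)|^{5/2}=(2|x|)^{-5}$ times the numerical constant in (3.5.3) with $\ell=0$; the vanishing $\alpha(x)=O(|x|^{2b''})$ near $x=0$ reflects the fact that for $|x|\lesssim T_0^{-\beta}$ the dynamics are trivial throughout $[T_0,\infty)$ and that $b''$ $hD_x$-derivatives of the driving data are bounded by hypothesis.

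\emph{Main obstacle.} The delicate point is the normal form of Step 2: the multipliers $B_{\pm}$ and their cubic analogues contain the singular factor $1/\partial_t\theta \propto 1/((1-\chi)|d\omega|^{1/2})$, so one has to track carefully that the cutoff $(1-\chi)(xh^{-\beta})$ and the $\sqrt{h}$ (resp.\ $h$) prefactors compensate this singularity, and that the cubic remainder produced by $D_t$ falling on the explicit cutoff does not destroy the $h^{1+\kappa'}$ decay—this is exactly where $\beta$ has to be taken small enough and $b''\le b'-2/\beta$ is used, so that the $b''$ $hD_x$-derivatives of the remainder $\tilde r$ remain controlled uniformly in $t$. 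Once this step is secured the rest is a standard ODE argument.
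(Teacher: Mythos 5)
Your overall strategy coincides with the paper's: a Poincar\'e normal form removing the non-resonant quadratic and cubic terms, followed by integration of the reduced ODE using the reality of the resonant coefficient. (The paper does not conjugate away the linear term first; it keeps $\tfrac12(1-\chi)\abs{d\omega}^{1/2}f$ in the reduced equation and absorbs it into the phase $\Theta_\ell$ at the end, but this is only a cosmetic difference from your Step 1. Your worry about dividing by $\partial_t\theta\propto(1-\chi)\abs{d\omega}^{1/2}$ is also moot: the sources $A_\pm$ carry the same factor $(1-\chi)(xh^{-\beta})$, so it cancels in $B_\pm$; equivalently, the paper builds the corrector with a cutoff $\chi_0$ supported in $\{\chi\equiv1\}$ and never divides by $1-\chi$.) Your derivation of the uniform bound \e{3517} is sound, since only the reality of the resonant coefficient is used there.

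There is, however, a concrete gap in Step 2 that corrupts the asymptotics \e{3518}. You assert that after the quadratic normal form ``the explicit form of $\Phi_1^{(\ell)}$ is preserved'', so that the reduced equation reads $D_tf=h\,\Phi_1^{(\ell)}\abs{f}^2f+O(h^{1+\kappa'})$. This is false: when you invert the substitution $g=f+\sqrt{h}[B_+e^{i\theta}f^2+B_-e^{-3i\theta}\bar f^2]$ and re-inject it into the quadratic nonlinearity (and, in the paper's formulation, into the linear term), you generate \emph{new resonant} cubic contributions proportional to $\abs{f}^2f$. These exactly cancel the piece $\frac{\langle 2d\omega\rangle^{2\ell}}{\langle d\omega\rangle^{2\ell}}\frac{3(3-2\sqrt2)}{16}$ of $\Phi_1^{(\ell)}$ in \e{353}, leaving the resonant coefficient $\tfrac12(1-\chi)\abs{d\omega}^{5/2}\langle d\omega\rangle^{-2\ell}$ of \e{3523} — not $\Phi_1^{(\ell)}$. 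With your coefficient at $\ell=0$ one would get $\frac{17-6\sqrt2}{512}\,\abs{x}^{-5}$ in the logarithmic phase instead of the correct $\frac{1}{64}\abs{x}^{-5}$, so the constant in \e{3518} would not come out right. Relatedly, your justification that $\alpha$ vanishes like $\abs{x}^{2b''}$ is only a heuristic: in the paper this comes from running the whole argument at level $\ell=b''$ and recovering $w$ from $w^{(b'')}$ through $\Oph(\langle\xi\rangle^{-b''})$, which on the Lagrangian acts as multiplication by $\langle d\omega(x)\rangle^{-b''}=O(\abs{x}^{2b''})$ near $x=0$; your proposal does not make this passage from $w^{(\ell)}$ back to $w$ explicit.
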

\begin{rema*}
We may write \e{3518} on a more explicit fashion. Assume that $b''>\kappa/(2\beta)$. The contribution of the first term in expansion 
\e{3518} localized for $|x|<Ct^{-\kappa/(2b'')}$ may be incorporated to the remainder, because of the vanishing 
of $\alpha$ at order $2b''$ at $x=0$. On the other hand, if $|x|>Ct^{-\kappa/(2b'')}$, our assumption on $b''$ implies that 
$|x|^{-1/\beta}<C^{-1/\beta}t$, so that, if $C$ is large enough, 
$$
\int_{T_0}^t \chi(\tau^\beta x)\, d\tau=\int_{T_0}^{+\infty}\chi(\tau^\beta x)\, d\tau.
$$
If we define 
$$
\underline{\alpha}(x)=\alpha(x)\exp\left[ -\frac{i}{4|x|}T_0- \frac{i}{4|x|}\int_{T_0}^{+\infty}\chi\big(\tau^\beta x\big)\, d\tau\right]
$$
we obtain
\be\label{3518a}
w(t,x)=\eps \underline{\alpha}(x)\exp\left[ \frac{it}{4|x|}+
\frac{i}{64}\eps^2\frac{\la \underline{\alpha}(x)\ra^2}{\la x\ra^{5}}\log t\right]+\eps t^{-\kappa}\underline{\rho}(t,x)
\ee
for a new bounded remainder $\underline{\rho}$.
\end{rema*}
\begin{proof}
We shall establish the proposition performing a normal form transform on equation \e{351}. 
Denote by $\Gr$ the space of continuous bounded 
functions on $[T_0,T[\times \xR$. Let $\chi_0$ be in $C^\infty_0(\xR)$, $\chi_0\equiv 1$ close to zero, 
$\supp \chi_0\subset \{x\,;\,\chi(x)\equiv 1\}$ and set 
\be\label{3519}
\ba
f^\el&=\wl\\
&\quad+i\frac{\sh}{4}\unchiz |\dom| \deuxangledom^\ell \angledom^{-2\ell} 
\Big[ (1+\sqrt{2})\big(\wl\big)^2+(1-\sqrt{2})\big(\owl\big)^2\Big]\\
&\quad +h \unchiz ^2 |\dom |^2\Big[ M_3^\el (\dom)\big(\wl\big)^3+M_{-3}^\el (\dom)\big(\owl\big)^3\\
&\quad \phantom{+h \unchiz |\dom |^2\Big[ }\quad 
+M_{-1}^\el (\dom)\big\vert \wl\big\vert^2\owl\Big]
\ea
\ee
where $M_p^\el(\zeta)$ are symbols of order $-2\ell$ in $\zeta$ to be chosen, $p=-3,-1,3$. 

We consider the polynomial map 
$\Phi\colon \begin{pmatrix} \wl \\ \owl \end{pmatrix} \rightarrow \begin{pmatrix} f^\el \\ \overline{f}^\el\end{pmatrix}$ 
defined on $\Gr$. For $h=t^{-1}$ small enough (i.e.\ $t\ge T_0$ large enough), this is a local diffeomorphism 
at zero in $\Gr$. The inverse $\Phi^{-1}$ sends 
$\begin{pmatrix} f^\el \\ \overline{f}^\el\end{pmatrix}$ to $\begin{pmatrix} \wl \\ \owl \end{pmatrix}$, where $\wl$ may be expressed explicitly as 
\be\label{3520}
\ba
\wl&=f^\el\\
&\quad -i\frac{\sh}{4}\unchiz |\dom| \deuxangledom^\ell \angledom^{-2\ell} 
\Big[ (1+\sqrt{2})\big(f^\el\big)^2+(1-\sqrt{2})\big(\overline{f}^\el\big)^2\Big]\\
&\quad +h  |\dom |^2 (1-\chi_0)^2\xb \bigg[ \widetilde{M}_3^\ell (\dom)\big(f^\el\big)^3\\
&\quad \phantom{+h  |\dom |^2\Big[}\quad + \frac{3-2\sqrt{2}}{8}\deuxangledom^{2\ell}\angledom^{-4\ell}
\big\vert f^\el\big\vert^2 f^\el\\
&\quad \phantom{+h  |\dom |^2\Big[}\quad +\widetilde{M}_{-1}^\ell (\dom)\big\vert f^\el\big\vert^2 \overline{f}^\el\\
&\quad \phantom{+h  |\dom |^2\Big[}\quad +\widetilde{M}_{-3}^\ell (\dom)\big(\overline{f}^\el\big)^3\bigg]\\
&\quad +h^{1+\kappa}R_4\big(x,h;f^\el,\overline{f}^\el\big)
\ea
\ee
where $\kappa$ is some positive constant, where $R_4\big(x,h;f^\el,\overline{f}^\el\big)$ is some analytic function of 
$\big(f^\el,\overline{f}^\el\big)$, vanishing at order four at zero, with bounds uniform in $(x,h)$, 
and 
$\widetilde{M}_p^\el (\zeta)=-M_p^\el (\zeta)+\Gamma_p^\ell(\zeta)$, 
$p=-3,-1,1$ for symbols $\Gamma_p^\ell$ of order $-2\ell$,  
independent of $M_p^\ell$. 

We compute $D_t f^\el$ from \e{3519}, expressing in the \rhs $D_t \wl$, $D_t\owl$ using \e{351}. We get
\be\label{3521}
\ba
D_t f^\el &=\mez \unchi |\dom(x)|^\mez \wl\\
&\quad +i\frac{\sh}{8}\unchi |\dom(x)|^\tdm \deuxangledom^\ell \angledom^{-2\ell}
\Big[ (1+\sqrt{2})\big(\wl\big)^2+(1-\sqrt{2})\big(\owl\big)^2\Big]\\
&\quad +h\unchi |\dom(x)|^{\frac 5 2}\bigg[\mez \angledom^{-2\ell}\big\vert \wl\big\vert^2\wl\\
&\quad \qquad +\Big( \tdm M_3^\el (\dom)+\widetilde{\Gamma}_3^\el (\dom)\Big)\big(\wl\big)^3
+\Big( -\mez M_{-1}^\el (\dom)+\widetilde{\Gamma}_{-1}^\el (\dom)\Big)\big\vert \wl\big\vert^2\owl\\
&\quad \qquad +\Big( -\tdm M_{-3}^\el (\dom)+\widetilde{\Gamma}_{-3}^\el (\dom)\Big)\big(\owl\big)^3\bigg]\\
&\quad +h^{1+\kappa}\Big( \rl(t,x)+R_2\big(x,h;\wl,\owl\big)\Big)
\ea
\ee
where $\widetilde{\Gamma}_p^\el(\zeta)$ are symbols of order $-2\ell$ in $\zeta$, that depend only on the coefficients of 
$\big(\wl\big)^2$, $\big(\wl\big)^3$, \ldots in the \rhs of \e{351}, where $r^\el$ is the remainder in \e{351} and 
where $R_2\big(x,h;\wl,\owl\big)$ is some polynomial in $\big(\wl,\owl\big)$, vanishing  at order $2$ at zero, with uniform bounds 
in $(x,h)$. We express $\wl$ in the \rhs of \e{3521} using formula \e{3520}. The quadratic terms in the definition \e{3519} 
of $f^\el$ have been chosen in such a way that the quadratic contributions in the \rhs of the resulting expression for 
$D_t f^\el$ vanish. We get
\be\label{3522}
\ba
D_t f^\el &=\mez \unchi |\dom(x)|^\mez f^\el\\
&\quad +h\unchi |\dom(x)|^{\frac 5 2}\bigg[ \mez \angledom^{-2\ell}\big\vert f^\el\big\vert^2f^\el\\
&\quad\qquad +\Big(M_3^\el (\dom)-\widetilde{\Gamma'}_3^\el (\dom)\Big)\big(f^\el\big)^3\\
&\quad\qquad +\Big( -M_{-1}^\el (\dom)-\widetilde{\Gamma'}_{-1}^\el (\dom)\Big)\big\vert f^\el\big\vert^2\overline{f}^\el\\
&\quad \qquad +\Big(-2 M_{-3}^\el (\dom)-\widetilde{\Gamma'}_{-3}^\el (\dom)\Big)\big(\overline{f}^\el\big)^3\bigg]\\
&\quad +h^{1+\kappa}\Big( \rl(t,x)+R_2\big(x,h;f^\el,\overline{f}^\el\big)\Big)
\ea
\ee
where $\widetilde{\Gamma'}_p^\el (\zeta)$ are new symbols of order $-2\ell$ that do not depend on $M_p^\el$, and where 
$R_2$ is a new analytic function of $(f^\el,\overline{f}^\el)$ vanishing at order $2$ at zero, with uniform bounds in~$(x,h)$. 

We choose now the free symbols $M_p^\el$, $p=3,-1,-3$ introduced in the definition \e{3519} of $f^\el$ so that the coefficients of 
$\big(f^\el\big)^3$, $\big\vert f^\el\big\vert^2\overline{f}^\ell$ and $\big(\overline{f}^\el\big)^3$ vanish. In that way, we are reduced to 
\be\label{3523}
\ba
D_t f^\el &=\mez \unchi |\dom(x)|^\mez \bigg[ 1+\frac{|\dom|^2}{t}\angledom^{-2\ell}\big\vert f^\el\big\vert^2\bigg]f^\el\\
&\quad +t^{-1-\kappa}\rl(t,x)+t^{-1-\kappa}R_2\big(x,h;f^\el,\overline{f}^\el\big)
\ea
\ee
where $\blA (hD_x)^pZ^k \rl (t,x)\bri$ is $O(\eps)$ for any $p\le b''$, $k+\ell\le \sd+N_0$. It follows from 
\e{3523} that $\la \partial_t\big\vert f^\el\big\vert^2\ra\le \big(C_0\eps +C_0'\big\vert f^\el\big\vert^2\big)t^{-1-\kappa}$ as long as 
$\big\vert f^\el\big\vert$ stays smaller than $1$. Since at time $t=T_0$, 
$\big\vert f^\el\big\vert=O(\eps)$, we obtain that $\big\vert f^\el(t,x)\big\vert\le C_1'\eps$ for some constant $C_1'>0$, 
as long as the solution exists. Using expression \e{3520} for $\wl$ in terms of $f^\el$, we get \e{3517}. 
If $r_\ell$ is defined for $t\in [T_0,+\infty[$, we get that $f^{(\ell)}$ and thus $w^{(\ell)}$ is defined on 
$[T_0,+\infty[\times \xR$.

Let us prove the asymptotic expansion for $w$. If $\ell\le \sd+N_0$, we define
$$
\Theta_\ell(t,x)=\mez |\dom(x)|^\mez\int_{T_0}^t(1-\chi)\big(\tau^\beta x\big) 
\left[ 1+\frac{|\dom(x)|^2}{\tau}\angledom^{-2\ell}\bla f^\el(\tau,x)\bra^2\right]\, d\tau.
$$
Then \e{3523} and the uniform a priori bound just obtained for $f^\el$ show that
$$
\frac{d}{dt}\left[ f^\el(t,x)\exp\Big[- i\Theta_\ell(t,x)\Big]\right]=O\big(\eps t^{-1-\kappa}\big)
$$
uniformly for $x\in \xR$. It follows that the uniform limit when $t$ goes to $+\infty$ of 
$$
f^\el(t,x)\exp\big[ -i\Theta_\ell(t,x)\big]
$$
exists and defines a continuous function $\eps\alpha_\ell(x)$ on $\xR$, which is $O(\eps)$ in 
$L^\infty(\xR)$. Moreover
\be\label{3524}
\blA f^\el (t,x)-\eps\alpha_\ell(x)\exp\big(i\Theta_\ell(t,x)\big)\bri
=O\big(\eps t^{-\kappa}\big),\quad t\rightarrow +\infty.
\ee
We write
\be\label{3525}
\ba
\Theta_\ell(t,x)&=\mez |\dom(x)|^\mez \int_{T_0}^t (1-\chi)\big(\tau^\beta x\big) \, d\tau\\
&\quad +\frac{\eps^2}{2}(1-\chi)\big(t^\beta x\big)|\dom(x)|^{\frac 5 2}\langle \dom(x)\rangle^{-2\ell}\la \alpha_\ell(x)\ra^2\log t\\
&\quad -\frac{\eps^2}{2}(1-\chi)\big(T_0^\beta x\big)|\dom(x)|^{\frac 5 2}\langle \dom(x)\rangle^{-2\ell}\la \alpha_\ell(x)\ra^2\log T_0\\
&\quad +\frac{\eps^2}{2}\int_{T_0}^t \beta \tau^\beta x \chi'\big(\tau^\beta x\big)\la \dom(x)\ra^{\frac 5 2} \langle \dom(x)\rangle^{-2\ell}\la \alpha_\ell(x)\ra^2
\frac{\log \tau}{\tau}\, d\tau\\
&\quad +\frac{1}{2}\int_{T_0}^t (1-\chi)\big(\tau^\beta x\big)\la \dom(x)\ra^{\frac 5 2} \langle \dom(x)\rangle^{-2\ell}\Big(
\bla f^\el\bra^2-\la \eps \alpha_\ell(x)\ra^2\Big)\, \frac{d\tau}{\tau}.
\ea
\ee
We notice that $\la \dom(x)\ra^{\frac 5 2}\angledom^{-2\ell}$ is $O(\langle x\rangle^{-5})$ if $\ell\geq 5/4$ and $\tau^\beta x \chi'(\tau^\beta x)\la \dom(x)\ra^{\frac 5 2}\langle \dom(x)\rangle^{-2\ell}$ is $O\big(\tau^{-\beta}\langle x\rangle^{-6}\big)$ if 
$\ell\ge 3/2$. 

These bounds and the estimate $\blA \bla f^\el\bra^2-\eps^2\la \alpha_\ell\ra^2\bri=O\big(\eps^2 t^{-\kappa}\big)$ that follows from 
\e{3524} imply that the last three terms in \e{3525} may be written as $\eps^2\Gamma_\ell(x)+\eps^2R_\ell(t,x)$ for some continuous function 
$\Gamma_\ell(x)$, which is $O\big(\langle x\rangle^{-5}\big)$ and some remainder $R(t,x)$ satisfying 
$\la R(t,x)\ra=O\big( t^{-\kappa}\lx^{-5}\big)$ (assuming $0<\kappa<\beta$). Modifying 
the definition of that remainder, we get finally
\begin{align*}
\Theta_\ell(t,x)&=\mez \la \dom(x)\ra^\mez\int_{T_0}^t (1-\chi)(\tau^\beta x)\, d\tau\\
&\quad +\frac{\eps^2}{2}\la \dom(x)\ra^{\frac 5 2}\angledom^{-2\ell}\la \alpha_\ell(x)\ra^2\log t\\
&\quad +\eps^2\Gamma(x)+\eps^2 R(t,x)
\end{align*}
when $\ell>3/2$. It follows from this and from \e{3524} that 
\be\label{3526}
\ba
f^\el(t,x)&=\eps \widetilde{\alpha}_\ell (x)\exp\left[ \frac{i}{4|x|}\int_{T_0}^t (1-\chi)\big(\tau^\beta x\big)\, d\tau
+\frac{i}{64}\eps^2\frac{\bla \widetilde{\alpha}_\ell(x)\bra^2}{\la x\ra^{5}}\angledom^{-2\ell} \log t\right]\\
&\quad+\eps t^{-\kappa}\rho(t,x)
\ea
\ee
where $\widetilde{\alpha}_\ell(x)=e^{i\eps^2\Gamma(x)}\alpha_\ell(x)$ and where 
$\rho(t,x)$ is uniformly bounded. If we express $\wl$ from $f^\el$ using \e{3520}, we conclude that the same expansion 
\e{3526} holds for $\wl$ (with a different remainder). Let us compute $w(t,\cdot)=\Oph\big(\lxi^{-\ell}\big) \wl$. 
The action of $\Oph\big(\lxi^{-\ell}\big)$ on the remainder gives a term of the same type, if $\ell$ is large enough. On the other hand, 
by the expression \e{3441}, \e{3442} of $\wl$ from $\wlla$ (and the converse expression), we get that $\Oph\big(\lxi^{-\ell}\big)w=
\Oph\big(\lxi^{-\ell}\big)w_\La$ up to a remainder bounded in $L^\infty(dx)$ by $C\eps t^{-\kappa}$. As $w_\La$ is in 
$h^{-\delta_1'}L^\infty\ti{J}_\La^{0,b'}[K]$, Proposition~\ref{ref:3.1.9} applies and shows that 
$\Oph\big(\lxi^{-\ell}\big)w_\La$ may be written as $\angledom^{-\ell}w_\La$ modulo a remainder in 
$h^{1-\delta'_1} \ti{\mB}_\infty^{-1,b'}[K]\subset h^{\sigma-\delta'_1}\ti{\mB}_\infty^{0,b'}[K]$, which is 
$O(\eps t^{-\kappa})$ in $L^\infty$ for small enough $\kappa>0$ since by \e{333} $\delta'_1<\sigma/8$. Using again the expression of $w_\La$ from $w$ 
deduced from \e{3441}, we deduce that 
$$
\lA w(t,\cdot)-\angledom^{-\ell}\wl(t,\cdot)\rA_{L^\infty}=O(\eps t^{-\kappa}).
$$
If we define $\alpha(x)=\angledom^{-\ell}\widetilde{\alpha}_\ell(x)$ with $\ell$ equal to $b''$, we obtain a function 
continuous and bounded on $\xR$, vanishing like $|x|^{2b''}$ when $x$ goes to zero and such that 
$w(t,x)$ is given by the asymptotic expansion \e{3518}. This concludes the proof.
\end{proof}

We prove now a statement concerning the $Z$-derivatives of $\wl$. Let 
$(A_k'')_{k\ge 1}$ be a sequence of positive numbers satisfying $A_k''\ge A_{k_1}''+A_{k_2}''+A_{k_3}''$ 
if $k_1+k_2+k_3=k$, $k_j<k$, $j=1,2,3$ and $A_k''$ large enough relatively to the constant $C_1$ in \e{3517}.

\begin{prop}\label{ref:354}
There is a constant $C_2>0$ such that, if we set $\widetilde{\delta}_k'=A_k''\eps^2$, for any 
$k,\ell$ with $k+\ell\le \sd +N_0-2$, the solution $\wl$ of \e{351} satisfies 
\be\label{3527}
\blA Z^k \wl(t,\cdot)\bri\le C_2\eps t^{\widetilde{\delta}_k'}.
\ee
\end{prop}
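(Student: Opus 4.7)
The plan is to proceed by induction on $k$, following the normal form scheme of Proposition \ref{ref:353} applied to the system \e{353a} rather than the scalar equation \e{351}. The base case $k=0$ is exactly \e{3517}. Assume \e{3527} holds for all $k' < k$ and every admissible $\ell$; we shall derive it at rank $k$.

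First I would extract from \e{353a} the equation for the last component $Z^k w^{(\ell)}$ of $W^{k,(\ell)}$. By \e{353b} its quadratic part is a linear combination of terms $\theta(xh^{-\beta})\Phi(x)(Z^{k_1}w^{(\ell)})(Z^{k_2}w^{(\ell)})$ (and conjugate variants) with $k_1+k_2 \le k$. I would then build a quadratic correction
\[
f^{k,(\ell)} = Z^k w^{(\ell)} + \sqrt{h}\,(1-\chi_0)(xh^{-\beta}) \sum_{k_1+k_2\le k} c_{k_1,k_2}(x)\,(Z^{k_1}w^{(\ell)})(Z^{k_2}w^{(\ell)}) + \cdots
\]
modeled on \e{3519}, with coefficients chosen so that, after applying $D_t$ and using \e{353a} (which provides $D_t Z^{k_j} w^{(\ell)}$ modulo terms of higher order in $h$), the quadratic terms in the evolution of $f^{k,(\ell)}$ cancel. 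This is the exact analogue of the cancellation in \e{3522}: the nonresonant character of the quadratic interactions (encoded in the lagrangian analysis of Section \ref{S:34}) ensures the coefficients $c_{k_1,k_2}$ are bounded. The key point is that $Z$ commutes with the leading operator $\mez(1-\chi)|d\omega|^{\mez}$ up to $O(h^{\kappa})$ remainders (as already exploited in deriving \e{353a}), so the algebraic structure of the quadratic elimination is unchanged.

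After the transform, $f^{k,(\ell)}$ satisfies an equation of the form
\[
D_t f^{k,(\ell)} = \mez (1-\chi)|d\omega|^{\mez}\Big[1+\tfrac{|d\omega|^2}{t}\langle d\omega\rangle^{-2\ell}|f^{(\ell)}|^2\Big] f^{k,(\ell)} + h\,\mathcal{M}^{k,(\ell)} + h^{1+\kappa}R^{k,(\ell)},
\]
where $\mathcal{M}^{k,(\ell)}$ gathers the cubic contributions in the $Z^{k'}f^{(\ell)}$'s \emph{other} than the diagonal term $|f^{(\ell)}|^2 f^{k,(\ell)}$ already displayed. Each such remaining cubic monomial is of type $Z^{k_1}f^{(\ell)}\cdot Z^{k_2}f^{(\ell)}\cdot Z^{k_3}\overline{f^{(\ell)}}$ (and variants) with $k_1+k_2+k_3\le k$ and, whenever the monomial is not purely diagonal, all $k_j < k$. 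By the induction hypothesis and \e{3520}, $\|Z^{k_j}f^{(\ell)}\|_{L^\infty} = O(\eps\, t^{\tilde\delta'_{k_j}})$ for $k_j<k$, so $\|\mathcal{M}^{k,(\ell)}\|_{L^\infty} \le C \eps^2 t^{\tilde\delta'_{k_1}+\tilde\delta'_{k_2}+\tilde\delta'_{k_3}} \|f^{k,(\ell)}\|_{L^\infty} + C\eps^3 t^{A''_{k_1}+A''_{k_2}+A''_{k_3})\eps^2}$, with the second type bounded by $C\eps^3 t^{\tilde\delta'_k\eps^2}$ thanks to the assumption $A''_k \ge A''_{k_1}+A''_{k_2}+A''_{k_3}$.

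Finally, since the bracket in the leading term is real, $\partial_t |f^{k,(\ell)}|^2$ is controlled by
\[
\partial_t |f^{k,(\ell)}|^2 \le \tfrac{C\eps^2}{t}|f^{k,(\ell)}|^2 + \tfrac{C\eps^2}{t} t^{\tilde\delta'_k}|f^{k,(\ell)}| + C\eps\, t^{-1-\kappa+\tilde\delta'_k},
\]
and Gronwall's inequality then yields $|f^{k,(\ell)}(t,x)| \le C_2 \eps\, t^{A''_k\eps^2}$ provided $A''_k$ is chosen large enough relative to the constant appearing in the cubic coefficient. Inverting the normal form (as in \e{3520}) gives \e{3527} for $Z^k w^{(\ell)}$. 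The main obstacle is the bookkeeping in step two: verifying that the quadratic correction can indeed be constructed so that all quadratic terms in $D_t f^{k,(\ell)}$ cancel, and that the substitution of $Z^{k_j}w^{(\ell)}$ in terms of $Z^{k_j}f^{(\ell)}$ produces only cubic or higher contributions that fit the induction scheme—this requires mimicking precisely the cancellations that occurred in passing from \e{3521} to \e{3522}, now at the level of the iterated vector field system.
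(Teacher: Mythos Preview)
Your overall strategy---quadratic normal form on \e{353a}, induction on $k$, Gronwall---is exactly the one the paper uses. But you overcomplicate the cubic step and introduce an error in the process. The paper performs \emph{only} a quadratic normal form: it sets $\widetilde{W}^{k,(\ell)}=W^{k,(\ell)}-\sqrt{h}\,\widetilde{Q}^{k,(\ell)}$ with $\widetilde{Q}^{k,(\ell)}$ built from the monomials \e{353b} divided by the appropriate multiples of $(1-\chi)\abs{d\omega}^{1/2}$, obtaining an equation with only cubic and remainder terms. It does \emph{not} attempt to single out the term $\abs{f^{(\ell)}}^2 f^{k,(\ell)}$ as a real phase. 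Instead it simply splits the cubic contributions \e{353c} into two groups: those with one index equal to $k$ and the other two equal to $0$ (called $F$, bounded by $B\eps^2\abs{\widetilde{W}_k^{k,(\ell)}}$ using the uniform bound \e{3517}), and those with all $k_j<k$ (called $G$, bounded by the induction hypothesis). Then a direct energy inequality on $\abs{\widetilde{W}_k^{k,(\ell)}}^2$ plus Gronwall gives the result.

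Your attempt to extract $\abs{f^{(\ell)}}^2 f^{k,(\ell)}$ into the bracket and then assert that the remaining $\mathcal{M}^{k,(\ell)}$ has ``all $k_j<k$'' is wrong: applying $Z^k$ to the cubic terms of \e{351} produces, besides $\abs{w^{(\ell)}}^2 Z^k w^{(\ell)}$, also monomials such as $(w^{(\ell)})^2 Z^k\overline{w^{(\ell)}}$, $(\overline{w^{(\ell)}})^2 Z^k w^{(\ell)}$, $(w^{(\ell)})^2 Z^k w^{(\ell)}$, etc., all of which have one index equal to $k$. The term $(w^{(\ell)})^2 Z^k\overline{w^{(\ell)}}$ is in fact resonant (it oscillates on $\Lambda$) and cannot be removed by a cubic normal form either. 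Your final differential inequality does contain the necessary $\tfrac{C\eps^2}{t}\abs{f^{k,(\ell)}}^2$ term, so the conclusion survives, but the justification you give for it is inconsistent with your description of $\mathcal{M}^{k,(\ell)}$. The clean fix is to drop the phase extraction entirely and proceed as the paper does: keep all cubic terms, sort them into $F$ and $G$ as above, and apply Gronwall directly.
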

\begin{rema*}
The gain in \e{3527}, in comparison with \e{339}, 
is that the exponents $\widetilde{\delta}_k'$ depend only on 
the size $\eps$ of the Cauchy data and not on the exponents 
$\delta_k$ that are used in the $L^2$-estimates. In particular, 
taking $\eps$ small enough, we may arrange so that $\widetilde{\delta}_k'\ll \delta_k$. 
\end{rema*}
\begin{proof}
We apply a normal forms method to remove the 
quadratic terms in \e{353a}. For $(k,\ell)$ satisfying 
$k+\ell\le \sd+N_0$, we define a new quadratic 
map $\widetilde{Q}^{k,\el}\Bigl[x,h;W^\kl,\overline{W}^\kl\Bigr]$ 
in the following way: The components of this map 
are defined taking the same linear combinations as those used to define the 
components of $Q^\kl$ from \e{353b} of the quantities
\be\label{3528}
\ba
&\frac{2\theta \xb \Phi(x)}{\unchi \la \dom\ra^{\mez}}\big( Z^{k_1}\wl\big)\big(Z^{k_2}\wl\big)\\
&\frac{-2\theta \xb \Phi(x)}{\unchi \la \dom\ra^{\mez}}\big( Z^{k_1}\wl\big)\big(Z^{k_2}\owl\big)\\
&\frac{-2\theta \xb \Phi(x)}{3\unchi \la \dom\ra^{\mez}}\big( Z^{k_1}\owl\big)\big(Z^{k_2}\owl\big).
\ea
\ee
If we make act $D_t-\mez \unchi \la \dom(x)\ra^\mez$ on each line of \e{3528}, we see that we obtain, using 
\e{353a}, the corresponding line of \e{353b} and the following contributions

$\bullet$ Quantities of the form $\sh \widetilde{\Cr}^\kl \Bigl[x,h;W^\kl,\overline{W}^\kl\Bigr]$ for cubic forms $\widetilde{\Cr}^\kl$ which have the same structure 
\e{353c} as $\Cr^\kl$. 

$\bullet$ Quantities given by the product of $h$ and of homogeneous expressions of order $4$ in 
$$
\big(Z^{k_j}\wl,Z^{k_j}\owl\big),\quad  k_1+\cdots+k_4\le k
$$
with coefficients depending on $x$ which are 
$O(h^{-7\beta})$. If we use that $Z^{k_j}\wl$ satisfies the a priori estimates \e{339}, we see that these contributions may be written as 
$h^{\mez+\kappa}R^\kl$ for some $\kappa>0$ and a bounded function $R^\kl$. 

$\bullet$ Contributions coming from the remainder in \e{353a} or from the action of $D_t$ on the cut-offs in \e{3528}, that may be written also as 
$h^{\mez+\kappa}R^\kl$. 

Consequently, if we set for $k+\ell\le \sd+N_0$,
$$
\widetilde{W}^\kl=W^\kl-\sh\widetilde{Q}^{k,\el}\Bigl[x,h;W^\kl,\overline{W}^\kl\Bigr],
$$
we obtain that $\widetilde{W}^\kl$ satisfies bounds of the form \e{339} and solves the equation
\be\label{3529}
\Bigl( D_t-\mez\unchi \la \dom \ra^\mez\Bigr)\widetilde{W}^\kl=h \Cr^\kl \Bigl[x,h;W^\kl,\overline{W}^\kl\Bigr]
+h^{1+\kappa}\mR^\kl
\ee
where $\Cr^\kl$ is a new cubic map given in terms of monomials of the form \e{353c} and $\mR^\kl$ a uniformly 
bounded remainder. Notice that, up to a modification of $\mR^\kl$, we may replace $W^\kl$ by $\widetilde{W}^\kl$ 
in the argument of $\Cr^\kl$.

Assume by induction that for given $k,\ell$ with $k+\ell\le \sd+N_0$, 
$\ell\ge 2$, \e{3527} has been established with 
$k$ replaced by $k-1$. Then $W^{k-1,\el}$ and $\widetilde{W}^{k-1,\el}$ are under control, and 
we need to obtain \e{3527} for the last component $Z^k\wl$ of $W^\kl$, or equivalently, for the last component $\widetilde{W}_k^\kl$
of $\widetilde{W}^\kl$. We sort the different contributions to 
\be\label{3530}
\Cr^\kl\Bigl[x,h;\widetilde{W}^\kl,\overline{\widetilde{W}}^\kl\Bigr].
\ee
On the one hand, we get terms given by expressions of the form 
\be\label{3531}
\ba
&\theta \xb \Phi(x)\widetilde{W}^\kl_{k_1}\widetilde{W}^\kl_{k_2}\widetilde{W}^\kl_{k_3}\\
&\theta \xb \Phi(x)\widetilde{W}^\kl_{k_1}\widetilde{W}^\kl_{k_2}\overline{\widetilde{W}}^\kl_{k_3}\\
&\theta \xb \Phi(x)\widetilde{W}^\kl_{k_1}\overline{\widetilde{W}}^\kl_{k_2}\overline{\widetilde{W}}^\kl_{k_3}\\
&\theta \xb \Phi(x)\overline{\widetilde{W}}^\kl_{k_1}\overline{\widetilde{W}}^\kl_{k_2}\overline{\widetilde{W}}^\kl_{k_3}
\ea
\ee
where $\theta,\Phi$ satisfy the same conditions as in \e{353c}, so are bounded since $\ell\ge 2$, and where two among $k_1,k_2,k_3$ are zero and the other one 
is equal to $k$. We call $F$ the sum of contributions of that type, so that
$$
\la F(t,x)\ra\le C\blA \widetilde{W}_0^\kl(t,\cdot)\bri ^2 \bla \widetilde{W}^\kl_k(t,x)\bra.
$$
Proposition \ref{ref:353} gives a uniform estimate for $\blA \wl(t,\cdot)\bri$, so also for 
$$
\widetilde{W}_0^\kl=\wl-\sh \widetilde{Q}_0^\kl \Bigl[x,h;W^\kl,\overline{W}^\kl\Bigr].
$$
We conclude that for some constant $B>0$, depending only on the constant $C_1$ in \e{3517},
\be\label{3532}
\la F(t,x)\ra\le B\eps^2 \bla \widetilde{W}_k^\kl(t,x)\bra.
\ee
On the other hand, \e{3530} is also made of terms of the form \e{3531} with
$$
k_1+k_2+k_3\le k,\quad  k_1,k_2,k_3<k.
$$
The assumption of induction, together with the inequality between the constants $A_k$ made in the statement 
of the proposition, imply that the contribution $G$ of these terms satisfies
\be\label{3533}
\la G(t,x)\ra\le C\eps^3 t^{\widetilde{\delta}_k'}.
\ee
We deduce from the equation for the last component $\widetilde{W}^\kl_k$ of $\widetilde{W}^\kl$ given by \e{3529}
\begin{align*}
\bla \widetilde{W}^\kl_k(t,x)\bra^2&\le \bla \widetilde{W}^\kl_k(T_0,x)\bra^2+\int_{T_0}^t |F(\tau,x)|\bla \widetilde{W}^\kl_k(\tau,x)\bra\, \frac{d\tau}{\tau}\\
&\quad +\int_{T_0}^t \bla G(\tau,x)\bra \bla \widetilde{W}_k^\kl (\tau,x)\bra\, \frac{d\tau}{\tau}\\
&\quad +\int_{T_0}^t \bla R_k^\kl(\tau,x)\bra\bla \widetilde{W}_k^\kl(\tau,x)\bra\, \frac{d\tau}{\tau^{1+\kappa}}.
\end{align*}
Using \e{3532}, \e{3533}, and the fact that at $t=T_0$, $\widetilde{W}^\kl_k(T_0,\cdot)$ is $O(\eps)$ we deduce that 
\be\label{3534}
\ba
\bla \widetilde{W}^\kl_k(t,x)\bra&\le C\eps+B\eps^2 \int_{T_0}^t \bla \widetilde{W}^\kl_k(\tau,x)\bra\, \frac{d\tau}{\tau}\\
&\quad +C\eps^2 \int_{T_0}^t \tau^{\widetilde{\delta}_k'-1}\, d\tau+C\eps \int_{T_0}^t \frac{d\tau}{\tau^{1+\kappa}}.
\ea
\ee
If we use Gronwall inequality, and assume that the constant $A_k$ in the definition $\widetilde{\delta}_k'=A_k\eps^2$ of 
$\widetilde{\delta}_k'$ is large enough relatively to $B$, we deduce from \e{3534} that 
$$
\bla \widetilde{W}^\kl_k(t,x)\bra\le C'\eps t^{\widetilde{\delta}_k'}
$$
when $k+\ell\le \sd+N_0$, $\ell\ge 2$. By definition of $\widetilde{W}_k^\kl$, the same inequality holds for 
$Z^k\wl$. Since $w^{(\ell-2)}=\Oph\big(\lxi^{-2}\big)\wl$, we conclude that $\blA Z^k\wl (t,\cdot)\bri$ is 
$O(\eps t^{\widetilde{\delta}_k'})$ when $k+\ell\le \sd+N_0-2$. This concludes the proof of the proposition. 
\end{proof}

To finish this section, we deduce from the results established so far the proof of Theorem~\ref{ref:132}. This will conclude the demonstration of our main 
theorem. 

\begin{proof}[Proof of Theorem~\ref{ref:132}]
We notice first that it is enough to prove the following apparently weaker statement: Assume that for some constants $B_2>0$,
$\tilde{A}_0>0$, any $t\in [T_0,T[$, any $\epsilon\in ]0,1]$, any $k\leq s_1$
\begin{equation}
  \label{3534a}
  \begin{split}
    M_s^{(k_1)}(t)\leq B_2\epsilon t^{\delta_k},\ N_\rho^{(s_0)}(t)\leq\sqrt{\epsilon}<1,\\
\norm{\eta(t,\cdot)}_{C^\gamma} + \norm{\abs{D_x}^{1/2}\psi(t,\cdot)}_{C^{\gamma-\frac{1}{2}}} \leq \tilde{A}_0t^{-\frac{1}{2}+\delta'_0}.
  \end{split}
\end{equation}
Then, \e{138} holds.

Actually, if the preceding implication is proved with $\rho\geq\gamma$, and if we assume only \e{137}, then \e{3534a} holds
true on some interval $[T_0,T']$, $T'>T_0$ taking $\tilde{A}_0$ large enough in function of $T_0$ (because the last condition
in \e{3534a} follows then from the second one, taking $T'$ close enough to $T_0$). We conclude that \e{138} holds on
$[T_0,T']$, and taking $\epsilon<\epsilon_0'$ small enough so that $\epsilon B_\infty <\tilde{A}_0$, we see that, by
continuity, \e{3534a} holds on some interval $[T_0,T'']$ with $T''>T'$. By bootstrap, we conclude that \e{138} will then be
true on the whole interval $[T_0,T]$.

Consequently, we have reduced ourselves to the proof of the fact that \e{3534a} implies \e{138}.

Recall that we have fixed in \e{334} large enough numbers $a,b$. We introduced also at the beginning of Section~\ref{S:33} integers 
$N_0,N_1$ and we assumed in Proposition~\ref{ref:342} that $(N_1-N_0-1)\sigma\ge 1$. 
Let us fix $\gamma\in ]\max (7/2,b),+\infty[\setminus 
\mez \xN$, and assume that $N_0$ is taken large enough so that 
$N_0\ge 2\gamma+\frac{13}{2}$. We define 
\be\label{3535}
s_1=\sd+N_1+1,\quad s_0=\sd+N_0-3-[\gamma]
\ee
where $s$ is an even integer taken large enough so that the following conditions hold 
\be\label{3536}
s\ge s_1\ge s_0\ge \mez (s+2\gamma)
\ee
and that moreover
\be\label{3537}
s_1\le s-a-\mez.
\ee
We set $\rho=s_0+\gamma$. 
It follows from equation (5.2.157) \todo{Ref Compagnon. ATTENTION : la ref est \`a corriger dans l'article compagnon et ici} of the companion paper~\cite{AlDel}  
that if 
$\mathcal{C}_{s_0}N_\rho^{(s_0)}=C\bigl( N_\rho^{(s_0)}\bigr)N_\rho^{(s_0)}$ 
is small enough, we have for any $k\le s_1$
$$
\blA Z^k \eta(t)\brA_{H^{s-k}}+\blA \Dxmez Z^k \psi(t)\brA_{H^{s-k-\mez}}\le B_2\eps t^{\delta_k}
$$
for a new value of the constant $B_2$. 
The smallness condition above 
is satisfied for $\eps<\eps_0'\ll 1$ using 
the second estimate \e{3534a}. Since we have set at the beginning of Section~\ref{S:32}
$$
u(t,x)=\Dxmez\psi+i\eta \text{ and }u(t,x)=\frac{1}{\sqrt{t}}v\left(t,\frac{x}{t}\right)
$$
it follows, denoting by the same notation $Z$ the vector field in $(t,x)$ and in $(t,\frac{x}{t})$-coordinates, that 
$$
\blA (hD_x)^\ell Z^{k'}v(t,\cdot)\brA_{L^2}\le B_2\eps t^{\delta_k}
$$
for $k'\le s_1$, $\ell\le a$ since $s_1+a\le s-\mez$. This, together with the definition \e{336} of $\Fr_k$, 
shows that the second condition in \e{337} holds with $k=s_1-1$. The first condition \e{337} holds because of the second estimate \e{3534a} and the fact that 
$\rho\ge \gamma>b$. Consequently, Proposition~\ref{ref:3.3.1} implies that \e{339} holds for any 
$k\le s_1-1=\sd+N_1$, with constants $A_{k'}'$ depending only on $B_2$ in \e{3534a}. The assumption \e{341} is thus satisfied, and since 
we assumed $(N_1-N_0-1)\sigma\ge 1$, we may apply Proposition~\ref{ref:342} and Corollary~\ref{ref:348} which provides development 
\e{3435}. This development is the assumption that allows one to apply the results of Section~\ref{S:35}: in particular inequality 
\e{3527} will hold, with a constant $C_2$ depending only on the constant $B_2$ of \e{3534a} (and of universal quantities). 
If $B_\infty$ is taken large enough relatively to $B_2$ and if $B_\infty'$ is larger than the constant $A_{s_0+[\gamma]+1}''$ introduced in 
Proposition~\ref{ref:354}, we deduce from \e{3527}
$$
\blA (h\px)^\ell Z^k w(t,\cdot)\bri \le \mez B_\infty \eps t^{B_\infty'\eps^2}
$$
for $k+\ell\le s_0+[\gamma]+1$ (since $\sd+N_0-2= s_0+[\gamma]+1$ by our choice \e{3535} of $s_0$). Coming back to the expression 
of $u=\Dxmez\psi+i\eta$ from $t^{-\mez} v$, and using that by definition $\rho=s_0+\gamma$ this will give the bound
\be\label{3537a}
N_\rho^{(s_0)}(t)\le \mez B_\infty \eps t^{-\mez+B_\infty'\eps^2}
\ee
if we prove that in the decomposition $v=v_L+w+v_H$, the contributions $v_L$ and $v_H$ satisfy also a bound of the form
\be\label{3538}
\blA Z^k v_H(t,\cdot)\brA_{\eC{\rho-k}}+\blA Z^k v_L(t,\cdot)\brA_{\eC{\rho-k}}\le \frac{1}{4} B_\infty\eps t^{B_\infty'\eps^2} 
\ee
if $k\le s_0$. Since our assumption \e{3534a} implies that \e{337} holds (with constants $A_{k'}$ depending only on $B_2$), for 
$k'\le s_1$, we deduce from \e{336} and the definition \e{3218} of $v_L$ that 
$$
\blA Z^k v_L(t,\cdot)\brA_{L^2}\le \eps A_k h^{-\delta_k}, \quad k\le s_1.
$$
Since $v_L$ is spectrally supported for $h|\xi|=O\big(h^{2(1-\sigma)}\big)$, we deduce from that by Sobolev injection that
\be\label{3539}
\blA Z^k v_L(t,\cdot)\bri =O\big(\eps h^{-\delta_k+\mez-\frac{\sigma}{2}}\big)
\ee
with constants depending only on $B_2$, which gives for $v_L$ a better estimate than the one \e{3538} we are looking for 
(since $v_L$ is spectrally supported for small frequencies, estimating $L^\infty$ or $\eC{\rho-k}$ norms is equivalent). 

Consider next the $v_H$-contribution. As \e{339} holds for $k=s_1-1$ with constants depending only on $B_2$, we may write for any 
$j\ge j_0(h,C)$, any $k,\ell$ with $k+\ell\le s_1-1$,
$$
\blA \Djh (hD)^\ell Z^kv_H(t,\cdot)\bri \le C\eps 2^{-j_+b}h^{-\delta_k'}.
$$
This holds in particular for $k+\ell \le s_0+\gamma+1$ as $s_0+\gamma+1\le s_1-1$ 
by \e{3535}. Since $v_H$ is spectrally supported for $|h\xi|\ge ch^{-\beta}$, we conclude that
\be\label{3540}
\blA Z^k v_H(t,\cdot)\brA_{\eC{\rho-k}}\le C \eps h^{b\beta-\delta_k'}\le C\eps h^{2-\delta_k'}
\ee
with a constant $C$ depending only on $B_2$, as we assumed in \e{334} that $b\beta>2$. This largely 
implies estimate \e{3538} for $v_H$, and so concludes the proof of \e{3537a}. 

We thus have obtained the first inequality \e{138}. We are left with showing the second estimate. This follows from \e{3517} that holds 
for $\ell\le \sd+N_0$, so for $\ell\le s_0+\gamma+1$. This concludes the proof of Theorem~\ref{ref:132}. 
\end{proof}

{\sc{Final remark on the proof of Theorem~\ref{ref:122}:}} In Section~\ref{S:13}, we did not justify the asymptotic expansion \e{127} of 
$u(t,x)=\frac{1}{\sqrt{t}}v\big(t,\frac{x}{t}\big)$. This follows from \e{3518a}, since we have seen in the proof above that in the decomposition $v=v_L+w+v_H$, 
$v_L$ and $v_H$ are $O(\eps t^{-\kappa})$ for some $\kappa>0$ (see \e{3539} and \e{3540}).

\appendix



\renewcommand{\thesection}{A}
\renewcommand{\theequation}{\thesection.\arabic{equation}}

\section{Appendix: Semi-classical pseudo-differential operators}\label{S:A.2}

We recall here some definitions and results concerning semi-classical pseudo-differential operators 
in one dimension. We refer to the books of Dimassi-Sj\"ostrand~\cite{DiSj}  Martinez~\cite{Ma} and Zworski~\cite{Zw}. 

Let $h$ be a parameter in $]0,1]$. An order function $m$ is a 
function $m\colon (x,\xi)\mapsto m(x,\xi)$ from $T^*\xR$ (identified with $\xR\times \xR$) 
to $\xR_{+}$, smooth, such that there are constants $N_{0}\in\xN$, $C_{0}>0$ with
$$
m(x,\xi)\le C_0(1+|x-y|+|\xi-\eta|)^{N_0}m(y,\eta)
$$
for any $(x,\xi)$, $(y,\eta)$ in $T^*\xR$. 

\begin{defi}\label{ref:A.2.1}
Let $m$ be an order function on $T^*\xR$. One denotes\index{Symbols!$S(m)$} by $S(m)$ the set of functions $a\colon T^*\xR\times ]0,1]\rightarrow \xC$, 
$(x,\xi,h)\mapsto a(x,\xi,h)$ such that for any $(\alpha,\beta)$ in $\xN\times \xN$, there is $C_{\alpha\beta}>0$, 
and for any $(x,\xi)\in T^*\xR$, any $h$ in $]0,1]$
$$
\la \px^\alpha\partial_\xi^\beta a(x,\xi,h)\ra\le C_{\alpha\beta}m(x,\xi).
$$
\end{defi}

If $(u_h)_h$ is a family indexed by $h\in]0,1]$ of elements of $\mathcal{S}'(\xR)$, and $a\in S(m)$, we define a family of elements of $\mathcal{S}'(\xR)$ by\index{Pseudo-differential operators!$\Oph(a)$, semi-classical operators}
\be\label{A21}
\Oph(a)u_h=\frac{1}{2\pi}\int_\xR e^{ix\xi}a(x,h\xi,h)\widehat{u_h}(\xi)\, d\xi.
\ee
If $m\equiv 1$, $\Oph(a)$ is a bounded family indexed by $h\in ]0,1]$ of bounded operators in $L^2(\xR)$. If moreover 
$\xi\mapsto a(x,\xi,h)$ is supported in a compact subset independent of $(x,h)$, the kernel of 
$\Oph(a)$, is
$$
K_h(x,y)=\frac{1}{h} k_h\left(x,\frac{x-y}{h}\right)
$$
where $k_h(x,z)=(\mathcal{F}_\xi^{-1} a)(x,z,h)$ is a smooth function satisfying 
estimates $\la\px^\alpha\partial_z^\beta k_h(x,z)\ra\le C_{\alpha\beta N}(1+|z|)^{-N}$ for any 
$\alpha,\beta,N$ so that $\Oph(a)$ is uniformly bounded on any $L^p$-space, $p\in [1,\infty]$. 

Let us recall the main result of symbolic calculus (Theorem~$7.9$,  
Proposition~$7.7$, formulas 
(7.16) and (7.3) in \cite{DiSj}).

\begin{theo}\label{ref:A.2.2}
Let $m_1,m_2$ be two order functions, $a_j$ an 
element of $S(m_j)$, $j=1,2$. There is an element $a_1\# a_2$ of $S(m_1m_2)$ such that 
$\Oph(a_1\# a_2)=\Oph(a_1)\Oph(a_2)$. Moreover, one has the expansion
\be\label{A22}
a_1\# a_2 -\sum_{j=0}^N \frac{1}{j!} \left(\frac{h}{i}\right)^j 
(\partial_\xi^j a_1)(\partial_x^j a_2)\in h^{N+1} S(m_1m_2).
\ee
Let $m$ be an order function, $a$ an element 
of $S(m)$. There is $b$ in $S(m)$ such that 
$\Oph(a)^*=\Oph(b)$. Moreover, 
$b=\bar{a}+hb_1$ with $b_1$ in $S(m)$. 
\end{theo}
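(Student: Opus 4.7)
The plan is to proceed by the standard oscillatory integral approach, treating the two statements in succession.

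For the composition formula, I would first write the kernel of $\Oph(a_1)\Oph(a_2)$ acting on a Schwartz function $u$ as an iterated oscillatory integral. Substituting the definition \eqref{A21} twice gives
\begin{equation*}
\Oph(a_1)\Oph(a_2)u(x) = \frac{1}{(2\pi h)^2}\iiiint e^{i[(x-y)\xi + (y-z)\eta]/h} a_1(x,\xi,h)\, a_2(y,\eta,h)\, u(z)\, dy\, d\eta\, dz\, d\xi.
\end{equation*}
The key step is to reduce this to the form $\Oph(c)u$. I would perform the $\eta$-integration after changing variable $\eta \mapsto \xi+\eta$ and $y\mapsto x+y$, obtaining a symbol candidate
\begin{equation*}
c(x,\xi,h) = \frac{1}{2\pi h}\iint e^{-iy\eta/h} a_1(x,\xi+\eta,h)\, a_2(x+y,\xi,h)\, dy\, d\eta.
\end{equation*}
The main work is to show $c \in S(m_1 m_2)$ together with the asymptotic expansion. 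Here one Taylor-expands $a_1(x,\xi+\eta,h)$ in $\eta$ around $\eta=0$ to order $N$, with integral remainder. Each polynomial term $\eta^j/j!$ combines with the oscillation $e^{-iy\eta/h}$ to produce, after integration by parts in $y$, the factor $(h/i)^j \partial_x^j a_2(x,\xi,h)$, which yields the $j$-th term of \eqref{A22}. The remainder of order $N+1$ must be bounded in $S(m_1 m_2)$: this is the main obstacle, because one has to control the non-stationary oscillatory integral
\begin{equation*}
\frac{1}{2\pi h}\iint e^{-iy\eta/h}\, \eta^{N+1} R_{N+1}(x,\xi,\eta,h)\, a_2(x+y,\xi,h)\, dy\, d\eta,
\end{equation*}
uniformly in $(x,\xi)$. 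The standard device is to write $\eta^{N+1} e^{-iy\eta/h} = (ih)^{N+1}\partial_y^{N+1} e^{-iy\eta/h}$, integrate by parts $N+1$ times to transfer the derivatives onto $a_2(x+y,\xi,h)$, and then insert cutoff/weight tricks (using $(1+y^2/h)^{-M}$ factors obtained from $(1-h\partial_\eta^2)$ integrations by parts in $\eta$) to make the $(y,\eta)$-integral absolutely convergent and uniformly $O(h^{N+1} m_1(x,\xi) m_2(x,\xi))$. The order function estimate \[m_1(x,\xi+\eta)m_2(x+y,\xi) \le C m_1(x,\xi) m_2(x,\xi)(1+|y|+|\eta|)^{N_0+N_0'}\] makes this uniform.

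For the adjoint statement, I would compute the integral kernel $K(x,y)$ of $\Oph(a)$ from \eqref{A21}, observe that the kernel of $\Oph(a)^*$ is $\overline{K(y,x)}$, and read off that $\Oph(a)^* = \Oph(b)$ with
\begin{equation*}
b(x,\xi,h) = \frac{1}{2\pi h}\iint e^{-iy\eta/h}\overline{a}(x+y,\xi+\eta,h)\, dy\, d\eta.
\end{equation*}
Taylor expanding $\overline{a}(x+y,\xi+\eta,h)$ around $(x,\xi)$ to order one and using exactly the same integration-by-parts reasoning as above gives $b = \overline{a} + h b_1$ with $b_1 \in S(m)$. No separate argument is needed; this is a specialization of the same non-stationary phase analysis.

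The only truly substantive step is therefore the uniform bound on the remainder in $S(m_1 m_2)$, and by differentiating the defining integral $\partial_x^\alpha \partial_\xi^\beta c$ under the integral sign, one sees that every derivative estimate reduces to the same kind of oscillatory integral bound; so the whole argument is a single non-stationary phase estimate carried out with care on the order function weights.
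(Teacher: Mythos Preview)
The paper does not actually prove this theorem: it is stated as a recall of standard symbolic calculus, with explicit citations to Theorem~7.9, Proposition~7.7, and formulas (7.16), (7.3) of Dimassi--Sj\"ostrand~\cite{DiSj}. Your sketch is the standard oscillatory-integral argument that one finds in that reference (and in Martinez~\cite{Ma}, Zworski~\cite{Zw}), so there is nothing to compare against in the paper itself; your outline is correct and is essentially the proof the cited sources give.
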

\begin{coro}\label{ref:A.2.3}
Let $m$ be an order function such that $m^{-1}$ is also an order function. Let $a$ be in $S(1)$, $e$ be in $S(m)$ and 
assume that $e\ge cm$ for some $c>0$ on a neighborhood of the support of $a$. 
Then for any $N\in\xN$, there are $q\in S(m^{-1})$, $r\in S(1)$ such that $a=e\# q+h^Nr$ (resp.\ $a=q\# e+h^N r$). 
Moreover, we may write $q=q_0+h q_1$ where $q_0,q_1$ are in $S(m^{-1})$ and $q_0=\frac{a}{e}$.
\end{coro}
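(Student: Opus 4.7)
\textsl{Proof proposal.} The plan is to construct $q$ as a finite asymptotic expansion $q = q_0 + h q_1 + \cdots + h^{N-1} q_{N-1}$ by an iterative parametrix-type argument, with each $q_j \in S(m^{-1})$, and read off the stated form $q = q_0 + h q_1$ by absorbing all higher-order terms into the second summand.

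First I would fix a cut-off $\chi \in C^\infty(T^*\R)$ equal to $1$ on $\mathrm{supp}(a)$ and supported in the neighborhood where $e \geq cm$; standard estimates show $\chi \in S(1)$ (choosing $\chi$ with uniformly bounded derivatives, which is possible since $m$ is an order function). On $\mathrm{supp}(\chi)$ we have the lower bound $e \geq cm$, and differentiating $1/e$ repeatedly one checks by induction that $\chi/e \in S(m^{-1})$ (use that each $\partial_x^\alpha \partial_\xi^\beta e \in S(m)$ and the quotient rule, combined with $e^{-2} \leq c^{-2} m^{-2}$; here it is crucial that $m^{-1}$ is itself an order function). Setting $q_0 = \chi a/e$, we get $q_0 \in S(m^{-1})$, and moreover $e q_0 = \chi a = a$ identically since $\chi \equiv 1$ on $\mathrm{supp}(a)$.

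Next I would apply Theorem~\ref{ref:A.2.2} to expand $e \# q_0$ to order $N$:
\[
e \# q_0 = e q_0 + \sum_{j=1}^{N-1} \frac{1}{j!} \Big(\frac{h}{i}\Big)^j (\partial_\xi^j e)(\partial_x^j q_0) + h^N \rho_N,
\]
with $\rho_N \in S(1)$. Since $e q_0 = a$, we obtain $a - e \# q_0 = h \cdot a^{(1)} + h^N \rho_N$ where $a^{(1)} \in S(1)$ (each term $(\partial_\xi^j e)(\partial_x^j q_0)$ lies in $S(m) \cdot S(m^{-1}) = S(1)$). Applying the same construction inductively to $a^{(1)}$ produces $q^{(1)} \in S(m^{-1})$ with $a^{(1)} - e \# q^{(1)} = h \cdot a^{(2)} + h^{N-1} \rho_N'$, etc. After $N$ iterations one gets symbols $q_0, q_1, \ldots, q_{N-1} \in S(m^{-1})$ and $r \in S(1)$ such that, setting $\tilde q = q_0 + h q_1 + \cdots + h^{N-1} q_{N-1}$, one has $a = e \# \tilde q + h^N r$. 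Grouping $q_1 + h q_2 + \cdots + h^{N-2} q_{N-1}$ into a single symbol of $S(m^{-1})$ (a finite sum with $h$-powers) yields the form $q = q_0 + h q_1$ of the statement, with $q_0 = \chi a/e$; after possibly absorbing the $\chi$-cutoff modification into $r$ one obtains the claimed equality $q_0 = a/e$ on a neighborhood of $\mathrm{supp}(a)$.

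The right-multiplication case $a = q \# e + h^N r$ is handled identically using the $\partial_x^j$ versus $\partial_\xi^j$ swap in the symbolic expansion, or more directly by taking adjoints via the second part of Theorem~\ref{ref:A.2.2}. The main technical point—really the only one requiring care—is the first step: verifying that $\chi/e$ genuinely belongs to $S(m^{-1})$, including all derivatives, which rests on the assumption that $m^{-1}$ is an order function (so the $m^{-1}$-class is stable under the Leibniz differentiations that arise) together with the lower bound $e \geq cm$ on $\mathrm{supp}(\chi)$. Everything else is a standard Neumann-type iteration using the asymptotic expansion \eqref{A22}.
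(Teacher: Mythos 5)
Your proposal is correct and follows essentially the same route as the paper: define $q_0=a/e$ (the paper does this directly, you add the harmless cutoff $\chi$, which changes nothing since $\chi a=a$), use the expansion of Theorem~\ref{ref:A.2.2} to see that $a-e\# q_0$ is $h a^{(1)}+h^N\rho_N$ with $a^{(1)}\in S(1)$ supported in $\supp a$, and iterate. No substantive difference.
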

\begin{proof}
We define $q_0=\frac{a}{e}$, which is an element of $S(m^{-1})$ by assumption. 
Then Theorem~\ref{ref:A.2.2} shows that $a-e\# q_0$ (resp.\ $a-q_0\# e$) may be written 
$ha_1+h^N r_0$ with $a_1$ in $S(1)$, $\supp a_1 \subset \supp a$. We iterate the construction to 
get the result.
\end{proof}

\addcontentsline{toc}{chapter}{Bibliography}




\begin{thebibliography}{10}

\bibitem{ABZ3}
T.~Alazard, N.~Burq, and C.~Zuily.
\newblock On the {C}auchy problem for gravity water waves.
\newblock arXiv:1212.0626.

\bibitem{ABZ1}
T.~Alazard, N.~Burq, and C.~Zuily.
\newblock On the water-wave equations with surface tension.
\newblock {\em Duke Math. J.}, 158(3):413--499, 2011.

\bibitem{ABZ2}
T.~Alazard, N.~Burq, and C.~Zuily.
\newblock Strichartz estimates for water waves.
\newblock {\em Ann. Sci. \'Ec. Norm. Sup\'er. (4)}, 44(5):855--903, 2011.

\bibitem{Bertinoro}
T.~Alazard, N.~Burq, and C.~Zuily.
\newblock The water-wave equations: from {Z}akharov to {E}uler.
\newblock In {\em Studies in Phase Space Analysis with Applications to PDEs},
  pages 1--20. Springer, 2013.

\bibitem{AlDel}
T.~Alazard and J.-M. Delort.
\newblock Sobolev estimates for two dimensional gravity water waves.
\newblock Preprint, 2013.

\bibitem{AM}
T.~Alazard and G.~M{\'e}tivier.
\newblock Paralinearization of the {D}irichlet to {N}eumann operator, and
  regularity of three-dimensional water waves.
\newblock {\em Comm. Partial Differential Equations}, 34(10-12):1632--1704,
  2009.

\bibitem{Alipara}
S.~Alinhac.
\newblock Paracomposition et op\'erateurs paradiff\'erentiels.
\newblock {\em Comm. Partial Differential Equations}, 11(1):87--121, 1986.

\bibitem{ASL}
B.~Alvarez-Samaniego and D.~Lannes.
\newblock Large time existence for 3{D} water-waves and asymptotics.
\newblock {\em Invent. Math.}, 171(3):485--541, 2008.

\bibitem{Beale}
J.~T. Beale.
\newblock Large-time regularity of viscous surface waves.
\newblock {\em Arch. Rational Mech. Anal.}, 84(4):307--352, 1983/84.

\bibitem{BO}
T.~B. Benjamin and P.~J. Olver.
\newblock Hamiltonian structure, symmetries and conservation laws for water
  waves.
\newblock {\em J. Fluid Mech.}, 125:137--185, 1982.

\bibitem{BG}
K.~Beyer and M.~G{\"u}nther.
\newblock On the {C}auchy problem for a capillary drop. {I}. {I}rrotational
  motion.
\newblock {\em Math. Methods Appl. Sci.}, 21(12):1149--1183, 1998.

\bibitem{Bourdaud}
G.~Bourdaud.
\newblock Realizations of homogeneous {S}obolev spaces.
\newblock {\em Complex Var. Elliptic Equ.}, 56(10-11):857--874, 2011.

\bibitem{BGSW}
B.~Buffoni, M.~D. Groves, S.-M. Sun, and E.~Wahl{\'e}n.
\newblock Existence and conditional energetic stability of three-dimensional
  fully localised solitary gravity-capillary water waves.
\newblock {\em J. Differential Equations}, 254(3):1006--1096, 2013.

\bibitem{CCFGGS-arxiv}
A.~Castro, D.~C{\'o}rdoba, C.~Fefferman, F.~Gancedo, and J.~G{\'o}mez-Serrano.
\newblock Finite time singularities for the free boundary incompressible
  {E}uler equations.
\newblock arXiv:1112.2170.

\bibitem{CCFGGS-PNAS}
A.~Castro, D.~C{\'o}rdoba, C.~L. Fefferman, F.~Gancedo, and
  J.~G{\'o}mez-Serrano.
\newblock Splash singularity for water waves.
\newblock {\em Proc. Natl. Acad. Sci. USA}, 109(3):733--738, 2012.

\bibitem{CCFGLF}
A.~Castro, D.~C{\'o}rdoba, C.~L. Fefferman, F.~Gancedo, and
  M.~L{\'o}pez-Fern{\'a}ndez.
\newblock Turning waves and breakdown for incompressible flows.
\newblock {\em Proc. Natl. Acad. Sci. USA}, 108(12):4754--4759, 2011.

\bibitem{Cauchy}
A.~L. Cauchy.
\newblock Th\'eorie de la propagation des ondes \`a la surface d'un fluide
  pesant d'une profondeur ind\'efinie.
\newblock p.5-318. M{\'e}moires pr{\'e}sent{\'e}s par divers savants {\`a}
  l'Acad{\'e}mie royale des sciences de l'Institut de France et imprim{\'e}s
  par son ordre. Sciences math{\'e}matiques et physiques. Tome I, imprim{\'e}
  par autorisation du Roi {\`a} l'Imprimerie royale; 1827. Disponible sur le
  site http://mathdoc.emath.fr/.

\bibitem{CMSW}
R.~M. Chen, J.~L. Marzuola, D.~Spirn, and J.~D. Wright.
\newblock On the regularity of the flow map for the gravity-capillary
  equations.
\newblock {\em J. Funct. Anal.}, 264(3):752--782, 2013.

\bibitem{CHS}
H.~Christianson, V.~M. Hur, and G.~Staffilani.
\newblock Strichartz estimates for the water-wave problem with surface tension.
\newblock {\em Comm. Partial Differential Equations}, 35(12):2195--2252, 2010.

\bibitem{CCG}
A.~C{\'o}rdoba, D.~C{\'o}rdoba, and F.~Gancedo.
\newblock Interface evolution: water waves in 2-{D}.
\newblock {\em Adv. Math.}, 223(1):120--173, 2010.

\bibitem{CS2}
D.~Coutand and S.~Shkoller.
\newblock On the finite-time splash and splat singularities for the 3-d
  free-surface {E}uler equations.
\newblock arXiv:1201.4919.

\bibitem{CS}
D.~Coutand and S.~Shkoller.
\newblock Well-posedness of the free-surface incompressible {E}uler equations
  with or without surface tension.
\newblock {\em J. Amer. Math. Soc.}, 20(3):829--930 (electronic), 2007.

\bibitem{Craig1985}
W.~Craig.
\newblock An existence theory for water waves and the {B}oussinesq and
  {K}orteweg-de {V}ries scaling limits.
\newblock {\em Comm. Partial Differential Equations}, 10(8):787--1003, 1985.

\bibitem{Craig96}
W.~Craig.
\newblock Birkhoff normal forms for water waves.
\newblock In {\em Mathematical problems in the theory of water waves ({L}uminy,
  1995)}, volume 200 of {\em Contemp. Math.}, pages 57--74. Amer. Math. Soc.,
  Providence, RI, 1996.

\bibitem{Craig02}
W.~Craig.
\newblock Non-existence of solitary water waves in three dimensions.
\newblock {\em R. Soc. Lond. Philos. Trans. Ser. A Math. Phys. Eng. Sci.},
  360(1799):2127--2135, 2002.
\newblock Recent developments in the mathematical theory of water waves
  (Oberwolfach, 2001).

\bibitem{CSS}
W.~Craig, U.~Schanz, and C.~Sulem.
\newblock The modulational regime of three-dimensional water waves and the
  {D}avey-{S}tewartson system.
\newblock {\em Ann. Inst. H. Poincar\'e Anal. Non Lin\'eaire}, 14(5):615--667,
  1997.

\bibitem{CrSu}
W.~Craig and C.~Sulem.
\newblock Numerical simulation of gravity waves.
\newblock {\em J. Comput. Phys.}, 108(1):73--83, 1993.

\bibitem{CrSuSu}
W.~Craig, C.~Sulem, and P.-L. Sulem.
\newblock Nonlinear modulation of gravity waves: a rigorous approach.
\newblock {\em Nonlinearity}, 5(2):497--522, 1992.

\bibitem{CW}
W.~Craig and C.~E. Wayne.
\newblock Mathematical aspects of surface waves on water.
\newblock {\em Uspekhi Mat. Nauk}, 62(3(375)):95--116, 2007.

\bibitem{Craik}
A.~D.~D. Craik.
\newblock {\em Wave interactions and fluid flows}.
\newblock Cambridge Monographs on Mechanics and Applied Mathematics. Cambridge
  University Press, Cambridge, 1988.

\bibitem{Darrigol}
O.~Darrigol.
\newblock {\em Worlds of flow}.
\newblock Oxford University Press, New York, 2005.
\newblock A history of hydrodynamics from the Bernoullis to Prandtl.

\bibitem{Delort}
J.-M. Delort.
\newblock Existence globale et comportement asymptotique pour l'\'equation de
  {K}lein-{G}ordon quasi lin\'eaire \`a donn\'ees petites en dimension 1.
\newblock {\em Ann. Sci. \'Ecole Norm. Sup. (4)}, 34(1), 2001.
\newblock Erratum : Ann. Sci. \'Ecole Norm. Sup. (4) 39 (2006), no. 2,
  335--345.

\bibitem{DiSj}
M.~Dimassi and J.~Sj{\"o}strand.
\newblock {\em Spectral asymptotics in the semi-classical limit}, volume 268 of
  {\em London Mathematical Society Lecture Note Series}.
\newblock Cambridge University Press, Cambridge, 1999.

\bibitem{GMS2}
P.~Germain, N.~Masmoudi, and J.~Shatah.
\newblock Global existence for capillary water waves.
\newblock arXiv:1210.1601.

\bibitem{GMS}
P.~Germain, N.~Masmoudi, and J.~Shatah.
\newblock Global solutions for the gravity water waves equation in dimension 3.
\newblock {\em Ann. of Math. (2)}, 175(2):691--754, 2012.

\bibitem{GT}
Y.~Guo and I.~Tice.
\newblock Decay of viscous surface waves without surface tension.
\newblock arXiv:1011.5179.

\bibitem{HayashiNaumkin}
N.~Hayashi and P.~I. Naumkin.
\newblock Asymptotics of small solutions to nonlinear {S}chr\"odinger equations
  with cubic nonlinearities.
\newblock {\em Int. J. Pure Appl. Math.}, 3(3):255--273, 2002.

\bibitem{Hormander}
L.~H{\"o}rmander.
\newblock {\em Lectures on nonlinear hyperbolic differential equations},
  volume~26 of {\em Math\'ematiques \& Applications (Berlin) [Mathematics \&
  Applications]}.
\newblock Springer-Verlag, Berlin, 1997.

\bibitem{Hur1}
V.~M. Hur.
\newblock No solitary waves exist on 2d deep water.
\newblock arXiv:1209.1926.

\bibitem{IonescuPusateri}
A.~Ionescu and F.~Pusateri.
\newblock Global solutions for the gravity water waves system in 2d.
\newblock arXiv:1303.5357.

\bibitem{IonescuPusateri0}
A.~Ionescu and F.~Pusateri.
\newblock Nonlinear fractional {S}chr\"odinger equations in one dimension.
\newblock arXiv:1209.4943.

\bibitem{IP-SW1}
G.~Iooss and P.~Plotnikov.
\newblock Multimodal standing gravity waves: a completely resonant system.
\newblock {\em J. Math. Fluid Mech.}, 7(suppl. 1):S110--S126, 2005.

\bibitem{IP}
G.~Iooss and P.~I. Plotnikov.
\newblock Small divisor problem in the theory of three-dimensional water
  gravity waves.
\newblock {\em Mem. Amer. Math. Soc.}, 200(940):viii+128, 2009.

\bibitem{Kl2}
S.~Klainerman.
\newblock Global existence of small amplitude solutions to nonlinear
  {K}lein-{G}ordon equations in four space-time dimensions.
\newblock {\em Comm. Pure Appl. Math.}, 38(5):631--641, 1985.

\bibitem{Kl}
S.~Klainerman.
\newblock Uniform decay estimates and the {L}orentz invariance of the classical
  wave equation.
\newblock {\em Comm. Pure Appl. Math.}, 38(3):321--332, 1985.

\bibitem{LannesBourbaki}
D.~Lannes.
\newblock Space time resonances [after {G}ermain, {M}asmoudi, {S}hatah].
\newblock S{\'e}minaire BOURBAKI 64{\`e}me ann{\'e}e, 2011-2012, no 1053.

\bibitem{LannesLivre}
D.~Lannes.
\newblock Water waves: mathematical analysis and asymptotics.
\newblock To appear.

\bibitem{LannesJAMS}
D.~Lannes.
\newblock Well-posedness of the water-waves equations.
\newblock {\em J. Amer. Math. Soc.}, 18(3):605--654 (electronic), 2005.

\bibitem{LannesKelvin}
D.~Lannes.
\newblock A {S}tability {C}riterion for {T}wo-{F}luid {I}nterfaces and
  {A}pplications.
\newblock {\em Arch. Ration. Mech. Anal.}, 208(2):481--567, 2013.

\bibitem{LindbladAnnals}
H.~Lindblad.
\newblock Well-posedness for the motion of an incompressible liquid with free
  surface boundary.
\newblock {\em Ann. of Math. (2)}, 162(1):109--194, 2005.

\bibitem{Ma}
A.~Martinez.
\newblock {\em An introduction to semiclassical and microlocal analysis}.
\newblock Universitext. Springer-Verlag, New York, 2002.

\bibitem{MR}
N.~Masmoudi and F.~Rousset.
\newblock Uniform regularity and vanishing viscosity limit for the free surface
  {N}avier-{S}tokes equations.
\newblock arXiv:1202.0657.

\bibitem{MRT}
M.~Ming, F.~Rousset, and N.~Tzvetkov.
\newblock Multi-solitons and related solutions for the water-waves system.
\newblock arXiv:1304.5263.

\bibitem{Nalimov}
V.~I. Nalimov.
\newblock The {C}auchy-{P}oisson problem.
\newblock {\em Dinamika Splo\v sn. Sredy}, (Vyp. 18 Dinamika Zidkost. so
  Svobod. Granicami):104--210, 254, 1974.

\bibitem{SW}
G.~Schneider and C.~E. Wayne.
\newblock The rigorous approximation of long-wavelength capillary-gravity
  waves.
\newblock {\em Arch. Ration. Mech. Anal.}, 162(3):247--285, 2002.

\bibitem{Shatah}
J.~Shatah.
\newblock Normal forms and quadratic nonlinear {K}lein-{G}ordon equations.
\newblock {\em Comm. Pure Appl. Math.}, 38(5):685--696, 1985.

\bibitem{SZ}
J.~Shatah and C.~Zeng.
\newblock Geometry and a priori estimates for free boundary problems of the
  {E}uler equation.
\newblock {\em Comm. Pure Appl. Math.}, 61(5):698--744, 2008.

\bibitem{SZ2}
J.~Shatah and C.~Zeng.
\newblock A priori estimates for fluid interface problems.
\newblock {\em Comm. Pure Appl. Math.}, 61(6):848--876, 2008.

\bibitem{Shinbrot}
M.~Shinbrot.
\newblock The initial value problem for surface waves under gravity. {I}. {T}he
  simplest case.
\newblock {\em Indiana Univ. Math. J.}, 25(3):281--300, 1976.

\bibitem{SuSu}
C.~Sulem and P.-L. Sulem.
\newblock {\em The nonlinear {S}chr\"odinger equation}, volume 139 of {\em
  Applied Mathematical Sciences}.
\newblock Springer-Verlag, New York, 1999.
\newblock Self-focusing and wave collapse.

\bibitem{Sun}
S.-M. Sun.
\newblock Some analytical properties of capillary-gravity waves in two-fluid
  flows of infinite depth.
\newblock {\em Proc. Roy. Soc. London Ser. A}, 453(1961):1153--1175, 1997.

\bibitem{TW}
N.~Totz and S.~Wu.
\newblock A rigorous justification of the modulation approximation to the 2{D}
  full water wave problem.
\newblock {\em Comm. Math. Phys.}, 310(3):817--883, 2012.

\bibitem{WuInvent}
S.~Wu.
\newblock Well-posedness in {S}obolev spaces of the full water wave problem in
  2-{D}.
\newblock {\em Invent. Math.}, 130(1):39--72, 1997.

\bibitem{WuJAMS}
S.~Wu.
\newblock Well-posedness in {S}obolev spaces of the full water wave problem in
  3-{D}.
\newblock {\em J. Amer. Math. Soc.}, 12(2):445--495, 1999.

\bibitem{Wu09}
S.~Wu.
\newblock Almost global wellposedness of the 2-{D} full water wave problem.
\newblock {\em Invent. Math.}, 177(1):45--135, 2009.

\bibitem{Wu10}
S.~Wu.
\newblock Global wellposedness of the 3-{D} full water wave problem.
\newblock {\em Invent. Math.}, 184(1):125--220, 2011.

\bibitem{Yosihara}
H.~Yosihara.
\newblock Gravity waves on the free surface of an incompressible perfect fluid
  of finite depth.
\newblock {\em Publ. Res. Inst. Math. Sci.}, 18(1):49--96, 1982.

\bibitem{Zakharov1968}
V.~E. Zakharov.
\newblock Stability of periodic waves of finite amplitude on the surface of a
  deep fluid.
\newblock {\em Journal of Applied Mechanics and Technical Physics},
  9(2):190--194, 1968.

\bibitem{Zw}
M.~Zworski.
\newblock {\em Semiclassical analysis}, volume 138 of {\em Graduate Studies in
  Mathematics}.
\newblock American Mathematical Society, Providence, RI, 2012.

\end{thebibliography}
\end{document}